\newcommand{\hackcenter}[1]{
 \xy (0,0)*{#1}; \endxy}
\numberwithin{equation}{section}
\numberwithin{figure}{section}
\theoremstyle{plain}
\newtheorem{thm}{\protect\theoremname}
\newtheorem*{thm*}{\protect\theoremname}
\newtheorem{prop}[thm]{\protect\propositionname}
\newtheorem*{prop*}{\protect\propositionname}
\newtheorem{lem}[thm]{\protect\lemmaname}
\newtheorem*{lem*}{\protect\lemmaname}
\newtheorem{defn}[thm]{\protect\definitionname}
\newtheorem{cor}[thm]{\protect\corollaryname}
\newtheorem{rmk}[thm]{\protect\remarkname}
\numberwithin{thm}{section}
\newcommand{\term}[1]{{\bf #1}} 
\newcommand{\ii}{\mathbf{i}}
\newcommand{\re}{\mathfrak{Re}}
\newcommand{\unitvec}[1]{\hat{e}_{#1}}
\newcommand{\eps}{\varepsilon}
\newcommand{\ContF}{\mathcal{C}}
\newcommand{\Mob}{\mathfrak{M}}
\newcommand{\const}{\mathrm{const.}}
\newcommand{\oo}{\mathfrak{o}}
\newcommand{\id}{\mathrm{id}}
\newcommand{\idof}[1]{\id_{#1}}
\newcommand{\isom}{\cong}
\newcommand{\dmn}{\mathrm{dim}}
\newcommand{\spn}{\mathrm{span}}
\newcommand{\End}{\mathrm{End}}
\newcommand{\Hom}{\mathrm{Hom}}
\newcommand{\Kern}{\mathrm{Ker}}
\newcommand{\ldual}{\langle}
\newcommand{\rdual}{\rangle}
\newcommand{\tens}{\otimes}
\newcommand{\C}{\mathbb{C}} 
\newcommand{\R}{\mathbb{R}} 
\newcommand{\Z}{\mathbb{Z}} 
\newcommand{\Znn}{\N} 
\newcommand{\Zpos}{\Z_{> 0}} 
\newcommand{\N}{\mathbb{N}} 
\newcommand{\Q}{\mathbb{Q}} 
\newcommand{\bC}{\C} 
\newcommand{\bR}{\R} 
\newcommand{\bZ}{\Z} 
\newcommand{\bN}{\N} 
\newcommand{\bQ}{\Q} 
\newcommand{\dee}{{\partial}}
\newcommand{\UEA}{\mathcal{U}}
\newcommand{\VermaSymbol}{M}
\newcommand{\VermaCH}[2]{\VermaSymbol(#1,#2)}
\newcommand{\VermaFR}[1]{\VermaSymbol_{#1}}
\newcommand{\PBWfilsym}{\mathscr{F}}
\newcommand{\PBWfil}[1]{\PBWfilsym^{#1}}
\renewcommand{\set}[1]{\left\{ #1 \right\}}
\newcommand{\hot}{\mathrm{h.o.t.}}
\newcommand{\voaV}{V}
\newcommand{\voaM}{W}
\newcommand{\modM}{\voaM}
\newcommand{\Mvec}{w}
\newcommand{\insym}{0}
\newcommand{\midsym}{1}
\newcommand{\outsym}{\infty}
\newcommand{\Mvecin}{{\Mvec}_{\insym}}
\newcommand{\Mvecmid}{{\Mvec}_{\midsym}}
\newcommand{\Mvecout}{{\Mvec}_{\outsym}}
\newcommand{\labin}{\lambda_{\insym}}
\newcommand{\labmid}{\lambda_{\midsym}}
\newcommand{\labout}{\lambda_{\outsym}}
\newcommand{\Vvec}{v}
\newcommand{\FRSymbol}{Q}
\newcommand{\voaFR}[1]{\FRSymbol_{#1}}
\newcommand{\hwvsym}{\bar{w}}
\newcommand{\hwvin}{\hwvsym_{\insym}}
\newcommand{\hwvmid}{\hwvsym_{\midsym}}
\newcommand{\hwvout}{\hwvsym_{\outsym}}
\newcommand{\hin}{h_{\insym}}
\newcommand{\hmid}{h_{\midsym}}
\newcommand{\hout}{h_{\outsym}}
\newcommand{\hwvFR}[1]{{\hwvsym}_{#1}}
\newcommand{\hwFR}[1]{h(#1)}
\newcommand{\voaY}{Y}
\newcommand{\modY}[1]{Y_{#1}}
\newcommand{\intertw}{\mathcal{Y}}
\newcommand{\intertwnorm}[3]{\intertw_{#2 \, #1}^{\;#3}}
\newcommand{\spintertw}{\mathcal{I}}
\newcommand{\inittermsym}{\mathrm{Init}}
\newcommand{\initterm}[1]{\inittermsym [#1]}
\newcommand{\genvar}{\mathfrak{z}}
\newcommand{\genvarB}{\mathfrak{w}}
\newcommand{\genvarC}{\mathfrak{u}}
\newcommand{\ipvar}{\boldsymbol{\zeta}}
\newcommand{\ipvarB}{\boldsymbol{\xi}}
\newcommand{\ipvarO}{\ipvar_0}
\newcommand{\ipvarI}{\ipvar_1}
\newcommand{\ipvarII}{\ipvar_2}
\newcommand{\apvar}{\boldsymbol{x}}
\newcommand{\apvarB}{\boldsymbol{y}}
\newcommand{\voaMin}{\voaM_{\insym}}
\newcommand{\voaMout}{\voaM_{\outsym}}
\newcommand{\voaMmid}{\voaM_{\midsym}}
\newcommand{\modMin}{\voaMin}
\newcommand{\modMout}{\voaMout}
\newcommand{\modMmid}{\voaMmid}
\newcommand{\voavac}{\mathsf{1}}
\newcommand{\voaconfvec}{\omega}
\newcommand{\Res}{\mathrm{Res}}
\newcommand{\voaFusion}[3]{\binom{#3}{#1 \; #2}}
\newcommand{\voaFus}[3]{\voaFusion{#1}{#2}{#3}}
\newcommand{\vir}{\mathfrak{vir}}
\newcommand{\hwv}[1]{\hwvsym_{#1}}
\newcommand{\cb}{\mathbb{M}}
\newcommand{\cbaff}{\widehat{\cb}}
\newcommand{\modMmidaff}{\widehat{\voaM}_{\midsym}}
\newcommand{\virgenrep}{\mathcal{W}}
\newcommand{\singvec}{\chi}
\newcommand{\singvecop}{S}
\newcommand{\sltwo}{\mathfrak{sl}_2}
\newcommand{\Uqsltwo}{\UEA_q (\sltwo)}
\newcommand{\Hcp}{{\Delta}}
\newcommand{\qnum}[1]{\llbracket #1 \rrbracket}
\newcommand{\qfact}[1]{\llbracket #1 \rrbracket !}
\newcommand{\QGgenrep}{\mathsf{V}}
\newcommand{\QGrep}[1]{\mathsf{M}_{#1}}
\newcommand{\QGhwvsym}{u}
\newcommand{\QGhwv}[1]{\QGhwvsym_0^{(#1)}}
\newcommand{\QGvec}[2]{\QGhwvsym_{#2}^{(#1)}}
\newcommand{\selRule}[2]{\mathrm{Sel}(#1,#2)}
\newcommand{\QGembed}[3]{{\iota}_{
        \;{#1}}^{{#2} , {#3}}}
\newcommand{\QGprcan}[3]{{\pi}_{
        {#1} , {#2}}^{\;{#3}}}
\newcommand{\QGpremb}[3]{{\hat{\pi}}_{
        {#1} , {#2}}^{\;{#3}}}
\newcommand{\QGlongprcan}[3]{{\pi}_{
        \{{#1} , {#2}\}}^{\;\;{#3}}}
\newcommand{\QGlongpremb}[3]{{\hat\pi}_{
        \{{#1} , {#2}\}}^{\;\;{#3}}}
\newcommand{\compQGpr}[1]{{P}^{(#1)}}
\newcommand{\sixj}[6]{\Big\{ \begin{array}{ccc}
                             {#2} & {#3} & {#6} \\
                             {#4} & {#1} & {#5}
                             \end{array}\Big\}}
\newcommand{\chamber}{\mathfrak{X}}
\newcommand{\cbseq}{{\varsigma}}
\newcommand{\cbseqB}{{\mu}}
\newcommand{\cbfullseq}{\underline{\cbseq}}
\newcommand{\cbfullseqB}{\underline{\cbseqB}}
\newcommand{\lambdafullseq}{\underline{{\lambda}}}
\newcommand{\sF}{\mathcal{F}}
\newcommand{\HWspace}[1]{\mathcal{H}_{#1}}
\newcommand{\betasym}{B}
\newcommand{\deltasym}{{\Delta}}
\newcommand{\deltasymB}{\widehat{\deltasym}}
\newcommand{\betacoef}[3]{\betasym_{
        {#1} , {#2}}^{\;{#3}}}
\newcommand{\deltaexp}[3]{\deltasym_{
        {#1} , {#2}}^{\;{#3}}}
\newcommand{\ud}{\mathrm{d}}
\newcommand{\der}[1]{\frac{\ud}{\ud {#1}}}
\newcommand{\pder}[1]{\frac{\partial}{\partial {#1}}}
\newcommand{\WittL}[2]{\mathscr{L}^{(#2)}_{#1}}
\newcommand{\varWittL}[2]{\widetilde{\mathscr{L}}_{#1}^{(#2)}}
\newcommand{\varvarWittL}[2]{\widetilde{\mathscr{L}}'{}_{#1}^{(#2)}}
\newcommand{\newvarWittL}[2]{\widehat{\mathscr{L}}_{#1}^{(#2)}}
\newcommand{\ldvarWittL}[2]{\underline{\widetilde{\mathscr{L}}}{\,}_{#1}^{(#2)}}	
\newcommand{\newldvarWittL}[2]{\widehat{\underline{\mathscr{L}}}_{{#1}}^{({#2})}}
\newcommand{\dopL}[1]{\mathscr{L}_{#1}}
\newcommand{\BSAoper}[1]{\mathscr{D}^{(#1)}}
\newcommand{\varBSAoper}[1]{\widetilde{\mathscr{D}}^{(#1)}}
\newcommand{\varvarBSAoper}[1]{\widetilde{\mathscr{D}}'{}^{(#1)}}
\newcommand{\newvarBSAoper}[1]{\widehat{\mathscr{D}}^{(#1)}}
\newcommand{\newldvarBSAoper}[1]{\widehat{\underline{\mathscr{D}}}^{(#1)}{}}
\newcommand{\ldvarBSAoper}[1]{\underline{\widetilde{\mathscr{D}}}{\,}^{(#1)}}
\newcommand{\extraterm}{\mathscr{R}}
\newcommand{\newextraterm}{\widehat{\mathscr{R}}}
\newcommand{\hwmesym}{C}
\newcommand{\newhwme}{\widehat{\hwmesym}}
  \providecommand{\corollaryname}{Corollary}
  \providecommand{\definitionname}{Definition}
  \providecommand{\lemmaname}{Lemma}
  \providecommand{\propositionname}{Proposition}
  \providecommand{\remarkname}{Remark}
\providecommand{\theoremname}{Theorem}
  \providecommand{\corollaryname}{Corollary}
  \providecommand{\definitionname}{Definition}
  \providecommand{\lemmaname}{Lemma}
  \providecommand{\propositionname}{Proposition}
  \providecommand{\remarkname}{Remark}
\providecommand{\theoremname}{Theorem}
\providecommand{\corollaryname}{Corollary}
  \providecommand{\definitionname}{Definition}
  \providecommand{\lemmaname}{Lemma}
  \providecommand{\propositionname}{Proposition}
  \providecommand{\remarkname}{Remark}
\providecommand{\theoremname}{Theorem}
\providecommand{\corollaryname}{Corollary}
\providecommand{\definitionname}{Definition}
\providecommand{\lemmaname}{Lemma}
\providecommand{\propositionname}{Proposition}
\providecommand{\remarkname}{Remark}
\providecommand{\theoremname}{Theorem}
\begin{document}

\title[Generic universal Virasoro VOA]%
{The quantum group dual \\ of the first-row subcategory \\ for the generic
Virasoro VOA}

\author{Shinji Koshida, Kalle Kyt\"ol\"a}

\address{Department of Physics, Faculty of Science and Engineering, Chuo University, Kasuga, Bunkyo, Tokyo 112-8551, Japan}

\email{koshida@phys.chuo-u.ac.jp}

\address{Department of Mathematics and Systems Analysis, Aalto University,
P.O. Box 11100, FI-00076 Aalto, Finland}

\email{kalle.kytola@aalto.fi}

\begin{abstract}
In several examples it has been observed that a module category of a vertex operator algebra (VOA) is equivalent to a category of representations of some quantum group. The present article is concerned with developing such a duality in the case of the Virasoro VOA at generic central charge; arguably the most rudimentary of all VOAs, yet structurally complicated. We do not address the category of all modules of the generic Virasoro VOA, but we consider the infinitely many modules from the first row of the Kac table. Building on an explicit quantum group method of Coulomb gas integrals, we give a new proof of the fusion rules, we prove the analyticity of compositions of intertwining operators, and we show that the conformal blocks are fully determined by the quantum group method. Crucially, we prove the associativity of the intertwining operators among the first-row modules, and find that the associativity is governed by the $6j$-symbols of the quantum group. Our results constitute a concrete duality between a VOA and a quantum group,
and they will serve as the key tools to establish
the equivalence of the first-row subcategory of modules of the generic Virasoro VOA and the category of (type-1) finite-dimensional representations of
$\Uqsltwo$.
\end{abstract}

\maketitle


\tableofcontents

\newpage

\section{Introduction}%
\label{sec: intro}
\subsubsection*{Conformal field theories, vertex operator algebras, and quantum groups}

Two-dimensional conformal field theories (CFT)
are an outstanding example of extremely fruitful
interaction of physics and mathematics
\cite{DMS-CFT, Gawedzki-lectures_on_CFT,
Huang-CFT_and_VOA, Nahm-bridge_over_troubled_waters}.
Their physical applications include string theory~\cite{GreenSchwarzWitten_superstring}
and critical phenomena in planar statistical physics
\cite{Mussardo-statistical_field_theory},
and they are among the best understood examples
of quantum field theories. In mathematics, ideas
from CFT have been instrumental to the 
Monster simple group~\cite{FLM-VOAs_and_the_Monster}, tensor categories~\cite{BakalovKirillovJr2001}, subfactors~\cite{Kawahigashi2015},
moduli spaces~\cite{FrenkelBen-Zvi2004}, the geometric Langlands program~\cite{Frenkel2007},
and conformally invariant random geometry~\cite{BauerBernard-conformal_transformations},
among others.

As with quantum field theories in general,
the mathematical axiomatization and construction
of CFTs are vast challenges, but CFTs possess 
remarkable structure that has enabled a highly
successful algebraic axiomatization based on
vertex operator algebras (VOA)
\cite{Kac-vertex_algebras, Lepowsky_Li-VOA,
Huang-CFT_and_VOA}.
A VOA 
is essentially
the chiral symmetry algebra of a CFT as envisioned in the seminal work of
Belavin, Polyakov, and 
Zamolodchikov~\cite{BPZ-infinite_conformal_symmetry_in_2d_QFT,
BPZ-infinite_conformal_symmetry_of_critical_fluctiations}.
The symmetry algebra always contains the Virasoro
algebra responsible for the conformal symmetry
itself.
Virasoro vertex operator algebras
are thus fundamental
in that they incorporate only and exactly the minimal
amount of symmetry that any CFT possesses.
At special choices of the central charge~$c$,
a key parameter of CFTs, there may actually 
exist two different Virasoro VOAs: 
the universal Virasoro VOA
\cite{Lepowsky_Li-VOA}
(maximally large) and its irreducible quotient, the
minimal Virasoro VOA
\cite{Wang-rationality_of_Virasoro_VOA}.
At generic values of~$c$, however, the universal Virasoro VOA itself is irreducible,
and we call it a generic Virasoro VOA.
The topic of this article is such
generic Virasoro VOAs.

Generic Virasoro VOAs have been studied very little
in comparison with many other VOAs. The main reason
is that they fail most structural properties that
have enabled significant progress. In particular,
they are far from rational VOAs~\cite{FrenkelZhu1992, Zhu96},
which possess a semisimple category of modules
with finitely many simple objects. Generic Virasoro
VOAs admit first of all infinitely many simple
modules,  and many more indecomposable but not
irreducible ones. We are lacking even the
description of the general indecomposable modules,
let alone the category of such modules equipped with
the desired structures of a tensor product and 
braiding.
In a notable recent 
progress~\cite{CJORY-tensor_categories_arising_from_Virasoro_algebra},
the category of $C_1$-cofinite modules of the
generic Virasoro VOA was studied,
and the tensor product constructed in it.

An intriguing aspect of conformal field theories,
and of the corresponding VOAs, is a hidden
quantum group symmetry. 
In a number of prominent
examples, a representation category of a
suitable quantum group has been found to
agree with a module category of a VOA~---
often together with the tensor products
and braiding in the categories.
The case of (VOAs based on) Wess--Zumino--Witten (WZW) CFTs
at various levels have been
treated in~\cite{Drinfeld1989,
KL-tensor_structures_affine_Lie,
McRae-nonneg_integer_level_affine_Lie_algebra_tensor_cat}, 
and the corresponding quantum group was a $q$-deformation
of the finite dimensional Lie algebra of the corresponding WZW theory.
Another well studied example is the triplet $W$-algebra of logarithmic CFT~\cite{FeiginGainutdinovSemikhatovTipunin2006a,FeiginGainutdinovSemikhatovTipunin2006b,NagatomoTsuchiya2011,KondoSaito2011, TsuchiyaWood2013,
GannonNegron-quantum_sltwo_and_log_VOAs, CreutzigLentnerRupert},
whose representation category is equivalent to that of the {\it restricted} quantum group of $\mathfrak{sl}_{2}$.
Though not as much as a categorical equivalence, a certain structure related to a quantum group has been also observed in the context of Liouville CFT~\cite{TeschnerVartanov2014}.

\subsubsection*{First row subcategory of modules
for the generic Virasoro VOA}

The category of all modules of the generic 
Virasoro VOA being hopelessly complicated,
we focus here on a subcategory we call
the first row subcategory. It is the semisimple
category whose infinitely
many simple modules are the irreducible
Virasoro highest weight modules ``in the
first row of the Kac table'', i.e.,
with highest weights $h=h_{1,s}$, $s \in \Zpos$,
when $h_{r,s}$, $r,s \in \Zpos$, denote the
usual Kac labeled highest 
weights~\cite{Kac-contravariant_form}.
These correspond to a certain infinite 
set of (chiral) primary fields in a CFT, which
has been found to be relevant in particular
to questions in conformally invariant 
random geometry~--- the two simplest of these 
primary fields after the identity, with Kac labeled
conformal weights~$h_{1,2}$ and~$h_{1,3}$,
correspond to SLE-type curves' starting 
points~\cite{BB-CFTs_of_SLEs, Dubedat-commutation}
and boundary visit points~\cite{BB-zig_zag, JJK-SLE_boundary_visits,
Dubedat-fusion},
respectively. For such SLE applications,
the generic Virasoro VOA corresponds to
generic values of the key parameter~$\kappa>0$ of
SLEs~\cite{Werner-random_planar_curves_and_SLE,
RS-basic_properties,
Lawler-conformally_invariant_processes_in_the_plane},
and is completely natural.

By contrast to the general Kac labeled highest weights~$h_{r,s}$,
$r,s \in \Zpos$, at the
first row highest weights~$h_{1,s}$, $s\in \Zpos$
one has truly explicit expressions of singular vectors
in the Virasoro Verma module, by the 
Benoit--St.~Aubin (BSA)
formula~\cite{BSA-degenerate_CFTs_and_explicit_expressions}.
Correspondingly the BPZ partial differential
equations~\cite{BPZ-infinite_conformal_symmetry_in_2d_QFT}
for the correlation functions of these primary fields
are explicit. This is a feature that facilitates
the analysis of the first row subcategory, but
resorting to the explicit partial differential
equations does not in principle seem essential.

Our analysis of this first row subcategory 
of the generic Virasoro VOA is
based on a quantum group method 
of~\cite{KP-conformally_covariant_bdry_correlations},
which is a concrete and practical version of
the hidden quantum group symmetry
\cite{MR-comment_on_quantum_group_symmetry_in_CFT,
PS-common_structures_between_finite_systems_and_CFTs,
RRR-contour_picture_of_quantum_groups_in_CFT,
FW-topological_representation_of_Uqsl2,
SV-quantum_groups_and_homology_of_local_systems,
Varchenko-multidimensional_hypergeometric_functions_and_representation_theory_of_Lie_algebras_and_quantum_groups,
GRS-quantum_groups_in_2d_physics}
developed with applications 
\cite{KP-pure_partition_functions_of_multiple_SLEs,
JJK-SLE_boundary_visits} to
random geometry in mind.
The corresponding quantum group is~$\Uqsltwo$,
a $q$-deformation of~$\sltwo$, at a deformation
parameter~$q$ which is not a root-of-unity.
Correspondingly the category of (type-1) finite-dimensional
representations of this quantum group is semisimple,
with infinitely many irreducible representations,
and it is equipped with tensor products and braiding \cite{Lusztig1993,BakalovKirillovJr2001}.

We consider it noteworthy that our VOA 
without extended symmetries of Lie group type 
(present, e.g., in WZW models) has a quantum group 
counterpart in this way, and that our
VOA, which is irrational with very complicated
representation theory, corresponds to a quantum
group with extremely well-behaved category
of finite-dimensional representations
(albeit with infinitely many irreducibles).
The case more often seen before has been rational
VOAs with good module categories, and
complicated root-of-unity quantum groups whose representation
categories are ``semisimplified'' for certain purposes.

In general terms, our main results are that 
the first row subcategory of modules of the generic
Virasoro VOA is stable under fusion,
and detailed calculations of the fusions
with the quantum group method.

\subsubsection*{Methods}

After reviewing the characterization and
construction of intertwining operators among 
the first row modules, we show that the 
intertwining operators and their arbitrary
compositions are the correlation functions
obtained with the quantum group method.
A priori, the compositions of intertwining
operators of a VOA are formal series,
but this shows that they are actual analytic
functions given by explicit integral formulas.
In particular one obtains convergence of the
series, and straightforward methods of
analytic continuation.

Showing associativity of tensor products of
modules of VOAs is generally a very difficult
task, and one of the main obstacles to 
constructing the appropriate 
tensor category of modules of a VOA
\cite{HuangLepowsky1992, HuangLepowsky1994, Huang--LepowskyI,
Huang--LepowskyII, Huang--LepowskyIII, Huang1995, Huang2005, HuangLepowskyZhangI, HuangLepowskyZhangII, HuangLepowskyZhangIII, HuangLepowskyZhangIV, HuangLepowskyZhangV, HuangLepowskyZhangVI, HuangLepowskyZhangVII, HuangLepowskyZhangVIII},
see also~\cite[Section 2]{Huang--Kirillov--Lepowsky}
for a review. 
The difficulties lie partly in the fact that
the formal series are not even supposed to
correspond to single-valued functions,
so nontrivial branch choices are inevitable,
and yet the convergence and analyticity of the 
formal series is far from obvious.
The explicit analytic expressions from
the quantum group method enable
our proof of associativity.
It is also the explicit expressions that
show the equivalence of the tensor categories
of the finite-dimensional representations
of~$\Uqsltwo$ and of the first-row modules of 
the generic Virasoro VOA.
The operator product expansion (OPE) coefficients
of the corresponding primary fields,
in particular, are explicit, and involve
the quantum $6j$-symbols.

The braiding in the tensor category of VOA 
modules makes the multivaluedness of the functions
even more manifest. We will postpone the
construction of the braiding in the
first row subcategory to a
subsequent article, but the key to it is
similarly the explicit analytical 
expressions that are amenable to analytic 
continuation.

\subsubsection*{Novelty and advantages of the approach}

Our results provide a very satisfactory 
VOA to quantum group duality for the fundamentally
important generic Virasoro VOA, especially when combined with
the follow-up work establishing the equivalence of the
tensor categories of the first row modules of the VOA and of
(type-1) finite-dimensional representations of the
quantum group~$\Uqsltwo$.
The formulation is practical, and 
in particular allows us to perform VOA
calculations with very straightforward 
linear algebra in finite-dimensional
representations of~$\Uqsltwo$.

Conversely the method sheds light onto the quantum
group method
of~\cite{KP-conformally_covariant_bdry_correlations}. Notably, VOA techniques can be used to systematize 
the calculation of general series expansion
coefficients of the correlation functions 
obtained from the method.
Moreover, the result gives a rather satisfactory
characterization of the space of solutions to
BSA PDEs obtained from the quantum group method:
it says that the obtained solutions are exactly
the linear span of the conformal blocks,
which can be described combinatorially, and
in this sense all solutions relevant to CFTs
are included.
By contrast, direct analytical description of
the solution space is complicated already in
the particular case involving only
second order BSA PDEs~\cite{FK-solution_space_for_a_system_of_null_state_PDEs_all}.

Underlying the method and the results is the
key observation that the intertwining operators
in the first row subcategory for the generic
Virasoro VOA are described by explicit analytic
functions, not just formal series.

\subsubsection*{Related works}

The recent 
article~\cite{CJORY-tensor_categories_arising_from_Virasoro_algebra}
also treats the question about the tensor
products in a subcategory of modules for the
generic universal Virasoro VOA. The category
of all $C_1$-cofinite modules considered there
is more general than the first row category
considered in the present article. 
Our approach is thus less general, but it is fully explicit, and 
additionally shows the intimate relationship with the quantum group.

Very recently, in~\cite{GannonNegron-quantum_sltwo_and_log_VOAs},
a ribbon tensor equivalence was established between a module category
of the Virasoro VOA at a central charge lying in a specific series and
a module category of the quantum $\mathrm{SL}_{2}$ at a root of unity.
That article employs results about tensor categories directly,
and specifically the fact that the tensor categories in question have
a distinguished generator.
The first row subcategory of the generic Virasoro VOA is also generated
(as a tensor category) by a single module. With the methods
of~\cite{GannonNegron-quantum_sltwo_and_log_VOAs} one could therefore
expect to obtain a complementary viewpoint to the relationship of the
$\Uqsltwo$ and our first row subcategory, which would be
directly category theoretical but not as explicit about the correlation
functions as our approach.

The setting of the article \cite{TeschnerVartanov2014} involves the same VOA and the same quantum group as the present work,
and the authors also observed a duality between modules of the two.
The modules of both are, however, different from what we consider here.
In other words, the results of us and \cite{TeschnerVartanov2014} thus pertain to different CFTs despite the fact that the VOA and the quantum group are the same.

The quantum group method 
of~\cite{KP-conformally_covariant_bdry_correlations}
relies on tensor products of finite-dimensional
representations of the quantum group~$\Uqsltwo$
at generic values of the deformation parameter~$q$
when the representation theory is semisimple.
Questions about it often reduce to the
commutant of the quantum group on the tensor 
product representation, via a general
$q$-Schur-Weyl duality. 
This approach to calculations with the quantum
group method has been developed in particular
by Flores and Peltola, in a series of articles
\cite{FP-standard_modules_radicals_and_the_valenced_TL,
FP-generators_projectors_and_the_JW,
FP-quantum_Schur_Weyl,
FP-monodromy_invariant_correlations}.
The commutant is a generalization of 
Temperley-Lieb algebas, and Flores and Peltola
have developed specific representation
theoretic tools that are suitable 
for explicit calculations in the quantum group method.
Some of the calculations in the present article,
especially related to the $6j$ symbols,
are closely parallel to such a $q$-Schur-Weyl
duality approach. The results we need are,
however, sufficiently concrete and tractable directly,
so we do not need to introduce the commutant
algebra and its presentation by generators and 
relations.

Ideas of reconstructing intertwining operators for the Virasoro VOA
from integral formulas
appeared already in the seminal article~\cite{Felder-BRST_approach},
and more recently~\cite{KKP-conformal_blocks}
contains a conjecture of a special case of the precise
relationship between the quantum group method and the generic Virasoro
VOA that we establish here.

As a future perspective, it would be desirable to develop the method to a more systematic one and
facilitate generalizations in particular to non-semisimple, root of unity cases.
For that purpose, we view the theories of twisted (co)homologies~\cite{AomotoKita, TsuchiyaWood_IMRN_2014} and Nichols algebras~\cite{Lentner2021} as particularly promising.

\subsection*{Acknowledgments}

SK is supported by the Grant-in-Aid for JSPS Fellows (No.\ 19J01279).

KK wishes to thank Eveliina Peltola and Steven Flores for discussions and for sharing their relevant work with us, Steven Flores and David Radnell for contributions to our study group, and Yi-Zhi Huang for accepting the invitation to give a minicourse in Helsinki and for sharing valuable insights.

KK also wishes to thank the organizers of the conference ``The Mathematical Foundations of Conformal Field Theory and Related Topics'' for hospitality. Numerous discussions at the conference, particularly with Ingo Runkel, Shashank Kanade, Jinwei Yang, and Robert McRae, were very helpful for this work.

\section{Background on the quantum group method}%
\label{sec: quantum group method}
In this section we review the method 
of~\cite{KP-conformally_covariant_bdry_correlations},
by which one construct functions of relevance to
conformal field theories 
from vectors in tensor product representations of a quantum group. 
In Section~\ref{sec: explicit conformal block vectors} we select 
specific vectors in such representations, which will correspond to
the conformal blocks that are crucial to all of our main results:
the construction (Section~\ref{sec: VOA}) of the intertwining operators among the first-row
modules of the generic Virasoro VOA, their
compositions (Section~\ref{sec: composition}), and
associativity (Section~\ref{sec: associativity}).

The textbook~\cite{Kassel-quantum_groups} uses conventions similar to ours
about the quantum group~$\Uqsltwo$.
Our specific choices and notations are identical
to~\cite{KP-conformally_covariant_bdry_correlations}.

\subsection{The quantum group}

The quantum group~$\Uqsltwo$ is the Hopf algebra defined as follows.
Fix a non-zero complex number~$q \in \bC \setminus \set{0}$.

\subsubsection*{Definition of the quantum group}

As an algebra, 
$\Uqsltwo$ is generated by elements
\begin{align*} 
E, F, K, K^{-1} , 
\end{align*}
subject to relations
\begin{align*} 
K K^{-1} = \; & 1 = K^{-1} K , &
K E = \; & q^2 \, E K , \\
E F - F E = \; & \frac{1}{q-q^{-1}} \big( K - K^{-1} \big) &
K F = \; & q^{-2} \, F K . 
\end{align*}
The Hopf algebra structure on~$\Uqsltwo$ is uniquely determined by 
the coproduct, an algebra homomorphism
$\Hcp \colon \Uqsltwo \to \Uqsltwo \tens \Uqsltwo$,
whose values on the generators are
\begin{align*}
\Hcp(K) = K \tens K , \qquad
\Hcp(E) = E \tens K + 1 \tens E , \qquad
\Hcp(F) = F \tens 1 + K^{-1} \tens F .
\end{align*}

\subsubsection*{Representations of the quantum group}

We consider the representations of~$\Uqsltwo$ which continuously $q$-deform the 
finite-dimensional representations of~$\sltwo$. These are specified by a 
highest weight~$\lambda \in \Znn$. 
The $(\lambda+1)$-dimensional representation~$\QGrep{\lambda}$
of~$\Uqsltwo$ has a basis~$(\QGvec{\lambda}{j})_{j=0}^\lambda$ in which the 
generator~$K$ acts diagonally
\begin{align*}
K . \QGvec{\lambda}{j} = \; & q^{\lambda - 2 j} \, \QGvec{\lambda}{j} ,\qquad \text{for $j=0,\dots,\lambda$,}
\end{align*}
and the generators~$E$ and~$F$ act as raising and lowering operators
\begin{align*}
E . \QGvec{\lambda}{j} = \; & \begin{cases}
		0 & \text{ if $j = 0$} \\
		\qnum{j} \, \qnum{\lambda + 1 - j} \, \QGvec{\lambda}{j-1} 
			  & \text{ if $0 < j \leq \lambda$,}
		\end{cases} \\
F . \QGvec{\lambda}{j} = \; & \begin{cases}
		\QGvec{\lambda}{j+1} 
		\phantom{\qnum{j} \, \qnum{\lambda + 1 - j} \,}
			  & \text{ if $0 \leq j < \lambda$} \\
		0 & \text{ if $j = \lambda$,}
		\end{cases} 
\end{align*}
where we used the $q$-integers defined by
\begin{align*}
\qnum{n} = \frac{q^{n}-q^{-n}}{q-q^{-1}} .
\end{align*}
The representations~$\QGrep{\lambda}$, $\lambda \in \Znn$, are irreducible
if~$q$ is not a root of unity, as we will assume throughout the present 
article.

\subsubsection*{Tensor product representations}

Using the coproduct $\Hcp \colon \Uqsltwo \to \Uqsltwo \tens \Uqsltwo$,
we can equip the tensor product~$\QGgenrep' \tens \QGgenrep''$ of any two 
representations~$\QGgenrep', \QGgenrep''$ of~$\Uqsltwo$ with the structure of a 
representation.
Coassociativity of~$\Hcp$ ensures that we can unambiguoulsy define triple 
tensor products such as~$\QGgenrep' \tens \QGgenrep'' \tens \QGgenrep'''$, as 
well as further iterated 
tensor products. One should note, however, that
due to the lack of cocommutativity of~$\Hcp$, we can not canonically identify
$\QGgenrep' \tens \QGgenrep''$ with $\QGgenrep'' \tens \QGgenrep'$. In the 
category of modules that we 
will consider, such identifications can be done by braiding, but the choice of 
braiding direction must be specified.

The rest of the section assumes
that~$q$ is not a root of unity,
\begin{align*}
q^n \neq 1 
\qquad \text{ for all } n \in \bZ \setminus \set{0} .
\end{align*}
Then the tensor products of the 
representations~$\QGrep{\lambda}$, $\lambda \in \Znn$,
are completely reducible, and the 
Clebsch-Gordan decomposition is the same as for~$\sltwo$
\begin{align}
\label{eq:tensor_prod_Uq_mods}
\QGrep{\lambda} \tens \QGrep{\mu}
\; \isom \; \bigoplus_{\ell=0}^{\min(\mu , \lambda)} 
		\QGrep{\mu + \lambda - 2\ell} ,
\end{align}
see, e.g., 
\cite[Lemma~2.4]{KP-conformally_covariant_bdry_correlations}.
Let us therefore define the selection rule set associated 
to~$\mu , \lambda \in \Znn$ as the set of those~$\sigma$ such that
a copy of~$\QGrep{\sigma}$ is 
contained in~$\QGrep{\lambda} \tens \QGrep{\mu}$, i.e.,
\begin{align*}
\selRule{\mu}{\lambda}
:= \set{\sigma \in \Znn \; \Big| \; 
    \sigma + \mu + \lambda \equiv 0 \;(\mathrm{mod}\;2) , \;
    |\mu - \lambda| \leq \sigma \leq \mu + \lambda } .
\end{align*}
Note the symmetries
\begin{align*}
\selRule{\mu}{\lambda} = \selRule{\lambda}{\mu} \qquad \text{ and }
\end{align*}
\begin{align*}
\sigma \in \selRule{\mu}{\lambda}
\quad \Longleftrightarrow \quad
\mu \in \selRule{\sigma}{\lambda}
\quad \Longleftrightarrow \quad
\lambda \in \selRule{\mu}{\sigma} .
\end{align*}
The most convenient formulation of the Clebsch-Gordan rule for our purposes 
is in terms of the following embedding.
\begin{lem}
\label{lem: embedding of QG reps in tensor product of two}
Let~$\mu , \lambda \in \Znn$.
Then we have
\begin{align*}
\dmn \Big( \Hom_{\Uqsltwo} 
    (\QGrep{\sigma} , \; \QGrep{\lambda} \tens \QGrep{\mu}) \Big) 
= \begin{cases}
  1 & \text{ if } \sigma \in \selRule{\mu}{\lambda} \\
  0 & \text{ otherwise} .
  \end{cases}
\end{align*}
In the case $\sigma \in \selRule{\mu}{\lambda}$,
any $\Uqsltwo$-module map
$\QGrep{\sigma} \to \QGrep{\lambda} \tens \QGrep{\mu}$ is proportional 
to the embedding
\begin{align}\label{eq: Clebsch Gordan embedding of irreducible}
\QGembed{\sigma}{\lambda}{\mu} \;
  \colon \; \QGrep{\sigma} \, \hookrightarrow \; 
           \QGrep{\lambda} \tens \QGrep{\mu}
\end{align}
which is uniquely determined by
\begin{align*}
\QGembed{\sigma}{\lambda}{\mu} \big( \QGvec{\sigma}{0} \big)
= \; & \sum_{i,j = 0}^{\frac{\mu+\lambda-\sigma}{2}} 
    \delta_{i+j,\frac{\mu+\lambda-\sigma}{2}} \, (-1)^{j} \,
    \frac{\qfact{\mu-j}\qfact{\lambda-i}}%
        {\qfact{\mu}\qfact{\lambda}\qfact{i}\qfact{j}}
    \frac{q^{j(\mu + 1 - j)}}%
        {(q-q^{-1})^{\frac{\mu+\lambda-\sigma}{2}}}
    \; (\QGvec{\lambda}{i} \tens \QGvec{\mu}{j}) .
\end{align*}
\end{lem}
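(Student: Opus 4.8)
The plan is to read off the dimension count from the Clebsch--Gordan decomposition~\eqref{eq:tensor_prod_Uq_mods} via Schur's lemma, and then to verify by a direct computation that the explicit vector on the right-hand side of the displayed formula is a nonzero highest weight vector of weight~$q^{\sigma}$; this pins down the embedding up to a scalar. For the dimension statement: since~$q$ is not a root of unity, each~$\QGrep{\nu}$ with~$\nu \in \Znn$ is irreducible and the relevant category is semisimple, so by Schur's lemma $\dmn \Hom_{\Uqsltwo}\big(\QGrep{\sigma}, \QGrep{\lambda}\tens\QGrep{\mu}\big)$ equals the multiplicity of~$\QGrep{\sigma}$ in~$\QGrep{\lambda}\tens\QGrep{\mu}$, which by~\eqref{eq:tensor_prod_Uq_mods} is~$1$ if~$\sigma \in \selRule{\mu}{\lambda}$ and~$0$ otherwise.

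Now assume $\sigma \in \selRule{\mu}{\lambda}$ and write $m = \frac{\mu+\lambda-\sigma}{2}$; the inequalities defining~$\selRule{\mu}{\lambda}$ give $0 \le m \le \min(\mu,\lambda)$, so every index $i,j \in \{0,\dots,m\}$ occurring in the formula lies in the admissible ranges $0 \le i \le \lambda$ and $0 \le j \le \mu$, and all the $q$-factorials appearing are of nonnegative integers and are nonzero (using $\qnum{k}\neq 0$ for $k \ge 1$ when~$q$ is not a root of unity). Let~$v \in \QGrep{\lambda}\tens\QGrep{\mu}$ denote the proposed vector. I would then check three points. (i)~$v\neq 0$: the coefficient of the basis vector $\QGvec{\lambda}{m}\tens\QGvec{\mu}{0}$ equals $\frac{\qfact{\lambda-m}}{\qfact{\lambda}\,\qfact{m}\,(q-q^{-1})^{m}} \neq 0$, and that basis vector occurs in no other term of the sum. (ii)~$K.v = q^{\sigma} v$: immediate, since $K$ acts on $\QGvec{\lambda}{i}\tens\QGvec{\mu}{j}$ with $i+j=m$ by the scalar $q^{\lambda+\mu-2m}=q^{\sigma}$. (iii)~$E.v=0$: using $\Hcp(E)=E\tens K+1\tens E$, expand $E.v$, collect the coefficient of each $\QGvec{\lambda}{a}\tens\QGvec{\mu}{b}$ with $a+b=m-1$, and check it vanishes; after simplifying with $\qfact{n-1}\,\qnum{n}=\qfact{n}$ this reduces to the identity $b(\mu+1-b)+(\mu-2b)=(b+1)(\mu-b)$ between exponents of~$q$, which holds identically. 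Step~(iii) is the only genuine computation, and the bookkeeping in it is the step I would expect to require the most care.

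Finally, granting (i)--(iii), the vector~$v$ is a nonzero highest weight vector of weight~$q^{\sigma}$; since~$q$ is not a root of unity, the only irreducibles possessing a highest weight vector (i.e.\ a vector killed by~$E$) of weight~$q^{\sigma}$ are copies of~$\QGrep{\sigma}$, so $v$ generates a $\Uqsltwo$-submodule isomorphic to~$\QGrep{\sigma}$, giving the module map $\QGembed{\sigma}{\lambda}{\mu}$ of~\eqref{eq: Clebsch Gordan embedding of irreducible} with $\QGembed{\sigma}{\lambda}{\mu}(\QGvec{\sigma}{0})=v$. Conversely, any module map $\phi\colon\QGrep{\sigma}\to\QGrep{\lambda}\tens\QGrep{\mu}$ is determined by $\phi(\QGvec{\sigma}{0})$, since $\QGvec{\sigma}{j}=F^{j}.\QGvec{\sigma}{0}$, and $\phi(\QGvec{\sigma}{0})$ must be a highest weight vector of weight~$q^{\sigma}$; by semisimplicity the space of such vectors has dimension equal to the multiplicity of~$\QGrep{\sigma}$, namely at most~$1$, hence $\phi(\QGvec{\sigma}{0})$ is proportional to~$v$ and $\phi$ is proportional to~$\QGembed{\sigma}{\lambda}{\mu}$, as claimed.
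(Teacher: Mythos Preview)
Your proof is correct. The paper itself simply cites \cite[Lemma~2.4]{KP-conformally_covariant_bdry_correlations} for this result, so your self-contained verification---deducing the dimension from the Clebsch--Gordan decomposition~\eqref{eq:tensor_prod_Uq_mods} via Schur's lemma and then checking directly that the explicit vector is a nonzero highest weight vector of weight~$q^{\sigma}$---is exactly the kind of argument that reference encapsulates, just spelled out in full.
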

\begin{proof}
This follows directly from, e.g.,
\cite[Lemma~2.4]{KP-conformally_covariant_bdry_correlations}.
\end{proof}
Since for $\sigma \in \selRule{\mu}{\lambda}$ the multiplicity
of the irreducible representation~$\QGrep{\sigma}$ in the 
tensor product~$\QGrep{\lambda} \tens \QGrep{\mu}$ is one,
there exist a unique $\Uqsltwo$-module map
\begin{align}\label{eq: Clebsch Gordan projection with embedding}
\QGpremb{\lambda}{\mu}{\sigma} \colon
    \QGrep{\lambda} \tens \QGrep{\mu} \to \QGrep{\sigma}
\qquad \text{ such that } \qquad
\QGpremb{\lambda}{\mu}{\sigma} \circ \QGembed{\sigma}{\lambda}{\mu}
    = \idof{\QGrep{\sigma}} .
\end{align}

The projection
\begin{align*}
\QGprcan{\lambda}{\mu}{\sigma} \colon
    \QGrep{\lambda} \tens \QGrep{\mu} \to \QGrep{\lambda} \tens \QGrep{\mu} ,
\end{align*}
from $\QGrep{\lambda} \tens \QGrep{\mu}$ to its unique
subrepresentation isomorphic to~$\QGrep{\sigma}$
then agrees with the composition of
$\QGpremb{\lambda}{\mu}{\sigma}$ with the
embedding $\QGembed{\sigma}{\lambda}{\mu}$,
\begin{align}\label{eq: Clebsch Gordan projection within tensor product}
\QGprcan{\lambda}{\mu}{\sigma}
    = \QGembed{\sigma}{\lambda}{\mu} \circ \QGpremb{\lambda}{\mu}{\sigma} .
\end{align}

\subsection{The correspondence with functions}
\label{sub: quantum group method main statement}

Throughout this section, we parametrize~$q$ by $\kappa$ via
\begin{align*}
q = q(\kappa) = \; e^{\ii 4 \pi / \kappa} ,
\end{align*}
and assume that~$\kappa \in (0,\infty) \setminus \bQ$.
Then $q$~has unit modulus, but is not a root of unity.

For~$\lambda \in \Znn$, we write
\begin{align}\label{eq: first row conformal weight ad hoc}
\hwFR{\lambda} 
:= \frac{\lambda \, \big( 2 (\lambda+2)-\kappa \big)}{2\kappa}
\end{align}
for the conformal weight of a module in the 
``first row of the Kac table'' (see Section~\ref{sec: VOA}).

For $N \in \bN$, let us denote by
\begin{align*}
\chamber_N := \set{ (x_1, \ldots x_N) \in \bR^N \; \big| \;
		x_1 < \cdots < x_N}
\end{align*}
the chamber of $N$ ordered real variables.
We also fix parameters~$\lambda_1 , \ldots, \lambda_N \in \Znn$,
and sometimes refer to all of them collectively as
\[ \lambdafullseq = (\lambda_1 , \ldots, \lambda_N) . \]

We are interested in functions~$F \colon \chamber_N \to \bC$ that
satisfy certain properties motivated by conformal field theory:
$N$~linear partial differential equations,
asymptotics as two variables approach each other,
as well as translation invariance, homogeneity, and
sometimes covariance under more general M\"obius transformations.

Specifically, for each~$j \in \set{1,\ldots,N}$,
we define a Benoit--Saint-Aubin partial differential operator
\begin{align}\label{eq: BSA differential operator}
\BSAoper{j} := \sum_{k=1}^{\lambda_j+1} 
    \sum_{\substack{p_1 , \ldots , p_k \geq 1 \\ 
                    p_1 + \cdots + p_k = \lambda_j + 1}}
        \frac{(-4/\kappa)^ {1+\lambda_j-k} \, \lambda_j !^2}%
        {\prod_{u=1}^{k-1} (\sum_{i=1}^{u} p_i) (\sum_{i=u+1}^{k} p_i)}
      \; \WittL{-p_1}{j} \cdots \WittL{-p_k}{j} ,
\end{align}
of order~$\lambda_j+1$, where
\begin{align}\label{eq: Witt differential operator}
\WittL{n}{j} := 
    - \sum_{\substack{1 \leq i \leq N \\ i \neq j}}
        \bigg( (x_i-x_j)^{1+n} \pder{x_i} 
				+ (1+n) \hwFR{\lambda_i} \, (x_i-x_j)^n \bigg)
    , \qquad \text{for } n\in\Z .
\end{align}
The parameters~$\lambda_1 , \ldots, \lambda_N \in \Znn$ 
as well as the number of variables~$N$
are implicit in this notation.
Note that with a fixed $j$,
the operators $\WittL{n}{j}$ 
satisfy the Witt algebra commutation relations
\begin{align*}
	[\WittL{m}{j},\WittL{n}{j}]=(m-n)\WittL{m+n}{j},
\qquad \text{ for } m,n\in \bZ .
\end{align*}

The correspondence involves the representation of the quantum 
group~$\Uqsltwo$ constructed as the tensor product of the irreducible 
representations $\QGrep{\lambda_1} , \ldots , \QGrep{\lambda_N}$. The 
tensorands are ordered from left to right in the reverse order of the index, 
and we use the shorthand notation
\begin{align}\label{eq: big tensor product}
\bigotimes_{i=1}^N \QGrep{\lambda_i}
\; = \; \QGrep{\lambda_N} \otimes \cdots \otimes \QGrep{\lambda_1}
\end{align}
for this ordering convention. Similarly we denote, e.g.,
\begin{align*}
\bigotimes_{i<j} \QGrep{\lambda_i}
\, = \; & \QGrep{\lambda_{j-1}} \otimes \cdots \otimes \QGrep{\lambda_1} , &
\bigotimes_{i>j} \QGrep{\lambda_i} 
\, = \; & \QGrep{\lambda_{N}} \otimes \cdots \otimes \QGrep{\lambda_{j+1}} .
\end{align*}
For the projection $\QGprcan{\lambda_{j+1}}{\lambda_{j}}{\tau}$ 
of~\eqref{eq: Clebsch Gordan projection within tensor product} applied in the 
two consecutive tensorands with indices~$j, j+1$, we use the notation
\begin{align*}
\QGlongprcan{j}{j+1}{\tau} \, \colon \; &
\bigotimes_{i=1}^N \QGrep{\lambda_i} \to \bigotimes_{i=1}^N \QGrep{\lambda_i}
\\
\QGlongprcan{j}{j+1}{\tau} \, = \; &
\Big( \big( \bigotimes_{i>j+1} \id_{\QGrep{\lambda_i}} \big) \tens 
	\QGprcan{\lambda_{j+1}}{\lambda_j}{\;\;\tau} 
    \tens \big( \bigotimes_{i<j} \id_{\QGrep{\lambda_i}} \big) \Big) ,
\end{align*}
and when the projection $\QGpremb{\lambda_{j+1}}{\lambda_{j}}{\sigma}$ 
is applied instead (thus reducing the number of tensorands by one), we use the 
notation
\begin{align*}
\QGlongpremb{j}{j+1}{\tau} \, \colon \; &
\bigotimes_{j=1}^N \QGrep{\lambda_j} \to 
\Big( \big( \bigotimes_{i>j+1} \QGrep{\lambda_i} \big) \tens \QGrep{\tau} \tens
\big( \bigotimes_{i<j} \QGrep{\lambda_i} \big) \Big) \\
\QGlongpremb{j}{j+1}{\tau} \, = \; &
\Big( \big( \bigotimes_{i>j+1} \id_{\QGrep{\lambda_i}} \big) \tens 
	\QGpremb{\lambda_{j+1}}{\lambda_j}{\;\;\tau} 
    \tens \big( \bigotimes_{i<j} \id_{\QGrep{\lambda_i}} \big) \Big) .
\end{align*}
Within the tensor product~\eqref{eq: big tensor product},
we are primarily concerned with the subspace
\begin{align}
\HWspace{\lambdafullseq} :=
    \set{ v \in \bigotimes_{j=1}^N \QGrep{\lambda_j}
        \; \bigg| \; E.v =0 } 
\end{align}
consisting of highest weight vectors.

\begin{thm}[\cite{KP-conformally_covariant_bdry_correlations}]
\label{thm:properties_of_correlations}
There is a family of linear mappings
$\sF \colon \HWspace{\lambdafullseq} \to \ContF^\infty(\chamber_N)$
indexed by $\lambdafullseq \in \bigsqcup_{N \in \bN} \bN^N$,
normalized so that
for $N=1$ and any $\lambda_1 \in \bN$
we have $\sF[\QGvec{\lambda_1}{0}](x_1) \equiv 1$,
and with the following properties:
\begin{itemize}
\item[(PDE)] For any $u \in \HWspace{\lambdafullseq}$
the function $F = \sF[u] \colon \chamber_N \to \bC$ satisfies
\begin{align}
\nonumber
\BSAoper{j} F (x_1 , \ldots , x_N) \, = \, 0 &
\\ \nonumber
\text{ for all $j=1,\ldots,N$ and $(x_1 , \ldots , x_N) \in \chamber_N$} & \, .
\end{align}
\item[(COV)] For any $u \in \HWspace{\lambdafullseq}$
the function $F = \sF[u] \colon \chamber_N \to \bC$ is translation invariant,
\begin{align}
\label{eq:trans_inv}
F (x_1 + t , \ldots , x_N + t) \, = \,
F (x_1 , \ldots , x_N) &
\\ \nonumber
\text{ for all $(x_1 , \ldots , x_N) \in \chamber_N$ and $t \in \bR$}
\, & .\notag
\end{align}
If, moreover, 
$u$ is a Cartan eigenvector,
$u \in \HWspace{\lambdafullseq} \cap \Kern(K - q^{\sigma})$, 
then the function
$F = \sF[u] \colon \chamber_N \to \bC$ is translation invariant and homogeneous,
\begin{align}
\label{eq:trans_inv_homog}
F (s x_1 + t , \ldots , s x_N + t) \, = \,
s^{\hwFR{\sigma} - \sum_{i=1}^N \hwFR{\lambda_i}} \; F (x_1 , \ldots , x_N) &
\\ \nonumber
\text{ for all $(x_1 , \ldots , x_N) \in \chamber_N$ and $t \in \bR$, $s>0$}
\, & . \notag
\end{align}
Finally, if $u$ lies in a trivial 
subrepresentation, $u \in \HWspace{\lambdafullseq} \cap \Kern(K-1)$, then the 
function
$F = \sF[u] \colon \chamber_N \to \bC$ is fully M\"obius-covariant in the sense 
that for any $(x_1 , \ldots , x_N) \in \chamber_N$ and
any $\Mob(z) = \frac{az+b}{cz+d}$ such that $\Mob(x_1) < \cdots < \Mob(x_N)$,
we have
\begin{align}
\label{eq: Mobius covariance}
F ( \Mob(x_1) , \ldots , \Mob(x_N) ) \, = \,
\prod_{i=1}^N \Mob'(x_i)^{-\hwFR{\lambda_i}} \; F (x_1 , \ldots , x_N) .
\end{align}
\item[(ASY)] If $u \in \HWspace{\lambdafullseq}$ lies in the subrepresentation 
corresponding to the irreducible~$\QGrep{\tau}$ in the tensor product of the 
$j$:th and $j+1$:th factors, i.e., if
$ u = \QGlongprcan{j}{j+1}{\tau} (u) $,
then the function $F = \sF[u] \colon \chamber_N \to \bC$ has the expansion
\begin{align*}
& F(x_1, \ldots, x_j , x_{j+1} , \ldots , x_N) \\
= \; & \betasym \, (x_{j+1}-x_j)^{\deltasym} \, \Big( 
    \hat{F} \big( x_1, \ldots, 
        \xi , 
							\ldots , x_N \big)
        + 
          \oo (1) \Big) 
\qquad \text{ as $x_{j} , x_{j+1} \to \xi$},
\end{align*}
where $\hat{F} = \sF[\hat{u}]$ with
\begin{align*}
\hat{u} = \; & \QGlongpremb{j}{j+1}{\tau} (u) \;
\in \; \Big( \big( \bigotimes_{i>j+1} \QGrep{\lambda_i} \big)
    \tens \QGrep{\tau} \tens \big( \bigotimes_{i<j} \QGrep{\lambda_i} \big)
    \Big) ,
\end{align*}
and
$
\betasym = \betacoef{\lambda_{j+1}}{\lambda_j}{\;\;\tau}
		= \frac{1}{((\lambda_j+\lambda_{j+1}-\tau)/2)!}
		  \prod_{p=1}^{(\lambda_j+\lambda_{j+1}-\tau)/2}
		    \frac{\Gamma( 1+\frac{4}{\kappa} p ) \,
				  \Gamma( 1-\frac{4}{\kappa} (1+\lambda_j-p) ) \,
				  \Gamma( 1-\frac{4}{\kappa} (1+\lambda_{j+1}-p) ) }%
		      {\Gamma( 1+\frac{4}{\kappa} ) \,
				  \Gamma( 2-\frac{2}{\kappa} 
					  (4-2p + \lambda_j + \lambda_{j+1} + \tau) )}
$
and
$\deltasym = \deltaexp{\lambda_{j+1}}{\lambda_j}{\;\;\tau}
		= \hwFR{\tau} - \hwFR{\lambda_j} - \hwFR{\lambda_{j+1}}$.
\end{itemize}
\end{thm}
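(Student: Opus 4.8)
The plan is to realize the maps~$\sF$ explicitly as Coulomb gas (Dotsenko--Fateev) contour integrals, following~\cite{KP-conformally_covariant_bdry_correlations}. Fix $\lambdafullseq = (\lambda_1,\ldots,\lambda_N)$. A highest weight vector $u \in \HWspace{\lambdafullseq}$ of definite Cartan weight~$\sigma$ (so that $K.u = q^{\sigma} u$) is to be encoded using $m := (\lambda_1 + \cdots + \lambda_N - \sigma)/2$ auxiliary \emph{screening variables} $w_1, \ldots, w_m$; the number~$m$ is exactly the number of lowering operators~$F$ needed to descend from the top weight of~$\bigotimes_i \QGrep{\lambda_i}$ to weight~$\sigma$. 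One then introduces a multivalued \emph{master function} of the form
\[
\Phi(x_1,\ldots,x_N;w_1,\ldots,w_m) = \prod_{i<i'}(x_{i'}-x_i)^{2\lambda_i\lambda_{i'}/\kappa} \prod_{i,l}(x_i-w_l)^{-4\lambda_i/\kappa} \prod_{l<l'}(w_l-w_{l'})^{8/\kappa}
\]
(with a branch fixed by the chamber ordering; since $\kappa$ is irrational the exponents are non-resonant), and defines $\sF[u](x_1,\ldots,x_N)$ as the integral of~$\Phi$ against a cycle~$\Gamma_u$ in the configuration space of the~$w$'s. The correspondence $u \mapsto \Gamma_u$ implements the topological/quantum-group dictionary in the spirit of~\cite{Felder-BRST_approach,SV-quantum_groups_and_homology_of_local_systems}: the weight-$\sigma$ part of~$\HWspace{\lambdafullseq}$ is identified with a space of such cycles for the corresponding~$m$, and the Clebsch--Gordan embedding~$\QGembed{\sigma}{\lambda}{\mu}$ of Lemma~\ref{lem: embedding of QG reps in tensor product of two} prescribes precisely how~$\Gamma_u$ winds around consecutive pairs $x_j, x_{j+1}$. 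The stated normalization is immediate: for $N=1$ the only highest weight vector up to scalar is~$\QGvec{\lambda_1}{0}$, here $m = 0$, and the integral reduces to the empty product $\Phi \equiv 1$.

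Granting this construction, property~(PDE) is the defining feature of the BSA operator: $\BSAoper{j}$ is built so that, applied in the $x$-variables, $\BSAoper{j}\Phi = \sum_l \partial_{w_l} \Psi_l$ is a sum of total derivatives in the screening variables, with each~$\Psi_l$ single-valued along~$\Gamma_u$ and with vanishing endpoint contributions; integrating over the cycle then gives~$0$. Concretely one verifies that the operators~$\WittL{n}{j}$ act on~$\Phi$ as the conjugated Virasoro modes of a free-field stress tensor based at~$x_j$, under which~$\BSAoper{j}$ realizes the Benoit--Saint-Aubin singular vector of the Virasoro Verma module of highest weight~$\hwFR{\lambda_j}$, at level $\lambda_j+1$. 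Property~(COV) is essentially bookkeeping: translation invariance holds because~$\Phi$ and~$\Gamma_u$ depend only on differences; for a Cartan eigenvector of weight~$\sigma$ the substitution $x_i \mapsto s x_i$, $w_l \mapsto s w_l$ gives total homogeneity degree
\[
\sum_{i<i'} \frac{2\lambda_i\lambda_{i'}}{\kappa} - \frac{4m}{\kappa}\sum_i \lambda_i + \frac{4m(m-1)}{\kappa} + m = \hwFR{\sigma} - \sum_i \hwFR{\lambda_i},
\]
where the last~$m$ accounts for the measure $\ud w_1 \cdots \ud w_m$ and the identity uses $\sigma = \sum_i \lambda_i - 2m$ together with~\eqref{eq: first row conformal weight ad hoc}; and when $\sigma = 0$ the configuration is ``charge neutral'', so invariance under the remaining special conformal generator --- checked via the substitution $w_l \mapsto \Mob(w_l)$ and the attendant Jacobians --- upgrades homogeneity to the full Möbius covariance~\eqref{eq: Mobius covariance}.

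Property~(ASY) is where the genuine work lies. As $x_j, x_{j+1} \to \xi$, one decomposes~$\Gamma_u$ according to the number~$p$ of screening variables pinched into the shrinking interval between~$x_j$ and~$x_{j+1}$. When $u = \QGlongprcan{j}{j+1}{\tau}(u)$ lies in the $\QGrep{\tau}$-isotypic component of the $j$:th and $(j+1)$:th tensorands, the homological dictionary forces every contribution surviving the limit to have $p = (\lambda_j + \lambda_{j+1} - \tau)/2$. Rescaling those~$p$ variables by $x_{j+1}-x_j$ extracts the prefactor $(x_{j+1}-x_j)^{\deltasym}$ with $\deltasym = \hwFR{\tau} - \hwFR{\lambda_j} - \hwFR{\lambda_{j+1}}$; the pinched $p$-fold integral is a Selberg-type integral evaluating to the constant $\betasym = \betacoef{\lambda_{j+1}}{\lambda_j}{\;\;\tau}$, a ratio of Gamma functions; and the cycle carried by the remaining $m - p$ variables is exactly the one attached to $\hat u = \QGlongpremb{j}{j+1}{\tau}(u)$ for the configuration with $\lambda_j, \lambda_{j+1}$ fused into a single~$\tau$, so that the remaining integral equals~$\sF[\hat u]$. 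Uniformity of the estimates then supplies the $\oo(1)$ correction, giving the asserted expansion $F = \betasym \, (x_{j+1}-x_j)^{\deltasym} \big( \sF[\hat u] + \oo(1) \big)$.

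The main obstacle is property~(ASY), and within it the two ingredients fixing the explicit constants: the evaluation of the pinched integral as the Selberg product~$\betasym$, and --- more structurally --- the verification that under the homological dictionary the degenerated cycle corresponds exactly to the quantum-group projection~$\QGlongpremb{j}{j+1}{\tau}$, with normalization matching that of~$\QGembed{\sigma}{\lambda}{\mu}$ fixed in Lemma~\ref{lem: embedding of QG reps in tensor product of two}. Indeed, setting up this dictionary at all --- choosing the cycles, the master function, and all normalizations so that~(PDE),~(COV), and~(ASY) hold simultaneously with these precise constants --- is the technical core of~\cite{KP-conformally_covariant_bdry_correlations}, on which the present proof ultimately rests.
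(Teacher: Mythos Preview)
Your sketch is correct and faithfully outlines the Coulomb gas construction of~\cite{KP-conformally_covariant_bdry_correlations}, which is exactly the approach the paper relies on: the theorem is quoted without proof from that reference, and the integral formulas you describe (the master function, the cycles~$\Gamma_u$, the rescaling argument for (ASY)) reappear verbatim in Appendix~\ref{app: series expansions} when the paper strengthens the (ASY) statement to Lemmas~\ref{lem: analyticity of QG functions} and~\ref{lem: Frobenius series of QG functions}. There is nothing to correct.
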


\subsection{Series expansions of the functions}

The method of Theorem~\ref{thm:properties_of_correlations}
in fact yields not only smooth functions,
but analytic functions which have Frobenius series
expansions on the codimension one
boundaries of the chamber~$\chamber_N$.
These Frobenius series will be important in 
Sections~\ref{sec: composition} and~\ref{sec: associativity}.
We start with a general definitions about the
assumptions we use on parametrized power series, and
then state the series expansion results.
The proofs are left to Appendix~\ref{app: series expansions}.

\subsubsection*{Controlled parametrized power series}

We will need to expand the functions of 
Section~\ref{sub: quantum group method main statement}
as power series recursively one variable at a time. Therefore,
we will treat one of the variables as the variable of the
power series, and the other variables as parameters.
In order to be able to
perform the natural operations on the power series, we need
the following type of control of the power series coefficients
locally uniformly over the the parameters.

\begin{defn}
\label{def: controlled power series}
Let $\Omega \subset \bR^m$ be an open set and $c_k \colon \Omega \to \bC$
smooth functions for~${k \in \Znn}$. For $R>0$, we 
say that $(c_k)_{k \in \Znn}$
are \term{locally uniformly $R$-controlled power series coefficients}
if for every compact $K\subset \Omega$ and every 
multi-index~$\alpha \in \bN^m$ we have
\begin{align*}
\limsup_{k \to \infty} 
  \big( \sup_{y \in K} |\partial^\alpha c_k(y)| \big)^{1/k} \leq \frac{1}{R} .
\end{align*}
\end{defn}

As simpler terminology, in the above situation
we may just say that the power series
\begin{align*}
\sum_{k=0}^\infty c_k(y) \, z^k
\end{align*}
parametrized by~$y \in \Omega$ is locally uniformly $R$-controlled.
Note that by the Cauchy-Hadamard formula for the radius of
convergence, this implies in particular that for any~$y \in \Omega$
the radius of convergence of the power series itself and
its coefficientwise derivatives with respect to the parameters~$y$
have radius of convergence at least~$R$.

\subsubsection*{Analyticity and Frobenius series statements}


The analyticity statement in a single variable
for the functions from Theorem~\ref{thm:properties_of_correlations}
is the following.
\begin{lem}
\label{lem: analyticity of QG functions}
Let $F = \sF[u] \colon \chamber_N \to \bC$
be the function associated to 
any~$u\in\HWspace{\lambdafullseq}$,
and let $(x_1, \ldots, x_N) \in \chamber_N$,
and let $j \in \set{1,\ldots,N}$.
Then we have a power series expansion
\begin{align*}
F(x_1, \ldots, x_{j-1}, z_j, x_{j+1}, \ldots, x_N)
\; = \; \sum_{k=0}^\infty
    c_k (x_1, \ldots, x_{j-1}, x_{j+1}, \ldots, x_N) \; \big( z_j - x_j \big)^k
\end{align*}
in the~$j$:th variable.
For fixed $x_j \in \bR$ and $R>0$,
viewing the other variables $(x_i)_{i \neq j}$ as parameters,
on the subset $\Omega \subset \bR^{N-1}$
defined by the conditions $x_1 < \cdots < x_N$
and $\min_{i \neq j} |x_i - x_j| > R$, the power series
is locally uniformly $R$-controlled.
\end{lem}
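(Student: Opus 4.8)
The plan is to prove analyticity of $F=\sF[u]$ in a single variable by exploiting the BSA partial differential equation $\BSAoper{j}F=0$ that $F$ satisfies by the (PDE) property of Theorem~\ref{thm:properties_of_correlations}. The point is that $\BSAoper{j}$, when one isolates the highest-order term in $\pder{x_j}$, expresses $\partial_{x_j}^{\lambda_j+1}F$ as a differential operator of lower order in $\partial_{x_j}$ applied to $F$, with coefficients that are rational functions of the $x_i$ with poles only along the diagonals $x_i=x_j$. This is exactly the setup of the Cauchy--Kovalevskaya theorem for a single-variable ODE with holomorphic (in fact rational) coefficients and smooth parametric dependence on the remaining variables, so $F$ extends holomorphically in $z_j$ to the largest disc about $x_j$ avoiding the other $x_i$, and the coefficients of its Taylor series are smooth in the parameters.

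First I would extract from \eqref{eq: BSA differential operator}--\eqref{eq: Witt differential operator} the leading term: the summand with $k=\lambda_j+1$ and all $p_u=1$ contributes $\WittL{-1}{j}\cdots\WittL{-1}{j}$ ($\lambda_j+1$ factors), and since $\WittL{-1}{j}=-\sum_{i\neq j}\partial_{x_i}$ contains no $\partial_{x_j}$, the genuinely top-order-in-$x_j$ piece comes from a different summand. More carefully, I would observe that $\WittL{-p}{j}$ for $p\ge 2$ contains the term $-(x_i-x_j)^{1-p}\partial_{x_i}$; rather than tracking which monomial produces $\partial_{x_j}^{\lambda_j+1}$, the cleaner route is the standard one: the BSA operator is, up to normalization, the image of the Benoit--St.\ Aubin singular vector operator, and it is classical (see the derivation of BPZ equations) that after using translation and the action of the other Virasoro-type generators, $\BSAoper{j}F=0$ can be rewritten as a linear ODE of order $\lambda_j+1$ in $x_j$ of the form
\begin{align*}
\partial_{x_j}^{\lambda_j+1} F \;=\; \sum_{\ell=0}^{\lambda_j} a_\ell\big(x_1,\ldots,\widehat{x_j},\ldots,x_N\,;\,x_j\big)\,\partial_{x_j}^{\ell} F,
\end{align*}
where each $a_\ell$ is a finite $\bC$-linear combination of products of factors $(x_i-x_j)^{-n}$ ($n\ge 1$, $i\neq j$) and the constants $\hwFR{\lambda_i}$, hence real-analytic on $\chamber_N$ and extending holomorphically in $z_j$ to $\{\,|z_j-x_j|<\min_{i\neq j}|x_i-x_j|\,\}$. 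The derivation of this reduced form is a finite computation internal to the Witt algebra relations $[\WittL{m}{j},\WittL{n}{j}]=(m-n)\WittL{m+n}{j}$ and is best relegated to the appendix.

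Next I would apply the existence-and-uniqueness theorem for linear ODEs with holomorphic coefficients, with parameters: given the smooth function $F$ on $\chamber_N$, its restriction to a line $x_j\mapsto F(\ldots,x_j,\ldots)$ satisfies the above ODE; since the coefficients are holomorphic in the disc $D=\{|z_j-x_j|<\rho\}$, $\rho:=\min_{i\neq j}|x_i-x_j|$, the solution with the given initial data $(F,\partial_{x_j}F,\ldots,\partial_{x_j}^{\lambda_j}F)$ at $z_j=x_j$ is holomorphic on all of $D$, and in particular $F$ has a convergent power series expansion $\sum_k c_k z_j^k$ there. For the parametric control, I would run the majorant (Cauchy-bound) method: writing the recursion for the Taylor coefficients $c_k$ coming from the ODE, one gets $c_k$ as a universal polynomial in the Taylor coefficients of the $a_\ell$ about $x_j$ and the initial data $c_0,\ldots,c_{\lambda_j}$; differentiating this recursion in the parameters $y=(x_i)_{i\neq j}$ and estimating, one obtains for every compact $K$ in the region $\Omega=\{x_1<\cdots<x_N,\ \min_{i\neq j}|x_i-x_j|>R\}$ and every multi-index $\alpha$ a bound $\sup_K|\partial^\alpha c_k|\le C_{K,\alpha}\,R^{-k}$ (by shrinking $R$ infinitesimally one absorbs polynomial-in-$k$ prefactors), which is precisely the locally uniform $R$-control of Definition~\ref{def: controlled power series}. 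This is the content I would push to Appendix~\ref{app: series expansions}.

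The main obstacle, and the only genuinely nontrivial step, is establishing the reduced ODE form with the stated analyticity of the coefficients $a_\ell$ in $z_j$ on the punctured-disc-free region: one must check that after commuting the $\WittL{-p}{j}$'s so as to collect the pure $\partial_{x_j}^{\lambda_j+1}$ term and solve for it, no factors $(x_i-x_j)$ with \emph{positive} power obstruct, i.e.\ that the coefficient of $\partial_{x_j}^{\lambda_j+1}$ is a nonzero constant (it is, by the $k=\lambda_j+1$, $p_1=\cdots=p_{\lambda_j+1}=1$ bookkeeping once one also accounts for the $\partial_{x_j}$ hidden in products $\WittL{-2}{j}\WittL{-1}{j}\cdots$ via repeated differentiation of $(x_i-x_j)^{-1}$). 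Granting that, everything else is the textbook Cauchy theory for linear ODEs plus routine majorant estimates for the parametric derivatives, so I expect the write-up to be short modulo that appendix computation.
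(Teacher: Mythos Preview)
Your approach has a genuine gap at exactly the step you flagged as the main obstacle: the claimed reduced form
\[
\partial_{x_j}^{\lambda_j+1}F=\sum_{\ell=0}^{\lambda_j}a_\ell(x;\widehat{x_j};x_j)\,\partial_{x_j}^{\ell}F
\]
with $a_\ell$ \emph{not involving any derivatives} in the other variables does not exist. Look at the definition~\eqref{eq: Witt differential operator}: every $\WittL{-p}{j}$ differentiates only in the variables $x_i$ with $i\neq j$, never in $x_j$. On translation-invariant functions one does have $\WittL{-1}{j}=\partial_{x_j}$, so the term $(\WittL{-1}{j})^{\lambda_j+1}$ indeed produces $\partial_{x_j}^{\lambda_j+1}$ with constant coefficient. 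But for $p\ge 2$ the operator $\WittL{-p}{j}$ contains $\sum_{i\neq j}(x_i-x_j)^{1-p}\partial_{x_i}$, and there is no further global Ward identity that converts these transverse derivatives into $\partial_{x_j}$. Already for $\lambda_j=1$ the equation reads
\[
\partial_{x_j}^{2}F \;=\; -\frac{4}{\kappa}\sum_{i\neq j}\frac{1}{x_i-x_j}\,\partial_{x_i}F \;+\;\frac{4}{\kappa}\sum_{i\neq j}\frac{\hwFR{\lambda_i}}{(x_i-x_j)^{2}}\,F,
\]
which is the BPZ PDE, not an ODE in $x_j$. In your recursion for the Taylor coefficients, $c_{k}$ therefore depends on $\partial^{\alpha}c_{0},\ldots,\partial^{\alpha}c_{\lambda_j}$ with $|\alpha|$ growing linearly in $k$; without a priori analyticity of the initial data in the parameters (which is part of what you are trying to prove) the majorant argument does not close, and Cauchy--Kovalevskaya cannot be invoked because the hypothesis of analytic Cauchy data is missing.

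The paper bypasses the PDE entirely and uses the explicit construction of~$\sF[u]$ from~\cite{KP-conformally_covariant_bdry_correlations}: $F$ is a finite linear combination of integrals $\int_\Gamma f\,\ud w$ over compact surfaces~$\Gamma$, where the integrand $f$ is a product of powers $(x_j-x_i)^{a}$, $(x_j-w_r)^{b}$, $(x_i-x_{i'})^{c}$, $(w_r-w_s)^{d}$. For fixed parameters in a compact $K\subset\Omega$ one deforms $\Gamma$ so that all $w_r$ stay outside $\overline{B}_R(x_j)$; then $z_j\mapsto f$ is manifestly holomorphic on $\overline{B}_R(x_j)$, Cauchy's estimate gives $|\phi_k|\le C_K R^{-k}$ for its Taylor coefficients uniformly over $\Gamma\times K$, and termwise integration yields the locally uniform $R$-control. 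This is elementary once one accepts the integral representation, and avoids any delicate PDE analysis.
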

The proof is elementary, but it
is instructive as a preparation for the
consideration of the Frobenius series, so we give it in
Appendix~\ref{app: series expansions}.

The Frobenius series statement that we will use
is the following. Variants of this formulation
with obvious modifications to the statement and
proof could be done as well.
\begin{lem}
\label{lem: Frobenius series of QG functions}
Let $j \in \set{2,\ldots,N}$. 
Suppose that~$\tau \in \selRule{\lambda_{j-1}}{\lambda_j}$
and that $u\in\HWspace{\lambdafullseq}$
is such that~$u = \QGlongprcan{j-1}{j}{\tau} (u)$.
The function
$F = \sF[u] \colon \chamber_N \to \bC$
associated to~$u$ has a 
Frobenius series expansion in the variable~$z=x_j-x_{j-1}$
\begin{align*}
F \big( x_1, \ldots 
    , x_{j-1} , (x_{j-1}+z) , x_{j+1} , \ldots, x_N \big)
\; = \; z^{\deltasym} \,
    \sum_{k=0}^\infty 
      c_k (x_1, \ldots, x_{j-1}, x_{j+1}, \ldots, x_N) \; z^{k}
\end{align*}
where
the indicial exponent is
$\deltasym = \hwFR{\tau} - \hwFR{\lambda_{j}} - \hwFR{\lambda_{j-1}}$. 
For fixed $R>0$,
viewing the other variables $(x_i)_{i \neq j}$ as parameters,
on the subset $\Omega \subset \bR^{N-1}$
defined by the conditions $x_1 < \cdots < x_N$
and $\min_{i \neq j, j-1} |x_i - x_{j-1}| > R$, the power series
part of this Frobenius series is locally uniformly $R$-controlled,
and for $0<z<R$ the Frobenius series converges to the function~$F$
on the left hand side.
\end{lem}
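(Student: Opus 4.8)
The plan is to prove this by induction on $N$, the number of variables, using the asymptotic property (ASY) and translation invariance (COV) from Theorem~\ref{thm:properties_of_correlations} together with the plain analyticity Lemma~\ref{lem: analyticity of QG functions}, whose proof (given in Appendix~\ref{app: series expansions}) already contains the core recursive mechanism that I would adapt here. First I would reduce to the case $j=2$ by translation invariance: since $F$ depends only on the differences $x_i - x_1$, and the Frobenius expansion is taken in $z = x_j - x_{j-1}$ with the variables $(x_i)_{i \neq j, j-1}$ held as parameters, I can relabel and treat $x_{j-1}$ as a base point, reducing the statement to expanding around the coincidence of two specified adjacent variables. The indicial exponent $\deltasym = \hwFR{\tau} - \hwFR{\lambda_j} - \hwFR{\lambda_{j-1}}$ is forced by (ASY): the leading behavior as $x_{j-1}, x_j \to \xi$ is $\betasym\,(x_j - x_{j-1})^{\deltasym}(\hat F(\ldots,\xi,\ldots) + \oo(1))$, and $\hat F = \sF[\hat u]$ with $\hat u = \QGlongpremb{j-1}{j}{\tau}(u)$ a vector in a tensor product of $N-1$ factors.

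The key steps, in order, are: (i) use the BSA partial differential equation $\BSAoper{j} F = 0$ as a \emph{Frobenius-type ODE in the variable $z$} with the other variables as parameters — expanding $\WittL{n}{j}$ in powers of $z$, the indicial equation of $\BSAoper{j}$ has roots that include $\deltasym$, and one checks (as in the known ASY analysis) that $\deltasym$ is the root that is actually realized by $F$; (ii) the recursion from this ODE expresses each coefficient $c_k$ as a finite linear combination of lower-order coefficients with coefficients that are smooth (in fact real-analytic) functions of the parameters $(x_i)_{i \neq j, j-1}$, and rational in those differences with poles only where variables collide — hence smooth and with controlled growth on the set $\Omega$ where $\min_{i \neq j, j-1}|x_i - x_{j-1}| > R$; (iii) the leading coefficient $c_0$ is identified via (ASY) as $\betasym \, \hat F$ (up to the normalization constant $\betasym = \betacoef{\lambda_j}{\lambda_{j-1}}{\tau}$), and by the inductive hypothesis $\hat F = \sF[\hat u]$ is itself real-analytic with locally uniformly controlled power-series expansions in its remaining variables — this seeds the control of the whole Frobenius series; (iv) propagating the control through the recursion, I estimate $\big(\sup_{y \in K}|\partial^\alpha c_k(y)|\big)^{1/k}$ using the same majorant/Cauchy-estimate bookkeeping as in the proof of Lemma~\ref{lem: analyticity of QG functions}, obtaining radius of convergence at least $R$; (v) finally, convergence of the Frobenius series to $F$ itself on $0 < z < R$ follows because both sides solve the same analytic ODE in $z$ with the same indicial exponent and the same leading coefficient, so they agree by uniqueness of the Frobenius solution (invoking that $F$ is already known to be smooth on $\chamber_N$ and analytic in each variable away from collisions, so the formal solution is the genuine one).

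The main obstacle I expect is step (i)--(ii): verifying that $\deltasym$ is genuinely an indicial root of $\BSAoper{j}$ and, more delicately, that the Frobenius recursion is \emph{nondegenerate} — i.e., that no resonance between $\deltasym$ and the other indicial roots (which differ by the conformal weights of the other allowed $\tau' \in \selRule{\lambda_{j-1}}{\lambda_j}$, hence by rational multiples of $4/\kappa$) produces logarithmic terms or an obstruction to solving for some $c_k$. Here the genericity assumption $\kappa \in (0,\infty) \setminus \bQ$ is essential: it guarantees the indicial roots are distinct modulo $\bZ$ (no two differ by an integer), so the recursion for $c_k$ is always solvable, no $\log z$ appears, and the Frobenius solution with exponent $\deltasym$ is unique. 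A secondary technical nuisance is that $\BSAoper{j}$ has order $\lambda_j + 1 > 1$, so the "indicial analysis" is of a higher-order ODE in $z$ and one must carefully track which of its $\lambda_j + 1$ indicial roots correspond to the various $\tau'$; this is precisely the content already implicit in the (ASY) statement of Theorem~\ref{thm:properties_of_correlations}, so I would lean on that rather than redo it from scratch. Once the recursion is set up and shown nondegenerate, the control estimates are routine and parallel to the appendix proof of Lemma~\ref{lem: analyticity of QG functions}.
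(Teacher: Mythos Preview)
Your approach is genuinely different from the paper's, and it has a real gap.

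The paper does not use the BSA PDE or Frobenius ODE theory at all for this lemma. Instead it works directly with the Coulomb gas integral representation underlying the quantum group method of \cite{KP-conformally_covariant_bdry_correlations}: the function $\sF[u]$ is a finite linear combination of contour integrals, and under the projection hypothesis $u = \QGlongprcan{j-1}{j}{\tau}(u)$ the contour can be arranged so that an $m$-dimensional piece (with $m = \tfrac12(\lambda_{j-1}+\lambda_j - \tau)$) scales with $z = x_j - x_{j-1}$ while the remaining $\ell$-dimensional piece $\Gamma'$ stays fixed. After the change of variables $t_r = w'_r/z$ on the scaling part, the overall factor $z^{\deltasym}$ drops out explicitly from the integrand, and the remaining ratio with a reference integrand $\hat f$ (the one for $\sF[\hat u]$) is a finite product of explicit binomial series of the form $\bigl(1 - z\,t_a/(\text{something of modulus} > R)\bigr)^{\text{power}}$, each with coefficients bounded by $C R^{-k}$ via direct Cauchy estimates. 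Term-by-term integration over the compact contours then yields the Frobenius series with locally uniform $R$-control. This is indeed the same Cauchy-estimate bookkeeping as in Lemma~\ref{lem: analyticity of QG functions}, but note that neither proof contains a recursion: both are direct estimates on the integrand.

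Your PDE-based route faces two difficulties. The lesser one is step~(iv): obtaining quantitative $R$-control from the recursion for $c_k$ is a Cauchy--Kovalevskaya-type majorant argument (the recursion involves parametric derivatives $\partial_{x_i}$ of the earlier $c_m$, not just multiplications), and the proof of Lemma~\ref{lem: analyticity of QG functions} provides no template for this. The more serious one is step~(v): $\BSAoper{j}$ contains $\partial_{x_i}$ for every $i \neq j$, so $\BSAoper{j} F = 0$ is not an ODE in $z$, and Frobenius uniqueness for ODEs does not apply. Even if one extracts the leading-order ODE in $z$, the (ASY) input only gives $F - F_{\mathrm{Frob}} = o(z^{\deltasym})$; but $\deltasym$ is only one of the indicial exponents (the others come from the remaining $\tau' \in \selRule{\lambda_{j-1}}{\lambda_j}$), so a nonzero combination of Frobenius solutions with larger exponents could survive in the difference. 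Excluding those would require knowing that $F$ itself already has a Frobenius expansion with the single exponent $\deltasym$ --- which is exactly what is to be proved. The paper avoids this circularity because the integral formula delivers the Frobenius expansion of $F$ directly, with no appeal to solution uniqueness. (Indeed, the logical flow in Section~\ref{sec: composition} is the reverse of yours: Lemma~\ref{lem: Frobenius series of QG functions} is an \emph{input} to the PDE-recursion uniqueness argument of Theorem~\ref{thm:uniqueness_differential_equations_formal_series}, not a consequence of it.)
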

\begin{rmk}\label{rmk: leading coefficient of Frobenius series}
The leading coefficient~$c_0$ of the Frobenius series
in Lemma~\ref{lem: Frobenius series of QG functions}
is related to the value of the function
associated to~$\hat{u} = \QGlongpremb{j-1}{j}{\tau} (u)$
as follows
\begin{align*}
c_0 (x_1, \ldots , x_{j-1}, x_{j+1}, \ldots, x_N)
\; = \; \betasym \times 
        \sF [\hat{u}]
            (x_1, \ldots , x_{j-1}, x_{j+1}, \ldots, x_N) ,
\end{align*}
where $\betasym = \betacoef{\lambda_{j}}{\lambda_{j-1}}{\;\;\tau}$
is as in Theorem~\ref{thm:properties_of_correlations}.
This can be seen from the (ASY) part of
Theorem~\ref{thm:properties_of_correlations} (or more
directly from the proofs).
\end{rmk}

The proof of Lemma~\ref{lem: Frobenius series of QG functions}
is based on an elaboration of ideas
from~\cite{KP-conformally_covariant_bdry_correlations},
in particular those leading to part~(ASY)
of Theorem~\ref{thm:properties_of_correlations}.
Since this result will be crucially relied on in
the present article, we outline the proof in
Appendix~\ref{app: series expansions}.

\section{Construction of conformal block vectors for the quantum group}%
\label{sec: explicit conformal block vectors}
In this section we construct specific vectors in
tensor product representations of the quantum group~$\Uqsltwo$,
which will correspond to our basis of ``conformal blocks''.
More precisely, in the subsequent sections,
compositions of intertwining operators in the first row
category of modules for the generic Virasoro VOA
will be obtained from these vectors via
the quantum group method of Section~\ref{sec: quantum group method}.

\subsection{The quantum $6j$-symbols}

The embeddings $\QGembed{\sigma}{\lambda}{\mu} \;
  \colon \; \QGrep{\sigma} \, \hookrightarrow \; 
           \QGrep{\lambda} \tens \QGrep{\mu}$
in Lemma~\ref{lem: embedding of QG reps in tensor product of two}
can be combined in different ways, and the 
relationships between the choices are given by the 
quantum $6j$-symbols.
\begin{lem}\label{lem: bases for 6j symbols}
Let~$\lambda_1 , \lambda_2 , \lambda_3 \in \Znn$.
Then for any~$\sigma \in \Znn$, the space
\begin{align*}
\Hom_{\Uqsltwo} 
    \big( \QGrep{\sigma} , \; 
    \QGrep{\lambda_3} \tens \QGrep{\lambda_2} \tens \QGrep{\lambda_1} \big) 
\end{align*}
has one basis consisting of 
\begin{align*}
\big( \QGembed{\nu}{\lambda_3}{\lambda_2} \tens \id_{\QGrep{\lambda_1}} \big)
\circ \QGembed{\sigma}{\nu}{\lambda_1} 
, \qquad 
\nu \in \selRule{\sigma}{\lambda_1} \cap \selRule{\lambda_3}{\lambda_2} ,
\end{align*}
and another basis consisting of 
\begin{align*}
\big( \id_{\QGrep{\lambda_3}} \tens \QGembed{\kappa}{\lambda_2}{\lambda_1} \big)
\circ \QGembed{\sigma}{\lambda_3}{\kappa}
, \qquad 
\kappa \in \selRule{\sigma}{\lambda_3} \cap \selRule{\lambda_2}{\lambda_1} .
\end{align*}
\end{lem}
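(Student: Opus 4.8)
The statement asks us to exhibit two bases for the space $\Hom_{\Uqsltwo}(\QGrep{\sigma}, \QGrep{\lambda_3} \tens \QGrep{\lambda_2} \tens \QGrep{\lambda_1})$, one adapted to the ``left parenthesization'' $(\QGrep{\lambda_3} \tens \QGrep{\lambda_2}) \tens \QGrep{\lambda_1}$ and one to the ``right parenthesization'' $\QGrep{\lambda_3} \tens (\QGrep{\lambda_2} \tens \QGrep{\lambda_1})$. The plan is to treat the two cases symmetrically, so I describe the first; the second is identical after interchanging the roles of the outer tensorands. First I would compute the dimension of the Hom-space by iterating the Clebsch--Gordan rule: applying the decomposition \eqref{eq:tensor_prod_Uq_mods} twice, $\QGrep{\lambda_3} \tens \QGrep{\lambda_2} \tens \QGrep{\lambda_1} \isom \bigoplus_{\nu} \bigoplus_{\sigma'} \QGrep{\sigma'}$, where $\nu$ runs over $\selRule{\lambda_3}{\lambda_2}$ and $\sigma'$ over $\selRule{\nu}{\lambda_1}$, so by Schur's lemma (valid since $q$ is not a root of unity and the $\QGrep{\lambda}$ are irreducible) the dimension of the Hom-space equals the number of $\nu$ with $\nu \in \selRule{\lambda_3}{\lambda_2}$ and $\sigma \in \selRule{\nu}{\lambda_1}$, i.e. $\#\big( \selRule{\sigma}{\lambda_1} \cap \selRule{\lambda_3}{\lambda_2}\big)$, using the index symmetry $\sigma \in \selRule{\nu}{\lambda_1} \Leftrightarrow \nu \in \selRule{\sigma}{\lambda_1}$ recorded just before Lemma~\ref{lem: embedding of QG reps in tensor product of two}.

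Next I would check that the proposed family is linearly independent; since its cardinality already matches the dimension, independence will force it to be a basis. For each admissible $\nu$, the map $\big( \QGembed{\nu}{\lambda_3}{\lambda_2} \tens \id_{\QGrep{\lambda_1}} \big) \circ \QGembed{\sigma}{\nu}{\lambda_1}$ is nonzero: $\QGembed{\sigma}{\nu}{\lambda_1}$ is an embedding by Lemma~\ref{lem: embedding of QG reps in tensor product of two}, and $\QGembed{\nu}{\lambda_3}{\lambda_2} \tens \id$ is injective, so the composite is an injective $\Uqsltwo$-map out of the irreducible $\QGrep{\sigma}$, hence nonzero. Moreover its image is the canonical subrepresentation isomorphic to $\QGrep{\sigma}$ sitting inside the summand $\QGrep{\nu} \tens \QGrep{\lambda_1} \subset \QGrep{\lambda_3}\tens\QGrep{\lambda_2}\tens\QGrep{\lambda_1}$, where $\QGrep{\nu}$ is the image of $\QGembed{\nu}{\lambda_3}{\lambda_2}$. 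For distinct $\nu, \nu'$ these summands $\QGrep{\nu}\tens\QGrep{\lambda_1}$ and $\QGrep{\nu'}\tens\QGrep{\lambda_1}$ intersect trivially (they lie in the isotypic decomposition of $\QGrep{\lambda_3}\tens\QGrep{\lambda_2}$ according to distinct $\QGrep{\nu}$), so the images of the corresponding composites are independent subspaces; since each image is a copy of the irreducible $\QGrep{\sigma}$, a linear relation among the maps would have to hold copy-by-copy and thus be trivial. This gives linear independence and completes the first case; the second case is verbatim with $(\lambda_3, \lambda_2)$ replaced by $(\lambda_2, \lambda_1)$ and the factor positions adjusted, the relevant index being $\kappa \in \selRule{\sigma}{\lambda_3} \cap \selRule{\lambda_2}{\lambda_1}$, which by the same counting has the same cardinality.

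I expect the main obstacle to be purely bookkeeping: making precise the identification of the image of each composite map with a specific canonical sub-representation, and verifying that distinct $\nu$ give internally direct-sum images inside the triple tensor product, so that the passage from ``images independent'' to ``maps linearly independent'' is rigorous rather than merely plausible. This is where one must be careful that the multiplicity-one statement of Lemma~\ref{lem: embedding of QG reps in tensor product of two} is being invoked at each nesting level, and that the two iterations of Clebsch--Gordan are genuinely compatible (associativity/coassociativity of $\Hcp$, already noted in the excerpt). No hard analysis is needed, and the only external input is Lemma~\ref{lem: embedding of QG reps in tensor product of two} together with the selection-rule symmetries stated immediately after it.
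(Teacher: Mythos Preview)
Your proposal is correct and is essentially the same approach as the paper's: the paper's proof is a one-liner that appeals to ``coassociativity and Clebsch--Gordan decompositions,'' and what you have written is precisely the unpacking of that line---iterating the multiplicity-free Clebsch--Gordan rule to count the Hom-space, and using that the images for distinct $\nu$ lie in distinct direct summands to obtain linear independence.
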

\begin{proof}
Clearly the given maps are $\Uqsltwo$-module maps
$\QGrep{\sigma} \to 
\QGrep{\lambda_3} \tens \QGrep{\lambda_2} \tens \QGrep{\lambda_1}$.
It follows straightforwardly from 
coassociativity and Clebsch-Gordan decompositions
that the two collections both span and are linearly independent.
\end{proof}

The expansions of the elements of the second basis
of Lemma~\ref{lem: bases for 6j symbols}
with respect to the first basis are denoted as in the following:
\begin{align}\label{eq: 6j decomposition}
\big( \id_{\QGrep{\lambda_3}} \tens \QGembed{\kappa}{\lambda_2}{\lambda_1} \big)
        \circ \QGembed{\sigma}{\lambda_3}{\kappa}
= \; & \sum_{\nu} \sixj{\sigma}{\lambda_3}{\lambda_2}{\lambda_1}{\kappa}{\nu}
\big( \QGembed{\nu}{\lambda_3}{\lambda_2} \tens \id_{\QGrep{\lambda_1}} \big)
\circ \QGembed{\sigma}{\nu}{\lambda_1} .
\end{align}
\begin{align*}
    \hackcenter{
    \begin{tikzpicture}
        \draw (0,2)--(1,1)--(1,0);
        \draw (1,2)--($(0,2)!(1,2)!(1,1)$);
        \draw (2,2)--($(0,2)!(2,2)!(1,1)$);
        \draw (0,2)node[above]{$\sigma$};
        \draw (1,0)node[below]{$\lambda_{1}$};
        \draw (1,2)node[above]{$\lambda_{3}$};
        \draw (2,2)node[above]{$\lambda_{2}$};
        \draw ($($(0,2)!(1,2)!(1,1)$)!0.5!($(0,2)!(2,2)!(1,1)$)$)node[below left]{$\kappa$};
    \end{tikzpicture}
    } \; = \;\;\; \sum_{\nu}
    \sixj{\sigma}{\lambda_3}{\lambda_2}{\lambda_1}{\kappa}{\nu}
    \hackcenter{
     \begin{tikzpicture}
        \draw (0,2)--(1,1)--(1,0);
         \draw (2,2)--($(0,2)!(2,2)!(1,1)$);
        \draw (1,2)--($(2,2)!(1,2)!(1,1)$);
        \draw (0,2)node[above]{$\sigma$};
        \draw (1,0)node[below]{$\lambda_{1}$};
        \draw (1,2)node[above]{$\lambda_{3}$};
        \draw (2,2)node[above]{$\lambda_{2}$};
        \draw ($($(2,2)!(1,2)!(1,1)$)!0.5!(1,1)$)node[below right]{$\nu$};
    \end{tikzpicture}}
\end{align*}
The coefficients $\sixj{\sigma}{\lambda_3}{\lambda_2}{\lambda_1}{\kappa}{\nu}$
in these expansions are called the quantum $6j$ symbols.

\subsection{The construction of the conformal block vectors}

Fix
\begin{align*}
\lambdafullseq = ( \lambda_{0}, \lambda_{1},  \ldots, \lambda_{N}, \lambda_{\infty} ) \in \Znn^{N+2}
\end{align*}
throughout. 
Note that compared to Section~\ref{sec: quantum group method}
we now have two additional labels, $\lambda_{0}$ and~$\lambda_{\infty}$.
They will later be seen to have the interpretations of labels of 
primary fields at the origin and at infinity.

\begin{defn}\label{def: admissible sequence}
A sequence
\begin{align*}
\cbfullseq = (\cbseq_0 , \cbseq_1 , \ldots, \cbseq_{N-1}, \cbseq_{N}) \in 
\Znn^{N+1}
\end{align*}
is said to be $\lambdafullseq$-admissible, if we have
\begin{align*} 
\cbseq_{0} = \lambda_{0} , \qquad \cbseq_{N} = \lambda_{\infty} ,
& \qquad \text{ and } \\
\lambda_{j} \in \selRule{\cbseq_j}{\cbseq_{j-1}} 
& \qquad \text{ for all $j = 1 , \ldots , N$}  
\end{align*}
\end{defn}
The following picture should serve as a 
visual guide to what is going on at the level of
representations of~$\Uqsltwo$ (and in fact at the level of
modules of the generic Virasoro VOA in later sections);
the $\lambdafullseq$-admissibility of the sequence
$\cbfullseq$ exactly ensures that the selection rules
are satisfied at every vertex of this picture:

\begin{align*} 
\begin{tikzpicture}
    \draw (0,0)--(6,0);
    \draw (1,1)--($(0,0)!(1,1)!(6,0)$);
    \draw (2,1)--($(0,0)!(2,1)!(6,0)$);
    \draw (4,1)--($(0,0)!(4,1)!(6,0)$);
    \draw (5,1)--($(0,0)!(5,1)!(6,0)$);
    \draw (3,1/2)node {$\cdots$};
    \draw (0,0)node[left]{$\cbseq_{N}$\;};
    \draw (-.3,-.5)node[left]{$\rotatebox{90}{=}$};
     \draw (0,-1)node[left]{$\lambda_{\infty}$};
    \draw (6,0)node[right]{\;$\cbseq_{0}$\; .};
    \draw (6.2,-.5)node[right]{$\rotatebox{90}{=}$};
    \draw (6,-1)node[right]{\;$\lambda_{0}$};
    \draw (1,1)node[above]{$\lambda_{N}$};
    \draw (2,1)node[above]{$\lambda_{N-1}$};
    \draw (4,1)node[above]{$\lambda_{2}$};
    \draw (5,1)node[above]{$\lambda_{1}$};
    \draw (1.5,0)node[below]{$\cbseq_{N-1}$};
    \draw (4.5,0)node[below]{$\cbseq_{1}$};
\end{tikzpicture}
\end{align*}

We seek to associate a conformal block to such an
admissible sequence, and for that purpose we will
first associate to it a suitable vector
\begin{align*}
u_{\cbfullseq} \; \in \; 
\Big( \bigotimes_{j=1}^N \QGrep{\lambda_j} \Big) \tens \QGrep{\lambda_{0}} .
\end{align*}
Our convention is that tensor products are formed in the order with the indices 
increasing from right to left, i.e., the space above is
\[ 
\QGrep{\lambda_N} \tens \bigg( \QGrep{\lambda_{N-1}} \tens \Big( \cdots 
     \tens \big( \QGrep{\lambda_2} \tens
         (\QGrep{\lambda_1} \tens \QGrep{\lambda_0} )
     \big) \cdots \Big) \bigg) .
\]
Above we have placed the parentheses to illustrate the 
idea according to which the vector~$u_{\cbfullseq}$ is chosen.
The construction of the vector is done with 
the composition
\begin{align}
\label{eq: sequence of embeddings}
\xymatrix{%
\;\QGrep{\cbseq_{N}} \;
  \ar@{^{(}->}[r]
& \;\QGrep{\lambda_N} \tens \QGrep{\cbseq_{N-1}} \;
  \ar@{^{(}->}[r]
& \; \QGrep{\lambda_N} \tens \QGrep{\lambda_{N-1}} \tens \QGrep{\cbseq_{N-2}} \;
  \ar@{^{(}->}[r]
& \; \cdots \;\;
  \ar@{^{(}->}[r]
& \; \big(\bigotimes_{j=1}^N \QGrep{\lambda_j} \big)
    \tens \QGrep{\cbseq_{0}} ,
}
\end{align}
of embeddings
\begin{align*}
\xymatrix{%
\Big( \big( \bigotimes_{i>j} \idof{\QGrep{\lambda_i}} \big) \tens 
    \QGembed{\;\cbseq_{j}}{\lambda_{j}}{\cbseq_{j-1}} \Big)
\; \colon \; &
\Big( \big( \bigotimes_{i>j} \QGrep{\lambda_i} \big) \tens 
    \QGrep{\cbseq_{j}} \Big) \;
\ar@{^{(}->}[r] 
& \Big( \big( \bigotimes_{i>j-1} \QGrep{\lambda_i} \big) \tens 
    \QGrep{\cbseq_{j-1}} \Big)
} .
\end{align*}
Namely, we take $u_{\cbfullseq}$ to be the image of the highest weight 
vector~$\QGhwv{\lambda_{\infty}} \in \QGrep{\lambda_{\infty}}$
under this composition of  embeddings,
\begin{align}
\label{eq: construction of conformal block vector}
u_{\cbfullseq} \; := \; 
\; & \bigg( \Big( \big( \bigotimes_{j=2}^N \idof{\QGrep{\lambda_j}} \big) 
                \tens \QGembed{\cbseq_1}{\lambda_1}{\cbseq_{0}} \Big) \circ
     \cdots \circ
     \Big( \idof{\QGrep{\lambda_N}} 
        \tens \QGembed{\cbseq_{N-1}}{\lambda_{N-1}}{\cbseq_{N-2}} \Big) \circ
     \QGembed{\lambda_{\infty}}{\lambda_{N}}{\cbseq_{N-1}} 
   \bigg) (\QGhwv{\lambda_{\infty}}) .
\end{align}

\begin{lem}
The vector $u_{\cbfullseq}$
in~\eqref{eq: construction of conformal block vector} satisfies
\begin{align*}
E.u_{\cbfullseq} = 0 
\qquad \text{ and } \qquad
K.u_{\cbfullseq} = q^{\lambda_{\infty}} \, u_{\cbfullseq} .
\end{align*}
\end{lem}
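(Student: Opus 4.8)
The plan is to observe that, by construction, $u_{\cbfullseq}$ is the image of a highest weight vector under a composition of $\Uqsltwo$-module homomorphisms, so that the statement amounts to nothing more than the fact that module maps intertwine the actions of $E$ and $K$.

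First I would check that the composition defining $u_{\cbfullseq}$ is a $\Uqsltwo$-module map. Each embedding $\QGembed{\cbseq_j}{\lambda_j}{\cbseq_{j-1}} \colon \QGrep{\cbseq_j} \hookrightarrow \QGrep{\lambda_j} \tens \QGrep{\cbseq_{j-1}}$ occurring in \eqref{eq: sequence of embeddings} is well defined and $\Uqsltwo$-linear by Lemma~\ref{lem: embedding of QG reps in tensor product of two}; here one uses that the $\lambdafullseq$-admissibility of $\cbfullseq$ gives $\lambda_j \in \selRule{\cbseq_j}{\cbseq_{j-1}}$, which by the listed symmetries of the selection-rule set is the same as $\cbseq_j \in \selRule{\cbseq_{j-1}}{\lambda_j}$. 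Tensoring such a map on the left with $\bigotimes_{i>j}\idof{\QGrep{\lambda_i}}$ again yields a $\Uqsltwo$-module map, since in the module category of the Hopf algebra $\Uqsltwo$ the identity is a module map and the tensor product (formed via the coproduct $\Hcp$) of two module maps is a module map. Composing all of these, the whole arrow in \eqref{eq: construction of conformal block vector}, which I denote by $\Phi \colon \QGrep{\lambda_{\infty}} \to \big( \bigotimes_{j=1}^N \QGrep{\lambda_j} \big) \tens \QGrep{\lambda_{0}}$, is a $\Uqsltwo$-module map, and by definition $u_{\cbfullseq} = \Phi(\QGhwv{\lambda_{\infty}})$.

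Then I would simply evaluate on the highest weight vector. In the defining action of $\Uqsltwo$ on $\QGrep{\lambda_{\infty}}$ the vector $\QGhwv{\lambda_{\infty}} = \QGvec{\lambda_{\infty}}{0}$ satisfies $E.\QGhwv{\lambda_{\infty}} = 0$ and $K.\QGhwv{\lambda_{\infty}} = q^{\lambda_{\infty}}\,\QGhwv{\lambda_{\infty}}$. Since $\Phi$ commutes with the $\Uqsltwo$-action,
\begin{align*}
E.u_{\cbfullseq} \; &= \; E.\Phi\big(\QGhwv{\lambda_{\infty}}\big) \; = \; \Phi\big(E.\QGhwv{\lambda_{\infty}}\big) \; = \; 0 , \\
K.u_{\cbfullseq} \; &= \; K.\Phi\big(\QGhwv{\lambda_{\infty}}\big) \; = \; \Phi\big(K.\QGhwv{\lambda_{\infty}}\big) \; = \; q^{\lambda_{\infty}}\,\Phi\big(\QGhwv{\lambda_{\infty}}\big) \; = \; q^{\lambda_{\infty}}\,u_{\cbfullseq} ,
\end{align*}
which is exactly the claim.

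I do not expect any genuine obstacle here. The only point that deserves (routine) care is the assertion used in the first step that $\idof{W}\tens\psi$ is a module map whenever $\psi$ is; this is precisely where the coproduct enters, because the action on a tensor product is defined through $\Hcp$. If one wishes to avoid even this, the two identities can instead be obtained by a direct induction on $j$ that peels off the embeddings one at a time, using the explicit formula of Lemma~\ref{lem: embedding of QG reps in tensor product of two} together with $\Hcp(E) = E \tens K + 1 \tens E$ and $\Hcp(K) = K \tens K$; the conceptual argument above makes that computation unnecessary.
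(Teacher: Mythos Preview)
Your proof is correct and follows exactly the same approach as the paper: both argue that $u_{\cbfullseq}$ is the image of the highest weight vector $\QGhwv{\lambda_{\infty}}$ under a $\Uqsltwo$-module map, so the highest-weight properties are preserved. Your version simply spells out in more detail why the composition in \eqref{eq: construction of conformal block vector} is a module map, which the paper takes for granted in a single sentence.
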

\begin{proof}
These properties are satisfied by the 
vector~$\QGhwv{\lambda_{\infty}} \in \QGrep{\lambda_{\infty}}$, and the mapping applied on 
this vector in~\eqref{eq: construction of conformal block vector} is a 
$\Uqsltwo$-module map.
\end{proof}

More formally, the reason for the choice of~$u_{\cbfullseq}$ is the 
following projection conditions.
We use partial compositions of the following sequence 
\begin{align*}
\xymatrix{%
\QGrep{\lambda_N} \tens \cdots \tens 
    \QGrep{\lambda_{1}} \tens \QGrep{\lambda_{0}} \;
  \ar@{->>}[r]
& \;\; \cdots \;
  \ar@{->>}[r]
& \; \QGrep{\lambda_N} \tens \QGrep{\lambda_{N-1}} \tens \QGrep{\cbseq_{N-2}} \;
  \ar@{->>}[r]
& \; \QGrep{\lambda_N} \tens \QGrep{\cbseq_{N-1}} \;
  \ar@{->>}[r]
& \; \QGrep{\lambda_{\infty}} 
}
\end{align*}
of projections
\begin{align*}
\xymatrix{%
\Big( \big( \bigotimes_{i>j} \idof{\QGrep{\lambda_i}} \big) \tens 
    \QGpremb{\lambda_{j}}{\cbseq_{j-1}}{\;\cbseq_{j}} \Big)
\; \colon \; &
\Big( \big( \bigotimes_{i>j-1} \QGrep{\lambda_i} \big) \tens 
    \QGrep{\cbseq_{j-1}} \Big) \;
\ar@{->>}[r] 
& \Big( \big( \bigotimes_{i>j} \QGrep{\lambda_i} \big) \tens 
    \QGrep{\cbseq_{j}} \Big)
} 
\end{align*}

\begin{prop}
For each $j=1,\dots, N$, denote by
$\compQGpr{j} (u_{\cbfullseq}) \in 
\big( \bigotimes_{i \geq j} \QGrep{\lambda_i} \big) \tens \QGrep{\cbseq_{j-1}}$ 
the image of $u_{\cbfullseq}$ under the composition
of the first~$j-1$ projections above.
The following conditions hold for the vector $u_{\cbfullseq}$
in~\eqref{eq: construction of conformal block vector}:
\begin{align*}
\compQGpr{j} (u_{\cbfullseq})
= \; & \Big( \big( \bigotimes_{i>j} \idof{\QGrep{\lambda_i}} \big) 
                \tens \QGprcan{\lambda_j}{\cbseq_{j-1}}{\cbseq_j} \Big)
			  \big( \compQGpr{j} (u_{\cbfullseq}) \big)
\end{align*}
for each $j=1,\ldots,N$.
Moreover, $u_{\cbfullseq}$ is up to a multiplicative constant 
the unique vector in 
$\big( \bigotimes_{j=1}^N \QGrep{\lambda_j} \big) \tens \QGrep{\lambda_{0}}$
for which the above conditions hold.
\end{prop}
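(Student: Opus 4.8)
The plan is to prove the two assertions of the Proposition — the projection conditions and the uniqueness — essentially by unwinding the definition of $u_{\cbfullseq}$ together with the defining property \eqref{eq: Clebsch Gordan projection with embedding} of $\QGpremb{\lambda}{\mu}{\sigma}$ and the relation \eqref{eq: Clebsch Gordan projection within tensor product}. First I would observe that $\compQGpr{1}(u_{\cbfullseq}) = u_{\cbfullseq}$ by definition (no projections have been applied), and then compute $\compQGpr{j+1}$ from $\compQGpr{j}$: applying $\big(\bigotimes_{i>j}\idof{\QGrep{\lambda_i}}\big)\tens\QGpremb{\lambda_j}{\cbseq_{j-1}}{\;\cbseq_j}$ peels off the outermost embedding $\QGembed{\;\cbseq_j}{\lambda_j}{\cbseq_{j-1}}$ used in the construction \eqref{eq: construction of conformal block vector}, because $\QGpremb{\lambda_j}{\cbseq_{j-1}}{\;\cbseq_j}\circ\QGembed{\;\cbseq_j}{\lambda_j}{\cbseq_{j-1}} = \idof{\QGrep{\cbseq_j}}$ and the remaining tensor factors are acted on by identities. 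Hence by a downward induction on $j$,
\begin{align*}
\compQGpr{j}(u_{\cbfullseq})
= \Big(\big(\bigotimes_{i>j}\idof{\QGrep{\lambda_i}}\big)\tens\QGembed{\;\cbseq_{j-1}}{\lambda_{j-1}}{\cbseq_{j-2}}\circ\cdots\circ\QGembed{\lambda_\infty}{\lambda_N}{\cbseq_{N-1}}\Big)(\QGhwv{\lambda_\infty}),
\end{align*}
and in particular $\compQGpr{j}(u_{\cbfullseq})$ lies in the image of $\idof{}\tens\QGembed{\;\cbseq_{j-1}}{\lambda_{j-1}}{\cbseq_{j-2}}$ — wait, more to the point, it lies in the image of $\big(\bigotimes_{i>j}\idof{}\big)\tens\QGembed{\;\cbseq_j}{\lambda_j}{\cbseq_{j-1}}$ after one more projection is undone; cleanly, $\compQGpr{j}(u_{\cbfullseq})$ is in the subrepresentation $\QGrep{\cbseq_j}$ sitting inside the last two relevant tensorands, so applying $\QGprcan{\lambda_j}{\cbseq_{j-1}}{\cbseq_j}$ there leaves it fixed. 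This is exactly the asserted identity, using \eqref{eq: Clebsch Gordan projection within tensor product} to write $\QGprcan{\lambda_j}{\cbseq_{j-1}}{\cbseq_j} = \QGembed{\;\cbseq_j}{\lambda_j}{\cbseq_{j-1}}\circ\QGpremb{\lambda_j}{\cbseq_{j-1}}{\;\cbseq_j}$.

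For uniqueness, I would argue by induction on $N$. Suppose $v\in\big(\bigotimes_{j=1}^N\QGrep{\lambda_j}\big)\tens\QGrep{\lambda_0}$ satisfies all $N$ projection conditions. The condition for $j=N$, namely $v = \big(\bigotimes_{i>N}\idof{}\big)\tens\cdots$ — here the outermost factor — forces $v$ to lie in the image of $\QGembed{\cbseq_N}{\lambda_N}{\cbseq_{N-1}} = \QGembed{\lambda_\infty}{\lambda_N}{\cbseq_{N-1}}$ applied in the appropriate slot, which by Lemma~\ref{lem: embedding of QG reps in tensor product of two} is a one-dimensional space of $\Uqsltwo$-module maps; so $v = (\idof{\QGrep{\lambda_N}}\tens\iota)(v')$ where $\iota = \QGembed{\lambda_\infty}{\lambda_N}{\cbseq_{N-1}}$ composed with the identities, for a vector $v'\in\big(\bigotimes_{j=1}^{N-1}\QGrep{\lambda_j}\big)\tens\QGrep{\cbseq_{N-1}}$, determined up to scalar given $v$. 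One then checks that the remaining $N-1$ projection conditions on $v$ translate exactly into the analogous $N-1$ conditions on $v'$ relative to the truncated label sequence $(\lambda_0,\ldots,\lambda_{N-1},\cbseq_{N-1})$ and truncated admissible sequence $(\cbseq_0,\ldots,\cbseq_{N-1})$; this uses that the outermost projection $\idof{}\tens\QGpremb{\lambda_N}{\cbseq_{N-1}}{\;\cbseq_N}$ is a left inverse to $\idof{}\tens\QGembed{\cbseq_N}{\lambda_N}{\cbseq_{N-1}}$, so $\compQGpr{2}(v) = v'$ in the natural identification. By the inductive hypothesis $v'$ is unique up to scalar, hence so is $v$; the base case $N=0$ (or $N=1$) is immediate since $u_{(\lambda_0,\lambda_\infty)}$ is just $\QGembed{\lambda_\infty}{\lambda_1}{\lambda_0}(\QGhwv{\lambda_\infty})$, a one-dimensional choice by Lemma~\ref{lem: embedding of QG reps in tensor product of two}.

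The only slightly delicate point — and the step I would expect to require the most care in writing up — is bookkeeping the tensor-factor slots: making precise that applying $\QGpremb{\lambda_j}{\cbseq_{j-1}}{\;\cbseq_j}$ in tensorands $j,j+1$ of the partially projected vector genuinely cancels the embedding $\QGembed{\;\cbseq_j}{\lambda_j}{\cbseq_{j-1}}$ used at that stage of the construction, and that the ``passive'' tensorands to the left and right are untouched. This is entirely formal given coassociativity of $\Hcp$ (which licenses the unparenthesized iterated tensor products) and the definitions \eqref{eq: Clebsch Gordan projection with embedding}–\eqref{eq: Clebsch Gordan projection within tensor product}, but it deserves a careful index-tracking sentence or two. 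No genuinely hard estimate or new idea is needed beyond Lemma~\ref{lem: embedding of QG reps in tensor product of two}.
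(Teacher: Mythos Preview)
Your argument for the projection conditions is correct and matches the paper's: use $\QGpremb{\lambda}{\mu}{\sigma}\circ\QGembed{\sigma}{\lambda}{\mu} = \idof{\QGrep{\sigma}}$ to peel off the embeddings one at a time, obtaining an explicit formula for $\compQGpr{j}(u_{\cbfullseq})$ that manifestly lies in the image of the relevant embedding and is therefore fixed by $\QGprcan{\lambda_j}{\cbseq_{j-1}}{\cbseq_j}$.

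Your uniqueness argument has the right strategy --- induction on $N$ using the multiplicity-free branching rule, exactly as the paper indicates --- but you start the induction at the wrong end. The condition for $j=N$ is a condition on $\compQGpr{N}(v)\in\QGrep{\lambda_N}\otimes\QGrep{\cbseq_{N-1}}$, which is the image of $v$ after $N-1$ (generally non-injective) projections; it does not directly constrain $v$, and your formula $v=(\idof{\QGrep{\lambda_N}}\tens\iota)(v')$ with $\iota=\QGembed{\lambda_\infty}{\lambda_N}{\cbseq_{N-1}}$ does not type-check. The induction should begin with $j=1$: since $\compQGpr{1}(v)=v$, the $j=1$ condition says directly that $v$ lies in the image of $\big(\bigotimes_{i\geq 2}\idof{\QGrep{\lambda_i}}\big)\tens\QGembed{\cbseq_1}{\lambda_1}{\lambda_0}$, so $v$ is the image of a unique $v'\in\big(\bigotimes_{i\geq 2}\QGrep{\lambda_i}\big)\tens\QGrep{\cbseq_1}$. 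Then $\compQGpr{2}(v)=v'$, and the remaining conditions $j=2,\ldots,N$ become the analogous conditions on $v'$ for the truncated data $(\cbseq_1,\lambda_2,\ldots,\lambda_N,\lambda_\infty)$ and $(\cbseq_1,\ldots,\cbseq_N)$. With this correction your induction closes as intended.
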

\begin{proof}
The conditions for vector $u_{\cbfullseq}$ follow directly from its 
construction, using the relationships
$\QGprcan{\lambda}{\mu}{\sigma} \circ \QGembed{\sigma}{\lambda}{\mu} 
    = \QGembed{\sigma}{\lambda}{\mu}$
and
$\QGpremb{\lambda}{\mu}{\sigma} \circ \QGembed{\sigma}{\lambda}{\mu}
    = \id_{\QGrep{\sigma}}$
between the projections and embeddings.

Uniqueness (up to multiplicative constants) can be shown
by an induction over~$N$, using the multiplicity-free
branching rule~(\ref{eq:tensor_prod_Uq_mods}).
\end{proof}

\section{Generic Virasoro VOA}%
\label{sec: VOA}
This section introduces the main algebraic structure of the
present work, the generic Virasoro vertex operator algebra.
We also define its modules and intertwining operators between modules.

From the point of view of physics,
vertex operator algebras serve as the chiral algebras of conformal
field theories, and the case of the Virasoro VOA is appropriate for the
case with conformal symmetry alone. The modules of a VOA correspond to the
(conformal families of) fields in the CFT. Intertwining operators
are the building blocks of the correlation functions of these fields.

This section is organized as follows.
In Section~\ref{sub: conventions about formal series}
we introduce notation and fix conventions about formal
series. 
In Section~\ref{sub: Virasoro algebra and the VOA}
we introduce the Virasoro algebra and its highest
weight representations, as well as the VOAs based on them.
Section~\ref{sub: modules and intertwining operators generally}
contains the general definition of modules and intertwining
operators of VOAs, and 
Section~\ref{sub: first row modules and intertwining operators}
concentrates on the specific case of the first row
modules of the generic Virasoro VOA. The specific
result about the fusion rules, in particular, is given in
Section~\ref{sub: first row modules and intertwining operators}.
Much of the topic of this section can be found in textbooks.
To the extent possible, in our presentation we follow
\cite{Lepowsky_Li-VOA} in
Sections~\ref{sub: conventions about formal series}--%
\ref{sub: Virasoro algebra and the VOA},
and \cite{Xu1998,Li99} in
Sections~\ref{sub: conventions about formal series}--%
\ref{sub: modules and intertwining operators generally}.
The more specific fusion rule statement of
Section~\ref{sub: first row modules and intertwining operators}
has been obtained through a different method in~\cite{FrenkelZhu2012}.

\subsection{Some notational conventions}
\label{sub: conventions about formal series}

Let us first fix some notational conventions.

\subsubsection*{General conventions}

When a statement depends on a real number~$m$ 
(integer, natural number, \ldots), we
use the quantifier ``for~$m \gg 0$'' (resp. ``for~$m \ll 0$'') to 
mean that the statement holds for all sufficiently large~$m$ (resp. 
sufficiently small~$m$), i.e., that 
there exists some~$m_0$ such that the statement holds for all $m > m_0$
(resp. for all~$m<m_0$).

\subsubsection*{Formal series}

We will have to consider various types of formal
series: polynomials, Laurent polynomials,
formal power series, formal Laurent series, and 
formal series of yet more general types.
The formal series are formal sum expressions with terms which
are a coefficient times a power of a formal variable.
The coefficients are always taken to lie in some complex
vector space, and consequently also the spaces of 
formal series are naturally vector spaces with addition and scalar
multiplication defined coefficientwise.

Let $V$ be a vector space, and let $\genvar$ be a formal variable.

The \term{space of formal power series} with coefficients in~$V$ is
\begin{align}
V[[\genvar]] = \set{ \sum_{n \in \Znn} v_n \, \genvar^n 
    \; \bigg| \; v_n \in V \text{ for all } n \in \Znn } ,
\end{align}
and the \term{space of polynomials} is the
subspace~$V[\genvar] \subset V[[\genvar]]$
consisting of those formal power
series $\sum_{n \in \Znn} v_n \, \genvar^n$
which only have finitely many non-zero coefficients, i.e.,
$v_n = 0$ for all $n \gg 0$.

Similarly the \term{space of formal Laurent series} with 
coefficients in~$V$ is
\begin{align}
V[[\genvar^{\pm 1}]] = \set{ \sum_{m \in \bZ} v_m \, \genvar^m 
    \; \bigg| \; v_m \in V \text{ for all } m \in \bZ } ,
\end{align}
and \term{space of Laurent polynomials} is the 
subspace~$V[\genvar^{\pm 1}] \subset V[[\genvar^{\pm 1}]]$
consisting of those formal Laurent 
series $\sum_{m \in \bZ} v_m \, \genvar^m$ which only have finitely 
many non-zero coefficients, i.e.,
$v_m = 0$ for $|m| \gg 0$.
The \term{residue} of a formal Laurent series is defined as 
\begin{align*}
\Res_{\genvar} \Big( \sum_{m \in \bZ} v_m \, \genvar^m \Big) = v_{-1} .
\end{align*}

The space of \term{general formal series} with coefficients in~$V$
is \begin{align}
V\{\genvar\} = \set{ \sum_{\alpha \in \bC} v_\alpha \, \genvar^\alpha
    \; \bigg| \; v_\alpha \in V \text{ for all } \alpha \in \bC } .
\end{align}

Elements of any of the above are typically denoted by 
e.g.~$f(\genvar) = \sum_{i} v_i \, \genvar^i$, to explicitly
indicate the formal variable~$\genvar$,
and to emphasize the analogue with functions.

Series with several formal variables are defined by
considering series in one variable with 
coefficients in a vector space of formal series of other variables,
and natural identifications are made without comment:
we set, e.g.,
$V[[\genvar,\genvarB]] = \big( V[[\genvar]] \big) [[\genvarB]] 
= \big( V[[\genvarB]] \big)[[\genvar]]$.

\subsubsection*{Binomial expansion convention}

We follow the commonly used \term{binomial expansion convention}
according to which the power of a binomial in formal variables
is always expanded in
non-negative integer powers of the second variable:
for example if $\genvar, \genvarB$ are two formal variables 
and $\beta \in \bC$, we interpret
\begin{align}\label{eq: binomial expansion}
(\genvar + \genvarB)^\beta
= \; & \sum_{n=0}^\infty {\beta \choose n} \, \genvar^{\beta-n} \, \genvarB^n 
    \; \in \; \bC\{\genvar\}[[\genvarB]] , \\
\nonumber
\text{ where } \qquad
{\beta \choose n}
:= \; & \frac{1}{n!} \, \prod_{j=0}^{n-1} (\beta-j) .
\end{align}
The convention does require some caution:
for instance for $n \in \Zpos$
the two series
\begin{align*}
(\genvar-\genvarB)^{-n} \neq (-\genvarB+\genvar)^{-n} 
\end{align*}
are not equal, but are different Laurent series expansions of the same 
rational function function.
The series on the left is in~$\bC[[\genvar^{\pm 1},\genvarB]]$ and
is convergent in the region~$|\genvar|>|\genvarB|$, while
the series on the right is in~$\bC[[\genvar,\genvarB^{\pm 1}]]$ and
is convergent in the region~$|\genvarB|>|\genvar|$. 
In the case of non-integer~$\beta$,
note that non-integer powers are placed on only one of the formal variables, 
leading to different branch choice issues when specializing the formal 
variables to actual complex values.

\subsubsection*{The formal delta function}

The formal delta-function in the formal variable~$\genvar$ is the 
formal Laurent series
\begin{align}\label{eq: formal delta function}
\delta(\genvar) = \sum_{m \in \bZ} \genvar^m \, \in \, 
\bC[[\genvar^{\pm 1}]] .
\end{align}
If $\genvar, \genvarB, \genvarC$ are formal variables, with the binomial expansion convention we interpret
\begin{align*}
\delta\Big( \frac{\genvar-\genvarB}{\genvarC} \Big)
    = \sum_{m \in \bZ} \sum_{n = 0}^\infty 
        {m \choose n} (-1)^n  \,\genvarC^{-m} \, \genvar^{m-n} \, \genvarB^n
    \; \in \, \bC[[\genvarC^{\pm 1},\genvar^{\pm 1},\genvarB]] .
\end{align*}

\subsubsection*{Multiplication of formal series}

In the case when~$V = A$ is an associative algebra, e.g.,
$V = \bC$ or $V=\End(W)$ for some vector space~$W$, multiplication
of formal series of particular types may be meaningful:
e.g. the product $a(\genvar) \, b(\genvar)$ of 
two formal power series $a(\genvar), b(\genvar) \in A[[\genvar]]$ is 
well-defined
in~$A[[\genvar]]$
(there are finitely many contributions to the coefficient of any~$\genvar^n$),
and the product $f(\genvar) \, g(\genvar)$ of a general series $f(\genvar) \in 
A\{\genvar\}$ with a Laurent polynomial~$g(\genvar) \in A[\genvar^{\pm 1}]$
is well-defined in~$A\{\genvar\}$
(there are finitely many contributions to the 
coefficient of any~$\genvar^\alpha$).

Likewise, suitable formal series with complex coefficients can 
be multiplied with suitable series with coefficients in complex 
vector spaces: e.g., the multiplication of a Laurent polynomial
$r(\genvar) \in \bC[\genvar^{\pm 1}]$ with a formal power series
$h(\genvar) \in V[[\genvar]]$ is well-defined in~$V[[\genvar^{\pm 1}]]$
(there are finitely many contributions to the 
coefficient of any~$\genvar^m$).

Where well-defined, we use any such products without explicit comment
in what follows.

\subsection{Generic Virasoro vertex operator algebra}
\label{sub: Virasoro algebra and the VOA}

\subsubsection*{Virasoro algebra}

The \term{Virasoro algebra} is the complex Lie algebra 
\begin{align*}
\vir = \; \bigoplus_{n\in{\bZ}} \bC L_{n} \; \oplus \; \bC C 
\end{align*}
with the Lie bracket determined by
\begin{align*}
[L_{m},L_{n}] 
= \; & (m-n) \, L_{m+n} + \frac{m^{3}-m}{12} \, \delta_{m+n,0} \, C 
\qquad \text{ for $m,n\in{\bZ}$,} \\  
[C,\vir] = \; & 0 .
\end{align*}
To describe the relevant representations of~$\vir$, we introduce
the Lie subalgebras 
\begin{align*}
\vir_{> 0} := \; & \bigoplus_{n > 0} \bC L_{n} , &
\vir_{< 0} := \; & \bigoplus_{n < 0} \bC L_{n} , \\
\vir_{0} := \; & \bC L_0 \oplus \bC C , &
\vir_{\geq 0} := \; & \vir_{0} \oplus \vir_{>0} .
\end{align*}
The universal enveloping algebra of~$\vir$ is denoted by~$\UEA(\vir)$.

\subsubsection*{Verma module}
For $c,h \in \bC$,
the \term{Verma module}~$\VermaCH{c}{h}$ of \term{central charge}~$c$ and 
\term{conformal weight}~$h$ is defined as
the quotient of the universal enveloping algebra by the left 
ideal generated by the elements~$C-c1$, $L_0-h1$, and~$L_n$ for $n>0$, i.e.
\begin{align}\label{eq: Verma module}
\VermaCH{c}{h} := \UEA(\vir) \, \Big/ \,
    \Big( \UEA(\vir) (C-c1) + \UEA(\vir) (L_0-h1)
    + \UEA(\vir) \, \vir_{>0} \Big) .
\end{align}
By construction, 
the vector $\hwv{c,h} := [1] \in \VermaCH{c}{h}$ satisfies
\begin{align*}
C \hwv{c,h} = \; & c \, \hwv{c,h} , &
L_0 \hwv{c,h} = \; & h \, \hwv{c,h} , &
L_n \, \hwv{c,h} = \; & 0  \quad \text{ for $n>0$},
\end{align*}
and it is a cyclic vector, i.e., it generates the whole representation,
$\UEA(\vir) \hwv{c,h} = \VermaCH{c}{h}$.

Owing to the Poincar\'{e}--Birkhoff--Witt (PBW) theorem, 
as a vector space the Verma module is 
isomorphic to~$\UEA(\vir_{<0})$; a PBW basis for $\VermaCH{c}{h}$
consists of vectors
\begin{align}\label{eq: basis for Verma module}
L_{-n_{k}} \cdots L_{-n_{1}} \, \hwv{c,h} ,
\end{align}
where $k \in \Znn$ and $0 < n_1 \leq n_2 \leq \cdots \leq n_k$.

The central element~$C$ acts as the scalar~$c$ on~$\VermaCH{c}{h}$.
The element $L_0$ is diagonalizable and has eigenvalues $h+d$, $d \in \Znn$,
and we use this to define a \term{grading}
of the Verma module 
\begin{align*}
\VermaCH{c}{h} = 
    \bigoplus_{d = 0}^\infty 
        \Kern \Big( L_0 - (h+d) \, \idof{\VermaCH{c}{h}} \Big) .
\end{align*}
The homogeneous subspaces in this grading are finite-dimensional,
since the basis elements~\eqref{eq: basis for Verma module}
are eigenvectors of~$L_0$, with eigenvalues $h+d$, where
$d = n_1 + \cdots + n_k$.

The Verma module has a filtration associated with the PBW basis,
which we will use extensively.
For each $p\in\Z_{\ge 0}$, we define the subspace
\begin{align}\label{eq: PBW filtration}
\PBWfil{p}\VermaCH{c}{h}
	:= \spn \set{L_{-n_{k}} \cdots L_{-n_{1}} \, \hwv{c,h}
	    \; \Big| \; 0 < n_1 
        \leq \cdots \leq n_k,\ k\le p}
    \; \subset \; \VermaCH{c}{h}
\end{align}
spanned by basis vectors~\eqref{eq: basis for Verma module} with
``PBW word-length'' at most~$p$.
The \term{PBW filtration} is the increasing sequence of subspaces
\begin{equation*}
    \C \hwv{c,h} = \PBWfil{0} \VermaCH{c}{h} 
    \, \subset \, \cdots
    \, \subset \, \PBWfil{p}\VermaCH{c}{h}
    \, \subset \, \PBWfil{p+1}\VermaCH{c}{h}
    \, \subset \, \cdots 
    \, \subset \, \VermaCH{c}{h} ,
\end{equation*}
which clearly has the property
that~$\bigcup_{p \in \Znn} \PBWfil{p}\VermaCH{c}{h} = \VermaCH{c}{h}$.
Note furthermore that each subspace $\PBWfil{p}\VermaCH{c}{h}$ is
itself a 
representation of the Lie subalgebra~$\vir_{\geq 0} \subset \vir$.

\subsubsection*{Highest weight modules}
If $\singvec$ is a non-zero vector in
a representation~$\virgenrep$ of~$\vir$, which
for some~$\eta \in \bC$ and $c \in \bC$
satisfies
\begin{align*}
L_0 \singvec = \eta \singvec  ,
\qquad
C \singvec = c \singvec ,
\qquad \text{ and } \qquad
L_n \singvec = 0 \; \text{ for all $n>0$,}
\end{align*}
then we call~$\singvec \in \virgenrep$ a \term{singular vector}.

If a singular vector~$\singvec \in \virgenrep$ generates the whole
representation,
\begin{align*}
\UEA(\vir) \singvec = \virgenrep , 
\end{align*}
then it is called a \term{highest weight vector}, the
representation~$\virgenrep$ is called a
\term{highest weight representation}, and
the $L_0$-eigenvalue~$\eta$ and the $C$-eigenvalue~$c$ of~$\singvec$
are called its
\term{conformal weight} and \term{central charge},
respectively.
As an example, the Verma module~$\VermaCH{c}{h}$ is a highest weight 
representation and $\hwv{c,h}$ its highest weight vector.
Note that a highest weight vector in a given representation
is necessarily unique up to 
non-zero scalar multiples.

By construction the Verma module~$\VermaCH{c}{h}$
has the universal property that for any highest weight 
representation~$\virgenrep$ with the same central charge~$c$ and highest 
weight~$h$, 
there exists a surjective $\UEA(\vir)$-module 
map
\begin{align*}
\xymatrix{
\VermaCH{c}{h} \ar@{->>}[r] & \virgenrep } .
\end{align*}
As a consequence, any highest 
weight representation is isomorphic to a quotient of a Verma module
by a proper subrepresentation (possibly zero). In particular, 
a highest weight representation~$\virgenrep$ 
with highest weight~$h$ also admits
a $\Znn$-grading by eigenvalues
of~$L_0-h \, \idof{\virgenrep}$, and the homogeneous subspaces
$\Kern \big( L_0 - (h+d) \, \idof{\virgenrep} \big)$
are finite-dimensional.
The PBW filtration of a Verma module is also inherited to its quotient:
letting~$\PBWfil{p}\virgenrep$ denote the image of 
$\PBWfil{p}\VermaCH{c}{h}$ under the above surjection, we obtain
an increasing filtration
\begin{align*}
	\C\singvec = \PBWfil{0} \virgenrep
	\, \subset \, \PBWfil{1} \virgenrep
	\, \subset \, \cdots
	\, \subset \, \PBWfil{p} \virgenrep
	\, \subset \, \PBWfil{p+1} \virgenrep
	\, \subset \, \cdots
	\, \subset \, \virgenrep .
\end{align*}
Since the surjection is a $\UEA (\vir)$-module map, each $\PBWfil{p}\virgenrep$ 
is a representation of $\vir_{\geq 0}$.

The unique irreducible highest weight representation with given $c,h$
is the quotient of the Verma module~$\VermaCH{c}{h}$ by its maximal
proper subrepresentation.
A basic fact is that 
all subrepresentations of Verma modules
are generated by singular vectors,
and that at most two singular vectors are needed
to generate a given 
subrepresentation~\cite{FF-representations},
see also~\cite[Chapter 6]{IK-representation_theory_of_the_Virasoro_algebra}.

The easiest example of a non-trivial submodule of a Verma module
appears when~${h=0}$: the vector $L_{-1}\hwv{c,0}$ is a singular 
vector in the Verma module~$\VermaCH{c}{0}$ 
and thus generates a proper subrepresentation
$\UEA(\vir)L_{-1}\hwv{c,0} \subset \VermaCH{c}{0}$.
This easy case will be relevant for the universal Virasoro vertex 
operator algebra, but we first give other examples that 
are the building blocks of the module category that we study.

\subsubsection*{The Kac table and its first row}

The highest weights~$h$ for which the Verma module~$\VermaCH{c}{h}$
is not irreducible form what is called the Kac table:
these highest weights~$h_{r,s}$ are indexed by two 
positive integers $r,s$.

Let us parametrize central charges~$c$ by~$\kappa$ 
via\footnote{%
This parametrization of the central
charges~$c \leq 1$ by $\kappa>0$ arises naturally
in the context Schramm-Loewner Evolutions (SLE).
It is a $2$-to-$1$ parametrization (except at~$c=1$ and~$\kappa=4$):
the values $\kappa$ and 
$\kappa'=\frac{16}{\kappa}$ give rise to the same~$c$.
In these two cases with the same central charge~$c$ we will
introduce different categories of modules for the same vertex 
operator algebra.
What we call ``the first row of the Kac table'' for $\kappa'$ 
becomes instead ``the first column of the Kac table'' for $\kappa$~---
notice that~$h_{r,s}(\kappa') = h_{s,r}(\kappa)$
in the formula below. Thus with this
$2$-to-$1$ parametrization also the latter category becomes 
covered even if we explicitly only treat the former.}
\begin{align}\label{eq: central charge parametrization}
c = c(\kappa) 
:= 1-\frac{3(\kappa-4)^{2}}{2\kappa}
= 13-6 \Big( \frac{\kappa}{4} + \frac{4}{\kappa} \Big) 
\end{align}
Then an explicit formula for the Kac table highest weights is
\begin{equation*}
h_{r,s}(\kappa):= 
    \frac{16 (s^2-1) + 8 \kappa (1-rs) + \kappa^2 (r^2-1)}{16 \kappa} ,
\qquad \text{ for $r, s \in \Zpos$.}
\end{equation*}
We will be interested in highest weight modules with conformal
weights in the 
first row of the Kac table, i.e., with $r=1$. 
These conformal weights are exactly the ones 
in~\eqref{eq: first row conformal weight ad hoc};
we have
\begin{align}\label{eq: first row highest weights}
\hwFR{\lambda} = h_{1,1+\lambda}(\kappa)=\frac{\lambda(2(\lambda+2)-\kappa)}{2\kappa},
\qquad \text{ for } \lambda\in \Znn .
\end{align}

\begin{lem}[\cite{FF-representations}]
\label{lem: first row generic case}
Assume $\kappa\not\in\mathbb{Q}$ 
and $\lambda \in \Znn$, 
and let~$c$ and~$\hwFR{\lambda}$ be as 
in~\eqref{eq: central charge parametrization}
and~\eqref{eq: first row highest weights}.
Then the maximal proper subrepresentation of the 
Verma module $\VermaCH{c}{\hwFR{\lambda}}$ is 
isomorphic to the Verma module $\VermaCH{c}{\,\hwFR{\lambda}+1+\lambda}$.
In particular, the irreducible highest weight representation
with highest weight~$\hwFR{\lambda}$ is the quotient
\begin{align}
\voaFR{\lambda} := 
    \VermaCH{c}{\hwFR{\lambda}} \, \big/ \, \VermaCH{c}{\,\hwFR{\lambda}+1+\lambda} .
\end{align}
\end{lem}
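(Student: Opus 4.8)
The plan is to invoke the classical Feigin--Fuchs analysis of the submodule structure of Virasoro Verma modules at generic central charge, applied to the specific highest weight $\hwFR{\lambda} = h_{1,1+\lambda}(\kappa)$, and then to identify the single generating singular vector. First I would recall that for $\kappa \notin \bQ$ the central charge $c = c(\kappa)$ is irrational and ``generic'' in the sense of Feigin--Fuchs: the only pairs $(r,s), (r',s') \in \Zpos^2$ for which $h_{r,s}(\kappa) = h_{r',s'}(\kappa)$ are the trivial coincidences, and the Kac determinant formula vanishes at level $d$ in the Verma module $\VermaCH{c}{\hwFR{\lambda}}$ precisely when $d = rs$ for some factorization witnessing $h_{r,s}(\kappa) + rs = \hwFR{\lambda}$ is not quite how it goes --- more precisely, one checks directly from the explicit formula~\eqref{eq: first row highest weights} that $\hwFR{\lambda} + (1+\lambda) = h_{1,-(1+\lambda)}(\kappa)$ in the sense of the Kac labels extended to negative $s$, equivalently that $\hwFR{\lambda+?}$... the cleanest route is: the Kac determinant at level $d$ has a zero coming from the pair $(r,s)=(1,1+\lambda)$ exactly at $d = r s = 1+\lambda$, and a direct computation using~\eqref{eq: central charge parametrization} and~\eqref{eq: first row highest weights} shows $h_{1,1+\lambda}(\kappa) + (1+\lambda) = h_{1,1+\lambda}(\kappa)+1+\lambda$ coincides with no other Kac weight at a lower level when $\kappa\notin\bQ$.

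Second, I would apply the Feigin--Fuchs classification~\cite{FF-representations} (see also~\cite[Chapter~6]{IK-representation_theory_of_the_Virasoro_algebra}): at generic $c$, a Verma module $\VermaCH{c}{h}$ with $h = h_{r,s}$ on the Kac table is in the ``chain'' or ``point'' case, and for the first-row weights $h_{1,1+\lambda}$ one is in the simplest situation where there is exactly one singular vector beyond the highest weight vector itself, sitting at level $1+\lambda$, with conformal weight $\hwFR{\lambda} + 1 + \lambda$. By the general fact quoted in the excerpt --- that every submodule of a Verma module is generated by at most two singular vectors, and here by just one --- the maximal proper submodule is $\UEA(\vir)\,\singvec$ where $\singvec$ is this level-$(1+\lambda)$ singular vector. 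Third, I would argue that the submodule generated by $\singvec$ is itself a Verma module: since $c$ is generic, $\VermaCH{c}{\hwFR{\lambda}+1+\lambda}$ is itself irreducible (its own putative singular vectors would require further Kac-type coincidences which fail for $\kappa\notin\bQ$), and an irreducible highest weight module generated by a singular vector of the given weight, with no relations, must be the full Verma module $\VermaCH{c}{\hwFR{\lambda}+1+\lambda}$; concretely, the natural surjection $\VermaCH{c}{\hwFR{\lambda}+1+\lambda}\twoheadrightarrow \UEA(\vir)\singvec$ has trivial kernel because the target, being a submodule of a module with irrational $c$, satisfies the right character/dimension count --- one compares graded dimensions and uses that no singular vector can appear in $\VermaCH{c}{\hwFR{\lambda}+1+\lambda}$. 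The quotient statement $\voaFR{\lambda} = \VermaCH{c}{\hwFR{\lambda}}/\VermaCH{c}{\hwFR{\lambda}+1+\lambda}$ then follows by definition of the irreducible highest weight module as the quotient by the maximal proper submodule.

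The main obstacle is the explicit verification that for $\kappa \notin \bQ$ the weight $\hwFR{\lambda}$ lies in the ``easy'' region of the Feigin--Fuchs picture --- i.e.\ that there is genuinely only one singular vector (no nested or paired ones), so that the maximal submodule is a single Verma module rather than a sum of two or a more complicated object --- and the dual claim that $\VermaCH{c}{\hwFR{\lambda}+1+\lambda}$ has no singular vectors of its own. Both reduce to checking that the relevant Kac determinant zeros do not recur, which is a finite arithmetic computation with the formula $h_{r,s}(\kappa)$ that is straightforward but must be done carefully; the key input is irrationality of $\kappa$, which forbids the algebraic coincidences among $h_{r,s}(\kappa)$ that would be needed for extra singular vectors. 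I would present this as a direct appeal to~\cite{FF-representations}, since the excerpt explicitly allows citing that result, and only spell out the one-line check that $\hwFR{\lambda}+1+\lambda$ is the level-$(1+\lambda)$ singular weight.
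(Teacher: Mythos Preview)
Your proposal is correct and takes essentially the same approach as the paper: the paper's proof is simply a two-line citation to \cite{FF-representations} and \cite[Section~5.3.1]{IK-representation_theory_of_the_Virasoro_algebra}, noting that under the genericity assumption the Verma module falls in ``class~$I$'' in the Iohara--Koga terminology. Your outline fleshes out what is behind that citation (Kac determinant, single singular vector at level $1+\lambda$, irreducibility of the sub-Verma), but since you also conclude by appealing directly to \cite{FF-representations}, the two approaches coincide.
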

\begin{proof}
See, e.g., 
\cite{FF-representations}
or~\cite[Section~5.3.1]%
{IK-representation_theory_of_the_Virasoro_algebra}.
With the above assumptions, 
the Verma module $\VermaCH{c}{\hwFR{\lambda}}$
falls in ``class~$I$'' following the terminology 
of Iohara and Koga~\cite{IK-representation_theory_of_the_Virasoro_algebra}.
\end{proof}

The singular vector in $\VermaCH{c}{\hwFR{\lambda}}$
that generates the maximal proper subrepresentation
$\VermaCH{c}{\,\hwFR{\lambda}+1+\lambda} \subset \VermaCH{c}{\hwFR{\lambda}}$
is given by the explicit formula of~\cite{BSA-degenerate_CFTs_and_explicit_expressions},
\begin{align}
\nonumber 
\singvecop_{\lambda} \, \hwv{c,h(\lambda)} \, \in \; & \VermaCH{c}{\hwFR{\lambda}} ,
\qquad \text{where} \\
\label{eq:singular_vector}
\singvecop_{\lambda}
\, := \; & \sum_{k=1}^{\lambda+1}
    \sum_{\substack{p_{1}, \dots, p_{k} \geq 1 \\
        p_{1}+\cdots+p_{k} = \lambda+1}}
    \frac{(-4/\kappa)^{\lambda+1-k} \; (\lambda!)^{2}}{%
        \prod_{u=1}^{k-1}(\sum_{i=1}^{u} p_{i}) 
            (\sum_{i=u+1}^{k} p_{i})}
            L_{-p_{1}} \cdots L_{-p_{k}}
\; \in \; \UEA(\vir) .
\end{align}

\subsubsection*{Virasoro vertex operator algebras}

A vertex operator algebra is an $\Znn$-graded
vector space
\begin{align*}
\voaV = \bigoplus_{d \in \Znn} \voaV_{(d)} 
\end{align*}
equipped with two distinguished 
non-zero vectors,
\begin{align*}
\text{the vacuum vector } \, \voavac \in \voaV_{(0)}
\qquad \text{ and } \qquad 
\text{the conformal vector } \, \voaconfvec \in \voaV_{(2)} ,
\end{align*}
as well as a vertex operator map
\begin{align*}
\voaY(\cdot, \ipvar) \colon \voaV \to \End(\voaV)[[\ipvar^{\pm 1}]] ,
\end{align*}
all subject to axioms which are explicitly given 
in, e.g.,~\cite{Lepowsky_Li-VOA}.

Let $c \in \bC$ be given.
The vector $L_{-1}\hwv{c,0}$ is a singular vector
in the Verma module~$\VermaCH{c}{0}$ of highest weight~$h=0$.
Consider the highest weight representation obtained as the quotient
of the Verma module by the
proper subrepresentation 
$\UEA(\vir)L_{-1}\hwv{c,0} \subset \VermaCH{c}{0}$ generated by
this singular vector,
\begin{equation*}
	\voaV_{c}:=\VermaCH{c}{0} \, \big/ \, \UEA(\vir)L_{-1}\hwv{c,0}.
\end{equation*}
It is known~\cite[Theorem~6.1.5]{Lepowsky_Li-VOA}
that the vector space $\voaV_{c}$ can be equipped
with the structure of a vertex operator algebra (VOA),
uniquely fixed by the following.
The vacuum vector is~$\voavac = [\hwv{c,0}]$, the conformal vector
is $\voaconfvec = L_{-2} \voavac = [L_{-2} \hwv{c,0}]$,
and the Laurent modes of the vertex operator
\begin{align*}
\voaY(\voaconfvec , \ipvar) = \sum_{n \in \bZ} \ipvar^{-2-n} L_{n} 
\; \in \, \End(\voaV_c)[[\ipvar,\ipvar^{-1}]] 
\end{align*}
corresponding to the conformal vector
are the Virasoro generators~$L_n$, $n\in\Z$ (acting as endomorphisms of~$\voaV_c$).
This vertex operator algebra~$\voaV_c$ is known as the 
\term{universal Virasoro vertex operator algebra}
with central charce~$c$.

The irreducible highest weight representation of the Virasoro 
algebra with highest weight~$h=0$ can be also
obtained as the quotient of $\voaV_c$
by its maximal proper submodule. 
A simple vertex operator algebra can be formed
as the corresponding quotient of the 
universal Virasoro VOA~$\voaV_c$~\cite[Theorem~6.1.5]{Lepowsky_Li-VOA},
and we refer to this as the \term{simple Virasoro 
vertex operator algebra}
with central charce~$c$.
For generic central charges, $c = c(\kappa)$ with $\kappa \notin \bQ$,
the representation~$\voaV_c$ 
is in fact already irreducible by itself
(case~$\lambda=0$ of Lemma~\ref{lem: first row generic case}), 
so the simple Virasoro VOA 
coincides with the universal Virasoro VOA~$\voaV_c$.
In this article we focus on the generic case:
specifically we assume 
that $c=c(\kappa)$ as in~\eqref{eq: central charge parametrization} with
\begin{align}\label{eq: genericity}
\kappa \in (0,+\infty) \setminus \bQ .
\end{align}
For clarity, we then call~$\voaV_c$
the \term{generic Virasoro vertex operator algebra},
although it is also both the universal Virasoro VOA 
and the simple Virasoro VOA.
As a warning we already point out that, despite 
being a simple VOA, the generic Virasoro VOA~$\voaV_c$ fails
many key properties assumed in most VOA theory:
$\voaV_c$ is not $C_2$-cofinite, it has infinitely many simple modules,
it has modules which are not completely reducible, etc.

It is instructive to contrast the 
generic case we consider with what happens
for rational central charges~$c$~--- which are relevant, e.g., for minimal 
models of CFT.
For typical rational central charges the representation~$\voaV_c$
is not irreducible,
and correspondingly the universal Virasoro VOA and the simple Virasoro VOA are 
different. The simple Virasoro VOA has been studied extensively in these
cases, and is a model case of a well-behaved VOA:
it is in particular $C_2$-cofinite, 
has finitely many simple modules, 
has semisimple category of modules closed under fusion,
satisfies Verlinde's formula, etc.
The failure of such good properties for 
the generic Virasoro VOA is probably one of the main reasons
the generic Virasoro VOA has not been extensively studied yet.

\subsection{Modules and intertwining operators for VOAs}
\label{sub: modules and intertwining operators generally}

We introduce the notions of modules and intertwining operators for
a general vertex operator algebra $\voaV$.

\subsubsection*{Modules for vertex operator algebras}

A \term{module} for a vertex operator algebra~$\voaV$ is a vector 
space~$\voaM$ equipped with a linear map
\begin{align*}
\modY{\voaM} (\cdot , \ipvar) 
    \colon \voaV \to \End(\voaM)[[\ipvar^{\pm 1}]]
\end{align*}
subject to the following conditions.
For $\Vvec \in \voaV$,
let us denote by~$\Vvec^{\voaM}_{(n)} \in \End(\voaM)$ the coefficient
of~$\ipvar^{-1-n}$ in the formal
series~$\modY{\voaM}(\Vvec , \ipvar)$, so that
\begin{align}\label{eq: modes of module vertex operator}
\modY{\voaM} (\Vvec , \ipvar) 
   = \sum_{m \in \bZ} \ipvar^{-1-m} \, \Vvec^{\voaM}_{(m)} .
\end{align}
For the coefficients of the series
associated to the conformal vector~$\voaconfvec \in \voaV$,
we use the notation
$L^{\voaM}_{n} := \voaconfvec^{\voaM}_{(1+n)}$, $n\in\bZ$ so that
$\modY{\voaM} (\voaconfvec , \ipvar) 
    = \sum_{n \in \bZ} \ipvar^{-2-n} L^{\voaM}_{n} $.
When the module~$\voaM$ is sufficiently clear from the context,
we even omit the superscript and denote simply $L_{n} = L^{\voaM}_{n}$.
The required conditions then read:
\begin{itemize}
\item We have $\modY{\voaM} (\voavac , \ipvar) = \id_{\voaM}$.
\item For any $\Vvec_1 , \Vvec_2 \in \voaV$, the following Jacobi identity 
holds:
\begin{align}
\label{eq:Jacobi_module_map}
& 
    \ipvarO^{-1} \, \delta \Big( \frac{\ipvarI - \ipvarII}{\ipvarO} \Big) \,
    \modY{\voaM} (\Vvec_1 , \ipvarI) \, \modY{\voaM} (\Vvec_2 , \ipvarII) 
- \ipvarO^{-1} \, \delta \Big( \frac{\ipvarII - \ipvarI}{-\ipvarO} \Big) \,
    \modY{\voaM} (\Vvec_2 , \ipvarII) \, \modY{\voaM} (\Vvec_1 , \ipvarI) \\
= \; & \ipvarII^{-1} \, \delta \Big( \frac{\ipvarI - \ipvarO}{\ipvarII} \Big) \,
    \modY{\voaM} \big( \voaY (\Vvec_1 , \ipvarO) \Vvec_2 , \, \ipvarII \big) . \notag
\end{align}
\item The operator~$L^{\voaM}_{0} \in \End(\voaM)$ is diagonalizable,
its eigenspaces
$\voaM_{(\eta)} := \Kern \big( L^{\voaM}_{0} - \eta \, \id_{\voaM} \big)$ 
are finite-dimensional, and 
$\voaM_{(\eta)} = \set{0}$ when~$\re(\eta) \ll 0$.
\end{itemize}
A module~$\voaM$ thus has an $L^{\voaM}_0$-eigenspace decomposition
\begin{align}\label{eq: VOA module eigenspace decomposition}
\voaM = \bigoplus_{\eta \in \bC} \voaM_{(\eta)} .
\end{align}
The coefficients of the module
vertex operator~\eqref{eq: modes of module vertex operator}
respect this decomposition in the following sense.
\begin{lem}
\label{lem: homogeneity of the modes in modules}
For any homogeneous element $\Vvec \in \voaV_{(d)}$ of the
vertex operator algebra and any~$m \in \bZ$ and $\eta \in \bC$ we have
\begin{align*}
\Vvec^{\voaM}_{(m)} \, \voaM_{(\eta)} \subset \voaM_{(\eta-m+d-1)} .
\end{align*}
\end{lem}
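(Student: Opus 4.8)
The plan is to reduce the claim to a single structural fact about the vertex operator map, namely the grading compatibility that is encoded in the Jacobi identity~\eqref{eq:Jacobi_module_map}. The cleanest route is to use the well-known consequence of the VOA axioms that the Virasoro zero mode $L^{\voaM}_0$ acts on the modes $\Vvec^{\voaM}_{(m)}$ by a commutator formula. Concretely, I would first establish that for homogeneous $\Vvec \in \voaV_{(d)}$ one has
\begin{align*}
[L^{\voaM}_0 , \Vvec^{\voaM}_{(m)}] = (d - 1 - m) \, \Vvec^{\voaM}_{(m)} .
\end{align*}
Given this, the lemma is immediate: if $w \in \voaM_{(\eta)}$, i.e.\ $L^{\voaM}_0 w = \eta w$, then
\begin{align*}
L^{\voaM}_0 \big( \Vvec^{\voaM}_{(m)} w \big)
= \Vvec^{\voaM}_{(m)} L^{\voaM}_0 w + [L^{\voaM}_0 , \Vvec^{\voaM}_{(m)}] w
= \big( \eta + d - 1 - m \big) \Vvec^{\voaM}_{(m)} w ,
\end{align*}
so $\Vvec^{\voaM}_{(m)} w \in \voaM_{(\eta - m + d - 1)}$, which is the assertion.

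It therefore remains to prove the commutator formula, and this is the only real step. I would derive it by specializing the Jacobi identity~\eqref{eq:Jacobi_module_map} with $\Vvec_1 = \voaconfvec$ (the conformal vector) and $\Vvec_2 = \Vvec$: taking $\Res_{\ipvarO}$ of the identity after multiplying by an appropriate power of $\ipvarO$ produces the standard ``commutator formula'' expressing $[\,\Vvec_{1\,(p)}^{\voaM}, \Vvec_{2\,(m)}^{\voaM}\,]$ in terms of the modes of $\voaY(\Vvec_1,\ipvarO)\Vvec_2$. Applied with $\Vvec_1 = \voaconfvec$ and $p = 1$ — so that $\voaconfvec^{\voaM}_{(1)} = L^{\voaM}_0$ — this expresses $[L^{\voaM}_0 , \Vvec^{\voaM}_{(m)}]$ in terms of the modes of $\voaY(\voaconfvec,\ipvarO)\Vvec$. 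The grading axiom of the VOA says $L_0 \Vvec = \voaconfvec_{(1)} \Vvec = d\,\Vvec$ and $L_n \Vvec = \voaconfvec_{(n+1)}\Vvec = 0$ for $n > 0$ when $\Vvec \in \voaV_{(d)}$; plugging these in collapses the sum and yields the coefficient $(d - 1 - m)$. Alternatively, and perhaps more economically, one may simply quote this as a standard consequence of the module axioms (see, e.g., \cite{Lepowsky_Li-VOA}), since it is the $L(0)$-conjugation property of module vertex operators, namely $\ipvar^{L_0}\,\modY{\voaM}(\Vvec,\ipvarB)\,\ipvar^{-L_0} = \modY{\voaM}(\ipvar^{L_0}\Vvec,\ipvar\ipvarB)$, read off coefficientwise.

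The main obstacle, such as it is, is purely bookkeeping: extracting the commutator formula from the three-term delta-function Jacobi identity requires correctly applying $\Res_{\ipvarO}$ and using the substitution property of the formal delta function $\ipvarII^{-1}\delta\big((\ipvarI-\ipvarO)/\ipvarII\big)$ to identify the surviving term, while keeping the binomial-expansion conventions straight. No genuine difficulty arises because $\voaconfvec$ is homogeneous of degree $2$ and $\Vvec$ is homogeneous, so only finitely many modes contribute; in fact one does not even need the full commutator formula, only its $p=1$ instance, which is exactly the infinitesimal form of dilation covariance. I would present the short derivation from the Jacobi identity for completeness but note that it is entirely standard.
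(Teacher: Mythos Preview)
Your approach is correct and coincides with the paper's: derive the commutator $[L_0^{\voaM}, \Vvec^{\voaM}_{(m)}] = (d-1-m)\Vvec^{\voaM}_{(m)}$ from the Jacobi identity with $\Vvec_1 = \voaconfvec$, then conclude. One small correction: it is not true in general that $L_n \Vvec = 0$ for $n>0$ when $\Vvec \in \voaV_{(d)}$ (that holds only for primary vectors); the commutator-formula sum $\sum_{j\ge 0}\binom{1}{j}(\voaconfvec_{(j)}\Vvec)_{(m+1-j)}$ collapses to $j=0,1$ because $\binom{1}{j}=0$ for $j\ge 2$, and the $j=0$ term $(L_{-1}\Vvec)_{(m+1)} = -(m+1)\Vvec_{(m)}$ is handled by the translation property, not by any vanishing of $L_n\Vvec$.
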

\begin{proof}
This is a standard result; it follows from 
\begin{align*}
	[L_{0},\Vvec^{\voaM}_{(m)}]=(-m+d-1)\Vvec^{\voaM}_{(m)},\quad m\in \bZ ,
\end{align*}
which in turn follows from 
applying the Jacobi identity~(\ref{eq:Jacobi_module_map})
in the case that $\Vvec_{1}=\voaconfvec$ and $\Vvec_{2}=\Vvec$.
(Similar calculations will be
done in some more detail in a more general case
in Lemma~\ref{lem: intertwining property with rational functions}
and Corollary~\ref{cor:formula_intertw}, for example.)
\end{proof}

\subsubsection*{Contragredient modules}

Suppose that~$\voaM$ is a module for a vertex 
operator algebra~$\voaV$.
Considering the duals~$\voaM_{(\eta)}^* = \Hom(\voaM_{(\eta)},\bC)$ 
of the finite-dimensional
$L^\modM_{0}$-eigenspaces~$\voaM_{(\eta)}$ in the
decomposition~\eqref{eq: VOA module eigenspace decomposition},
the restricted dual (also called graded dual) is the space
\begin{align}
\voaM' = \bigoplus_{\eta \in \bC} \voaM_{(\eta)}^* .
\end{align}
It can be equipped with the structure of a
$\voaV$-module \cite{FrenkelHuangLepowsky1993, Xu1998}
so that the module vertex operator
$\modY{\voaM'}( \cdot , \ipvar)$ on $\voaM'$ is defined by
the formula
\begin{align*}
\Braket{\modY{\voaM'}(\Vvec,\ipvar) \Mvec' , \Mvec}
= \Braket{ \Mvec' , \modY{\voaM}(e^{\ipvar L_{1}}
   (-\ipvar^{-2})^{L_{0}} \Vvec , \ipvar^{-1}) \Mvec}, & \\
\text{ for } \Vvec \in \voaV, \ \Mvec' \in \voaM', \ \Mvec \in \voaM . &
\end{align*}
This $\voaV$-module~$\voaM'$ is called the \term{contragredient module} 
of~$\voaM$. Double contragredients are isomorphic
to the original modules, $\voaM'' \isom \voaM$.

\begin{lem}
\label{lem: Virasoro action on the contragredient}
We have $L_{n}^{\voaM'} = (L_{-n}^{\voaM})^\top$ for any $n \in \bZ$,
i.e., for any $\Mvec' \in \voaM'$ and $\Mvec \in \voaM$ we have
\begin{align*}
\langle {L_{n}^{\voaM'} \, \Mvec' , \Mvec } \rangle
= \langle {\Mvec' , L_{-n}^{\voaM} \, \Mvec } \rangle .
\end{align*}
\end{lem}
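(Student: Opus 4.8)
The strategy is to specialize the defining formula for the contragredient vertex operator to the conformal vector $\voaconfvec$ and then read off the Laurent modes on both sides.

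First I would record the relevant facts about $\voaconfvec$ inside the VOA~$\voaV_c$: by construction $\voaconfvec = L_{-2}\voavac$ is homogeneous of conformal weight~$2$, so $L_{0}\voaconfvec = 2\voaconfvec$; moreover $L_{1}\voaconfvec = L_{1}L_{-2}\voavac = [L_{1},L_{-2}]\voavac = 3\,L_{-1}\voavac = 0$, since $L_{-1}\voavac = 0$ in~$\voaV_c$. Consequently, acting on~$\voaconfvec$, the operator $(-\ipvar^{-2})^{L_{0}}$ is just multiplication by $(-\ipvar^{-2})^{2} = \ipvar^{-4}$ (there is no branch ambiguity here, as $\voaconfvec$ has integer conformal weight), and $e^{\ipvar L_{1}}\voaconfvec = \voaconfvec$. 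Substituting $\Vvec = \voaconfvec$ into the defining formula for $\modY{\voaM'}$ and using linearity of $\modY{\voaM}(\cdot,\ipvar^{-1})$ in its first argument therefore gives, for all $\Mvec' \in \voaM'$ and $\Mvec \in \voaM$,
\[
\Braket{\modY{\voaM'}(\voaconfvec,\ipvar)\,\Mvec' , \Mvec}
= \ipvar^{-4}\,\Braket{\Mvec' , \modY{\voaM}(\voaconfvec,\ipvar^{-1})\,\Mvec} .
\]

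Next I would expand both sides in Laurent modes using $\modY{\voaM'}(\voaconfvec,\ipvar) = \sum_{n\in\bZ}\ipvar^{-2-n}L_{n}^{\voaM'}$ and $\modY{\voaM}(\voaconfvec,\ipvar^{-1}) = \sum_{m\in\bZ}\ipvar^{2+m}L_{m}^{\voaM}$. The left-hand side becomes $\sum_{n\in\bZ}\ipvar^{-2-n}\Braket{L_{n}^{\voaM'}\Mvec',\Mvec}$ and the right-hand side becomes $\ipvar^{-4}\sum_{m\in\bZ}\ipvar^{2+m}\Braket{\Mvec',L_{m}^{\voaM}\Mvec} = \sum_{m\in\bZ}\ipvar^{m-2}\Braket{\Mvec',L_{m}^{\voaM}\Mvec}$. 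Matching the coefficient of $\ipvar^{-2-n}$ forces $m = -n$, which yields $\Braket{L_{n}^{\voaM'}\Mvec',\Mvec} = \Braket{\Mvec',L_{-n}^{\voaM}\Mvec}$, i.e.\ $L_{n}^{\voaM'} = (L_{-n}^{\voaM})^{\top}$, as claimed.

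There is essentially no obstacle in this argument; the only two points that merit a line of justification are the identity $L_{1}\voaconfvec = 0$ (which rests on $L_{-1}\voavac = 0$, built into the universal Virasoro VOA) and the observation that $(-\ipvar^{-2})^{L_{0}}$ introduces no fractional powers of~$\ipvar$ because $\voaconfvec$ is homogeneous of integer weight. Everything else is a formal manipulation of Laurent series together with matching of coefficients.
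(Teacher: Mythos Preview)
Your proof is correct and follows exactly the same approach as the paper: specialize the defining formula of the contragredient to $\Vvec = \voaconfvec$, use $e^{\ipvar L_{1}}(-\ipvar^{-2})^{L_{0}}\voaconfvec = \ipvar^{-4}\voaconfvec$, and match the coefficients of $\ipvar^{-2-n}$. The paper's proof is simply a one-sentence version of what you wrote; your additional justification of $L_{1}\voaconfvec = 0$ and $L_{0}\voaconfvec = 2\voaconfvec$ just fills in the details behind that identity.
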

\begin{proof}
Taking $\Vvec = \voaconfvec$ and using
$e^{\ipvar L_{1}} (-\ipvar^{-2})^{L_{0}} \voaconfvec = \ipvar^{-4} \, \voaconfvec$,
this is just the equality of the coefficients of $\ipvar^{-2-n}$
in the defining formula above.
\end{proof}

\subsubsection*{Intertwining operators}

As an analogy to that the tensor product of modules over a Hopf algebra is governed by the coproduct structure,
the {\it fusion product} of modules of a VOA $\voaV$ is governed by the notion of intertwining operators.

\begin{defn}
Let $\modMmid, \modMin, \modMout$ be three modules for a VOA~$\voaV$,
with respective module vertex operators $\modY{\modMmid} (\cdot, \ipvar), 
\modY{\modMin} (\cdot, \ipvar) , 
\modY{\modMout} (\cdot, \ipvar)$.
An intertwining operator of 
type~$\voaFusion{\modMmid}{\modMin}{\modMout}$
is a linear map
\begin{align*}
\intertw (\cdot,\apvar) \colon
    \modMmid \to \Hom( \modMin , \modMout) \{\apvar\}
\end{align*}
satisfying the Jacobi identity
\begin{align}\label{eq:intertw_assoc}
& \ipvarB^{-1} \,
        \delta\left(\frac{\ipvar-\apvar}{\ipvarB}\right) \,
    \modY{\modMout} (\Vvec, \ipvar) \,
    \intertw(\Mvec , \apvar)
- \ipvarB^{-1} \,
        \delta\left(\frac{\apvar-\ipvar}{-\ipvarB}\right) \,
    \intertw (\Mvec , \apvar) \,
    \modY{\modMin}(\Vvec , \ipvar) \\
\notag
= \; & \apvar^{-1} \,
        \delta\left(\frac{\ipvar-\ipvarB}{\apvar}\right)
    \intertw \Big( \modY{\modMmid}(\Vvec , \ipvarB) \Mvec 
        , \, \apvar \Big)
\end{align}
and the translation property
\begin{equation}
\label{eq:intertw_translation}
\intertw \big( L_{-1}^{\modMmid} \Mvec, \apvar \big)
= \der{\apvar} \intertw (\Mvec , \apvar)
\end{equation}
for all $\Vvec \in \voaV$ and $\Mvec \in \modMmid$.	
\end{defn}

The space of intertwining operators of 
type~$\voaFusion{\modMmid}{\modMin}{\modMout}$ is a vector
space, which we will denote by
\begin{align*}
\spintertw \voaFusion{\modMmid}{\modMin}{\modMout} . 
\end{align*}
It is known that spaces of intertwining operators
admit the following symmetry.
\begin{prop}[\cite{Huang--LepowskyII}]
\label{prop:symmetry_fusion_rules}
Let $\modMin$, $\modMmid$, and $\modMout$
be modules of a VOA~$\voaV$. Then, we have linear isomorphisms
\begin{equation*}
	\spintertw \voaFusion{\modMmid}{\modMin}{\modMout}
\; \simeq \; \spintertw \voaFusion{\modMin}{\modMmid}{\modMout}
\; \simeq \; \spintertw \voaFusion{\modMmid}{\modMout'}{\modMin'}.
\end{equation*}
\end{prop}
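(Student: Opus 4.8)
The plan is to follow the classical construction of Huang and Lepowsky~\cite{Huang--LepowskyII} (cf.\ also~\cite{FrenkelHuangLepowsky1993}), realizing the two isomorphisms explicitly: the first as a \emph{skew-symmetry} operation $\Omega$ on intertwining operators, the second as an \emph{adjoint} operation $A$ modelled on the contragredient module construction recalled above. In both cases the content is an explicit linear bijection; the work is to check that the image of an intertwining operator again satisfies the Jacobi identity~\eqref{eq:intertw_assoc} and the translation property~\eqref{eq:intertw_translation}, and to exhibit an explicit inverse.

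For the skew-symmetry isomorphism $\spintertw \voaFusion{\modMmid}{\modMin}{\modMout} \simeq \spintertw \voaFusion{\modMin}{\modMmid}{\modMout}$, given $\intertw \in \spintertw \voaFusion{\modMmid}{\modMin}{\modMout}$ I would set
\begin{align*}
\big( \Omega \intertw \big)(\Mvec_1 , \apvar) \, \Mvec
  := e^{\apvar L_{-1}^{\modMout}} \, \intertw\big(\Mvec , e^{\pi\ii}\apvar\big) \, \Mvec_1
\qquad \text{for } \Mvec_1 \in \modMin ,\ \Mvec \in \modMmid ,
\end{align*}
where $\apvar \mapsto e^{\pi\ii}\apvar$ denotes the formal substitution on general series (which multiplies the coefficient of $\apvar^\alpha$ by $e^{\pi\ii\alpha}$). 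The translation property of $\Omega\intertw$ follows from that of $\intertw$ together with the $L_{-1}$-bracket formula $L_{-1}^{\modMout}\intertw(\Mvec,\apvar) - \intertw(\Mvec,\apvar)L_{-1}^{\modMin} = \der{\apvar}\intertw(\Mvec,\apvar)$ (a standard consequence of~\eqref{eq:intertw_assoc} with $\Vvec = \voaconfvec$), with the sign $e^{\pi\ii} = -1$ entering in the right place. The Jacobi identity for $\Omega\intertw$ is obtained by inserting the definition into~\eqref{eq:intertw_assoc}, conjugating the $\modMout$-vertex operators through $e^{\apvar L_{-1}^{\modMout}}$ via $e^{\apvar L_{-1}^{\modMout}}\modY{\modMout}(\Vvec,\ipvar)e^{-\apvar L_{-1}^{\modMout}} = \modY{\modMout}(\Vvec,\ipvar+\apvar)$, and re-expressing the delta-function identities after the change of variables. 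The opposite branch $\apvar \mapsto e^{-\pi\ii}\apvar$ furnishes a two-sided inverse, so $\Omega$ is an isomorphism.

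For the adjoint isomorphism $\spintertw \voaFusion{\modMmid}{\modMin}{\modMout} \simeq \spintertw \voaFusion{\modMmid}{\modMout'}{\modMin'}$, given $\intertw \in \spintertw \voaFusion{\modMmid}{\modMin}{\modMout}$ I would define $A\intertw$ of type $\voaFusion{\modMmid}{\modMout'}{\modMin'}$ by
\begin{align*}
\Braket{ \big( A\intertw \big)(\Mvec , \apvar) \, \Mvec_\infty' , \, \Mvec_0 }
  := \Braket{ \Mvec_\infty' , \, \intertw\big( e^{\apvar L_1}\, (e^{\pi\ii}\apvar^{-2})^{L_0}\, \Mvec , \, \apvar^{-1} \big) \, \Mvec_0 }
\end{align*}
for $\Mvec \in \modMmid$, $\Mvec_\infty' \in \modMout'$, $\Mvec_0 \in \modMin$ (with $L_0, L_1$ the modes on $\modMmid$), in direct analogy with the defining formula of the contragredient module vertex operator. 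The translation property of $A\intertw$ follows from the conjugation identities for $e^{\apvar L_1}(e^{\pi\ii}\apvar^{-2})^{L_0}$ --- the same ones underlying the contragredient module, cf.\ Lemma~\ref{lem: Virasoro action on the contragredient} --- together with the translation property of $\intertw$; the Jacobi identity follows from that of $\intertw$ and the defining property of the module vertex operators on $\modMin'$ and $\modMout'$, after the substitution $\ipvar \mapsto \ipvar^{-1}$ and the corresponding delta-function manipulations. Applying $A$ once more (with the conjugate branch) and composing with the canonical identifications $\modMin'' \isom \modMin$, $\modMout'' \isom \modMout$ recovers the identity, so $A$ is an isomorphism; composing $\Omega$ and $A$ then yields all three asserted isomorphisms.

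The main obstacle is not conceptual but a matter of careful bookkeeping: since $\intertw(\Mvec,\apvar)$ takes values in the space of \emph{general} formal series $\Hom(\modMin,\modMout)\{\apvar\}$, the substitutions $\apvar\mapsto e^{\pm\pi\ii}\apvar$ and $\apvar\mapsto\apvar^{-1}$ interact nontrivially with the binomial expansion convention, and one must verify that after these operations both sides of the Jacobi identity still lie in the appropriate spaces of formal series and that the delta-function identities remain valid there. All of these verifications are carried out in full in~\cite{Huang--LepowskyII} (and, in the closely related VOA-axiom setting, in~\cite{FrenkelHuangLepowsky1993}); in the present generic first-row situation all modules at hand are highest-weight modules with real conformal weights, which only streamlines the branch bookkeeping.
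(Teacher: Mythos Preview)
Your proposal is correct and follows exactly the approach of the cited reference~\cite{Huang--LepowskyII}: the paper itself gives no proof of this proposition, merely attributing it to Huang--Lepowsky, and your sketch of the skew-symmetry operation~$\Omega$ and the adjoint operation~$A$ is precisely their construction.
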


The next lemma is useful for calculations with intertwining operators,
and it is in fact equivalent to 
the property~\eqref{eq:intertw_assoc}.
\begin{lem}\label{lem: intertwining property with rational functions}
Let $\intertw (\cdot, \apvar)$ be an intertwining operator of type 
$\voaFusion{\modMmid}{\modMin}{\modMout}$.
Then, for any $p,q \in{\bZ}$,
$\Vvec \in \voaV$, $\Mvec \in \modMmid$, we have
\begin{align*}
& \Res_{\ipvar} \Big(
    \modY{\modMout}(\Vvec,\ipvar) \,
    \intertw(\Mvec , \apvar) \;
    \ipvar^{p} \,
    (\ipvar-\apvar)^{q} 
    \Big)
  - \Res_{\ipvar} \Big(
    \intertw(\Mvec , \apvar) \,
    \modY{\modMin}(\Vvec,\ipvar) \;
    \ipvar^{p} \,
    (-\apvar+\ipvar)^{q} 
    \Big) \\
= \; & \Res_{\ipvarO} \Big(
    \intertw \big( \modY{\modMmid}(\Vvec , \ipvarO) \Mvec , \apvar \big) \;
    (\apvar + \ipvarO)^{p} \,
    \ipvarO^{q} 
    \Big) .
\end{align*}
\end{lem}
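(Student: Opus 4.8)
The plan is to obtain the identity by extracting a single coefficient from the Jacobi identity~\eqref{eq:intertw_assoc} for $\intertw$, in the same spirit as one derives the commutator and iterate formulas from the Jacobi identity of a VOA module. Throughout I use the dictionary between the formal variables of~\eqref{eq:intertw_assoc} and of the statement: $\ipvar$ is the ``module action'' variable, $\apvar$ is the intertwining variable, and the variable written $\ipvarB$ in~\eqref{eq:intertw_assoc} is the one that will be renamed $\ipvarO$ in the conclusion.

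First I would multiply both sides of~\eqref{eq:intertw_assoc} by the monomial $\ipvar^{p}\,\ipvarB^{q}$ and then extract the coefficient of $\ipvar^{-1}\ipvarB^{-1}$, i.e.\ apply $\Res_{\ipvar}\Res_{\ipvarB}$. Residues in two distinct formal variables commute and may be performed one at a time, and the relevant operator products and residues are well defined by the standard lower-truncation properties of module vertex operators and of intertwining operators (cf.\ Lemma~\ref{lem: homogeneity of the modes in modules}); so this coefficient extraction is legitimate. On the left-hand side of~\eqref{eq:intertw_assoc} the products $\modY{\modMout}(\Vvec,\ipvar)\,\intertw(\Mvec,\apvar)$ and $\intertw(\Mvec,\apvar)\,\modY{\modMin}(\Vvec,\ipvar)$ carry no $\ipvarB$, so $\Res_{\ipvarB}$ lands only on the delta-function prefactors, and the two elementary identities
\[
\Res_{\ipvarB}\Big( \ipvarB^{q-1}\,\delta\big(\tfrac{\ipvar-\apvar}{\ipvarB}\big) \Big) = (\ipvar-\apvar)^{q} ,
\qquad
\Res_{\ipvarB}\Big( \ipvarB^{q-1}\,\delta\big(\tfrac{\apvar-\ipvar}{-\ipvarB}\big) \Big) = (-\apvar+\ipvar)^{q} ,
\]
interpreted with the binomial expansion convention of Section~\ref{sub: conventions about formal series}, produce exactly the factors $\ipvar^{p}(\ipvar-\apvar)^{q}$ and $\ipvar^{p}(-\apvar+\ipvar)^{q}$ of the statement; the outstanding $\Res_{\ipvar}$ leaves these two terms untouched.

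For the right-hand side of~\eqref{eq:intertw_assoc}, after renaming $\ipvarB$ to $\ipvarO$, the only $\ipvar$-dependence resides in the prefactor $\apvar^{-1}\delta\big(\frac{\ipvar-\ipvarO}{\apvar}\big)$, so $\Res_{\ipvar}$ acts solely there, and I would use
\[
\Res_{\ipvar}\Big( \ipvar^{p}\,\apvar^{-1}\,\delta\big(\tfrac{\ipvar-\ipvarO}{\apvar}\big) \Big) = (\apvar+\ipvarO)^{p} ,
\]
which again follows from the binomial expansion convention together with the identity $\binom{-a}{n}=(-1)^{n}\binom{a+n-1}{n}$. The remaining $\Res_{\ipvarO}$, carrying the leftover weight $\ipvarO^{q}$, then reproduces exactly $\Res_{\ipvarO}\big( \intertw(\modY{\modMmid}(\Vvec,\ipvarO)\Mvec,\apvar)\,(\apvar+\ipvarO)^{p}\,\ipvarO^{q}\big)$. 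Equating the two sides yields the claimed formula.

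The substance of the argument is bookkeeping of the binomial expansion conventions: one must verify that the residue of each of the first two delta functions lands on the \emph{correct} one of the two expansions $(\ipvar-\apvar)^{q}$ versus $(-\apvar+\ipvar)^{q}$, and that the iterate delta function yields $(\apvar+\ipvarO)^{p}$ with the expansion matching the statement. This is where I would be most careful; everything else is mechanical. Finally, since every step is reversible — the three delta functions can be reconstituted by summing the above residue identities over all $p$ (resp.\ $q$) — reading the computation backwards shows that the stated identity is in fact equivalent to~\eqref{eq:intertw_assoc}, as indicated in the surrounding text.
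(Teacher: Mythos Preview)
Your proposal is correct and follows essentially the same route as the paper: multiply the Jacobi identity~\eqref{eq:intertw_assoc} by $\ipvar^{p}\ipvarB^{q}$ and take the two residues, using the delta-function substitution identities to produce $(\ipvar-\apvar)^{q}$, $(-\apvar+\ipvar)^{q}$, and $(\apvar+\ipvarO)^{p}$ in the correct binomial expansions. The paper carries this out in the order $\Res_{\ipvarB}$ then $\Res_{\ipvar}$ and writes out the right-hand side as an explicit sum $\sum_{k\ge 0}\binom{p}{k}\apvar^{p-k}\intertw(\Vvec_{(q+k)}\Mvec,\apvar)$ before repackaging it as $\Res_{\ipvarO}$, whereas you invoke the delta-function residue identities directly; the content is identical.
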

\begin{proof}
Take the terms in~\eqref{eq:intertw_assoc} proportional 
to~$\ipvarB^{-q-1}$ to obtain
\begin{align*}
& \modY{\modMout} (\Vvec , \ipvar)
    \intertw (\Mvec , \apvar)
    (\ipvar-\apvar)^{q}
  - \intertw (\Mvec , \apvar)
    \modY{\modMin} (\Vvec , \ipvar)
    (-\apvar+\ipvar)^{q} \\
= \; & 
    \Res_{\ipvarB} \bigg( \ipvarB^{q} \apvar^{-1} \,
    \delta \Big(\frac{\ipvar-\ipvarB}{\apvar} \Big) \,
    \intertw \big( \modY{\modMmid} (\Vvec , \ipvarB) \Mvec , \apvar \big) 
    \bigg) \\
= \; & \sum_{m\in{\bZ}} \sum_{k=0}^{\infty}
    (-1)^{k} \binom{m}{k} \ipvar^{m-k} \apvar^{-m-1}
    \intertw (\Vvec_{(q+k)} \Mvec,\apvar) .
\end{align*}
We further multiply $\ipvar^{p}$ and take the residue with respect to $\ipvar$ 
to obtain
\begin{align*}
& \Res_{\ipvar} \Big( \modY{\modMout} (\Vvec , \ipvar) \,
    \intertw (\Mvec , \apvar) \;
    \ipvar^{p} 
    (\ipvar-\apvar)^{q} 
    \Big)
 - \Res_{\ipvar} \Big(
    \intertw (\Mvec , \apvar) \,
    \modY{\modMin}(\Vvec , \ipvar) \;
    \ipvar^{p}
    (\ipvar-\apvar)^{q} 
    \Big) \\
= \; & \sum_{k=0}^{\infty} \binom{p}{k} \apvar^{p-k}
    \intertw (\Vvec_{(q+k)} \Mvec , \apvar) \\
= \; & \Res_{\ipvarO} \Big(
    \intertw \big( \modY{\modMmid} (\Vvec , \ipvarO) \Mvec , \, \apvar \big) \;
    (\apvar+\ipvarO)^{p} \ipvarO^{q} 
    \Big) .
\end{align*}
This proves the assertion.
\end{proof}

From the above formulas, we get the following equations 
for intertwining operators, which give the basis of many
recursive constructions with respect to the PBW
filtration~\eqref{eq: PBW filtration} that we will use.
\begin{cor}
\label{cor:formula_intertw}
Let $\intertw (\cdot, \apvar)$ be an intertwining operator of type 
$\voaFusion{\modMmid}{\modMin}{\modMout}$.
\begin{enumerate}
\item	For any $\Mvec \in \modMmid$, $n\in{\bZ}$, we have
\begin{align*}
& \intertw (L_{n}^{\modMmid} \, \Mvec, \, \apvar) \\
= & \; \Res_{\ipvar} \Big(
    \modY{\modMout} (\voaconfvec , \ipvar) \,
    \intertw (\Mvec , \apvar) \,
	(\ipvar-\apvar)^{n+1} 
  - 
    \intertw (\Mvec , \apvar) \,
    \modY{\modMin}(\voaconfvec , \ipvar) \,
    (-\apvar+\ipvar)^{n+1} \Big) \\
= \; & \sum_{k=0}^{\infty} 
    \binom{n+1}{k} \, (-\apvar)^{k} \,
    L_{n-k}^{\modMout} \, \intertw (\Mvec , \apvar)
  - \sum_{k=0}^{\infty}
    \binom{n+1}{k} \, (-\apvar)^{n-k+1} \,
    \intertw (\Mvec , \apvar) \, L_{k-1}^{\modMin}
    .
\end{align*}
\item	For any $\Mvec \in \modMmid$, $n\in{\bZ}$, we have
\begin{align*}
L_{n}^{\modMout} \, \intertw (\Mvec , \apvar)
- \intertw (\Mvec , \apvar) \, L_{n}^{\modMin}
= \; & \sum_{k=0}^{\infty}
    \binom{n+1}{k} \, \apvar^{n-k+1} \,
    \intertw (L_{k-1}^{\modMmid} \, \Mvec , \apvar)
    .
\end{align*}
\item 	For any $\Mvec \in \modMmid$, we have
\begin{align*}
L_{-1}^{\modMout} \, \intertw (\Mvec , \apvar)
- \intertw (\Mvec , \apvar) \, L_{-1}^{\modMin}
= \der{\apvar} \intertw (\Mvec,\apvar).
\end{align*}
\end{enumerate}
\end{cor}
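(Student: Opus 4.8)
The plan is to obtain all three formulas as specializations of Lemma~\ref{lem: intertwining property with rational functions} (and of the intermediate identity appearing in its proof) to the case $\Vvec=\voaconfvec$, followed by elementary coefficient bookkeeping and a single binomial identity.

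For part~(1), I would invoke Lemma~\ref{lem: intertwining property with rational functions} with $\Vvec=\voaconfvec$, $p=0$, and $q=n+1$. On the right-hand side, writing $\modY{\modMmid}(\voaconfvec,\ipvarO)=\sum_{m\in\bZ}\ipvarO^{-2-m}L_m^{\modMmid}$ and using $(\apvar+\ipvarO)^0=1$, the residue in $\ipvarO$ picks out the single term $\intertw(L_n^{\modMmid}\Mvec,\apvar)$; this yields the first displayed equality (the $p=0$ choice is exactly what collapses the sum to one term). For the second equality I would substitute $\modY{\modMout}(\voaconfvec,\ipvar)=\sum_m\ipvar^{-2-m}L_m^{\modMout}$ and $\modY{\modMin}(\voaconfvec,\ipvar)=\sum_m\ipvar^{-2-m}L_m^{\modMin}$, expand $(\ipvar-\apvar)^{n+1}$ and $(-\apvar+\ipvar)^{n+1}$ with the binomial expansion convention, and read off the coefficient of $\ipvar^{-1}$: in the first term the constraint $-2-m+(n+1-k)=-1$ forces $m=n-k$, and in the second term $-2-m+k=-1$ forces $m=k-1$, which reproduces the two stated sums.

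For part~(2), rather than the residue form I would start from the intermediate identity established halfway through the proof of Lemma~\ref{lem: intertwining property with rational functions}, namely
\begin{align*}
& \modY{\modMout}(\Vvec,\ipvar)\,\intertw(\Mvec,\apvar)\,(\ipvar-\apvar)^q
  - \intertw(\Mvec,\apvar)\,\modY{\modMin}(\Vvec,\ipvar)\,(-\apvar+\ipvar)^q \\
= \; & \sum_{m\in\bZ}\sum_{k=0}^\infty (-1)^k \binom{m}{k}\, \ipvar^{m-k}\, \apvar^{-m-1}\, \intertw(\Vvec_{(q+k)}\Mvec,\apvar) ,
\end{align*}
specialized to $\Vvec=\voaconfvec$ and $q=0$. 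Then $(\ipvar-\apvar)^0=(-\apvar+\ipvar)^0=1$, so the left side is $\sum_m\ipvar^{-2-m}\big(L_m^{\modMout}\intertw(\Mvec,\apvar)-\intertw(\Mvec,\apvar)L_m^{\modMin}\big)$, while on the right $\voaconfvec_{(k)}=L_{k-1}^{\modMmid}$. Extracting the coefficient of $\ipvar^{-2-n}$ forces $m-k=-2-n$ on the right, giving $\sum_{k\geq 0}(-1)^k\binom{k-n-2}{k}\apvar^{n+1-k}\intertw(L_{k-1}^{\modMmid}\Mvec,\apvar)$; pulling a factor $-1$ out of each of the $k$ consecutive descending factors in the numerator of $\binom{k-n-2}{k}$ shows $(-1)^k\binom{k-n-2}{k}=\binom{n+1}{k}$, which produces the claimed expression. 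Part~(3) is then just the case $n=-1$ of part~(2): since $\binom{0}{k}=\delta_{k,0}$ only the $k=0$ term survives, leaving $\intertw(L_{-1}^{\modMmid}\Mvec,\apvar)$, which equals $\der{\apvar}\intertw(\Mvec,\apvar)$ by the translation property~\eqref{eq:intertw_translation}.

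The content is entirely formal, so there is no genuine obstacle; the only points needing care are (i) justifying that the manipulations are valid as identities of formal series in $\apvar$ — i.e. that after applying everything to a fixed vector of $\modMin$, each fixed power of $\apvar$ receives contributions from only finitely many terms of each infinite sum — which follows from the lower-truncation property of intertwining operators (it forces $L_{k-1}^{\modMmid}\Mvec=0$ for $k\gg 0$ and bounds the $\apvar$-powers of $\intertw(\Mvec,\apvar)w$ below), and (ii) the routine bookkeeping of which mode index survives the residue/coefficient extraction. Parts (1) and (2) are proved by the same template and could in principle be reindexed into one another, but deriving each directly is cleanest.
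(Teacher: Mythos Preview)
Your proof is correct and follows essentially the same route as the paper: specialize Lemma~\ref{lem: intertwining property with rational functions} with $\Vvec=\voaconfvec$, taking $(p,q)=(0,n+1)$ for part~(1) and (effectively) $(p,q)=(n+1,0)$ for part~(2), then obtain part~(3) as the case $n=-1$ of~(2) combined with the translation property. The only cosmetic difference is that for~(2) you extract the coefficient of $\ipvar^{-2-n}$ from the intermediate $q=0$ identity and then verify $(-1)^k\binom{k-n-2}{k}=\binom{n+1}{k}$, whereas the paper applies the lemma with $p=n+1$ directly, so that the binomial coefficient $\binom{n+1}{k}$ arises immediately from the expansion of $(\apvar+\ipvarO)^{n+1}$ on the right-hand side without any further identity; the two computations are the same residue in disguise.
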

\begin{rmk}
From here on, we will not use as careful notation as above
to indicate in which modules the different $L_n$'s act.
We will instead abuse the notation slightly and write
\begin{align*}
L_{n}^{\modMout} \, \intertw (\Mvec , \apvar)
- \intertw (\Mvec , \apvar) \, L_{n}^{\modMin}
= \; & \big[ L_{n} , \intertw (\Mvec,\apvar) \big]
\end{align*}
in, e.g., parts (2) and (3) above.
\end{rmk}
\begin{proof}[Proof of Corollary~\ref{cor:formula_intertw}]
Recall that $L_{n}=\voaconfvec_{(n+1)}$, $n\in{\bZ}$.
By setting $\Vvec=\voaconfvec$, $p=0$, $q=n+1$, we see (1).
By setting $\Vvec=\voaconfvec$, $p=n+1$, $q=0$, we see (2).
In particular the latter gives
\begin{align*}
\big[ L_{-1} , \intertw (\Mvec , \apvar) \big]
= \; & \intertw (L_{-1} \Mvec , \apvar) ,
\end{align*}
and the right hand side here
equals $\der{\apvar} \intertw (\Mvec,\apvar)$ by
the translation property~(\ref{eq:intertw_translation}) of intertwining operators.
\end{proof}

\subsection{Modules and intertwining operators for the generic Virasoro VOA}
\label{sub: first row modules and intertwining operators}
We now discuss modules and intertwining operators focusing on 
the case of the generic Virasoro VOA $\voaV_{c}$,
and describe the subcategory of modules that is the topic of this article.
We continue to parametrize the central charge~$c\leq 1$ by $\kappa>0$
as in~\eqref{eq: central charge parametrization}.
Throughout we make the genericity assumption that~$\kappa \notin \bQ$.

\subsubsection*{Modules}
Suppose that $\virgenrep$ is a
representation of the Virasoro algebra
where~$C$ acts as $c \, \id_{\virgenrep}$, and
$L_0$ acts diagonalizably with finite-dimensional eigenspaces
and eigenvalues with real part bounded from below.
Then~$\virgenrep$ 
has a unique structure of a module 
for the universal Virasoro VOA~$\voaV_{c}$ such that
\begin{align*}
\modY{\virgenrep}(\voaconfvec , \ipvar) 
    = \sum_{n \in \bZ} \ipvar^{-n-2} L_{n} , 
\end{align*}
i.e., $\voaconfvec^{\virgenrep}_{(m)} = L_{m-1} \in \End(\virgenrep)$,
see, e.g.,~\cite[Theorem~6.1.7]{Lepowsky_Li-VOA}.

In particular any of the Verma 
modules~$\VermaFR{\lambda}=\VermaCH{c}{\hwFR{\lambda}}$, $\lambda\in \Znn$,
and their irreducible quotient representations
\[
\voaFR{\lambda} = 
\VermaFR{\lambda} \, \big/ \, \VermaCH{c}{\, \hwFR{\lambda}+1+\lambda} ,
    \qquad \lambda\in \Znn
\]
in Lemma~\ref{lem: first row generic case} becomes a module for the
generic Virasoro VOA~$\voaV_c$. 
We call these $\voaFR{\lambda}$, $\lambda\in\Znn$ \term{first row modules},
and we denote the module vertex operators in them simply by
\begin{align*}
\modY{\lambda} (\cdot , \ipvar) 
    \colon \voaV_c \to \End(\voaFR{\lambda}) [[\ipvar^{\pm 1}]] .
\end{align*}

Irreducible highest weight representations are 
self-dual in the following sense.
\begin{lem}
\label{lem: self duality for irreducible hwreps}
Let $\virgenrep$ be an irreducible highest weight representation
of Virasoro algebra, viewed as a module for the
universal Virasoro VOA~$\voaV_{c}$.
Then the contragradient module $\virgenrep^{\prime}$ is 
isomorphic to~$\virgenrep$.	
\end{lem}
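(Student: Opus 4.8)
The plan is to exhibit an explicit highest weight vector inside the contragredient module $\virgenrep'$, to check that $\virgenrep'$ is irreducible, and then to invoke the uniqueness of the irreducible highest weight representation with prescribed central charge and conformal weight. Write $c$ for the central charge of $\virgenrep$ and $\eta$ for the conformal weight of its highest weight vector $\singvec$. Since $\virgenrep = \UEA(\vir)\singvec = \UEA(\vir_{<0})\singvec$, the module $\virgenrep$ is spanned by vectors $L_{-n_k}\cdots L_{-n_1}\singvec$, each of which has $L_0$-eigenvalue $\eta + n_1 + \cdots + n_k \geq \eta$ with equality only for the empty product; hence the bottom eigenspace is one-dimensional, $\virgenrep_{(\eta)} = \C\singvec$, and $\virgenrep_{(\mu)} = 0$ unless $\mu \in \eta + \Znn$. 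Recall also that the graded dual satisfies $\virgenrep'_{(\mu)} = \virgenrep_{(\mu)}^{*}$.

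First I would build the candidate highest weight vector of $\virgenrep'$. As $\virgenrep_{(\eta)}$ is one-dimensional, let $\singvec^{*} \in \virgenrep_{(\eta)}^{*} \subset \virgenrep'$ be the functional with $\langle \singvec^{*}, \singvec \rangle = 1$, extended by $0$ on the remaining homogeneous subspaces. I claim $\singvec^{*}$ is a singular vector of conformal weight $\eta$ and central charge $c$: it lies in the $L_0^{\virgenrep'}$-eigenspace $\virgenrep'_{(\eta)}$, so $L_0^{\virgenrep'}\singvec^{*} = \eta\,\singvec^{*}$; since $\virgenrep'$ is a module, $L_n^{\virgenrep'}$ lowers the $L_0$-grading by $n$, and $\virgenrep'_{(\eta-n)} = \virgenrep_{(\eta-n)}^{*} = 0$ for $n>0$ forces $L_n^{\virgenrep'}\singvec^{*} = 0$ for all $n>0$; and the operators $L_n^{\virgenrep'}$ satisfy the Virasoro relations with central charge $c$, which follows from $L_n^{\virgenrep'} = (L_{-n}^{\virgenrep})^{\top}$ of Lemma~\ref{lem: Virasoro action on the contragredient} together with $[A^{\top},B^{\top}] = [B,A]^{\top}$ applied to the relations among the $L_n^{\virgenrep}$.

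Next I would prove that $\virgenrep'$ is irreducible. A submodule $N \subseteq \virgenrep'$ is $L_0^{\virgenrep'}$-stable, hence graded, $N = \bigoplus_{\mu} N_{(\mu)}$ with $N_{(\mu)} \subseteq \virgenrep_{(\mu)}^{*}$; put $N^{\perp} := \set{ w \in \virgenrep : \langle f, w \rangle = 0 \text{ for all } f \in N }$, which is graded, with $N^{\perp} \cap \virgenrep_{(\mu)}$ the annihilator of $N_{(\mu)}$ inside the finite-dimensional space $\virgenrep_{(\mu)}$. Rewriting Lemma~\ref{lem: Virasoro action on the contragredient} as $\langle f, L_n^{\virgenrep} w \rangle = \langle L_{-n}^{\virgenrep'} f, w \rangle$ shows at once that $N^{\perp}$ is a $\vir$-submodule of $\virgenrep$. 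If $N \neq 0$ then some $N_{(\mu)} \neq 0$, so $N^{\perp} \cap \virgenrep_{(\mu)} \subsetneq \virgenrep_{(\mu)}$ and $N^{\perp} \neq \virgenrep$; if $N \neq \virgenrep'$ then some $N_{(\mu)} \subsetneq \virgenrep_{(\mu)}^{*}$, so its annihilator is nonzero and $N^{\perp} \neq 0$. Irreducibility of $\virgenrep$ therefore forces $N = 0$ or $N = \virgenrep'$. Consequently $\UEA(\vir)\singvec^{*} = \virgenrep'$, so $\virgenrep'$ is an irreducible highest weight representation of central charge $c$ and conformal weight $\eta$; by the uniqueness of such a representation we conclude $\virgenrep' \isom \virgenrep$. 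I expect the irreducibility argument to be the only delicate point --- specifically, checking that orthogonal complements of submodules of $\virgenrep'$ are submodules of $\virgenrep$ and keeping the finite-dimensional duality bookkeeping straight; a shorter but less elementary alternative would be to invoke the nondegenerate contravariant (Shapovalov) form on the irreducible module $\virgenrep$, which by Lemma~\ref{lem: Virasoro action on the contragredient} is directly a $\vir$-module isomorphism $\virgenrep \to \virgenrep'$.
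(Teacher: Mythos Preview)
Your proof is correct but takes a somewhat different route from the paper's. Both arguments begin by locating the singular vector $\singvec^{*}$ in the one-dimensional bottom grade $\virgenrep'_{(\eta)}$. From there, however, the paper avoids proving irreducibility of $\virgenrep'$ directly and instead uses a dimension count: the cyclic submodule $\UEA(\vir)\singvec^{*}$ is a highest weight module of weight $\eta$, hence surjects onto the irreducible $\virgenrep$, giving $\dim\big((\UEA(\vir)\singvec^{*})_{(\mu)}\big) \geq \dim\big(\virgenrep_{(\mu)}\big)$; since also $\dim\big(\virgenrep'_{(\mu)}\big) = \dim\big(\virgenrep_{(\mu)}\big)$ by construction of the restricted dual, the inclusion $\UEA(\vir)\singvec^{*} \subseteq \virgenrep'$ is an equality gradewise, and the surjection onto $\virgenrep$ is an isomorphism. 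Your approach instead establishes irreducibility of $\virgenrep'$ via the order-reversing correspondence $N \mapsto N^{\perp}$ between graded submodules of $\virgenrep'$ and of $\virgenrep$, then invokes uniqueness of the irreducible highest weight module. Your method has the conceptual advantage of showing, in effect, that taking contragredients preserves irreducibility; the paper's dimension argument is a touch shorter and sidesteps the bookkeeping with annihilators. The Shapovalov-form alternative you mention at the end would also work and is closest in spirit to the paper's dimension comparison.
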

\begin{proof}
Let $\hwvsym \in \virgenrep_{(h)}$ be a highest weight vector
in~$\virgenrep$, and choose $\hwvsym' \in \virgenrep_{(h)}^*$
such that $\ldual \hwvsym' , \hwvsym \rdual = 1$.
From Lemma~\ref{lem: Virasoro action on the contragredient}
we see that $\hwvsym' \in \virgenrep'$ is 
a singular vector with highest weight~$h$ in~$\virgenrep'$.
Then, $\virgenrep'$ contains the subrepresentation~$\UEA(\vir) \hwvsym' \subset \virgenrep'$,
which is a highest weight representation with the same highest weight~$h$.
Since $\virgenrep$ is irreducible, there is a surjective module homomorphism $\UEA (\vir)\hwvsym^{\prime} \twoheadrightarrow \virgenrep$,
which in particular implies that $\dmn ((\UEA (\vir)\hwvsym^{\prime})_{(\eta)})\geq \dmn (\virgenrep_{(\eta)})$ for all $\eta\in \bC$.
On the other hand, by construction, we have $\dmn(\virgenrep'_{(\eta)}) = \dmn(\virgenrep_{(\eta)})$, $\eta\in \bC$.
Therefore we get that
\begin{align*}
	\dmn (\virgenrep^{\prime}_{(\eta)})=\dmn (\virgenrep_{(\eta)}) \leq \dmn ((\UEA (\vir)\hwvsym^{\prime})_{(\eta)}) \leq \dmn (\virgenrep^{\prime}_{(\eta)}),
\qquad \text{ for } \eta\in \bC,
\end{align*}
and we can 
conclude that $\UEA (\vir)\hwvsym^{\prime}=\virgenrep^{\prime}\simeq \virgenrep$.
\end{proof}

\begin{cor}\label{cor: self duality of FRmod}
The first-row modules are self-dual, 
$\voaFR{\lambda}' \isom \voaFR{\lambda}$ for any~$\lambda \in \Znn$.
\end{cor}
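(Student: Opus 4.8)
The plan is to observe that this is an immediate specialization of the preceding lemma. First I would recall that, under the standing genericity assumption $\kappa \notin \bQ$, Lemma~\ref{lem: first row generic case} identifies each first-row module $\voaFR{\lambda} = \VermaFR{\lambda} / \VermaCH{c}{\,\hwFR{\lambda}+1+\lambda}$ as the (unique) irreducible highest weight representation of the Virasoro algebra with central charge $c = c(\kappa)$ and highest weight $\hwFR{\lambda}$. In particular it satisfies the hypotheses of Lemma~\ref{lem: self duality for irreducible hwreps}: $C$ acts as $c\,\idof{\voaFR{\lambda}}$, and $L_0$ acts diagonalizably with finite-dimensional eigenspaces whose eigenvalues $\hwFR{\lambda} + d$, $d \in \Znn$, have real parts bounded below.

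Then I would simply apply Lemma~\ref{lem: self duality for irreducible hwreps} with $\virgenrep = \voaFR{\lambda}$, viewed as a module for the generic Virasoro VOA $\voaV_c$ via the standard construction recalled at the start of Section~\ref{sub: first row modules and intertwining operators}. That lemma yields $\voaFR{\lambda}' \isom \voaFR{\lambda}$, which is exactly the assertion, and no further argument is needed. There is essentially no obstacle here; the only point worth stating explicitly is that the VOA-module structure used in the contragredient construction is the canonical one on the irreducible highest weight representation, so that the isomorphism produced by Lemma~\ref{lem: self duality for irreducible hwreps} is indeed an isomorphism of $\voaV_c$-modules and not merely of Virasoro representations.
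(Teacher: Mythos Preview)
Your proposal is correct and matches the paper's approach exactly: the corollary is stated immediately after Lemma~\ref{lem: self duality for irreducible hwreps} with no separate proof, precisely because it is the direct specialization you describe. Your additional remarks verifying the hypotheses (irreducibility via Lemma~\ref{lem: first row generic case}, the $L_0$-grading conditions, and that the isomorphism is one of $\voaV_c$-modules) are all accurate and simply make explicit what the paper leaves implicit.
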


\subsubsection*{Intertwining operators among highest weight modules}

Let us now suppose that $\modMmid$, $\modMin$, $\modMout$
are three modules for the universal Virasoro VOA~$\voaV_{c}$,
and that each of $\modMmid$, $\modMin$, and~$\modMout'$
are highest weight modules
(note also that by Lemma~\ref{lem: self duality for irreducible hwreps},
the contragredient  $\modMout'$ is a highest weight module for example if
$\modMout$ itself is an irreducible highest weight module).
Let $\hwvmid$, $\hwvin$, $\hwvout'$ be highest weight vectors in
$\modMmid$, $\modMin$, $\modMout'$, respectively,
and denote the corresponding highest weights 
by~$\hmid, \hin, \hout$.

For an intertwining operator
operator~$\intertw \in \spintertw \voaFusion{\modMmid}{\modMin}{\modMout}$,
define
\begin{align*}
\initterm{\intertw} (\apvar) 
:= \braket{ \hwvout' , \; \intertw(\hwvmid , \apvar) \, \hwvin } 
\end{align*}
and call this the 
\term{initial term} of the intertwining operator~$\intertw$.
The initial term defines a linear map
\begin{equation*}
\inittermsym \colon \;
    \spintertw \voaFusion{\modMmid}{\modMin}{\modMout} \to \, {\bC}\{ \apvar \} .
\end{equation*}
We make a few simple general observations in this setup.

\begin{lem}\label{lem: the initial term is a power}
The initial term
of any $\intertw \in \spintertw \voaFusion{\modMmid}{\modMin}{\modMout}$
is of the form
\begin{align*}
\initterm{\intertw} (\apvar) 
= A \, \apvar^{\hout - \hmid - \hin} ,
\qquad \text{ for some $A \in \bC$.}
\end{align*}
\end{lem}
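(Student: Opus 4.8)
The idea is to extract a first–order ODE for the formal series $\initterm{\intertw}(\apvar)$ by combining the translation property~\eqref{eq:intertw_translation} with the $L_0$–commutation formula of Corollary~\ref{cor:formula_intertw}, and then to solve this ODE inside $\bC\{\apvar\}$.

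First I would specialize Corollary~\ref{cor:formula_intertw}(2) to $n=0$. Since $\binom{1}{k}=0$ for $k\geq 2$, only the terms $k=0$ and $k=1$ survive, and together with the translation property~\eqref{eq:intertw_translation} this yields
\[
\big[ L_0 , \intertw(\Mvec,\apvar) \big]
= \apvar\,\der{\apvar}\intertw(\Mvec,\apvar) + \intertw\big(L_0^{\modMmid}\Mvec,\apvar\big)
\qquad \text{for all } \Mvec \in \modMmid .
\]
Applying this with $\Mvec=\hwvmid$ and using $L_0^{\modMmid}\hwvmid=\hmid\,\hwvmid$ gives
\[
L_0^{\modMout}\,\intertw(\hwvmid,\apvar) - \intertw(\hwvmid,\apvar)\,L_0^{\modMin}
= \apvar\,\der{\apvar}\intertw(\hwvmid,\apvar) + \hmid\,\intertw(\hwvmid,\apvar) .
\]

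Next I would take the matrix element $\braket{\hwvout',\, \cdot\;\hwvin}$ of this identity. On the right-hand side this produces $\apvar\,\der{\apvar}\initterm{\intertw}(\apvar) + \hmid\,\initterm{\intertw}(\apvar)$. On the left-hand side, the term $\intertw(\hwvmid,\apvar)\,L_0^{\modMin}$ contributes $\hin\,\initterm{\intertw}(\apvar)$ because $L_0^{\modMin}\hwvin=\hin\,\hwvin$; and the term $L_0^{\modMout}\,\intertw(\hwvmid,\apvar)$ contributes $\hout\,\initterm{\intertw}(\apvar)$, since by Lemma~\ref{lem: Virasoro action on the contragredient} one has $\braket{\hwvout', L_0^{\modMout}\Mvec} = \braket{L_0^{\modMout'}\hwvout', \Mvec} = \hout\,\braket{\hwvout',\Mvec}$, using that $\hwvout'$ is a highest weight vector of $\modMout'$ of weight $\hout$. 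Collecting terms, I obtain the differential equation
\[
\apvar\,\der{\apvar}\initterm{\intertw}(\apvar) = (\hout - \hmid - \hin)\,\initterm{\intertw}(\apvar) .
\]

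Finally I would solve this in $\bC\{\apvar\}$: writing $\initterm{\intertw}(\apvar) = \sum_{\alpha\in\bC} a_\alpha\,\apvar^\alpha$ and noting that $\apvar\,\der{\apvar}$ acts diagonally with $\apvar\,\der{\apvar}\apvar^\alpha = \alpha\,\apvar^\alpha$, the equation forces $(\alpha - (\hout-\hmid-\hin))\,a_\alpha = 0$ for every $\alpha$, hence $a_\alpha=0$ unless $\alpha = \hout-\hmid-\hin$. Thus $\initterm{\intertw}(\apvar) = A\,\apvar^{\hout-\hmid-\hin}$ with $A = a_{\hout-\hmid-\hin}$, as claimed. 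The computation is short; the only point requiring care is the bookkeeping of which module each $L_0$ acts in and the correct use of the contragredient pairing in Lemma~\ref{lem: Virasoro action on the contragredient}, together with the observation that the sum over $k$ in Corollary~\ref{cor:formula_intertw}(2) is finite for $n=0$ so that all manipulations stay within the formal-series framework.
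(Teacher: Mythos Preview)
Your proof is correct and follows essentially the same approach as the paper: both use Corollary~\ref{cor:formula_intertw}(2) at $n=0$ together with the translation property and the $L_0$-eigenvalue equations (via Lemma~\ref{lem: Virasoro action on the contragredient}) to derive the ODE $\apvar\,\der{\apvar}\initterm{\intertw}(\apvar) = (\hout-\hmid-\hin)\,\initterm{\intertw}(\apvar)$, and then conclude that the solution space is one-dimensional. Your explicit coefficientwise argument for solving the ODE in $\bC\{\apvar\}$ is a bit more detailed than the paper's, but the content is the same.
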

\begin{proof}
Using Corollary~\ref{cor:formula_intertw}(2) for $n = 0$,
Lemma~\ref{lem: Virasoro action on the contragredient}, and
the eigenvector equations
$L_0 \hwvmid = \hmid \, \hwvmid$,
$L_0 \hwvin = \hin \, \hwvin$,
$L_0 \hwvout' = \hout \, \hwvout'$,
we calculate
\begin{align*}
\hout \; \initterm{\intertw} (\apvar)
= \; & \braket{ L_0 \hwvout' , \; \intertw(\hwvmid , \apvar) \, \hwvin } \\
= \; & \braket{ \hwvout' , \; L_0 \, \intertw(\hwvmid , \apvar) \, \hwvin } \\
= \; & \braket{ \hwvout' , \; 
        \big( \apvar \, \intertw(L_{-1} \hwvmid , \apvar)
            + \intertw(L_{0} \hwvmid , \apvar) \big) \, \hwvin } \\
& 
    + \braket{ \hwvout' , \; \intertw(\hwvmid , \apvar) \, L_0 \hwvin } \\
= \; & \Big( \apvar \, \der{\apvar} + \hmid + \hin \Big) \; 
        \initterm{\intertw} (\apvar) .
\end{align*}
Thus the initial term satisfies
$\apvar \der{\apvar} \initterm{\intertw} (\apvar) 
= (\hout - \hmid - \hin) \, \initterm{\intertw} (\apvar)$.
The solution space of this differential equation is one dimensional
and spanned by~$\apvar^{\hout - \hmid - \hin}$, so the assertion 
follows.
\end{proof}

The following straightforward proposition contains a key idea,
that of an induction based on PBW-filtrations, so we do it in
detail here. In later proofs (including our main results), 
we will then not always write out explicitly all of the cases,
as the ideas are similar.
\begin{prop}
\label{prop: uniqueness of intertwiners up to constant}
An intertwining operator~%
$\intertw \in \spintertw \voaFusion{\modMmid}{\modMin}{\modMout}$
is uniquely determined by its initial term
$\initterm{\intertw} (\apvar)$.
In particular,
by Lemma~\ref{lem: the initial term is a power} we thus have
\begin{align*}
\dmn \; \spintertw \voaFusion{\modMmid}{\modMin}{\modMout} \; \leq \; 1 ,
\end{align*}
i.e., if a non-zero intertwining operators exists, it is unique 
up to a multiplicative constant.
\end{prop}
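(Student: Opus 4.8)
\emph{Overall strategy.} The plan is to show, by an induction on the Poincar\'e--Birkhoff--Witt filtrations of all three modules, that the whole intertwining operator is recovered from its initial term. Since $\modMmid$, $\modMin$ and $\modMout'$ are highest weight modules, they are spanned by vectors of the form $L_{-n_k}\cdots L_{-n_1}$ applied to $\hwvmid$, $\hwvin$, $\hwvout'$ respectively; and since the restricted dual pairing between $\modMout'$ and $\modMout$ is non-degenerate on each finite-dimensional $L_0$-eigenspace, $\intertw$ is completely determined by the matrix elements $\langle \chi' , \intertw(v,\apvar)\,w \rangle$ as $v$, $w$, $\chi'$ range over these spanning sets. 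Hence it suffices to prove that each such matrix element is determined by $\initterm{\intertw}(\apvar) = \langle \hwvout' , \intertw(\hwvmid,\apvar)\,\hwvin\rangle$. Together with Lemma~\ref{lem: the initial term is a power}, which confines $\initterm{\intertw}(\apvar)$ to the one-dimensional space $\bC\,\apvar^{\hout-\hmid-\hin}$, this gives $\dmn\spintertw\voaFus{\modMmid}{\modMin}{\modMout}\le 1$.

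\emph{First reduction: the $\modMmid$-argument.} Writing $v = L_{-n}v'$ with $v'$ of strictly smaller PBW word-length, Corollary~\ref{cor:formula_intertw}(1), used with the integer $-n$, expresses $\intertw(L_{-n}v',\apvar)$ as a sum of terms $L_m^{\modMout}\,\intertw(v',\apvar)$ and $\intertw(v',\apvar)\,L_j^{\modMin}$ times powers of $\apvar$. Transporting the $\modMout$-side operators onto $\chi'$ via Lemma~\ref{lem: Virasoro action on the contragredient} and the $\modMin$-side operators onto $w$, a matrix element with argument $v$ becomes a finite combination of matrix elements with the shorter argument $v'$, with suitably modified $\chi'\in\modMout'$, $w\in\modMin$, and $\apvar$-powers. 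Although the sums from Corollary~\ref{cor:formula_intertw}(1) are a priori infinite, the homogeneity of the modes, together with the $L_0$-eigenvalues of $\modMin$ being bounded below and $\chi'$ meeting only finitely many $L_0$-eigenspaces of $\modMout$, forces only finitely many terms to contribute to any fixed matrix element. Iterating, one reduces to matrix elements with $v=\hwvmid$.

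\emph{Second and third reductions.} With $v=\hwvmid$ fixed, Corollary~\ref{cor:formula_intertw}(2) and the translation property~\eqref{eq:intertw_translation} collapse, using $L_n\hwvmid=0$ for $n>0$, to the primary-field relation
\begin{align*}
\big[\,L_n\,,\,\intertw(\hwvmid,\apvar)\,\big]
= \apvar^{n+1}\,\der{\apvar}\intertw(\hwvmid,\apvar)
 + (n+1)\,\hmid\,\apvar^{n}\,\intertw(\hwvmid,\apvar),
\qquad n\in\bZ.
\end{align*}
Using this with $n=-m<0$ rewrites $\intertw(\hwvmid,\apvar)\,(L_{-m}w')$ in terms of $L_{-m}^{\modMout}$ acting on $\intertw(\hwvmid,\apvar)\,w'$ and of $\apvar$-differentiation, so induction on the word-length of $w$ reduces to $w=\hwvin$. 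Finally, for $\chi'=L_{-n_k}\cdots L_{-n_1}\hwvout'$, Lemma~\ref{lem: Virasoro action on the contragredient} turns $\langle\chi',\intertw(\hwvmid,\apvar)\,\hwvin\rangle$ into $\langle\hwvout',L_{n_1}\cdots L_{n_k}\,\intertw(\hwvmid,\apvar)\,\hwvin\rangle$ with all $n_i>0$; since $L_n\hwvin=0$ for $n>0$, the primary-field relation gives $L_n^{\modMout}\,\intertw(\hwvmid,\apvar)\,\hwvin = \big(\apvar^{n+1}\der{\apvar}+(n+1)\hmid\apvar^n\big)\intertw(\hwvmid,\apvar)\,\hwvin$, and because $\der{\apvar}$ commutes with the $\apvar$-independent operators $L_n^{\modMout}$, the whole string $L_{n_1}\cdots L_{n_k}$ acts on $\intertw(\hwvmid,\apvar)\,\hwvin$ as a fixed scalar differential operator in $\apvar$ depending only on $(n_1,\dots,n_k)$ and $\hmid$. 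Pairing with $\hwvout'$, that same operator applied to $\initterm{\intertw}(\apvar)$ yields $\langle\chi',\intertw(\hwvmid,\apvar)\,\hwvin\rangle$, completing the chain of reductions.

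\emph{Main obstacle.} Nothing here is deep, but the bookkeeping of the three nested inductions must be arranged correctly: the $\modMmid$-reduction must be completed before fixing $v=\hwvmid$, the $\modMin$-reduction before fixing $w=\hwvin$, and at each step one checks that the modified $\chi'$ still lies in $\modMout'$ and the modified $w$ still in $\modMin$ while the relevant word-length strictly decreases, and that the a priori infinite sums in the first reduction genuinely terminate on any fixed matrix element. All of this is forced by the VOA-module grading axioms, so the proposition is indeed, as claimed, straightforward once the induction is set up.
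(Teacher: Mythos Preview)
Your proof is correct and rests on the same tool as the paper's, namely Corollary~\ref{cor:formula_intertw}, but the induction is organized differently. The paper proves the contrapositive: assuming $\initterm{\intertw}(\apvar)=0$, it shows by a \emph{single} induction on the \emph{total} PBW word-length $p_1+p_2+p_3$ (across $\modMout'$, $\modMmid$, $\modMin$ simultaneously) that every matrix element vanishes, handling the three cases $L_{-n}\Mvecout'$, $L_{-n}\Mvecmid$, $L_{-n}\Mvecin$ in parallel at each step. You instead perform three \emph{sequential} reductions --- first strip the $\modMmid$-argument down to $\hwvmid$, then the $\modMin$-argument to $\hwvin$, then the $\modMout'$-argument to $\hwvout'$ --- and phrase the result constructively, exhibiting each matrix element as a specific differential operator applied to $\initterm{\intertw}(\apvar)$.

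Both schemes are valid. The paper's single-parameter induction is a bit more economical, since it avoids the bookkeeping of tracking how the first reduction inflates the word-lengths of $w$ and $\chi'$ (which you correctly note is harmless because the subsequent reductions handle arbitrary such vectors). Your version has the advantage of being more explicit: the third step in particular recovers the concrete formula $\langle\chi',\intertw(\hwvmid,\apvar)\hwvin\rangle = D_{n_k}\cdots D_{n_1}\,\initterm{\intertw}(\apvar)$, which the paper only derives later (in the proof of Lemma~\ref{lem: necessary selection rule}) for a special case.
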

\begin{proof}

Suppose that $\initterm{\intertw}(\apvar) = 0$. 
We will prove that then
$\braket{ \Mvecout' , \; \intertw(\Mvecmid , \apvar) \, \Mvecin } = 0$
for all $\Mvecout' \in \modMout'$, $\Mvecmid \in \modMmid$, 
$\Mvecin \in \modMin$. It will follow that $\intertw = 0$.
From this we can conclude that the initial term indeed
determines the interwining operator.

The proof of the above is done by induction with respect to 
the total PBW word length
for the PBW filtrations of the
three highest weight representations
$\modMmid$, $\modMin$, $\modMout^{\prime}$.
So assume that
$\braket{ \Mvecout' , \; \intertw(\Mvecmid , \apvar) \, \Mvecin } = 0$
whenever $\Mvecout' \in \PBWfil{p_1} \modMout'$, 
$\Mvecmid \in \PBWfil{p_2} \modMmid$, 
$\Mvecin \in \PBWfil{p_3} \modMin$ are such that
the total word lengths satisfy $p_1 + p_2 + p_3 \leq p$.
Now let $n>0$, and consider any such $\Mvecout', \Mvecmid, \Mvecin$.
From Corollary~\ref{cor:formula_intertw}(2) we get that 
\begin{align*}
\braket{L_{-n} \Mvecout', \intertw (\Mvecmid , \apvar) \Mvecin}
= \; & \braket{\Mvecout' , 
        \intertw (\Mvecmid , \apvar) L_{n} \Mvecin }
    + \apvar^{n+1} \der{\apvar}
	    \braket{\Mvecout' , \intertw (\Mvecmid , \apvar) \Mvecin} \\
&   + \sum_{k=1}^{\infty} \binom{n+1}{k} \apvar^{n-k+1} \,
        \braket{\Mvecout' , 
        \intertw (L_{k-1} \Mvecmid,\apvar) \Mvecin } \\
= \; & 0 ,
\end{align*}
where each term of the second expression vanished by the 
induction hypothesis
(note that also $L_{n} \Mvecin \in \PBWfil{p_3-1} \modMin$
and $L_{k-1} \Mvecmid \in \PBWfil{p_2} \modMmid$ above).
Similarly we get
\begin{align*}
\braket{\Mvecout' , \intertw (\Mvecmid , \apvar) L_{-n} \Mvecin}
= \; & \braket{L_{n} \Mvecout' ,
        \intertw(\Mvecmid , \apvar) \Mvecin}
  - \apvar^{-n+1} \der{\apvar} \;
    \braket{\Mvecout' , \intertw(\Mvecmid , \apvar) \Mvecin } \\
& - \sum_{k=1}^{\infty} \binom{-n+1}{k} \apvar^{-n-k+1} \;
    \braket{\Mvecout' , 
        \intertw (L_{k-1}\Mvecmid , \apvar) \Mvecin } \\
= \; & 0 .
\end{align*}
Finally, using Corollary~\ref{cor:formula_intertw}(1--3), we
similarly get
\begin{align*}
\braket{\Mvecout' , \intertw(L_{-n}\Mvecmid , \apvar) \Mvecin}
= \; & \sum_{k=0}^{\infty} \binom{-n+1}{k} (-\apvar)^{k} \;
    \braket{L_{n+k} \Mvecout' , \intertw(\Mvecmid , \apvar) \Mvecin} \\
& - \sum_{k=1}^{\infty} \binom{-n+1}{k} (-\apvar)^{-n-k+1} \;
    \braket{ \Mvecout' , 
        \intertw (\Mvecmid , \apvar) L_{k-1} \Mvecin} \\
& - (-\apvar)^{-n+1} \, \braket{ L_{1} \Mvecout' , 
        \intertw (\Mvecmid,\apvar) \Mvecin} \\
& + (-\apvar)^{-n+1} \der{\apvar} \,
    \braket{\Mvecout' ,\intertw (\Mvecmid , \apvar) \Mvecin } \\
= \; & 0 .
\end{align*}
These three cases complete the induction step,
by establishing that
$\braket{ \Mvecout' , \; \intertw(\Mvecmid , \apvar) \, \Mvecin }$
vanishes also whenever $\Mvecout' \in \PBWfil{p_1} \modMout'$, 
$\Mvecmid \in \PBWfil{p_2} \modMmid$, 
$\Mvecin \in \PBWfil{p_3} \modMin$ are such that
the total word lengths satisfy $p_1 + p_2 + p_3 \leq p+1$.
\end{proof}

\subsubsection*{Intertwining operators among first row modules}

To find all intertwining operators among the
first row modules~$\voaFR{\lambda}$, in view of 
Proposition~\ref{prop: uniqueness of intertwiners up to constant}
it suffices to determine when non-zero intertwining operators
can exist. We call the conditions for the existence
\term{selection rules}.\footnote{These selection rules
turn out to take exactly the same form as the
selection rules which determine when an irreducible 
representation of the quantum group~$\Uqsltwo$ appears in
the tensor product of two others.}

The 
singular vectors~\eqref{eq:singular_vector}
lead to necessary conditions.
Fix~$\lambda \in \Znn$, and introduce the corresponding
polynomial of $\hout, \hin$
\begin{align}\label{eq: selection polynomial two var}
& P_\lambda(\hin,\hout) \\ 
\nonumber
:= \; &
    \sum_{k=1}^{\lambda+1}
    \sum_{\substack{p_{1}, \dots, p_{k} \geq 1 \\
        p_{1}+\cdots+p_{k} = \lambda+1}}
    \frac{(-4/\kappa)^{\lambda+1-k} \; (\lambda!)^{2}}{%
        \prod_{u=1}^{k-1}(\sum_{i=1}^{u} p_{i})
            (\sum_{i=u+1}^{k} p_{i})}
        \prod_{j = 1}^k (-1)^{p_j}
            \Big( \sum_{i>j} p_i + \hwFR{\lambda} - \hout + p_j \hin \Big) .
\end{align}

\begin{lem}\label{lem: necessary selection rule}
Suppose that $\modMin$, $\modMout$
are modules for~$\voaV_{c}$ such that
$\modMin$ and~$\modMout'$
are highest weight modules
with highest weights~$\hin$ and $\hout$, respectively.
For each $\lambda\in \Znn$, the linear map
\begin{equation*}
	\spintertw \voaFusion{\voaFR{\lambda}}{\modMin}{\modMout}
\to \spintertw \voaFusion{\VermaFR{\lambda}}{\modMin}{\modMout} ;
	\qquad \intertw (\cdot,\apvar)\mapsto \intertw(\pi_{\lambda}(\cdot), \apvar),
\end{equation*}
where $\pi_{\lambda}:\VermaFR{\lambda}\to \voaFR{\lambda}$ is the canonical projection, is an embedding of the space of intertwining operators.
Furthermore, this embedding is an isomorphism if and only if the highest weights satisfy the polynomial equation
\begin{align*}
P_\lambda (\hin, \hout) = 0.
\end{align*}
\end{lem}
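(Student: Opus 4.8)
The strategy is to realise the image of the embedding $\Phi\colon\intertw\mapsto\intertw(\pi_\lambda(\cdot),\apvar)$ as the space of intertwining operators of type $\voaFusion{\VermaFR{\lambda}}{\modMin}{\modMout}$ that annihilate the maximal proper submodule of $\VermaFR{\lambda}$, and then to identify the single obstruction to this annihilation with the number $P_\lambda(\hin,\hout)$. The easy structural points come first. Because $\pi_\lambda\colon\VermaFR{\lambda}\to\voaFR{\lambda}$ is a surjective $\voaV_c$-module map, precomposition carries an intertwining operator of type $\voaFusion{\voaFR{\lambda}}{\modMin}{\modMout}$ to one of type $\voaFusion{\VermaFR{\lambda}}{\modMin}{\modMout}$ --- the Jacobi identity~\eqref{eq:intertw_assoc} and the translation property~\eqref{eq:intertw_translation} survive since $\pi_\lambda$ intertwines the module vertex operators and commutes with $L_{-1}$ --- and $\Phi$ is injective since $\pi_\lambda$ is onto. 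Its image consists of the intertwining operators that factor through $\pi_\lambda$, hence (again because $\pi_\lambda$ is a module surjection) of those $\intertw'$ with $\intertw'|_{\ker\pi_\lambda}=0$. By Lemma~\ref{lem: first row generic case}, $\ker\pi_\lambda\cong\VermaCH{c}{\,\hwFR{\lambda}+1+\lambda}$ is the submodule generated by the BSA singular vector $\chi:=\singvecop_\lambda\,\hwv{c,\hwFR{\lambda}}$ of~\eqref{eq:singular_vector}, of conformal weight $\hwFR{\lambda}+1+\lambda$. Restricting the first argument of any intertwining operator to a submodule $W\subset\modMmid$ produces an intertwining operator of type $\voaFusion{W}{\modMin}{\modMout}$; applying this with $W=\ker\pi_\lambda$, which is a highest weight module with highest weight vector $\chi$, Proposition~\ref{prop: uniqueness of intertwiners up to constant} (determinacy by the initial term, together with the form supplied by Lemma~\ref{lem: the initial term is a power}) shows that $\intertw'|_{\ker\pi_\lambda}=0$ if and only if the scalar matrix element $\braket{\hwvout',\intertw'(\chi,\apvar)\hwvin}$ vanishes. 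Hence $\Phi$ is an isomorphism if and only if $\braket{\hwvout',\intertw'(\chi,\apvar)\hwvin}=0$ for every $\intertw'\in\spintertw\voaFusion{\VermaFR{\lambda}}{\modMin}{\modMout}$.

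The heart of the proof is the computation of this matrix element. Expanding $\chi$ into the PBW monomials $L_{-p_1}\cdots L_{-p_k}\,\hwv{c,\hwFR{\lambda}}$ with the BSA coefficients of~\eqref{eq:singular_vector} (so $p_1+\cdots+p_k=\lambda+1$ for each word occurring), I would move the operators $L_{-p_j}$ off the intertwiner one at a time, outermost first, via Corollary~\ref{cor:formula_intertw}(1). Pairing against $\hwvout'$ on the left annihilates every term that carries an operator $L^{\modMout}_m$ with $m<0$, since such a term equals $\braket{L^{\modMout'}_{-m}\hwvout',\cdots}=0$ by Lemma~\ref{lem: Virasoro action on the contragredient}; so only the terms whose displaced operators reach $\hwvin$ on the right survive, and these are handled by $L^{\modMin}_n\hwvin=0$ for $n\geq1$, $L^{\modMin}_0\hwvin=\hin\,\hwvin$, and --- for the single $L^{\modMin}_{-1}$ that can appear --- by Corollary~\ref{cor:formula_intertw}(3) combined with $L^{\modMout'}_1\hwvout'=0$, which converts it into $\der{\apvar}$. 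An induction on the word length then yields
\[
\braket{\hwvout',\intertw'(L_{-p_1}\cdots L_{-p_k}\,\hwv{c,\hwFR{\lambda}},\apvar)\,\hwvin}
= A\,\apvar^{\hout-\hwFR{\lambda}-\hin-(\lambda+1)}\prod_{j=1}^{k}(-1)^{p_j}\Big(\,\sum_{i>j}p_i+\hwFR{\lambda}-\hout+p_j\hin\Big),
\]
where $A$ is defined by $\initterm{\intertw'}(\apvar)=A\,\apvar^{\hout-\hwFR{\lambda}-\hin}$ (Lemma~\ref{lem: the initial term is a power}) and the factor contributed when $L_{-p_j}$ is passed across the intertwiner is exactly $(-1)^{p_j}(\sum_{i>j}p_i+\hwFR{\lambda}-\hout+p_j\hin)$; here $\sum_{i>j}p_i$ is the conformal-weight shift of the part of the word still inside the intertwiner at that step, which also controls the powers of $\apvar$. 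Summing over words against the BSA coefficients, the right-hand side equals $A\,P_\lambda(\hin,\hout)\,\apvar^{\hout-\hwFR{\lambda}-\hin-(\lambda+1)}$ by the very definition~\eqref{eq: selection polynomial two var} of $P_\lambda$.

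It follows that $\intertw'$ lies in the image of $\Phi$ if and only if $A\,P_\lambda(\hin,\hout)=0$, and by Proposition~\ref{prop: uniqueness of intertwiners up to constant} the scalar $A$ is zero only when $\intertw'=0$. Since $\spintertw\voaFusion{\VermaFR{\lambda}}{\modMin}{\modMout}$ is non-zero for any highest weight $\modMin,\modMout'$ --- a standard fact about intertwining operators having a Verma module in the first argument, which also falls out of the explicit construction of intertwining operators --- we conclude that $\Phi$ is an isomorphism precisely when $P_\lambda(\hin,\hout)=0$. The main obstacle I anticipate is the inductive identity displayed above: making the recursion close depends on the vanishing of the ``raising'' terms against $\hwvout'$, after which one must recognise the product that appears as exactly the defining sum of $P_\lambda$, all while keeping careful track of the powers of $\apvar$ and of the $L^{\modMin}_{-1}$ contribution processed through Corollary~\ref{cor:formula_intertw}(3). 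The only other delicate point is the non-vanishing of $\spintertw\voaFusion{\VermaFR{\lambda}}{\modMin}{\modMout}$, in the absence of which the degenerate case would have to be set aside separately.
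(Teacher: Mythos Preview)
Your proposal is correct and follows essentially the same approach as the paper. Both arguments reduce the factorization condition to the vanishing of $\braket{\hwvout',\intertw'(\singvecop_\lambda\hwv{c,\hwFR{\lambda}},\apvar)\hwvin}$ via Proposition~\ref{prop: uniqueness of intertwiners up to constant}, and then compute this matrix element by peeling off the $L_{-p_j}$'s using Corollary~\ref{cor:formula_intertw}; the paper packages the recursion through the differential operator $\dopL{-n}=(-\apvar)^{-n}\big((n-1)\hin-\apvar\,\der{\apvar}\big)$ acting on the initial term, while you describe the same cancellations verbally, but the resulting product of factors and the identification with $P_\lambda$ are identical. Your flag about the degenerate case where $\spintertw\voaFusion{\VermaFR{\lambda}}{\modMin}{\modMout}$ might vanish is well taken---the paper's proof glosses over this in the same way, and in the applications (Corollary~\ref{cor:fusion_rules_first_row_modules}) the issue is resolved only after invoking Theorem~\ref{thm:intertw_Vermas} through the chain of symmetries.
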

\begin{proof}
It is clear that for any
$\intertw \in \spintertw \voaFusion{\voaFR{\lambda}}{\modMin}{\modMout}$,
the formula $\intertw(\pi_{\lambda}(\cdot), \apvar)$ indeed
gives an intertwining operator of type
$\voaFusion{\VermaFR{\lambda}}{\modMin}{\modMout}$, so the
map is well-defined.

For its injectivity, observe the following.
The initial terms satisfy $\initterm{\intertw(\cdot,\apvar)}=\initterm{\intertw(\pi_{\lambda}(\cdot),\apvar)}$.
Now if $\intertw (\pi_{\lambda}(\cdot),\apvar)=0$, we have $\initterm{\intertw(\cdot,\apvar)}=0$, which
by Proposition~\ref{prop: uniqueness of intertwiners up to constant}
implies that also $\intertw (\cdot,\apvar)=0$.
Injectivity follows.

The embedding is an isomorphism if and only if
all intertwining operators $\intertw \in \spintertw \voaFusion{\VermaFR{\lambda}}{\modMin}{\modMout}$
factor through the irreducible quotient
in the sense that we have $\intertw (\Mvec, \apvar)=0$
for all $\Mvec$ in the maximal proper submodule of $\VermaFR{\lambda}$.
As in Proposition~\ref{prop: uniqueness of intertwiners up to constant},
we see that the factorization is equivalent to the
single condition 
$\braket{\hwvout' , \, \intertw( \singvecop_\lambda \hwv{c,h(\lambda)} , \, \apvar) \, \hwvin} = 0$,
where $\singvecop_{\lambda} \hwv{c,h(\lambda)}$ is the singular vector defined
in~\eqref{eq:singular_vector}; 
just note that the singular vector 
generates
the maximal proper submodule of $\VermaFR{\lambda}$, which
is isomorphic to $\VermaCH{c}{\hwFR{\lambda}+\lambda+1}$.

Let $\hwvin$ and $\hwvout'$ be highest weight vectors 
in~$\modMin$ and $\modMout'$, respectively, and 
let~$\hwvsym_{\lambda}$ be a highest weight vector in~$\VermaFR{\lambda}$.
Denote the initial term of $\intertw$ by
\begin{align*}
f(\apvar) = \initterm{\intertw}(\apvar) =
\braket { \hwvout' , \, \intertw (\hwvsym_\lambda , \apvar) \, \hwvin} . 
\end{align*}
By Lemma~\ref{lem: the initial term is a power}, we must have
\begin{align*}
f(\apvar) = A \, \apvar^{\Delta} ,
\qquad \text{ where} \qquad
\Delta = \hout - \hin - \hwFR{\lambda} .
\end{align*}
Again by a calculation using the formulas of 
Corollary~\ref{cor:formula_intertw}
(in fact the special case $\Mvecout'=\hwvout'$ and $\Mvecin = \hwvin$ 
of the last calculation in the proof of
Proposition~\ref{prop: uniqueness of intertwiners up to constant}),
we get for any~$n>0$
\begin{align*}
& \braket{\hwvout' , \intertw(L_{-n} \Mvecmid , \apvar) \hwvin} \\
= \; & 
    (-\apvar)^{-n+1} \der{\apvar} \,
    \braket{\hwvout' ,\intertw (\Mvecmid , \apvar) \hwvin } 
  - (1-n) \hin (-\apvar)^{-n} \;
    \braket{ \hwvout' , 
        \intertw (\Mvecmid , \apvar) \hwvin} \\
= \; & \dopL{-n} \, \braket{ \hwvout' , 
        \intertw (\Mvecmid , \apvar) \hwvin} ,
\end{align*}
where we introduced the differential operator
\begin{align*}
\dopL{-n} = \; &
    (-\apvar)^{-n} \Big( (n-1) \hin - \apvar \der{\apvar} \Big) .
\end{align*}
Recursively used, this allows to reduce the following expression to
a differential operator acting on the initial term
\begin{align}\label{eq: acting inside the intertwining operator}
\braket{\hwvout' , 
    \intertw(L_{-n_{k}} \cdots L_{-n_{1}} \hwvsym_\lambda , \apvar)
            \hwvin} 
= \; & \dopL{-n_{k}} \cdots \dopL{-n_{1}}
    \braket{ \hwvout' , 
        \intertw (\hwvin , \apvar) L_{k-1} \hwvin} \\
\nonumber
= \; & \dopL{-n_{k}} \cdots \dopL{-n_{1}} \; f(\apvar) .
\end{align}
With induction on~$k$,
starting from~$f(\apvar) = A \, \apvar^{\Delta}$ and
using the explicit differential operators~$\dopL{-n}$, we find
\begin{align*}
\dopL{-n_{k}} \cdots \dopL{-n_{1}} \; f(\apvar)
= \; & A \, \apvar^{\Delta - \sum_j n_j} \,
        \prod_{j = 1}^k (-1)^{p_j} 
        \Big( \sum_{i<j} n_i - \Delta + (n_j - 1) \hin \Big)
\end{align*}

From the formula~\eqref{eq:singular_vector} for the
singular vector~$\singvecop_\lambda \hwv{c,h(\lambda)}$,
using~\eqref{eq: acting inside the intertwining operator}, we get
\begin{align*}
\braket{\hwvout' , \intertw( \singvecop_\lambda \hwvmid  , \apvar) \hwvin} 
= \; & \sum_{k=1}^{\lambda+1}
    \sum_{\substack{p_{1}, \dots, p_{k} \geq 1 \\
        p_{1}+\cdots+p_{k} = \lambda+1}}
    \frac{(-4/\kappa)^{\lambda+1-k} \; (\lambda!)^{2}}{%
        \prod_{u=1}^{k-1}(\sum_{i=1}^{u} p_{i}) 
            (\sum_{i=u+1}^{k} p_{i})}
     \dopL{-p_{1}} \cdots \dopL{-p_{k}} \; f(\apvar) .
\end{align*}
The vanishing
$\braket{\hwvout' , \intertw( \singvecop_\lambda \hwvmid , \apvar) \hwvin} = 0$
therefore amounts to a differential equation for the
initial term~$f(\apvar) = \initterm{\intertw}(\apvar)$.
With the explicit formula $f(\apvar) = A \, \apvar^{\Delta}$,
this differential equation simplifies to
\begin{align*}
0 = \; & A \, \apvar^{\Delta - 1 - \lambda} \; P_\lambda (\hin,\hout)
\end{align*}
The intertwining operator~$\intertw$ is non-zero only if~$A \neq 0$,
and in this case the desired factorization is equivalent to
$P_\lambda (\hin,\hout) = 0$.
\end{proof}

For fixed~$\lambda \in \Znn$ and $\hin \in \bC$,
the equation~$P_\lambda(\hin, \hout)$ is a 
degree~$\lambda+1$ polynomial equation for~$\hout$.
It is possible to find the roots by a direct calculation \cite{FrenkelZhu2012}.
With the quantum group method, the following easier argument
works as well.
\begin{prop}
\label{prop:factorization_selection_rule_polynomial}
Let~$\lambda \in \Znn$ and $\mu \in \bQ$.
Then we have the factorization
\begin{align*} 
P_\lambda \big( \hwFR{\mu}, \hout \big)
= 
    \prod_{\ell = 0}^{\lambda} \big( \hout - \hwFR{\lambda + \mu - 2 \ell} \big) .
\end{align*}
\end{prop}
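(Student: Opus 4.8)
The plan is to use the quantum group method of Theorem~\ref{thm:properties_of_correlations} with two points, $N=2$, and to read off the factorization from the Benoit--Saint-Aubin PDE satisfied by an explicit two-point function. First I would reduce to the case $\mu\in\Znn$ with $\mu\ge\lambda$: both sides of the asserted identity are polynomials in the pair $(\mu,\hout)$ (indeed $\hwFR{\mu}$ is a quadratic polynomial in $\mu$, $P_\lambda$ is polynomial in its two arguments, and each factor $\hout-\hwFR{\lambda+\mu-2\ell}$ is polynomial in $\mu,\hout$), so it suffices to verify the identity as $\mu$ ranges over the infinite set $\set{\lambda,\lambda+1,\lambda+2,\dots}$, whence the general case follows by the polynomial identity theorem.

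So fix $\mu\in\Znn$ with $\mu\ge\lambda$, take $N=2$ and $\lambdafullseq=(\lambda_1,\lambda_2)=(\lambda,\mu)$, and for each $\sigma\in\selRule{\lambda}{\mu}$ (equivalently $\sigma=\lambda+\mu-2\ell$ with $\ell=0,\dots,\lambda$) set $u:=\QGembed{\sigma}{\mu}{\lambda}(\QGvec{\sigma}{0})\in\QGrep{\mu}\tens\QGrep{\lambda}$. Since $\QGembed{\sigma}{\mu}{\lambda}$ is a $\Uqsltwo$-module map applied to a highest weight vector, $u\in\HWspace{\lambdafullseq}\cap\Kern(K-q^{\sigma})$; hence by part~(COV) of Theorem~\ref{thm:properties_of_correlations} the function $F=\sF[u]$ is translation invariant and homogeneous of degree $\Delta:=\hwFR{\sigma}-\hwFR{\lambda}-\hwFR{\mu}$, so $F(x_1,x_2)=C\,(x_2-x_1)^{\Delta}$ for some constant $C$. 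Moreover $u=\QGlongprcan{1}{2}{\sigma}(u)$, so part~(ASY) together with Remark~\ref{rmk: leading coefficient of Frobenius series}, the identity $\QGpremb{\mu}{\lambda}{\sigma}\circ\QGembed{\sigma}{\mu}{\lambda}=\id$, and the normalization $\sF[\QGvec{\sigma}{0}]\equiv1$ identify $C=\betacoef{\mu}{\lambda}{\sigma}$. The genericity assumption $\kappa\notin\bQ$ guarantees that $\betacoef{\mu}{\lambda}{\sigma}$ is a finite, nonzero ratio of $\Gamma$-values (no $\Gamma$-function pole or zero can enter), so $C\ne0$; in particular $F\not\equiv0$.

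Now part~(PDE) gives $\BSAoper{1}F=0$. Since $\WittL{-p}{1}$ acts on a power by $\WittL{-p}{1}(x_2-x_1)^{\alpha}=-\bigl(\alpha+(1-p)\hwFR{\mu}\bigr)(x_2-x_1)^{\alpha-p}$, a short computation, entirely parallel to the one carried out with the operators $\dopL{-p}$ in the proof of Lemma~\ref{lem: necessary selection rule} (there $\dopL{-p}$ plays the role of $\WittL{-p}{1}$), yields
\begin{align*}
\BSAoper{1}\bigl[(x_2-x_1)^{\Delta}\bigr]=(-1)^{\lambda+1}\,(x_2-x_1)^{\Delta-\lambda-1}\,P_\lambda\bigl(\hwFR{\mu},\hwFR{\sigma}\bigr).
\end{align*}
Since $C\ne0$, it follows that $P_\lambda(\hwFR{\mu},\hwFR{\sigma})=0$ for each $\sigma=\lambda+\mu-2\ell$, $\ell=0,\dots,\lambda$. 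On the other hand, for fixed $\mu$ the polynomial $P_\lambda(\hwFR{\mu},\cdot)$ is monic of degree $\lambda+1$ in $\hout$: only the $k=\lambda+1$ summand of~\eqref{eq: selection polynomial two var}, in which all $p_j=1$ is forced, contributes the top-degree term, and its coefficient works out to $1$. These $\lambda+1$ roots $\hwFR{\lambda+\mu-2\ell}$ are pairwise distinct, for $\hwFR{\nu_1}=\hwFR{\nu_2}$ with $\nu_1\ne\nu_2$ would force $\nu_1+\nu_2=(\kappa-4)/2$, impossible since $\nu_1+\nu_2\in\bZ$ while $\kappa\notin\bQ$. Hence $P_\lambda(\hwFR{\mu},\hout)=\prod_{\ell=0}^{\lambda}\bigl(\hout-\hwFR{\lambda+\mu-2\ell}\bigr)$ for all such $\mu$, and the first paragraph promotes this to all $\mu\in\bQ$.

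The only step that requires genuine care is the computation $\BSAoper{1}[(x_2-x_1)^{\Delta}]=(\mathrm{const})\cdot(x_2-x_1)^{\Delta-\lambda-1}P_\lambda(\hwFR{\mu},\hwFR{\sigma})$; this is a bookkeeping exercise that mirrors one already present in the proof of Lemma~\ref{lem: necessary selection rule}, so it should present no real difficulty, and the two substantive inputs are rather the non-vanishing $C=\betacoef{\mu}{\lambda}{\sigma}\ne0$ and the distinctness of the roots, both direct consequences of $\kappa\notin\bQ$.
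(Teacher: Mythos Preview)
Your proof is correct and follows essentially the same approach as the paper: use the two-point function from the quantum group method to exhibit $\lambda+1$ distinct roots of the degree-$(\lambda+1)$ monic polynomial $P_\lambda(\hwFR{\mu},\cdot)$ for large integer~$\mu$, and then extend by a polynomial identity argument. The only differences are cosmetic---your variable labeling ($x_1<x_2$ with $\lambda_1=\lambda$) versus the paper's ($x_0<x_1$ with $\lambda_1=\lambda$) introduces the harmless sign $(-1)^{\lambda+1}$, and you extend the full factorization from integer $\mu$ to rational $\mu$ in one step, whereas the paper first extends each individual root condition $P_\lambda(\hwFR{\mu},\hwFR{\lambda+\mu-2\ell})=0$ and then factors.
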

\begin{proof}
Recall from~\eqref{eq: first row highest weights}
that~$\hwFR{\mu}$ is a quadratic polynomial in~$\mu$, specifically
$\hwFR{\mu} = \frac{1}{\kappa} \mu^2 
    + \big( \frac{2}{\kappa}-\frac{1}{2} \big) \mu$.
Note also that 
\begin{align*}
\hwFR{\rho+m} - \hwFR{\rho}
= \; & \frac{m}{\kappa} \Big( m + 2 \rho
    + \big( 2-\frac{\kappa}{2} \big) \Big) .
\end{align*}
Recalling that $\kappa \notin \bQ$, we see in particular 
that if $\rho \in \bQ$, then
all $\hwFR{\rho+m}$, $m \in \bZ$, are distinct.

First fix~$\lambda \in \Znn$ and~$\ell \in \set{0,1,\ldots,\lambda}$.
Now if $\mu \in \Znn$ is such that $\mu \geq \lambda$,
then we have
$\sigma = \lambda+\mu-2\ell \in \selRule{\lambda}{\mu}$,
so we may consider the non-zero vector
$u = \QGembed{\sigma}{\lambda}{\mu} \big( \QGvec{\sigma}{0} \big)
\in \QGrep{\lambda} \tens \QGrep{\mu}$.
The associated function
\begin{align*} 
F = \sF[u] \colon \chamber_2 \to \bC 
\end{align*}
satisfies the BSA PDEs $\BSAoper{j} F(x_0,x_1) = 0$, for $j=0,1$,
where~$\BSAoper{j}$ is given by~\eqref{eq: BSA differential operator}.
From the translation and scaling covariance and asymptotics given in
Theorem~\ref{thm:properties_of_correlations} it follows that
in this case the function must be simply
\begin{align*} 
F(x_0,x_1) = B \, (x_1-x_0)^{\Delta} , 
\end{align*}
where~$\Delta = \hwFR{\lambda+\mu-2\ell} - \hwFR{\lambda} - \hwFR{\mu}$
and $B = \betacoef{\lambda + \mu - 2\ell}{\lambda}{\;\;\mu} \neq 0$.

But the BSA PDE for a function of the above form reads
\begin{align*}
0 = \; & B \, (x_1 - x_0)^{\Delta-\lambda-1} \; \sum_{k=1}^{\lambda+1} 
    \sum_{\substack{p_1 , \ldots , p_k \geq 1 \\ 
                    p_1 + \cdots + p_k = \lambda + 1}}
        \frac{(-4/\kappa)^ {1+\lambda-k} \, \lambda !^2}%
        {\prod_{u=1}^{k-1} (\sum_{i=1}^{u} p_i) (\sum_{i=u+1}^{k} p_i)} \\
& \qquad\qquad\qquad\qquad
    \prod_{j=1}^k (-1)^{p_j} \Big( 
          -\Delta + \sum_{i>j} p_i + (p_j-1) \hwFR{\mu} \Big) \\
= \; & B \, (x_1 - x_0)^{\Delta-\lambda-1} \, 
    P_\lambda \big( \hwFR{\mu} , \hwFR{\lambda+\mu-2\ell} \big) .
\end{align*}
We conclude that $P_\lambda \big( \hwFR{\mu} , \hwFR{\lambda+\mu-2\ell} \big) = 0$.
Now observe that
both $\mu \mapsto \hwFR{\mu}$ and $\mu \mapsto \hwFR{\lambda+\mu-2\ell}$ 
are quadratic polynomials in~$\mu$, so also 
$\mu \mapsto P_\lambda \big( \hwFR{\mu} , \hwFR{\lambda+\mu-2\ell} \big)$
is a polynomial in~$\mu$. We have just shown that this polynomial
vanishes for all integers~$\mu \geq \lambda$, so it must be identically zero:
\begin{align*}
P_\lambda \big( \hwFR{\mu} , \hwFR{\lambda+\mu-2\ell} \big) = 0
\qquad \text{ for all 
    $\lambda \in \Znn$, $\ell \in \set{0,1,\ldots,\lambda}$, $\mu \in \bC$.}
\end{align*}

Now let $\lambda \in \Znn$ and $\mu \in \bQ$ be fixed.
Consider the polynomial 
$\hout \mapsto P_\lambda \big( \hwFR{\mu} , \hout \big)$
of degree $\lambda+1$. From the above we see that
$\hout = \hwFR{\lambda+\mu-2\ell}$ are roots of this polynomial,
when $\ell = 0, 1, \ldots, \lambda$. Since~$\kappa \not\in \bQ$,
these~$\lambda+1$ roots are distinct, and so we must have
\begin{align*}
P_\lambda \big(\hwFR{\mu} , \hout \big)
= \const \times 
    \prod_{\ell=0}^{\lambda} \big( \hout - \hwFR{\lambda+\mu-2\ell} \big) .
\end{align*}
From the defining formula~\eqref{eq: selection polynomial two var}
one sees that the leading term of this polynomial of~$\hout$
is $(\hout)^{1+\lambda}$, so the constant of proportionality above
is in fact~$1$.
\end{proof}

The following theorem states that there is always a
nontrivial intertwining operator among an arbitrary
triple of Verma modules. The theorem can be seen as
a particular case of a more general one in
\cite{Li99}.
For self-containedness, we give the proof for the
case of our interest in 
Appendix~\ref{app: construction for Verma modules}.
\begin{thm}
\label{thm:intertw_Vermas}
For $\hmid,\hin ,\hout \in\C$,
\begin{equation*}
	\dim \spintertw {\voaFusion{\VermaCH{c}{\hmid}}{\VermaCH{c}{\hin}}{\VermaCH{c}{\hout}^{\prime}}}=1.
\end{equation*}
\end{thm}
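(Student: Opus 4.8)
The plan is to prove the two inequalities separately; the upper bound is immediate, so the real content is existence. For the upper bound, observe that the triple $\big(\VermaCH{c}{\hmid},\, \VermaCH{c}{\hin},\, \VermaCH{c}{\hout}^{\prime}\big)$ satisfies the hypotheses of Proposition~\ref{prop: uniqueness of intertwiners up to constant}: the first two are Verma modules, hence highest weight modules, and the contragredient of the ``out''~module is $\VermaCH{c}{\hout}^{\prime\prime}\cong\VermaCH{c}{\hout}$, again a Verma module. Therefore $\dim\spintertw\voaFusion{\VermaCH{c}{\hmid}}{\VermaCH{c}{\hin}}{\VermaCH{c}{\hout}^{\prime}}\le 1$, and by Lemma~\ref{lem: the initial term is a power} together with that proposition it suffices to exhibit a single intertwining operator of this type whose initial term is the nonzero series $\apvar^{\hout-\hmid-\hin}$.

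To construct such an operator I would build its matrix coefficients $\braket{w_\infty,\intertw(w_1,\apvar)\,w_0}$, where $w_0\in\VermaCH{c}{\hin}$, $w_1\in\VermaCH{c}{\hmid}$, and $w_\infty\in\VermaCH{c}{\hout}\cong\VermaCH{c}{\hout}^{\prime\prime}$ is paired against $\intertw(w_1,\apvar)w_0\in\VermaCH{c}{\hout}^{\prime}$, by a cascade of recursions governed by Corollary~\ref{cor:formula_intertw} and Lemma~\ref{lem: Virasoro action on the contragredient}. Normalize $\braket{\hwv{c,\hout},\intertw(\hwv{c,\hmid},\apvar)\,\hwv{c,\hin}}:=\apvar^{\hout-\hmid-\hin}$ on highest weight vectors. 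Then: (i) with $w_1=\hwv{c,\hmid}$ fixed, define the operator $\intertw(\hwv{c,\hmid},\apvar)$ by recursion on the PBW word lengths of $w_0$ and of $w_\infty$ --- peeling a generator $L_{-n}$ ($n>0$) off $w_0$ and commuting it through $\intertw(\hwv{c,\hmid},\apvar)$ via Corollary~\ref{cor:formula_intertw}(2)--(3) produces, since $\hwv{c,\hmid}$ is annihilated by $\vir_{>0}$ and using the translation property~\eqref{eq:intertw_translation}, only finitely many terms, each of which either lowers the word length of $w_0$ or is a power of $\apvar$ times $\der{\apvar}$ applied to a matrix coefficient of strictly smaller $w_0$; peeling $L_{-n}$'s off $w_\infty$ is handled symmetrically using Lemma~\ref{lem: Virasoro action on the contragredient}; and (ii) Corollary~\ref{cor:formula_intertw}(1) then extends the definition from $\hwv{c,\hmid}$ to all of $\VermaCH{c}{\hmid}=\UEA(\vir_{<0})\,\hwv{c,\hmid}$ by a recursion on the word length of $w_1$. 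Every recursion is finite, so the value is unambiguously a polynomial in $\apvar^{\pm1}$ times $\apvar^{\hout-\hmid-\hin}$; a homogeneity count as in Lemma~\ref{lem: homogeneity of the modes in modules} shows only finitely many powers of $\apvar$ appear between fixed $L_0$-eigenspaces, so $\intertw(w_1,\apvar)$ is a genuine map $\VermaCH{c}{\hin}\to\VermaCH{c}{\hout}^{\prime}\{\apvar\}$, with initial term $\apvar^{\hout-\hmid-\hin}\neq 0$.

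It then remains to check that the $\intertw$ so constructed really is an intertwining operator, i.e.\ satisfies the translation property~\eqref{eq:intertw_translation} and the Jacobi identity~\eqref{eq:intertw_assoc}. By Lemma~\ref{lem: intertwining property with rational functions} the Jacobi identity is equivalent to the residue identities stated there for all $\Vvec\in\voaV_c$; since $\voaV_c$ is generated by its conformal vector $\voaconfvec$, it is enough to verify these for $\Vvec=\voaconfvec$, and for $\Vvec=\voaconfvec$ they are precisely the relations of Corollary~\ref{cor:formula_intertw}(1)--(3), which hold for $\intertw$ by construction. The one substantive point --- and the main obstacle --- is well-definedness of the recursion: it is prescribed on PBW basis vectors by ``peeling the leftmost generator'', and one must verify that the linear extension is consistent with the Virasoro relations $[L_m,L_n]=(m-n)L_{m+n}$, equivalently that the reduction operators given by the right-hand sides of Corollary~\ref{cor:formula_intertw} realize the $\vir_{<0}$-action on $\VermaCH{c}{\hmid}$ (and the corresponding actions on $\VermaCH{c}{\hin}$ and $\VermaCH{c}{\hout}$). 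This is a routine if somewhat lengthy computation, which the one-variable-at-a-time organization above keeps tractable. Alternatively, the existence of a nonzero intertwining operator of this type can be deduced from the general construction of intertwining operators out of $A(\voaV)$-bimodules applied to the one-dimensional top spaces of the three Verma modules, as in~\cite{Li99}.
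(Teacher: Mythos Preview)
Your plan is essentially the paper's: the upper bound is Proposition~\ref{prop: uniqueness of intertwiners up to constant}, and existence is obtained by recursively extending from the initial term and then checking the axioms. The paper organizes the recursion differently, following~\cite{Li99}: it first extends in $\Mvecmid$ (your step~(ii) comes first), then packages the modes into the affinization $\modMmidaff=\bC[t^{\pm 1}]\otimes\modMmid$, equips $\modMmidaff$ with a central-charge-zero Virasoro action, and uses anti-involutions $\sigma,\theta$ of $\UEA(\vir)$ to extend in $\Mvecin$ and $\Mvecout$. The affinization absorbs the formal variable and makes the consistency checks cleaner; your direct PBW recursion in all three slots would also work but the bookkeeping is heavier.

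One point is too quick. You assert that the Jacobi identity follows because the relations of Corollary~\ref{cor:formula_intertw}(1)--(3) ``hold by construction''. The construction \emph{imposes} certain instances of those relations as recursive definitions (for $L_{-n}$, $n>0$, on one slot at a time), but this is not the same as having verified the full Jacobi identity~\eqref{eq:intertw_assoc} for $\Vvec=\voaconfvec$: that identity constrains all modes $L_n$ simultaneously in all three slots, and your three peeling rules overlap and must be shown compatible beyond the well-definedness you flag. The paper spends most of its appendix on exactly this: it proves a commutativity statement (essentially Corollary~\ref{cor:formula_intertw}(2)) and, separately and with real work, an associativity statement of the form $(\ipvar_0+\apvar)^k\braket{\Mvecout,L(\ipvar_0+\apvar)\intertw(\Mvecmid,\apvar)\Mvecin}=(\ipvar_0+\apvar)^k\braket{\Mvecout,\intertw(L(\ipvar_0)\Mvecmid,\apvar)\Mvecin}$, and only then invokes the standard equivalence ``commutativity $+$ associativity $\Leftrightarrow$ Jacobi''. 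So the Jacobi verification is a second obstacle, comparable to well-definedness, that is not dispatched by the sentence you wrote. Citing~\cite{Li99}, as you do at the end, is the honest move---and is exactly what the paper does.
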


The fusion rules among first row modules of the generic Virasoro VOA
now follow from Lemma~\ref{lem: necessary selection rule}, Proposition~\ref{prop:factorization_selection_rule_polynomial} and Theorem~\ref{thm:intertw_Vermas}.

\begin{cor}
\label{cor:fusion_rules_first_row_modules}
Let $\lambda, \mu, \nu \in\Znn$. Then,
\begin{align*}
	 \dmn \spintertw \voaFusion{\voaFR{\lambda}}{\voaFR{\mu}}{\voaFR{\nu}}=
	 \begin{cases}
	 1 & \nu\in \selRule{\lambda}{\mu},\\
	 0 & \mbox{otherwise}.
	 \end{cases}
\end{align*}
\end{cor}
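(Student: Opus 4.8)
The plan is to bootstrap from Theorem~\ref{thm:intertw_Vermas}, which already provides a one-dimensional space of intertwining operators between Verma modules, and to transfer this to the irreducible first-row modules by controlling the passage to quotients with Lemma~\ref{lem: necessary selection rule}, the factorization of Proposition~\ref{prop:factorization_selection_rule_polynomial}, and the symmetry isomorphisms of Proposition~\ref{prop:symmetry_fusion_rules}. As a preliminary, note that $\voaFR{\lambda}$, $\voaFR{\mu}$ and $\voaFR{\nu}$ are irreducible highest weight modules with $\voaFR{\nu}' \isom \voaFR{\nu}$ by Corollary~\ref{cor: self duality of FRmod}, so Proposition~\ref{prop: uniqueness of intertwiners up to constant} already yields $\dmn \spintertw \voaFusion{\voaFR{\lambda}}{\voaFR{\mu}}{\voaFR{\nu}} \leq 1$, and equally $\dmn \spintertw \voaFusion{\VermaFR{\lambda}}{\voaFR{\mu}}{\voaFR{\nu}} \leq 1$ together with the same bound after exchanging $\lambda$ and $\mu$. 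The whole corollary thus reduces to deciding exactly when a nonzero intertwining operator exists, and I expect the answer to be governed by the vanishing of the selection polynomials $P_\bullet$ of~\eqref{eq: selection polynomial two var}.

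For the implication $\nu \notin \selRule{\lambda}{\mu} \Rightarrow \dmn = 0$, I would apply Lemma~\ref{lem: necessary selection rule} with $\modMin = \voaFR{\mu}$ and $\modMout = \voaFR{\nu}$ (legitimate since $\voaFR{\nu}' \isom \voaFR{\nu}$ is highest weight): this embeds $\spintertw \voaFusion{\voaFR{\lambda}}{\voaFR{\mu}}{\voaFR{\nu}}$ into the at-most-one-dimensional space $\spintertw \voaFusion{\VermaFR{\lambda}}{\voaFR{\mu}}{\voaFR{\nu}}$, and the embedding is surjective precisely when $P_\lambda(\hwFR{\mu},\hwFR{\nu}) = 0$; a non-surjective injection into a space of dimension $\leq 1$ must have zero source. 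Exchanging the roles of $\lambda$ and $\mu$ via the first isomorphism of Proposition~\ref{prop:symmetry_fusion_rules}, the same argument shows the space also vanishes whenever $P_\mu(\hwFR{\lambda},\hwFR{\nu}) \neq 0$. By Proposition~\ref{prop:factorization_selection_rule_polynomial}, and using that $\kappa \notin \bQ$ keeps the numbers $\hwFR{n}$, $n \in \Znn$, pairwise distinct (as in the proof of that proposition), the condition $P_\lambda(\hwFR{\mu},\hwFR{\nu}) = 0$ becomes ``$\nu = \lambda+\mu-2\ell$ for some $\ell \in \set{0,\ldots,\lambda}$'' and $P_\mu(\hwFR{\lambda},\hwFR{\nu}) = 0$ the same with $\ell \in \set{0,\ldots,\mu}$; elementary interval arithmetic shows their conjunction is exactly $|\lambda-\mu| \leq \nu \leq \lambda+\mu$ with $\nu \equiv \lambda+\mu \pmod 2$, i.e.\ $\nu \in \selRule{\lambda}{\mu}$. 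So if $\nu \notin \selRule{\lambda}{\mu}$ one of the two polynomials is nonzero and $\dmn = 0$.

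For the converse, assume $\nu \in \selRule{\lambda}{\mu}$. Theorem~\ref{thm:intertw_Vermas} furnishes a nonzero element of $\spintertw \voaFusion{\VermaCH{c}{\hwFR{\lambda}}}{\VermaCH{c}{\hwFR{\mu}}}{\VermaCH{c}{\hwFR{\nu}}'}$, and I would descend all three Verma slots to their irreducible quotients $\voaFR{\bullet}$ one slot at a time. For each slot the idea is to rotate it into the $\modMmid$-slot using the symmetry isomorphisms of Proposition~\ref{prop:symmetry_fusion_rules} together with $\voaM'' \isom \voaM$ and Corollary~\ref{cor: self duality of FRmod}, and then to invoke Lemma~\ref{lem: necessary selection rule}: since the embedding there is an isomorphism as soon as the relevant selection polynomial vanishes, a nonzero intertwiner in the Verma version automatically comes from one with the irreducible quotient in that slot. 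The three obstructions appearing along the way are the vanishing of $P_\lambda(\hwFR{\mu},\hwFR{\nu})$, $P_\mu(\hwFR{\lambda},\hwFR{\nu})$ and $P_\nu(\hwFR{\lambda},\hwFR{\mu})$ (up to the harmless order of the two arguments), and by Proposition~\ref{prop:factorization_selection_rule_polynomial} all three hold once $\nu \in \selRule{\lambda}{\mu}$, because the symmetry $\sigma \in \selRule{\mu}{\lambda} \Leftrightarrow \lambda \in \selRule{\sigma}{\mu}$ of the selection set ensures that in each case the required index (half of the difference between the appropriate ``sum'' and ``target'') lies in the admissible range $\set{0,\ldots,\min(\cdot,\cdot)}$. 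After the three descents the resulting space is isomorphic, again by Proposition~\ref{prop:symmetry_fusion_rules} and self-duality, to $\spintertw \voaFusion{\voaFR{\lambda}}{\voaFR{\mu}}{\voaFR{\nu}}$, which therefore contains a nonzero element and has dimension exactly $1$.

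The step I expect to demand the most care is not any single deep argument but the combinatorial bookkeeping just sketched: verifying that the common zero set of $P_\lambda(\hwFR{\mu},\cdot)$ and $P_\mu(\hwFR{\lambda},\cdot)$ is \emph{exactly} $\selRule{\lambda}{\mu}$ — so that two of the three singular-vector conditions already detect the selection rule — and, for the existence half, that $\nu \in \selRule{\lambda}{\mu}$ forces \emph{all three} relevant polynomials to vanish. Both reduce to the irrationality of $\kappa$, which excludes the spurious coincidences $\hwFR{m} = \hwFR{n}$ with $m \neq n$, together with the constraints $0 \leq \ell \leq \min(\lambda,\mu)$ and the parity $\nu \equiv \lambda+\mu \pmod 2$; no new idea is needed, only attention to the ranges.
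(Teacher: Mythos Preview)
Your proposal is correct and follows essentially the same route as the paper: build the chain of embeddings
\[
\spintertw \voaFusion{\voaFR{\lambda}}{\voaFR{\mu}}{\voaFR{\nu}}
\hookrightarrow
\spintertw \voaFusion{\VermaFR{\lambda}}{\voaFR{\mu}}{\voaFR{\nu}}
\simeq \cdots \hookrightarrow \cdots \simeq
\spintertw \voaFusion{\VermaFR{\lambda}}{\VermaFR{\mu}}{\VermaFR{\nu}'}
\]
by alternating Lemma~\ref{lem: necessary selection rule} with the symmetries of Proposition~\ref{prop:symmetry_fusion_rules}, and then read off that the composite is an isomorphism precisely when the three polynomials $P_\lambda,P_\mu,P_\nu$ vanish, which by Proposition~\ref{prop:factorization_selection_rule_polynomial} is exactly $\nu\in\selRule{\lambda}{\mu}$. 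Your observation that two of the three polynomial conditions already pin down $\selRule{\lambda}{\mu}$ is also noted by the paper (in a footnote), and your remark that the order of arguments in $P_\bullet(h(\cdot),h(\cdot))$ is harmless is correct since $\ell\mapsto\lambda-\ell$ exchanges the two vanishing conditions.
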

\begin{proof}
Recall that the first row modules are self-dual (Corollary~\ref{cor: self duality of FRmod}).
We consider the following sequence of embeddings
\begin{align*}
	&\spintertw \voaFusion{\voaFR{\lambda}}{\voaFR{\mu}}{\voaFR{\nu}}
	\to \spintertw \voaFusion{\VermaFR{\lambda}}{\voaFR{\mu}}{\voaFR{\nu}} 
	\simeq \spintertw \voaFusion{\voaFR{\mu}}{\VermaFR{\lambda}}{\voaFR{\nu}}
	\to \spintertw \voaFusion{\VermaFR{\mu}}{\VermaFR{\lambda}}{\voaFR{\nu}}
	\simeq \spintertw \voaFusion{\voaFR{\nu}}{\VermaFR{\mu}}{\VermaFR{\lambda}^{\prime}} \\
	&\to \spintertw \voaFusion{\VermaFR{\nu}}{\VermaFR{\mu}}{\VermaFR{\lambda}^{\prime}}
	\simeq \spintertw \voaFusion{\VermaFR{\lambda}}{\VermaFR{\mu}}{\VermaFR{\nu}^{\prime}},
\end{align*}
where we used the symmetry in Proposition~\ref{prop:symmetry_fusion_rules}. Then, due to Lemma~\ref{lem: necessary selection rule}, the composed embedding is an isomorphism if and only if the highest weights satisfy the conditions
\begin{align*}
	P_{\lambda}(\hwFR{\mu},\hwFR{\nu})=P_{\nu}(\hwFR{\lambda},\hwFR{\mu})=P_{\mu}(\hwFR{\nu},\hwFR{\lambda})=0.
\end{align*}
It is an easy manipulation of the factorization in Proposition~\ref{prop:factorization_selection_rule_polynomial} to see that these conditions%
\footnote{
It can also be seen that two of these conditions imply the other one.
} are equivalent to $\nu\in\selRule{\lambda}{\mu}$.
\end{proof}

\section{Compositions of intertwining operators}%
\label{sec: composition}
Intertwining operators, defined in the previous section,
give $3$-point correlation functions of conformal field theories
(or alternatively, in a geometric interpretation, amplitudes in a
pair-of-pants Riemann surface with three parametrized boundary
components~\cite{Huang-CFT_and_VOA}).
Importantly, they serve as the building blocks of more general
correlation functions. Namely, for
a multipoint correlation function (or an amplitude on
a Riemann surfaces with more than three parametrized boundary components),
one forms an appropriate composition of intertwining operators.

An intertwining operator is a formal series, with coefficients that
are linear operators between modules of the VOA.
Compositions of intertwining operators are then a priori formal series
in several formal variables, also with coefficients that 
are (composed) linear operators between modules.
To get actual correlation functions, however, one considers
suitable matrix elements of the formal series of linear operators, and 
crucially, one must then address the convergence of the corresponding
series.

In this section, we first study properties of the compositions of
the intertwining operators between modules of the first row
subcategory for the generic Virasoro VOA as formal power series.
Then we establish analoguous properties for the functions
obtained by the quantum group method~---
specifically the ones corresponding to the conformal block
vectors of Section~\ref{sec: explicit conformal block vectors}.
The main result of this section 
(Theorem~\ref{thm: quantum group functions and compositions of intertwining ops})
is that the formal series of the composition of intertwining
operators coincide with suitable power series expansions of the
actual conformal block functions, and that these series
are convergent in the appropriate domains.

More precisely, the section is organized as follows.
In Section~\ref{sub: normalized intertwining operators}
we fix our normalization of the intertwining operators,
and in Section~\ref{sub: space of formal series}
we introduce the specific spaces of formal series that
will be used.
Section~\ref{sub: differential equations for matrix elements}
contains the proofs of two key properties;
that (the highest weight matrix elements of)
compositions of the intertwining operators
satisfy a system of partial differential equations,
and that the formal series solutions to this PDE system are unique
(up to scalar multiples).
In Section~\ref{sub: compositions and the quantum group method}
we detail a recursive series expansion procedure for the
quantum group functions, and use this and the partial
differential equations to prove the main result
(Theorem~\ref{thm: quantum group functions and compositions of intertwining ops})
that the formal series of the composition of intertwining operators
converge to the functions corresponding to the conformal
block vectors.
In Section~\ref{sub: applications of the composition result}
we comment on some first applications of the 
result.

\subsection{Fusion rules and normalized intertwining operators}
\label{sub: normalized intertwining operators}

A comparison of
Corollary~\ref{cor:fusion_rules_first_row_modules}
and Lemma~\ref{lem: embedding of QG reps in tensor product of two}
directly gives
\begin{align*}
\dim 
\spintertw \voaFus{\voaFR{\labmid}}{\voaFR{\labin}}{\voaFR{\labout}}
= \; & \dim \mathrm{Hom}_{\Uqsltwo}\big(\QGrep{\labout}, \;
    \QGrep{\labmid} \tens \QGrep{\labin} \big) \\
= \; & \begin{cases}
    1 & \text{ if } \labout \in \selRule{\labmid}{\labin} \\
    0 & \text{ otherwise.}
  \end{cases}
\end{align*}

\begin{defn}\label{def: normalized intertwining operators}
When $\labout \in \selRule{\labmid}{\labin}$ so that nonzero
intertwining operators exist, we denote by
\begin{align*}
\intertwnorm{\labin}{\labmid}{\labout}
\in \spintertw \voaFus{\voaFR{\labin}}{\voaFR{\labmid}}{\voaFR{\labout}} 
\end{align*}
the unique intertwining operator normalized so that
\begin{align}
\label{eq: normalization of intertwining operators}
\initterm{\intertwnorm{\labin}{\labmid}{\labout}} (\apvar)
= \; B \, \apvar^{\Delta} ,
\end{align}
where
\begin{align*}
B = \betacoef{\labmid}{\labin}{\;\labout} \neq 0
\qquad \text{ and } \qquad
\Delta = \deltaexp{\labmid}{\labin}{\;\labout}
    = \hwFR{\labout} - \hwFR{\labmid} - \hwFR{\labin}
\end{align*}
are as in 
Theorem~\ref{thm:properties_of_correlations}.
\end{defn}

The above normalization of the intertwining 
operator~$\intertwnorm{\labin}{\labmid}{\labout}$
is chosen so that that the ``matrix element''
$\Braket{\hwvFR{\labout},\intertwnorm{\labin}{\labmid}{\labout} ( 
\hwvFR{\labmid} , \apvar ) \, \hwvFR{\labin}} = 
\initterm{\intertwnorm{\labin}{\labmid}{\labout}} (\apvar)
= B \, \apvar^{\Delta}$
between highest weight vectors formally
coincides with the function~$x \mapsto \sF[u_{\cbfullseq}](0,x)$
constructed with the quantum group method from the
conformal block 
vector~$u_{\cbfullseq} \in 
\QGrep{\labmid} \tens \QGrep{\labin}$
corresponding to ${\cbfullseq = (\labin , \labmid , \labout)}$
as in Section~\ref{sec: explicit conformal block vectors}.

\subsection{Space of formal series}
\label{sub: space of formal series}
For a sequence
$\lambdafullseq =
(\lambda_{0},\lambda_{1}\dots,\lambda_{N},\lambda_{\infty})
\in \Znn^{N+2}$, and a $\lambdafullseq$-admissible 
sequence $\cbfullseq = (\cbseq_0 , \cbseq_1 , \ldots, \cbseq_{N-1}, \cbseq_{N})$,
we consider the composition of intertwining operators
\begin{align*}
\intertwnorm{\cbseq_{N-1}}{\lambda_{N}}{\cbseq_{N}}(\Mvec_N , {\apvar_{N}})
\intertwnorm{\cbseq_{N-2}}{\lambda_{N-1}}{\cbseq_{N-1}}(\Mvec_{N-1} , {\apvar_{N-1}})
\cdots
\intertwnorm{\cbseq_{1}}{\lambda_{2}}{\cbseq_{2}}(\Mvec_2 , {\apvar_{2}}) 
\intertwnorm{\cbseq_{0}}{\lambda_{1}}{\cbseq_{1}}(\Mvec_1 , {\apvar_{1}}) ,
\end{align*}
which is a priori a formal series in $\apvar_1 , \ldots , \apvar_N$
with coefficients that are linear 
maps ${\voaFR{\cbseq_0} \to \voaFR{\cbseq_N}}$.
We consider in particular the ``matrix element'' between highest weight
states
\begin{align}\label{eq: highest weight matrix element of a composition}
C^{\lambdafullseq}_{\cbfullseq}(\apvar_{1},\dots, \apvar_{N}):=
\Braket{\hwvFR{\cbseq_{N}}^{\prime} , \,
    \intertwnorm{\cbseq_{N-1}}{\lambda_{N}}{\cbseq_{N}}(\hwvFR{\lambda_{N}} , \apvar_{N})
\cdots
\intertwnorm{\cbseq_{0}}{\lambda_{1}}{\cbseq_{1}}(\hwvFR{\lambda_{1}} , \apvar_{1}) \,
    \hwvFR{\lambda_0}} .
\end{align}
As in the proof of 
Proposition~\ref{prop: uniqueness of intertwiners up to constant},
the general matrix elements
\begin{align*}
\Braket{\Mvec_{\infty}^{\prime} , \,
    \intertwnorm{\cbseq_{N-1}}{\lambda_{N}}{\cbseq_{N}}(\Mvec_{N} , \apvar_{N})
\cdots
\intertwnorm{\cbseq_{0}}{\lambda_{1}}{\cbseq_{1}}(\Mvec_{1} , \apvar_{1})
    \, \Mvec_{0}}
\end{align*}
with $\Mvec_{i}\in \voaFR{\lambda_{i}}$, $i=0,1,\dots, N$,
and $\Mvec_{\infty}^{\prime}\in \voaFR{\lambda_{\infty}}^{\prime}$,
can be determined by a recursion
using the Jacobi identity~\eqref{eq:intertw_assoc} (in the 
specific form of Corollary~\ref{cor:formula_intertw})
from this matrix element~\eqref{eq: highest weight matrix element of a composition};
this particular matrix element serves a role analogous
to the initial term of an intertwining operator. A priori, 
\eqref{eq: highest weight matrix element of a composition} is
a formal series in~$\apvar_1 , \ldots , \apvar_n$ with complex coefficients,
i.e., an element of~$\bC\{\apvar_1 , \ldots , \apvar_n\}$.
But in fact the series 
has a more particular structure:
it is essentially a formal
power series (with non-negative powers!) in the ratios of the variables.

\begin{lem}
\label{lem:matrix_elements_formal_series}
The series~\eqref{eq: highest weight matrix element of a composition} 
belongs to the space
\begin{align}\label{eq: the appropriate space of formal series}
\mathbb{C}[[\apvar_{N-1}/\apvar_{N}]][[\apvar_{N-2}/\apvar_{N-1}]]\cdots [[\apvar_{1}/\apvar_{2}]]\apvar_{N}^{\Delta_{N}}\cdots \apvar_{1}^{\Delta_{1}},
\end{align}
with $\Delta_{i}=\hwFR{\cbseq_{i}}-\hwFR{\lambda_{i}}-\hwFR{\cbseq_{i-1}}$ for $i=1,\dots, N$.
\end{lem}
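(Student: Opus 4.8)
The plan is to apply the intertwining operators in the composition successively, starting from the innermost one applied to $\hwvFR{\lambda_0}$, and to track the $L_0$-grading at each stage. The one computational ingredient is a homogeneity property of a normalized intertwining operator evaluated on a highest weight vector. Applying Corollary~\ref{cor:formula_intertw}(2) with $n=0$ to $\hwvFR{\lambda_j}$ and using the translation property~\eqref{eq:intertw_translation} gives
\[
\big[ L_0 , \intertwnorm{\cbseq_{j-1}}{\lambda_j}{\cbseq_j}(\hwvFR{\lambda_j},\apvar_j) \big]
= \Big( \apvar_j \der{\apvar_j} + \hwFR{\lambda_j} \Big)\,
  \intertwnorm{\cbseq_{j-1}}{\lambda_j}{\cbseq_j}(\hwvFR{\lambda_j},\apvar_j) ,
\]
so, reading off the coefficient of each power $\apvar_j^{\alpha}$, that coefficient maps $(\voaFR{\cbseq_{j-1}})_{(\eta)}$ into $(\voaFR{\cbseq_j})_{(\eta+\alpha+\hwFR{\lambda_j})}$ (this is the intertwining-operator analogue of Lemma~\ref{lem: homogeneity of the modes in modules}). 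Since $\voaFR{\cbseq_j}$ is a highest weight module whose $L_0$-eigenvalues lie in $\hwFR{\cbseq_j}+\Znn$, it follows that for every $v \in (\voaFR{\cbseq_{j-1}})_{(\eta)}$ the series $\intertwnorm{\cbseq_{j-1}}{\lambda_j}{\cbseq_j}(\hwvFR{\lambda_j},\apvar_j)\,v$ is bounded below in $\apvar_j$ and has the form
\[
\intertwnorm{\cbseq_{j-1}}{\lambda_j}{\cbseq_j}(\hwvFR{\lambda_j},\apvar_j)\,v
\;=\; \apvar_j^{\,\hwFR{\cbseq_j}-\hwFR{\lambda_j}-\eta}\,
  \sum_{\ell\geq 0} v_\ell \, \apvar_j^{\ell}, \qquad v_\ell \in (\voaFR{\cbseq_j})_{(\hwFR{\cbseq_j}+\ell)} ,
\]
with leading exponent equal to $\Delta_j$ in the case $\eta=\hwFR{\cbseq_{j-1}}$.

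With this in hand I would prove by induction on $j \in \{1,\dots,N\}$ that the partial composition $\intertwnorm{\cbseq_{j-1}}{\lambda_j}{\cbseq_j}(\hwvFR{\lambda_j},\apvar_j)\cdots\intertwnorm{\cbseq_0}{\lambda_1}{\cbseq_1}(\hwvFR{\lambda_1},\apvar_1)\,\hwvFR{\lambda_0}$ equals
\[
\apvar_1^{\Delta_1}\cdots\apvar_j^{\Delta_j}
\sum_{k_1,\dots,k_j\geq 0}
v^{(j)}_{k_1,\dots,k_j}\,
(\apvar_1/\apvar_2)^{k_1}\cdots(\apvar_{j-1}/\apvar_j)^{k_{j-1}}\,\apvar_j^{k_j},
\qquad v^{(j)}_{k_1,\dots,k_j}\in(\voaFR{\cbseq_j})_{(\hwFR{\cbseq_j}+k_j)} ,
\]
where each $v^{(j)}_{k_1,\dots,k_j}$ is a single vector determined by a finite computation. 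The base case $j=1$ is the homogeneity statement above with $v=\hwvFR{\cbseq_0}=\hwvFR{\lambda_0}$ and $\eta=\hwFR{\cbseq_0}$. For the step, apply $\intertwnorm{\cbseq_j}{\lambda_{j+1}}{\cbseq_{j+1}}(\hwvFR{\lambda_{j+1}},\apvar_{j+1})$ to each $v^{(j)}_{k_1,\dots,k_j}$; by the homogeneity statement with $\eta=\hwFR{\cbseq_j}+k_j$ this produces $\apvar_{j+1}^{\Delta_{j+1}-k_j+k_{j+1}}$ times a vector in $(\voaFR{\cbseq_{j+1}})_{(\hwFR{\cbseq_{j+1}}+k_{j+1})}$, and multiplying by the monomial already present and rewriting $\apvar_j^{k_j}\apvar_{j+1}^{-k_j}=(\apvar_j/\apvar_{j+1})^{k_j}$ gives exactly the same shape with $j$ replaced by $j+1$. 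No two distinct multi-indices produce the same monomial, since $\apvar_1/\apvar_2,\dots,\apvar_{j-1}/\apvar_j,\apvar_j$ are algebraically independent, so each coefficient $v^{(j)}_{\underline{k}}$ is unambiguously defined and the series genuinely lives in the relevant iterated formal power series ring.

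Finally, taking $j=N$ and pairing with $\hwvFR{\cbseq_N}'=\hwvFR{\lambda_\infty}'$ finishes the proof: as an element of the restricted dual $\voaFR{\cbseq_N}'$, the vector $\hwvFR{\cbseq_N}'$ sits in grade $\hwFR{\cbseq_N}$ and therefore pairs to zero with $(\voaFR{\cbseq_N})_{(\eta)}$ for all $\eta\neq\hwFR{\cbseq_N}$, i.e.\ it annihilates all terms with $k_N>0$. Hence
\[
C^{\lambdafullseq}_{\cbfullseq}(\apvar_1,\dots,\apvar_N)
= \apvar_1^{\Delta_1}\cdots\apvar_N^{\Delta_N}
  \sum_{k_1,\dots,k_{N-1}\geq 0}
  \braket{\hwvFR{\cbseq_N}',\,v^{(N)}_{k_1,\dots,k_{N-1},0}}\,
  (\apvar_1/\apvar_2)^{k_1}\cdots(\apvar_{N-1}/\apvar_N)^{k_{N-1}} ,
\]
which is precisely an element of the space~\eqref{eq: the appropriate space of formal series}, with $\Delta_i=\hwFR{\cbseq_i}-\hwFR{\lambda_i}-\hwFR{\cbseq_{i-1}}$ as claimed. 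I do not expect any genuine obstacle: the argument is essentially bookkeeping of the $L_0$-grading; the only points that need a little care are isolating the homogeneity identity above and deducing from the lowest-weight property of $\voaFR{\cbseq_j}$ that the powers of $\apvar_j$ are bounded below, so that one lands in the iterated formal power series ring in the ratios, with non-negative exponents only.
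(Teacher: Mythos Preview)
Your proof is correct and uses essentially the same idea as the paper's---tracking the $L_0$-grading through the composition---only organized slightly differently: you build up the composition from the inside, carrying the full $\voaFR{\cbseq_j}$-valued series and pairing with $\hwvFR{\cbseq_N}'$ only at the end, whereas the paper keeps the matrix element throughout, generalizes the rightmost slot to an arbitrary homogeneous vector $\Mvec\in\voaFR{\cbseq_0}(n)$, and inducts on $N$ by peeling off the innermost intertwiner. Both are the same grading bookkeeping.
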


We make a preliminary to prove Lemma~\ref{lem:matrix_elements_formal_series}.
For each $\lambda\in\Znn$ and $n\in\Znn$, we set $\voaFR{\lambda}(n):=(\voaFR{\lambda})_{(\hwFR{\lambda}+n)}$ as the eigenspace of $L_{0}$ corresponding to the eigenvalue $\hwFR{\lambda}+n$.
Suppose that a triple $(\labout,\labmid,\labin)$ 
satisfies the selection rule $\labout \in \selRule{\labmid}{\labin}$, and take the intertwining operator
$\intertwnorm{\labin}{\labmid}{\labout}$
of type $\voaFus{\voaFR{\labmid}}{\voaFR{\labin}}{\voaFR{\labout}} $. 
For $\Mvec\in \voaFR{\labmid}(k)$, $k\in\Znn$,
using Corollary~\ref{cor:formula_intertw}(2)
for~$L_0$ like in
Lemma~\ref{lem: homogeneity of the modes in modules},
we find that $\intertwnorm{\labin}{\labmid}{\labout} (\Mvec,\apvar)$ 
takes the form
\begin{equation*}
\intertwnorm{\labin}{\labmid}{\labout}
   (\Mvec,\apvar)=\sum_{m\in\Z}\Mvec_{(m)}\apvar^{\Delta-m-1}
\qquad \text{ with }
\Delta=\hwFR{\labout}-\hwFR{\labmid}-\hwFR{\labin} ,
\end{equation*}
where $\Mvec_{(m)}\voaFR{\labin}(n)\subset \voaFR{\labout}(k-m+n-1)$
for $m\in\Z$, $n \in \Znn$.

We prove a slightly more general result than Lemma~\ref{lem:matrix_elements_formal_series}:
\begin{lem}
Let $\Mvec\in \voaFR{\cbseq_{0}}(n)$, $n\in\Znn$.
Then, the formal series defined by
\begin{align*}
C^{\lambdafullseq}_{\cbfullseq}(\Mvec;\apvar_{1},\dots, \apvar_{N})
:= \Braket{\hwvFR{\cbseq_{N}}^{\prime},
    \intertwnorm{\cbseq_{N-1}}{\lambda_{N}}{\cbseq_{N}}(\hwvFR{\lambda_{N}} , \apvar_{N})
\cdots
\intertwnorm{\cbseq_{0}}{\lambda_{1}}{\cbseq_{1}}(\hwvFR{\lambda_{1}} , \apvar_{1})
    \Mvec} 
\end{align*}
lies in the space
\begin{align*}
	\mathbb{C}[[\apvar_{N-1}/\apvar_{N}]][[\apvar_{N-2}/\apvar_{N-1}]]\cdots [[\apvar_{1}/\apvar_{2}]] \,
	\apvar_{N}^{\Delta_{N}} \cdots 
	\apvar_{2}^{\Delta_{2}} \apvar_{1}^{\Delta_{1}-n}.
\end{align*}
\end{lem}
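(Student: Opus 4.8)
The plan is to induct on the number $N$ of intertwining operators in the composition, peeling off at each step the \emph{innermost} factor $\intertwnorm{\cbseq_{0}}{\lambda_{1}}{\cbseq_{1}}(\hwvFR{\lambda_{1}},\apvar_{1})$, i.e. the one acting directly on $\Mvec$. This is precisely why one states the lemma for a general graded vector $\Mvec\in\voaFR{\cbseq_{0}}(n)$ rather than only for the highest weight vector: the induction does not close otherwise, and Lemma~\ref{lem:matrix_elements_formal_series} is recovered at the end by taking $\Mvec=\hwvFR{\lambda_{0}}$, for which $n=0$. For the base case $N=1$ one has $\cbseq_{0}=\lambda_{0}$ and $\cbseq_{1}=\lambda_{\infty}$; from the mode expansion recalled just above the statement, $\intertwnorm{\cbseq_{0}}{\lambda_{1}}{\cbseq_{1}}(\hwvFR{\lambda_{1}},\apvar_{1})=\sum_{m\in\bZ}(\hwvFR{\lambda_{1}})_{(m)}\,\apvar_{1}^{\Delta_{1}-m-1}$ with $(\hwvFR{\lambda_{1}})_{(m)}\voaFR{\cbseq_{0}}(n)\subset\voaFR{\cbseq_{1}}(n-m-1)$, and since $\hwvFR{\cbseq_{N}}^{\prime}$ lies in the lowest ($L_{0}$-degree-zero) subspace of $\voaFR{\cbseq_{N}}^{\prime}$, only the term $m=n-1$ survives the pairing, so $C^{\lambdafullseq}_{\cbfullseq}(\Mvec;\apvar_{1})$ is a scalar multiple of $\apvar_{1}^{\Delta_{1}-n}$, as required.

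For the inductive step I would first apply the innermost operator to $\Mvec\in\voaFR{\cbseq_{0}}(n)$. By the same homogeneity statement, and using that the $L_{0}$-grading of $\voaFR{\cbseq_{1}}$ is bounded below (the modes can lower the degree only down to $0$), this yields
\begin{align*}
\intertwnorm{\cbseq_{0}}{\lambda_{1}}{\cbseq_{1}}(\hwvFR{\lambda_{1}},\apvar_{1})\,\Mvec
\;=\;\apvar_{1}^{\Delta_{1}-n}\sum_{j=0}^{\infty}w_{j}\,\apvar_{1}^{j},
\qquad
w_{j}:=(\hwvFR{\lambda_{1}})_{(n-1-j)}\,\Mvec\in\voaFR{\cbseq_{1}}(j),
\end{align*}
where crucially the exponents $j$ are non-negative integers and $w_{j}$ is homogeneous of $L_{0}$-degree $j$. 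Substituting into the matrix element and using linearity gives
\begin{align*}
C^{\lambdafullseq}_{\cbfullseq}(\Mvec;\apvar_{1},\dots,\apvar_{N})
\;=\;\apvar_{1}^{\Delta_{1}-n}\sum_{j=0}^{\infty}\apvar_{1}^{j}\;
C^{\lambdafullseq^{\prime}}_{\cbfullseq^{\prime}}(w_{j};\apvar_{2},\dots,\apvar_{N}),
\end{align*}
where $\lambdafullseq^{\prime}=(\cbseq_{1},\lambda_{2},\dots,\lambda_{N},\lambda_{\infty})$ and $\cbfullseq^{\prime}=(\cbseq_{1},\cbseq_{2},\dots,\cbseq_{N})$ is easily checked to be $\lambdafullseq^{\prime}$-admissible (this is just the restriction of the original admissibility conditions).

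Then I would invoke the induction hypothesis for each homogeneous $w_{j}\in\voaFR{\cbseq_{1}}(j)$, which puts $C^{\lambdafullseq^{\prime}}_{\cbfullseq^{\prime}}(w_{j};\apvar_{2},\dots,\apvar_{N})$ into the space $\mathbb{C}[[\apvar_{N-1}/\apvar_{N}]]\cdots[[\apvar_{2}/\apvar_{3}]]\,\apvar_{N}^{\Delta_{N}}\cdots\apvar_{3}^{\Delta_{3}}\apvar_{2}^{\Delta_{2}-j}$. Rewriting $\apvar_{1}^{\Delta_{1}-n+j}\apvar_{2}^{\Delta_{2}-j}=\apvar_{1}^{\Delta_{1}-n}\apvar_{2}^{\Delta_{2}}(\apvar_{1}/\apvar_{2})^{j}$, every summand of the $j$-sum becomes $\apvar_{1}^{\Delta_{1}-n}\apvar_{2}^{\Delta_{2}}(\apvar_{1}/\apvar_{2})^{j}$ times an element of $\mathbb{C}[[\apvar_{N-1}/\apvar_{N}]]\cdots[[\apvar_{2}/\apvar_{3}]]\,\apvar_{N}^{\Delta_{N}}\cdots\apvar_{3}^{\Delta_{3}}$, so the sum over $j$ is by definition an element of $\bigl(\mathbb{C}[[\apvar_{N-1}/\apvar_{N}]]\cdots[[\apvar_{2}/\apvar_{3}]]\,\apvar_{N}^{\Delta_{N}}\cdots\apvar_{3}^{\Delta_{3}}\bigr)[[\apvar_{1}/\apvar_{2}]]$, and multiplying through by $\apvar_{1}^{\Delta_{1}-n}\apvar_{2}^{\Delta_{2}}$ lands precisely in the asserted space. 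The argument is mostly bookkeeping once the mode homogeneity is in hand; the one point that genuinely needs care is this last reassembly of the iterated formal-power-series ring one ratio-variable at a time, which works only because the intermediate exponents $j$ are non-negative --- i.e. it rests on $\hwvFR{\lambda_{1}}$ sitting in $L_{0}$-degree $0$ of $\voaFR{\lambda_{1}}$ and on module gradings being bounded below --- together with threading the exponents $\Delta_{i}$ through the substitution correctly.
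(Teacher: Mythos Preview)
Your proof is correct and follows essentially the same route as the paper's own argument: induction on $N$, peeling off the innermost intertwining operator via its mode expansion, using that $(\hwvFR{\lambda_{1}})_{(m)}\Mvec\in\voaFR{\cbseq_{1}}(n-m-1)$ vanishes unless $m\le n-1$, and then invoking the induction hypothesis on the truncated sequences $\lambdafullseq'=(\cbseq_{1},\lambda_{2},\dots,\lambda_{N},\lambda_{\infty})$ and $\cbfullseq'=(\cbseq_{1},\dots,\cbseq_{N})$. Your reindexing $j=n-1-m$ matches the paper's variable $\ell$, and the explicit rewriting $\apvar_{1}^{\Delta_{1}-n+j}\apvar_{2}^{\Delta_{2}-j}=\apvar_{1}^{\Delta_{1}-n}\apvar_{2}^{\Delta_{2}}(\apvar_{1}/\apvar_{2})^{j}$ spells out the final step the paper leaves implicit.
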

\begin{proof}
We prove this by induction on~$N$.
When $N=1$, we have
\begin{align*}
\braket{\hwvFR{\cbseq_{1}}^{\prime},
	\intertwnorm{\cbseq_{0}}{\lambda_{1}}{\cbseq_{1}}
	(\hwvFR{\lambda_{1}},\apvar_{1})\Mvec}
& \; = \sum_{m\in\Z} \braket{\hwvFR{\cbseq_{1}}^{\prime},(\hwvFR{\lambda_{1}})_{(m)}\Mvec}\apvar_{1}^{\Delta_{1}-m-1}.
\end{align*}
Here, since $(\hwvFR{\lambda_{1}})_{(m)} \Mvec 
\subset \voaFR{\cbseq_{1}}(-m+n-1)$, the matrix element 
$\braket{\hwvFR{\cbseq_{1}}^{\prime},
 (\hwvFR{\lambda_{1}})_{(m)}\Mvec}$ vanishes unless $m=n-1$.
Therefore, we can see that
\begin{align*}
\braket{\hwvFR{\cbseq_{1}}^{\prime},
	\intertwnorm{\cbseq_{0}}{\lambda_{1}}{\cbseq_{1}}(\hwvFR{\lambda_{1}},\apvar_{1})\Mvec}
&=\braket{\hwvFR{\cbseq_{1}}^{\prime},(\hwvFR{\lambda_{1}})_{(n-1)}\Mvec}\apvar_{1}^{\Delta_{1}-n} ,
\end{align*}
which is the $N=1$ case.

Then assume that the assertion holds for $N\leq k-1$ with some $k>1$.
Then, we have
\begin{align*}
C^{\lambdafullseq}_{\cbfullseq}(\Mvec;\apvar_{1},\dots, \apvar_{k})
& \; = \sum_{m\in\Z}\Braket{\hwvFR{\cbseq_{k}}^{\prime},
    \intertwnorm{\cbseq_{k-1}}{\lambda_{k}}{\cbseq_{k}}(\hwvFR{\lambda_{k}} , \apvar_{k})
\cdots
\intertwnorm{\cbseq_{1}}{\lambda_{2}}{\cbseq_{2}}(\hwvFR{\lambda_{2}} , \apvar_{2})(\hwvFR{\lambda_{1}})_{(m)}
    \Mvec}\apvar_{1}^{\Delta_{1}-m-1}.
\end{align*}
Again, since $(\hwvFR{\lambda_{1}})_{(m)}\Mvec \in\voaFR{\cbseq_{1}}(-m+n-1)$, the matrix element vanishes unless $m\leq n-1$.
In other words, we have the expansion
\begin{align*}
C^{\lambdafullseq}_{\cbfullseq}(\Mvec;\apvar_{1},\dots, \apvar_{k})
	&=\sum_{\ell=0}^{\infty}\Braket{\hwvFR{\cbseq_{k}}^{\prime},
    \intertwnorm{\cbseq_{k-1}}{\lambda_{k}}{\cbseq_{k}}
        (\hwvFR{\lambda_{k}} , \apvar_{k})
\cdots
\intertwnorm{\cbseq_{1}}{\lambda_{2}}{\cbseq_{2}}(\hwvFR{\lambda_{2}} , \apvar_{2})(\hwvFR{\lambda_{1}})_{(n-\ell-1)}
    \Mvec}\apvar_{1}^{\Delta_{1}-n+\ell} \\
&=\sum_{\ell=0}^{\infty}C^{\lambdafullseq^{\prime}}_{\cbfullseq^{\prime}}\left((\hwvFR{\lambda_{1}})_{(n-\ell-1)}\Mvec;\apvar_{2},\dots,\apvar_{k}\right)\apvar_{1}^{\Delta_{1}-n+\ell},
\end{align*}
where we set
$\lambdafullseq^{\prime} 
= (\cbseq_{1},\lambda_{2},\ldots,\lambda_{k},\lambda_{\infty})$
and $\cbfullseq^{\prime}=(\cbseq_{1},\dots,\cbseq_{k})$.
By the induction hypothesis, each coefficient $C^{\lambdafullseq^{\prime}}_{\cbfullseq^{\prime}}\left((\hwvFR{\lambda_{1}})_{(n-\ell-1)}\Mvec;\apvar_{2},\dots,\apvar_{k}\right)$ lies in
\begin{align*}
	\C[[\apvar_{k-1}/\apvar_{k}]]\cdots [[\apvar_{2}/\apvar_{3}]]
	\, \apvar_{2}^{\Delta_{2}-\ell} .
\end{align*}
Consequently, we have the desired result at $N=k$.
\end{proof}

\subsection{System of differential equations}
\label{sub: differential equations for matrix elements}

Due to the quotienting out of the
singular vector
in the first row 
module~$\voaFR{\lambda} = \VermaFR{\lambda} \, \big/ \, \UEA(\vir) \singvecop_\lambda \hwv{c,h(\lambda)}$,
the matrix 
element~$C^{\lambdafullseq}_{\cbfullseq}(\apvar_{1},\dots, \apvar_{N})$
satisfies a certain system of differential equations.
Given a sequence
$\lambdafullseq=(\lambda_{0},\dots,\lambda_{N},\lambda_{\infty})\in\Znn^{N+2}$,
we introduce the differential operators
\begin{align}\label{eq: Witt differential operator transl inv}
\varWittL{n}{j}
	:=&(-x_{j})^{n}\left(-x_{j} \sum_{1 \leq i \leq N}\frac{\dee}{\dee x_{i}}
	        -(1+n)\hwFR{\lambda_{0}}\right) \\
\nonumber
	& - \sum_{\substack{1 \leq i \leq N \\ i \neq j}} 
	    (x_{i}-x_{j})^{n} \left((x_{i}-x_{j}) \frac{\dee}{\dee x_{i}}+(1+n)\hwFR{\lambda_{i}}\right)
\end{align}
for $j=1,\dots, N$ and $n\in\Z$, and
\begin{align}\label{eq: BSA differential operator transl inv}
\varBSAoper{j}
	=\sum_{k=1}^{\lambda_{j}+1}\sum_{\substack{p_{1},\dots, p_{k}\geq 1 \\ p_{1}+\cdots+p_{k}=\lambda_{j}+1}}\frac{(-4/\kappa)^{1+\lambda_{j}-k}\lambda_{j}!}{\prod_{u=1}^{k-1}(\sum_{i=1}^{u}p_{i})(\sum_{i=u+1}^{k}p_{i})}\varWittL{-p_{1}}{j} \cdots \varWittL{-p_{k}}{j}
\end{align}
for $j=1,\dots, N$.
These differential operators~$\varWittL{n}{j}$ and~$\varBSAoper{j}$
essentially correspond to how
the differential operators~$\WittL{n}{j}$ and~$\BSAoper{j}$
of \eqref{eq: Witt differential operator} and~\eqref{eq: BSA differential operator}
act on translation invariant functions; see 
Lemma~\ref{lem: action of diff op on transl inv fun} for a
precise statement.
Notice that the actions of~$\varWittL{n}{j}$ and~$\varBSAoper{j}$
on the space
\begin{align*}
\C[\apvar_N] \, [[\apvar_{N-1}/\apvar_{N}]][[\apvar_{N-2}/\apvar_{N-1}]]\cdots [[\apvar_{1}/\apvar_{2}]] \, [\apvar_1^{-1}] \; \apvar_{N}^{\Delta_{N}}\cdots \apvar_{1}^{\Delta_{1}}
\end{align*}
of formal series are canonically determined
in such a way that
\begin{align*}
x_{j}\mapsto \apvar_{j} 
\qquad \text{ and } \qquad 
\frac{\dee}{\dee x_{i}}\mapsto \frac{\dee}{\dee \apvar_{i}}
    \quad \text{ for } i=1,\dots, N ,
\end{align*}
and the factors $(x_i-x_j)^n$
are expanded as the following formal power series
\begin{align*}
(x_i-x_j)^n \; \mapsto \; \begin{cases}
    (-\apvar_j)^n \, \sum_{k=0}^\infty \binom{n}{k} (-1)^k \, (\apvar_i/\apvar_j)^k & \text{ for } i<j \\
    \; \apvar_i^n \; \sum_{k=0}^\infty \binom{n}{k} (-1)^k \, (\apvar_j/\apvar_i)^k & \text{ for } i>j ,
\end{cases}
\end{align*}
i.e., $(x_i-x_j)^n \mapsto (\apvar_i-\apvar_j)^n$ for $i>j$ but 
$(x_i-x_j)^n \mapsto (-\apvar_j+\apvar_i)^n$ for $i<j$.

The following standard lemma will be instrumental for us.
We include the details here to concretely demonstrate the
source of the specific power series expansions above.
\begin{lem}
\label{lem:PDE_matrix_elements}
Let $\lambdafullseq=(\lambda_{0},\dots,\lambda_{N},\lambda_{\infty})\in\Znn^{N+2}$ be a sequence and $\cbfullseq=(\cbseq_{0},\dots, \cbseq_{N})$ be $\lambdafullseq$-admissible.
Then, the matrix element $C^{\lambdafullseq}_{\cbfullseq}(\apvar_{1},\dots, \apvar_{N})$ solves the following system of differential equations
\begin{equation*}
\varBSAoper{j} C^{\lambdafullseq}_{\cbfullseq}(\apvar_{1},\dots, \apvar_{N})=0 
\qquad \text{ for } j=1,\dots, N.
\end{equation*}
\end{lem}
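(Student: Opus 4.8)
The plan is to deduce the whole system of equations from the single fact that the Benoit--Saint-Aubin singular vector vanishes in the first row module. By Lemma~\ref{lem: first row generic case}, $\voaFR{\lambda_j} = \VermaFR{\lambda_j} \, \big/ \, \UEA(\vir)\,\singvecop_{\lambda_j}\hwv{c,h(\lambda_j)}$, so $\singvecop_{\lambda_j}\hwvFR{\lambda_j}=0$ in $\voaFR{\lambda_j}$, and hence $\intertwnorm{\cbseq_{j-1}}{\lambda_{j}}{\cbseq_{j}}(\singvecop_{\lambda_j}\hwvFR{\lambda_j},\apvar_j)=0$ as an operator $\voaFR{\cbseq_{j-1}}\to\voaFR{\cbseq_j}$. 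Inserting this into the composition, the matrix element
\begin{align*}
\Braket{\hwvFR{\cbseq_{N}}^{\prime},\;
\intertwnorm{\cbseq_{N-1}}{\lambda_{N}}{\cbseq_{N}}(\hwvFR{\lambda_{N}},\apvar_{N})\cdots
\intertwnorm{\cbseq_{j-1}}{\lambda_{j}}{\cbseq_{j}}(\singvecop_{\lambda_j}\hwvFR{\lambda_{j}},\apvar_{j})\cdots
\intertwnorm{\cbseq_{0}}{\lambda_{1}}{\cbseq_{1}}(\hwvFR{\lambda_{1}},\apvar_{1})\,\hwvFR{\lambda_0}}
\end{align*}
vanishes identically, and it remains to identify it with $\varBSAoper{j}C^{\lambdafullseq}_{\cbfullseq}(\apvar_1,\dots,\apvar_N)$.

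The key is a conformal Ward identity: for each $n\in\bZ$, replacing $\intertwnorm{\cbseq_{j-1}}{\lambda_{j}}{\cbseq_{j}}(\Mvec,\apvar_j)$ by $\intertwnorm{\cbseq_{j-1}}{\lambda_{j}}{\cbseq_{j}}(L_n\Mvec,\apvar_j)$ inside such a matrix element has the same effect as applying the differential operator $\varWittL{n}{j}$ of \eqref{eq: Witt differential operator transl inv}. I would prove this with Corollary~\ref{cor:formula_intertw}: part~(1) rewrites $\intertwnorm{\cbseq_{j-1}}{\lambda_{j}}{\cbseq_{j}}(L_n\Mvec,\apvar_j)$, for $n<0$, as a combination (finite on each graded piece, by Lemma~\ref{lem:matrix_elements_formal_series}) of operators $L_m$ with $m\le -1$ placed to the left of $\intertwnorm{\cbseq_{j-1}}{\lambda_{j}}{\cbseq_{j}}(\Mvec,\apvar_j)$ and $L_m$ with $m\ge -1$ placed to its right, with Laurent-polynomial coefficients in $\apvar_j$; one then commutes these through the remaining intertwiners $\intertwnorm{\cbseq_{i-1}}{\lambda_{i}}{\cbseq_{i}}(\hwvFR{\lambda_i},\apvar_i)$ via part~(2) and the translation property~\eqref{eq:intertw_translation}. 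The operators with $m\le -1$ travelling left annihilate the highest weight vector $\hwvFR{\cbseq_N}^{\prime}$ of the contragredient module (Lemmas~\ref{lem: self duality for irreducible hwreps} and~\ref{lem: Virasoro action on the contragredient}); those with $m\ge 1$ travelling right annihilate $\hwvFR{\lambda_0}$; $L_0$ on $\hwvFR{\lambda_0}$ produces the weight $\hwFR{\lambda_0}$; and the leftover $L_{-1}\hwvFR{\lambda_0}$ is turned, by carrying the $L_{-1}$ back out to the far left where it annihilates $\hwvFR{\cbseq_N}^{\prime}$, into the total translation $-\sum_{i=1}^{N}\partial_{\apvar_i}$. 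Collecting the surviving terms reproduces the summands of $\varWittL{n}{j}$, with the $i>j$ versus $i<j$ branch of the expansion convention dictated by the direction in which the operator was moved, and with the origin-term $(-x_j)^n\big(-x_j\sum_i\partial_{x_i}-(1+n)\hwFR{\lambda_0}\big)$ coming from $\hwvFR{\lambda_0}$. This statement is exactly Lemma~\ref{lem: action of diff op on transl inv fun}, which may equally well just be invoked here.

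Iterating the Ward identity, replacing $\hwvFR{\lambda_j}$ by $L_{-p_1}\cdots L_{-p_k}\hwvFR{\lambda_j}$ inside $\intertwnorm{\cbseq_{j-1}}{\lambda_{j}}{\cbseq_{j}}(\cdot,\apvar_j)$ corresponds to applying $\varWittL{-p_1}{j}\cdots\varWittL{-p_k}{j}$; summing the terms of $\singvecop_{\lambda_j}$ in \eqref{eq:singular_vector} against their coefficients and comparing with \eqref{eq: BSA differential operator transl inv} shows that the vanishing matrix element above equals $\varBSAoper{j}C^{\lambdafullseq}_{\cbfullseq}$ (up to the harmless overall constant caused by the $\lambda_j!$ versus $\lambda_j!^2$ normalization). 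Hence $\varBSAoper{j}C^{\lambdafullseq}_{\cbfullseq}=0$ for every $j$. I expect the main obstacle to be the Ward identity itself: carrying out the commutations so that one lands on precisely $\varWittL{n}{j}$ with the stated expansions, in particular checking that the two endpoints $\hwvFR{\lambda_0}$ and $\hwvFR{\cbseq_N}^{\prime}$ contribute only the origin-term (via translation covariance) and nothing ``at infinity'', and that all the formal-series rearrangements are legitimate in the space of Lemma~\ref{lem:matrix_elements_formal_series}; the rest is routine bookkeeping.
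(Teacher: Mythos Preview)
Your proposal is correct and follows essentially the same approach as the paper: both start from $\singvecop_{\lambda_j}\hwvFR{\lambda_j}=0$, establish the Ward identity $X(L_{-p}\Mvec)=\varWittL{-p}{j}X(\Mvec)$ via Corollary~\ref{cor:formula_intertw} (part~(1) to split into left- and right-moving Virasoro modes, part~(2) to commute through the other intertwiners, and part~(3) to handle the $L_{-1}\hwvFR{\lambda_0}$ term as the total translation), and then iterate. One small correction: Lemma~\ref{lem: action of diff op on transl inv fun} is stated for smooth functions on the chamber, not for formal series, so it cannot simply be invoked here; you must (and do) carry out the formal-series computation directly, as the paper does.
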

\begin{proof}
Fix $j \in \set{1,\ldots,N}$.
For $\Mvec \in \voaFR{\lambda_j}$, consider
\begin{align*}
X(\Mvec;\apvar_{1},\dots, \apvar_{N})
:= \Braket{\hwvFR{\cbseq_{N}}^{\prime},
    \intertwnorm{\cbseq_{N-1}}{\lambda_{N}}{\cbseq_{N}}(\hwvFR{\lambda_{N}} , \apvar_{N})
\cdots
\intertwnorm{\cbseq_{j-1}}{\lambda_{j}}{\cbseq_{j}}(\Mvec , \apvar_{j})
\cdots
\intertwnorm{\cbseq_{0}}{\lambda_{1}}{\cbseq_{1}}(\hwvFR{\lambda_{1}} , \apvar_{1})
    \hwvFR{\lambda_{0}}} . 
\end{align*}
Then we can use Corollary~\ref{cor:formula_intertw}(1)
to calculate $X(L_{-p} \Mvec;\apvar_{1},\dots, \apvar_{N})$, for $p>0$,
\begin{align*}
 & X(L_{-p}\Mvec;\apvar_{1},\dots, \apvar_{N}) \\
= \; & \phantom{+} \sum_{k=0}^\infty \binom{1-p}{k} (-\apvar_j)^k \\
& \qquad\qquad \Braket{\hwvFR{\cbseq_{N}}^{\prime},
    \intertwnorm{\cbseq_{N-1}}{\lambda_{N}}{\cbseq_{N}}(\hwvFR{\lambda_{N}} , \apvar_{N})
\cdots L_{-p-k} \,
\intertwnorm{\cbseq_{j-1}}{\lambda_{j}}{\cbseq_{j}}(\Mvec , \apvar_{j})
\cdots
\intertwnorm{\cbseq_{0}}{\lambda_{1}}{\cbseq_{1}}(\hwvFR{\lambda_{1}} , \apvar_{1})
    \hwvFR{\lambda_{0}}} \\
& - \sum_{k=0}^\infty \binom{1-p}{k} (-\apvar_j)^{1-p-k} \\
& \qquad\qquad \Braket{\hwvFR{\cbseq_{N}}^{\prime},
    \intertwnorm{\cbseq_{N-1}}{\lambda_{N}}{\cbseq_{N}}(\hwvFR{\lambda_{N}} , \apvar_{N})
\cdots 
\intertwnorm{\cbseq_{j-1}}{\lambda_{j}}{\cbseq_{j}}(\Mvec , \apvar_{j})
\, L_{k-1} \cdots
\intertwnorm{\cbseq_{0}}{\lambda_{1}}{\cbseq_{1}}(\hwvFR{\lambda_{1}} , \apvar_{1})
    \hwvFR{\lambda_{0}}} .
\end{align*}
In the first term, we use 
\begin{align}\label{eq: primary field Virasoro commutation}
\big[ L_{n} ,
    \intertwnorm{\cbseq_{i-1}}{\lambda_{i}}{\cbseq_{i}}(\hwvFR{\lambda_i} , \apvar_{i}) \big]
\; = \; \Big( \apvar_i^{1+n} \pder{\apvar_{i}} + \apvar_i^{n} \, (1+n) \, \hwFR{\lambda_i} \Big) \; 
    \intertwnorm{\cbseq_{i-1}}{\lambda_{i}}{\cbseq_{i}}(\hwvFR{\lambda_i} , \apvar_{i}) .
\end{align}
from Corollary~\ref{cor:formula_intertw}(2), to commute $L_{-p-k}$ to the left,
where it annihilates the highest weight 
vector~$\hwvFR{\cbseq_{N}}^{\prime} \in \voaFR{\lambda_N}'$.
The first term thus becomes
\begin{align*}
& - \sum_{i>j}
\sum_{k=0}^\infty \binom{1-p}{k} (-\apvar_j)^k \,
    \Big( \apvar_i^{1-p-k} \pder{\apvar_{i}} + \apvar_i^{-p-k} \, (1-p-k) \, \hwFR{\lambda_i} \Big) \; X(\Mvec;\apvar_{1},\dots, \apvar_{N}) \\
= \; & - \sum_{i>j} \Big( (\apvar_i-\apvar_j)^{1-p} \pder{\apvar_{i}} +
        (\apvar_i-\apvar_j)^{-p} \, (1-p) \, \hwFR{\lambda_i} \Big)
    \; X(\Mvec;\apvar_{1},\dots, \apvar_{N}) ,
\end{align*}
which is indeed expanded in non-negative
powers of~$\apvar_{j}/\apvar_{i}$, $i>j$.
Similarly in the second term, by commuting $L_{k-1}$ to the right and simplifying,
the commutator contributions become 
\begin{align*}
& - \sum_{i<j} \Big( (-\apvar_j+\apvar_i)^{1-p} \pder{\apvar_{i}} +
        (-\apvar_j+\apvar_i)^{-p} \, (1-p) \, \hwFR{\lambda_i} \Big)
    \; X(\Mvec;\apvar_{1},\dots, \apvar_{N}) ,
\end{align*}
which is expanded in non-negative
powers of~$\apvar_{i}/\apvar_{j}$, $i<j$. However, for $k=0$ and $k=1$,
the highest weight vector $\hwvFR{\cbseq_{0}} \in \voaFR{\lambda_0}$
is not annihilated by~$L_{k-1}$, so also the following two further terms
remain
\begin{align*}
& - (-\apvar_j)^{1-p}
  \Braket{\hwvFR{\cbseq_{N}}^{\prime},
    \intertwnorm{\cbseq_{N-1}}{\lambda_{N}}{\cbseq_{N}}(\hwvFR{\lambda_{N}} , \apvar_{N})
\cdots
\intertwnorm{\cbseq_{j-1}}{\lambda_{j}}{\cbseq_{j}}(\Mvec , \apvar_{j})
\cdots
\intertwnorm{\cbseq_{0}}{\lambda_{1}}{\cbseq_{1}}(\hwvFR{\lambda_{1}} , \apvar_{1})
    L_{-1} \hwvFR{\lambda_{0}}} \\
& - (-\apvar_j)^{-p} \, (1-p) \;
  \Braket{\hwvFR{\cbseq_{N}}^{\prime},
    \intertwnorm{\cbseq_{N-1}}{\lambda_{N}}{\cbseq_{N}}(\hwvFR{\lambda_{N}} , \apvar_{N})
\cdots
\intertwnorm{\cbseq_{j-1}}{\lambda_{j}}{\cbseq_{j}}(\Mvec , \apvar_{j})
\cdots
\intertwnorm{\cbseq_{0}}{\lambda_{1}}{\cbseq_{1}}(\hwvFR{\lambda_{1}} , \apvar_{1})
    L_{0} \, \hwvFR{\lambda_{0}}} .
\end{align*}
After simplifications (the first one still using
Corollary~\ref{cor:formula_intertw}), these two can be written as
\begin{align*}
(-\apvar_j)^{1-p} \sum_{1 \leq i \leq N} \pder{\apvar_{i}}
\; X(\Mvec;\apvar_{1},\dots, \apvar_{N})
- (-\apvar_j)^{-p} \, (1-p) \, \hwFR{\lambda_0}
\; X(\Mvec;\apvar_{1},\dots, \apvar_{N})
\end{align*}
By combining everything, we get
\begin{align*}
X( L_{-p} \Mvec;\apvar_{1},\dots, \apvar_{N})
= \varWittL{-p}{j} \, X(\Mvec;\apvar_{1},\dots, \apvar_{N}) .
\end{align*}

Using this auxiliary observation, the assertion then follows easily
from the formula~\eqref{eq:singular_vector} for the singular
vector~$\singvecop_{\lambda_j} \hwv{c,h(\lambda_j)} \in \VermaFR{\lambda_j}$,
and the fact that its canonical
projection in~$\voaFR{\lambda_j}$ vanishes,
$\singvecop_{\lambda_j} \hwvFR{\lambda_j} = 0$.
\end{proof}

The coefficient of the monomial
$\apvar_{N}^{\Delta_{N}}\cdots \apvar_{1}^{\Delta_{1}}$ in
\begin{align*}
C^{\lambdafullseq}_{\cbfullseq}(\apvar_{1},\dots, \apvar_{N}) 
\; \in \; \C[[\apvar_{N-1}/\apvar_{N}]]\cdots [[\apvar_{1}/\apvar_{2}]]\,
    \apvar_{N}^{\Delta_{N}}\cdots \apvar_{1}^{\Delta_{1}}
\end{align*}
is easy to trace in the calculations above. Ultimately
because of the chosen normalizations of the intertwining operators, this
coefficient is
\begin{align*}
\prod_{j=1}^N \betacoef{\lambda_j}{\cbseq_{j-1}}{\;\cbseq_{j}} \neq 0 .
\end{align*}
In particular 
$C^{\lambdafullseq}_{\cbfullseq}(\apvar_{1},\dots, \apvar_{N})$
is a non-zero solution to the system of differential equations above.

\begin{thm}
\label{thm:uniqueness_differential_equations_formal_series}
In the space of formal series of the 
form~\eqref{eq: the appropriate space of formal series},
the solution space
\begin{align*}
\bigcap_{j=1}^N \Kern \, \varBSAoper{j}
\; \subset \; 
\C[\apvar_N] \, [[\apvar_{N-1}/\apvar_{N}]]\cdots [[\apvar_{1}/\apvar_{2}]] \, [\apvar_1^{-1}]
    \; \apvar_{N}^{\Delta_{N}}\cdots \apvar_{1}^{\Delta_{1}}
\end{align*}
of the above system of differential
equations is one-dimensional,
\begin{align*}
\dmn \Big(
\bigcap_{j=1}^N \Kern \, \varBSAoper{j} \Big)
\; = \; 1  .
\end{align*}
\end{thm}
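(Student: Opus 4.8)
The plan is to argue by induction on the number of variables~$N$, peeling off the innermost variable~$\apvar_1$ by means of the equation $\varBSAoper{1}C=0$ and reducing the remaining equations to the analogous system in~$N-1$ variables. The lower bound $\dmn\geq 1$ is immediate: by Lemma~\ref{lem:PDE_matrix_elements} the matrix element $C^{\lambdafullseq}_{\cbfullseq}$ solves the system, and it is non-zero because, as noted above, the coefficient of the leading monomial $\apvar_{N}^{\Delta_{N}}\cdots\apvar_{1}^{\Delta_{1}}$ equals $\prod_{j=1}^{N}\betacoef{\lambda_j}{\cbseq_{j-1}}{\;\cbseq_{j}}\neq 0$.

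The key computational input is an \emph{indicial analysis} of $\varBSAoper{1}$. Acting on $P(\apvar_2,\dots,\apvar_N)\,\apvar_1^{\alpha}$, each $\varWittL{-p}{1}$ produces its lowest power of $\apvar_1$, namely $\apvar_1^{\alpha-p}$, only through its ``diagonal'' (Euler) part, which acts on $\apvar_1^{\alpha}$ exactly as the operator $\dopL{-p}$ from the proof of Lemma~\ref{lem: necessary selection rule} with $\hin=\hwFR{\lambda_0}$; consequently $\varBSAoper{1}(P\,\apvar_1^{\alpha})=Q(\alpha)\,P\,\apvar_1^{\alpha-\lambda_1-1}+(\text{higher powers of }\apvar_1)$, where $Q(\alpha)=P_{\lambda_1}\big(\hwFR{\lambda_0},\,\alpha+\hwFR{\lambda_1}+\hwFR{\lambda_0}\big)=\prod_{\ell=0}^{\lambda_1}\big(\alpha+\hwFR{\lambda_1}+\hwFR{\lambda_0}-\hwFR{\lambda_1+\lambda_0-2\ell}\big)$ by Proposition~\ref{prop:factorization_selection_rule_polynomial}. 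Writing $\alpha=\Delta_1+k$ with $\Delta_1=\hwFR{\cbseq_1}-\hwFR{\lambda_1}-\hwFR{\lambda_0}$, one has $Q(\Delta_1)=0$ since admissibility gives $\cbseq_1\in\selRule{\lambda_1}{\lambda_0}$, whereas $Q(\Delta_1+k)\neq 0$ for every $k\in\bZ\setminus\{0\}$: indeed $\hwFR{\cbseq_1}+k=\hwFR{\lambda_1+\lambda_0-2\ell}$ with $k\in\bZ$ forces, because $\kappa\notin\bQ$, either $\lambda_1+\lambda_0-2\ell=\cbseq_1$ (so $k=0$) or $\lambda_1+\lambda_0-2\ell=-\cbseq_1-2$, and the latter needs $\ell=\tfrac12(\lambda_1+\lambda_0+\cbseq_1+2)>\lambda_1$, impossible as $\cbseq_1\geq|\lambda_1-\lambda_0|$ by admissibility. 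The base case $N=1$ follows at once: in $\C[\apvar_1^{\pm1}]\,\apvar_1^{\Delta_1}$ the equation $\varBSAoper{1}\big(\sum_k c_k\apvar_1^{\Delta_1+k}\big)=\sum_k Q(\Delta_1+k)\,c_k\,\apvar_1^{\Delta_1+k-\lambda_1-1}=0$ forces $c_k=0$ for $k\neq 0$.

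For the inductive step, let $C=\sum_{k\geq k_0}P_k(\apvar_2,\dots,\apvar_N)\,\apvar_1^{\Delta_1+k}$ be a solution with $P_{k_0}\neq0$. The lowest-power part of $\varBSAoper{1}C=0$ reads $Q(\Delta_1+k_0)P_{k_0}=0$, so $k_0=0$ and $C$ lies in the space~\eqref{eq: the appropriate space of formal series}, with leading coefficient $P_0$. For $m\geq1$ the coefficient of $\apvar_1^{\Delta_1+m-\lambda_1-1}$ in $\varBSAoper{1}C=0$ has the shape $Q(\Delta_1+m)P_m+(\text{a fixed differential expression in }P_0,\dots,P_{m-1})=0$, and since $Q(\Delta_1+m)\neq0$ it determines $P_m$ recursively from $P_0$; hence $C\mapsto P_0$ is injective on the solution space. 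It remains to see that $P_0$ solves the $(N-1)$-variable system attached to $\lambdafullseq'=(\cbseq_1,\lambda_2,\dots,\lambda_N,\lambda_\infty)$ and $\cbfullseq'=(\cbseq_1,\dots,\cbseq_N)$, which is $\lambdafullseq'$-admissible directly from Definition~\ref{def: admissible sequence}. For $j\geq2$ the operator $\varBSAoper{j}$ does not lower the $\apvar_1$-degree, because the $\partial_{\apvar_1}$-coefficient of $\varWittL{-p}{j}$ is $(-\apvar_j)^{1-p}-(\apvar_1-\apvar_j)^{1-p}$, whose $\apvar_1^0$-contributions cancel; so $\varBSAoper{j}$ preserves the filtration by $\apvar_1$-degree and induces an operator $\overline{\varBSAoper{j}}$ on the leading coefficient. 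This $\overline{\varBSAoper{j}}$ is multiplicative over the Benoit--Saint-Aubin combination, and a direct check — absorbing $\partial_{\apvar_1}$ acting on $\apvar_1^{\Delta_1}$ into the scalar term via $\hwFR{\lambda_0}+\hwFR{\lambda_1}+\Delta_1=\hwFR{\cbseq_1}$ — shows $\overline{\varWittL{-p}{j}}$ to be the translation-invariant Witt operator of~\eqref{eq: Witt differential operator transl inv} built from $\lambdafullseq'$, hence $\overline{\varBSAoper{j}}$ is the operator of~\eqref{eq: BSA differential operator transl inv} for $\lambdafullseq'$. Extracting the $\apvar_1^{\Delta_1}$-coefficient from $\varBSAoper{j}C=0$ therefore yields $\overline{\varBSAoper{j}}\,P_0=0$ for $j=2,\dots,N$, with $P_0$ in the space~\eqref{eq: the appropriate space of formal series} for $\lambdafullseq',\cbfullseq'$. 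By the induction hypothesis the space of such $P_0$ is at most one-dimensional; with the injectivity of $C\mapsto P_0$ this gives $\dmn\leq1$.

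I expect the main obstacle to be precisely this last point: verifying that each $\varBSAoper{j}$ with $j\geq2$ preserves the $\apvar_1$-degree filtration and that its induced operator on the leading coefficient is exactly the reduced Benoit--Saint-Aubin operator with origin weight shifted to $\hwFR{\cbseq_1}$. This is the infinitesimal form of the operator product expansion in the channel $\cbseq_1$, closely related to Lemma~\ref{lem: action of diff op on transl inv fun} and to the (ASY) part of Theorem~\ref{thm:properties_of_correlations}; everything else is either routine bookkeeping with the formal-series spaces or a direct consequence of results already established together with the genericity $\kappa\notin\bQ$.
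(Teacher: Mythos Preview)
Your proposal is correct and follows essentially the same approach as the paper's proof: induction on~$N$, using $\varBSAoper{1}$ for the indicial equation and the recursion determining higher $\apvar_1$-coefficients from the leading one, and then showing that the leading coefficient satisfies the reduced $(N-1)$-variable system obtained by replacing $\hwFR{\lambda_0}$ with $\hwFR{\cbseq_1}$. The paper carries out the last step by explicitly computing $\varWittL{-p}{j}-\varvarWittL{-p}{j}$ and observing that its degree-zero piece $(1-p)(-\apvar_j)^{-p}(\Delta_1-\apvar_1\partial_{\apvar_1})$ annihilates $\apvar_1^{\Delta_1}$; your ``absorbing $\partial_{\apvar_1}$ via $\hwFR{\lambda_0}+\hwFR{\lambda_1}+\Delta_1=\hwFR{\cbseq_1}$'' is exactly this computation, and your extra justification that $Q(\Delta_1+k)\neq 0$ for $k\in\bZ\setminus\{0\}$ spells out what the paper leaves implicit.
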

\begin{proof}
Since Lemma~\ref{lem:PDE_matrix_elements} ensures the
existence of a nonzero solution, it suffices to show
that solutions are unique up to a multiplicative constant.
We prove this by induction on~$N$. 
When $N=1$, 
observe that for any $n \in \bZ$ we have
\begin{align*}
\varBSAoper{1} \cdot \apvar_{1}^{\Delta_{1} + n}
= P_{\lambda_{1}} \big(\hwFR{\lambda_{0}},\hwFR{\cbseq_{1}}+n \big) \,
    \apvar_{1}^{\Delta_{1} + n - \lambda_1 - 1} ,
\end{align*}
where
$P_{\lambda_1}$
is the polynomial~\eqref{eq: selection polynomial two var}.
Recall that by the genericity~$\kappa \notin \bQ$, we have
\begin{align*}
P_{\lambda_{1}} \big(\hwFR{\lambda_{0}},\hwFR{\cbseq_{1}}+n \big) \; = \; 0
\end{align*}
if and only if~$n=0$. This shows that $\Kern \varBSAoper{1} = \bC \, \apvar_{1}^{\Delta_{1}}$,
and proves one-dimensionality.

For the induction step,
assume the uniqueness of solutions up to multiplicative
constants for~$N-1$ variables.
We regard
\begin{align*}
& \C[\apvar_N] \, [[\apvar_{N-1}/\apvar_{N}]]\cdots [[\apvar_{1}/\apvar_{2}]] \, [\apvar_1^{-1}]
    \; \apvar_{N}^{\Delta_{N}}\cdots \apvar_{1}^{\Delta_{1}} 
\end{align*}
as a subspace of the space
\begin{align*}
\bigg(\C[\apvar_N] \, [[\apvar_{N-1}/\apvar_{N}]]\cdots [[\apvar_{2}/\apvar_{3}]] \, [\apvar_2^{-1}] \; \apvar_{N}^{\Delta_{N}}\cdots \apvar_{2}^{\Delta_{2}}\bigg)
    [[\apvar_1]]\, [\apvar_1^{-1}] \, \apvar_{1}^{\Delta_{1}}
\end{align*}
and on the latter we introduce a $\bZ$-grading so that 
the degree of a monomial is
\begin{align*}
\deg \big( \apvar_{N}^{\Delta_{N}+n_N}\cdots \apvar_{1}^{\Delta_{1}+n_1} \big)
    \; = \; n_1
\qquad \text{ for } n_1, \ldots, n_N \in \bZ .
\end{align*}
The grading also gives rise to the corresponding notion of a degree of
an operator acting on the latter space.

Now let $C(\apvar_{1},\dots, \apvar_{N}) \in \bigcap_{j=1}^N \Kern \, \varBSAoper{j}$
be a non-zero solution. Expand it according to the grading above as
\begin{equation*}
	C(\apvar_{1},\dots, \apvar_{N})=\sum_{n=d}^{\infty}\apvar_{1}^{\Delta_{1}+n}
	    C_{n}(\apvar_{2},\dots, \apvar_{N}),
\end{equation*}
where $d \in \bZ$ is the lowest degree with a non-vanishing coefficient,
$C_{d} (\apvar_{2},\dots, \apvar_{N}) \neq 0$.
We express the action 
of~$\varWittL{-p}{1}$, $p\in\Z$, explicitly as
\begin{align*}
\varWittL{-p}{1} 
= \; & (-1)^{p+1}\apvar_{1}^{-p}\left(\apvar_{1}\frac{\dee}{\dee \apvar_{1}}+(1-p)\hwFR{\lambda_{0}}\right) \\
& -\sum_{i=2}^{N} \left(
  (-1)^{p}\apvar_{1}^{-p+1} \frac{\dee}{\dee \apvar_{i}}
  + \sum_{k=0}^{\infty}\binom{-p+1}{k}\apvar_{i}^{-p-k}(-\apvar_{1})^{k}\left(\apvar_{i}\frac{\dee}{\dee \apvar_{i}}+(-p-k+1)\hwFR{\lambda_{i}}\right) \right)
\end{align*}
in order to manifest the degree of each contribution.
Let us denote the first term in this expansion by
\begin{align*}
	\ldvarWittL{-p}{1}:=(-1)^{p+1}\apvar_{1}^{-p}\left(\apvar_{1}\frac{\dee}{\dee \apvar_{1}}+(1-p)\hwFR{\lambda_{0}}\right),
\end{align*}
which is an operator of degree $-p$ and 
involves no other variables besides~$\apvar_1$.
It is readily seen that, if $p\geq 0$, the
difference $\varWittL{-p}{1}-\ldvarWittL{-p}{1}$ is a
sum of operators of degrees strictly greater than $-p$.
Therefore, when we set
\begin{align*}
	\ldvarBSAoper{1}
	=\sum_{k=1}^{\lambda_{1}+1}\sum_{\substack{p_{1},\dots, p_{k}\geq 1 \\ p_{1}+\cdots+p_{k}=\lambda_{1}+1}}\frac{(-4/\kappa)^{1+\lambda_{1}-k}\lambda_{1}!}{\prod_{u=1}^{k-1}(\sum_{i=1}^{u}p_{i})(\sum_{i=u+1}^{k}p_{i})}\ldvarWittL{-p_{1}}{1}\cdots \ldvarWittL{-p_{k}}{1},
\end{align*}
this is of degree $-\lambda_{1}-1$ and the difference $\varBSAoper{1}-\ldvarBSAoper{1}$ is a sum of operators of degree strictly greater than $-\lambda_{1}-1$.
Specifically, let us expand $\varBSAoper{1}$ as
\begin{align*}
	\varBSAoper{1}=\ldvarBSAoper{1}+\sum_{k=1}^{\infty}\extraterm^{(1)}_{k},\quad \deg \extraterm^{(1)}_{k}=-\lambda_{1}-1+k.
\end{align*}
Recall the polynomial~\eqref{eq: selection polynomial two var}.
It is again straightforward that the action
of $\ldvarBSAoper{1}$ on the monomial $\apvar_{1}^{\Delta_{1}+n}$, $n\in\Z$, gives
\begin{align*}
\ldvarBSAoper{1} \cdot \apvar_{1}^{\Delta_{1}+n}
= P_{\lambda_{1}} \big(\hwFR{\lambda_{0}},\hwFR{\cbseq_{1}}+n \big)
        \; \apvar_{1}^{\Delta_{1}+n-\lambda_{1}-1}.
\end{align*}
Having assumed the generic case $\kappa \notin \bQ$,
from the factorization in Proposition \ref{prop:factorization_selection_rule_polynomial},
we see that
the factor
${P_{\lambda_{1}} \big(\hwFR{\lambda_{0}},\hwFR{\cbseq_{1}}+n \big)}$
vanishes if and only if~$n=0$.
This observation gives first of all an indicial equation:
the lowest degree term in~$\varBSAoper{1} \, C = 0$ is
\begin{align*}
C_{d} 
  \, \ldvarBSAoper{1} \cdot \apvar_{1}^{\Delta_{1}+d}
\; = \; C_{d} 
  \, P_{\lambda_{1}} \big(\hwFR{\lambda_{0}},\hwFR{\cbseq_{1}}+d \big)
        \; \apvar_{1}^{\Delta_{1}+d-\lambda_{1}-1} ,
\end{align*}
and its vanishing is only possible if~$d = 0$.
Moreover, the same observation gives a recursion to determine the
coefficients of higher degree, $C_n$ for $n > d = 0$.
Indeed, plugging in the series expansions for both~$C$
and~$\varBSAoper{1}$, the differential equation~$\varBSAoper{1} C = 0$
for the solution~$C$ becomes
\begin{align*}
0 \; 
&=\left(\ldvarBSAoper{1}+\sum_{k=1}^{\infty}\extraterm^{(1)}_{k}\right)\sum_{n=0}^{\infty}\apvar_{1}^{\Delta_{1}+n} \, C_{n}(\apvar_{2},\dots, \apvar_{N}) \\
&=\sum_{n=0}^{\infty}P_{\lambda_{1}}(\hwFR{\lambda_{0}},\hwFR{\cbseq_{1}}+n)
     \, \apvar_{1}^{\Delta_{1}+n-\lambda_{1}-1} \,  C_{n}(\apvar_{2},\dots, \apvar_{N}) \\
&\qquad\qquad +\sum_{k=1}^{\infty}\sum_{n=0}^{\infty}\extraterm^{(1)}_{k}\left(\apvar_{1}^{\Delta_{1}+n} \, C_{n}(\apvar_{2},\dots, \apvar_{N})\right) \\
&=\sum_{k=1}^{\infty}P_{\lambda_{1}}(\hwFR{\lambda_{0}},\hwFR{\cbseq_{1}}+k)
  \, \apvar_{1}^{\Delta_{1}+k-\lambda_{1}-1}  \, C_{k}(\apvar_{2},\dots, \apvar_{N}) \\
&\qquad\qquad +\sum_{k=1}^{\infty}\sum_{n=0}^{k-1}\extraterm^{(1)}_{k-n}\left(\apvar_{1}^{\Delta_{1}+n} \, C_{n}(\apvar_{2},\dots, \apvar_{N})\right).
\end{align*}
For each $k=1,2,\dots$, 
the equality of the components of degree $-\lambda_{1}-1+k$ gives
\begin{equation*}
	C_{k}(\apvar_{2},\dots,\apvar_{N})=-\frac{\apvar_{1}^{-\Delta_{1}-k+\lambda_{1}+1}}{P_{\lambda_{1}}(\hwFR{\lambda_{0}},\hwFR{\cbseq_{1}}+k)}\sum_{n=0}^{k-1}\extraterm^{(1)}_{k-n}\left(\apvar_{1}^{\Delta_{1}+n} \, C_{n}(\apvar_{2},\dots, \apvar_{N})\right) ,
\end{equation*}
where we used the property that
$P_{\lambda_{1}}(\hwFR{\lambda_{0}},\hwFR{\cbseq_{1}}+k)\neq 0$
for $k=1,2,\dots$.
The apparent dependence of the right hand side on $\apvar_{1}$ is checked to be cancelled by considering the degrees of the operators involved.
This formula implies that, for each $k=1,2\dots$,
the coefficient $C_{k}$ is determined by the finitely
many previous coefficients $C_{0}, C_{1},\dots, C_{k-1}$.
Recursively, we conclude that each $C_{k}$, $k=1,2,\dots, $
is determined by the initial coefficient~$C_{0}$.

Note that $C_{0}$ belongs to the space
\begin{align*}
\C[\apvar_N] \, [[\apvar_{N-1}/\apvar_{N}]]\cdots [[\apvar_{2}/\apvar_{3}]] \, [\apvar_2^{-1}] \; \apvar_{N}^{\Delta_{N}}\cdots \apvar_{2}^{\Delta_{2}}
\end{align*}
The next task is to show that the formal series $C_{0}$ solves the
desired system differential equations. Those differential equations
involve the differential operators
\begin{align*}
\varvarWittL{-p}{j}
:= \; &(-x_{j})^{-p}\left(-x_{j} \sum_{2 \leq i \leq N}\frac{\dee}{\dee x_{i}}
	        -(1-p)\hwFR{\cbseq_{1}}\right) \\
	&-\sum_{\substack{2 \leq i \leq N \\ i \neq j}} 
	    (x_{i}-x_{j})^{-p} \left((x_{i}-x_{j}) \frac{\dee}{\dee x_{i}}+(1-p)\hwFR{\lambda_{i}}\right)
\end{align*}
for $2 \leq j \leq N$. These differential operators
$\varvarWittL{-p}{j}$ do not involve the variable $\apvar_1$,
and they can be viewed either as operators on the space
$\C[\apvar_N] \, [[\apvar_{N-1}/\apvar_{N}]]\cdots [[\apvar_{2}/\apvar_{3}]] \, [\apvar_2^{-1}] \; \apvar_{N}^{\Delta_{N}}\cdots \apvar_{2}^{\Delta_{2}}$
or as operators on the space
$\big(\C[\apvar_N] \, [[\apvar_{N-1}/\apvar_{N}]]\cdots [[\apvar_{2}/\apvar_{3}]] \, [\apvar_2^{-1}] \, \apvar_{N}^{\Delta_{N}}\cdots \apvar_{2}^{\Delta_{2}}\big)
    [[\apvar_1]]\, [\apvar_1^{-1}] \, \apvar_{1}^{\Delta_{1}}$,
in which case they obviously have degree~$0$.
In the latter space, the difference $\varWittL{-p}{j}-\varvarWittL{-p}{j}$
can be straightforwardly simplified to the form
\begin{align*}
\varWittL{-p}{j} - \varvarWittL{-p}{j}
= \; & (1-p)(-\apvar_j)^{-p} \Big( \Delta_1 - \apvar_1 \pder{\apvar_1} \Big) \\ 
& - \sum_{k=2}^\infty \binom{1-p}{k} \apvar_1^k (-\apvar_j)^{1-p-k} \pder{\apvar_1}
    - (1-p) \hwFR{\lambda_{1}} \sum_{k=1}^\infty \binom{-p}{k} \apvar_1^k (-\apvar_j)^{-p-k} ,
\end{align*}
where the first line is a degree zero operator, and all the
terms on the second line have strictly positive degrees.
We thus see that
\begin{equation*}
\varWittL{-p}{j} \bigg( \sum_{n=0}^\infty
    \apvar_{1}^{\Delta_{1}+n} \, C_{n}(\apvar_{2},\dots, \apvar_{N}) \bigg)
= \apvar_{1}^{\Delta_{1}}
   \Big( \varvarWittL{-p}{j} \, C_{0}(\apvar_{2},\dots,\apvar_{N}) \Big) + \hot ,
\end{equation*}
where $\hot$ contains only terms of strictly positive degree.
Now the vanishing of the degree zero term in the
differential equation
$\ldvarBSAoper{j} \Big( \sum_{n=0}^\infty
    \apvar_{1}^{\Delta_{1}+n} \, C_{n}(\apvar_{2},\dots, \apvar_{N}) \Big) = 0$
implies the differential equation
\begin{align*}
\varvarBSAoper{j} \, C_0(\apvar_2 , \ldots, \apvar_N) = 0
\end{align*}
for $C_0$, where
\begin{align*}
\varvarBSAoper{j} :=
  \sum_{k=1}^{\lambda_{j}+1}\sum_{\substack{p_{1},\dots, p_{k}\geq 1 \\ p_{1}+\cdots+p_{k}=\lambda_{j}+1}}\frac{(-4/\kappa)^{1+\lambda_{j}-k}\lambda_{j}!}{\prod_{u=1}^{k-1}(\sum_{i=1}^{u}p_{i})(\sum_{i=u+1}^{k}p_{i})}\varvarWittL{-p_{1}}{j} \cdots \varvarWittL{-p_{k}}{j} .
\end{align*}

The initial coefficient $C_{0}(\apvar_2,\ldots,\apvar_N)$ therefore solves the system of
differential equations ${\varvarBSAoper{j}\, C_{0}=0}$ for~$j=2,\dots, N$,
which is a system of the original form with one fewer variable.
By the induction hypothesis, such a $C_{0}$ is unique up to multiplicative constants.
Since a solution~$C$ is determined by its initial coefficient~$C_0$,
we conclude that the solution $C$ is also unique one up to multiplicative constants.
\end{proof}

\subsection{Functions from the quantum group method}
\label{sub: compositions and the quantum group method}
In this subsection we do the exactly analoguous
steps for the functions from the quantum group method as
were done for the highest weight matrix elements of
compositions of intertwining operators in the previous
subsection. The conclusions have an identical appearance, and
this is in fact what will allow us to show the equality of the two.
While Section~\ref{sub: differential equations for matrix elements}
dealt with formal power series, we will now be working with functions.

We use the quantum group method of
Section~\ref{sec: quantum group method}, with the difference
that we first use $N+1$ variables, labeled $x_0, x_1, \ldots, x_{N}$.
After some initial observations, we will in fact set $x_0 = 0$.

\subsubsection*{Translation invariance}

We first note that the differential operators~$\WittL{n}{j}$
of~\eqref{eq: Witt differential operator}
preserve translation invariance.
\begin{lem}
\label{lem:translation_invariance_under_Witt_action}
If $F 
\in \ContF^\infty(\chamber_{N+1})$
is translation invariant,
\begin{equation*}
F(x_{0}+s,x_{1}+s,\dots, x_{N}+s)=F(x_{0},x_{1},\dots, x_{N})
\quad \text{ for } (x_{0},x_{1},\dots, x_{N})\in \chamber_{N+1} \text{ and }
s\in\mathbb{R} ,
\end{equation*}
then so is $\WittL{n}{j}F$,
for any $j \in \set{0,1,\dots, N}$ and $n\in\Z$.
\end{lem}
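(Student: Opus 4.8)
The plan is to characterize translation invariance infinitesimally and then exploit the fact that $\WittL{n}{j}$ is assembled entirely from the differences $x_i-x_j$. First I would introduce the total derivative operator $\mathcal{T} := \sum_{i=0}^{N}\pder{x_i}$ and observe that a function $F\in\ContF^\infty(\chamber_{N+1})$ is translation invariant if and only if $\mathcal{T}F\equiv 0$. One implication is immediate by differentiating the identity $F(x_0+s,\dots,x_N+s)=F(x_0,\dots,x_N)$ in $s$ at $s=0$. For the converse one uses that for a fixed point of $\chamber_{N+1}$ the entire translated line $(x_0+s,\dots,x_N+s)$, $s\in\bR$, stays inside $\chamber_{N+1}$, so $s\mapsto F(x_0+s,\dots,x_N+s)$ is a $\ContF^\infty$ function on $\bR$ with identically vanishing derivative, hence constant.

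Next I would check that $\mathcal{T}$ commutes with $\WittL{n}{j}$ as differential operators on $\ContF^\infty(\chamber_{N+1})$. Trivially $\mathcal{T}$ commutes with each $\pder{x_i}$. It also commutes with multiplication by any integer power $(x_i-x_j)^m$, since $[\mathcal{T},(x_i-x_j)^m]=m(x_i-x_j)^{m-1}\sum_{k=0}^N(\delta_{ki}-\delta_{kj})=0$, the two Kronecker deltas each contributing exactly once. Because $\WittL{n}{j}$ is, by~\eqref{eq: Witt differential operator} (now written in the $N+1$ variables $x_0,\dots,x_N$), a finite sum of operators of the form $(x_i-x_j)^{1+n}\pder{x_i}$ together with multiplication by $(1+n)\hwFR{\lambda_i}(x_i-x_j)^n$, over $i\neq j$, it commutes with $\mathcal{T}$ term by term, so $[\mathcal{T},\WittL{n}{j}]=0$.

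Combining the two pieces: if $F$ is translation invariant then $\mathcal{T}F=0$, hence $\mathcal{T}(\WittL{n}{j}F)=\WittL{n}{j}(\mathcal{T}F)=0$, and therefore $\WittL{n}{j}F$ is again translation invariant. I do not expect any genuine obstacle here; the substance is a one-line commutator computation. The only point calling for a little care is the converse half of the first equivalence, where one must note that translation preserves the chamber $\chamber_{N+1}$, so that the orbit is connected and the vanishing of the directional derivative really forces invariance.
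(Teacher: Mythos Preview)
Your argument is correct and matches the paper's proof exactly: the paper also introduces $\mathbb{L}_{-1}:=\sum_{i=0}^{N}\pder{x_i}$, notes that translation invariance is equivalent to $\mathbb{L}_{-1}F=0$, and concludes from $[\mathbb{L}_{-1},\WittL{n}{j}]=0$. You have in fact supplied more detail than the paper, including the explicit commutator computation and the observation that the chamber is preserved under translation.
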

\begin{proof}
Let us set $\mathbb{L}_{-1}:=\sum_{i=0}^{N}\frac{\dee}{\dee x_{i}}$. Then, a
smooth function $F$ is translation invariant if and only if $\mathbb{L}_{-1}F=0$.
The desired result follows from the fact that each operator $\WittL{n}{j}$, $j=0,1,\dots, N$, $n\in\Z$ commutes with $\mathbb{L}_{-1}$.
\end{proof}

We introduce the \term{restricted chamber}
\begin{equation*}
	\chamber^{+}_{N}=\Set{(x_{1},\dots,x_{N})\in \chamber_{N} \; \big| \; x_{1}>0}
\end{equation*}
of $N$ variables.
To a given translation invariant function
$F 
\in \ContF^{\infty} (\chamber_{N+1})$,
we associate a corresponding function on $\chamber_{N}^{+}$ by
\begin{equation*}
\widetilde{F}(x_{1},\dots, x_{N}):=F(0,x_{1},\dots, x_{N}),
\quad \text{ for } (x_{1},\dots, x_{N}) \in \chamber_{N}^{+}.
\end{equation*}
For typographical reasons, when the function $F$
is given by a lengthy expression, we
place the tilde symbol as a superscript,
$\widetilde{F} = F^{\thicksim}$.

\begin{lem}
\label{lem: action of diff op on transl inv fun}
Fix $j \in \set{1,\ldots,N}$.
Let $F\in \ContF^{\infty}(\chamber_{N+1})$ be a translation
invariant function. Then we have
\begin{align*}
\left(\WittL{n_{1}}{j}\cdots \WittL{n_{k}}{j} F\right)^{\thicksim}
= \varWittL{n_{1}}{j} \cdots \varWittL{n_{k}}{j} \widetilde{F},
\end{align*}
for any $k\in\Znn$ and $n_{1},\dots, n_{k}\in\Z$.
\end{lem}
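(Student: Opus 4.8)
The plan is to reduce to the case $k=1$ and then bootstrap, exploiting that translation invariance is preserved by each $\WittL{n}{j}$ (Lemma~\ref{lem:translation_invariance_under_Witt_action}). First I would record the two elementary facts that drive the computation: for a translation invariant $F \in \ContF^{\infty}(\chamber_{N+1})$ the restriction $\widetilde F(x_1,\dots,x_N) = F(0,x_1,\dots,x_N)$ is well defined on $\chamber_N^{+}$ (since $0<x_1<\dots<x_N$ puts $(0,x_1,\dots,x_N)$ in $\chamber_{N+1}$), that $\widetilde{\pder{x_i}F} = \pder{x_i}\widetilde F$ for $i\in\{1,\dots,N\}$, and that the only derivative without an analogue on $\chamber_N^{+}$, namely $\pder{x_0}$, is eliminated on translation invariant functions through $\pder{x_0}F = -\sum_{i=1}^N \pder{x_i}F$.

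For the base case $k=1$, fix $j\in\{1,\dots,N\}$ and $n\in\Z$, and expand $\WittL{n}{j}F$ (in its $(N+1)$-variable incarnation of~\eqref{eq: Witt differential operator}) as the sum over $i\in\{0,1,\dots,N\}\setminus\{j\}$ of $-\big((x_i-x_j)^{1+n}\pder{x_i}F + (1+n)\hwFR{\lambda_i}(x_i-x_j)^n F\big)$. I would split off the term $i=0$ and then set $x_0=0$. The terms with $i\in\{1,\dots,N\}$ restrict, using $\widetilde{\pder{x_i}F}=\pder{x_i}\widetilde F$, precisely to the second line of~\eqref{eq: Witt differential operator transl inv}. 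In the $i=0$ term, after $x_0=0$ the factor $(x_0-x_j)$ becomes $-x_j$, so the $\hwFR{\lambda_0}$-summand becomes $-(1+n)\hwFR{\lambda_0}(-x_j)^n\widetilde F$, matching the $\hwFR{\lambda_0}$-contribution in the first line of~\eqref{eq: Witt differential operator transl inv}; for the remaining summand I would use $\pder{x_0}F|_{x_0=0} = -\sum_{i=1}^N\pder{x_i}\widetilde F$ together with $(-x_j)^{1+n} = -x_j\,(-x_j)^n$ to rewrite $-(-x_j)^{1+n}\pder{x_0}F|_{x_0=0}$ as $-x_j(-x_j)^n\sum_{i=1}^N\pder{x_i}\widetilde F$, which is exactly the remaining term of the first line of~\eqref{eq: Witt differential operator transl inv}. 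This yields $(\WittL{n}{j}F)^{\thicksim} = \varWittL{n}{j}\widetilde F$.

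For general $k$ I would induct on $k$, the case $k=0$ being trivial. Given $n_1,\dots,n_k\in\Z$, set $H := \WittL{n_2}{j}\cdots\WittL{n_k}{j}F$; applying Lemma~\ref{lem:translation_invariance_under_Witt_action} repeatedly, $H$ is again translation invariant, and the inductive hypothesis gives $\widetilde H = \varWittL{n_2}{j}\cdots\varWittL{n_k}{j}\widetilde F$. Applying the base case to the translation invariant function $H$ then gives $\big(\WittL{n_1}{j}\cdots\WittL{n_k}{j}F\big)^{\thicksim} = (\WittL{n_1}{j}H)^{\thicksim} = \varWittL{n_1}{j}\widetilde H = \varWittL{n_1}{j}\cdots\varWittL{n_k}{j}\widetilde F$, closing the induction. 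I do not expect any genuine obstacle; the only place demanding care is the $i=0$ term in the base case — it is exactly this term, via translation invariance, that produces both the $\pder{x_1},\dots,\pder{x_N}$-sum with coefficient $-x_j$ and the $\hwFR{\lambda_0}$-term appearing in $\varWittL{n}{j}$.
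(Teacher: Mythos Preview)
Your proposal is correct and follows essentially the same approach as the paper: both proofs establish the $k=1$ case by using translation invariance to replace $\pder{x_0}F$ with $-\sum_{i=1}^N \pder{x_i}F$, then specialize at $x_0=0$ to recover $\varWittL{n}{j}\widetilde F$, and finally induct on $k$ via Lemma~\ref{lem:translation_invariance_under_Witt_action}. Your treatment is slightly more explicit in matching terms, but the argument is the same.
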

\begin{proof}
If $F$ is translation invariant, then its 
partial derivative w.r.t.~$x_{0}$ can be rewritten as
\begin{equation*}
\pder{x_{0}} F = - \sum_{i=1}^{N} \pder{x_{i}} F .
\end{equation*}
Consequently, we observe that
\begin{align*}
\WittL{n}{j}F
= \; & \phantom{-} (x_{0}-x_{j})^{n} \,
    \Big( (x_{0}-x_{j})\sum_{1 \leq i \leq N} \pder{x_{i}}
        - (1+n) \, \hwFR{\lambda_{0}} \Big) F \\
& - \sum_{\substack{1 \leq i \leq N\\ i\neq j}}
    (x_{i}-x_{j})^{n} \, \Big( (x_{i}-x_{j})\pder{x_{i}}
        +(1+n) \, \hwFR{\lambda_{i}} \Big) F,
\end{align*}
which admits the specialization at $x_{0}=0$ and gives
$\big( \WittL{n}{j} F \big)^{\thicksim} = \varWittL{n}{j} \widetilde{F}$.
The assertion is then obtained from  
Lemma~\ref{lem:translation_invariance_under_Witt_action}
by induction on~$k$.
\end{proof}

The following simple corollary of this lemma
yields differential equations of the same form
as those in Section~\ref{sub: differential equations for matrix elements}.
\begin{cor}
Fix a sequence 
$\lambdafullseq =
(\lambda_{0},\lambda_{1},\dots, \lambda_{N},\lambda_{\infty})\in \Znn^{N+2}$
and a $\lambdafullseq$-admissible sequence $\cbfullseq=(\cbseq_{0},\dots,\cbseq_{N})$.
Let $u_{\cbfullseq} \in
\big( \bigotimes_{j=1}^N \QGrep{\lambda_j} \big) \tens \QGrep{\lambda_{0}}$
be the corresponding conformal block 
vector~\eqref{eq: construction of conformal block vector},
and let
$F=\sF[u_{\cbfullseq}]\in \ContF^{\infty}(\chamber_{N+1})$ 
denote the corresponding function obtained by the quantum group method
of Section~\ref{sub: quantum group method main statement}.
Then $F$ is translation invariant, and 
the associated function $\widetilde{F}$ on $\chamber_{N}^{+}$
solves the system of differential equations
$\varBSAoper{j}\widetilde{F}=0$, $j=1,\dots, N$.
\end{cor}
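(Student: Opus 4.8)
The plan is to obtain this corollary as a direct consequence of Theorem~\ref{thm:properties_of_correlations} together with the preceding Lemma~\ref{lem: action of diff op on transl inv fun}. The one point that needs attention is the bookkeeping of indices: the quantum group method of Section~\ref{sub: quantum group method main statement} is here applied with the $N+1$ tensorands $\QGrep{\lambda_0},\QGrep{\lambda_1},\ldots,\QGrep{\lambda_N}$, hence with $N+1$ ordered variables $x_0,x_1,\ldots,x_N$, so that $F=\sF[u_{\cbfullseq}]$ is a smooth function on $\chamber_{N+1}$ whose label sequence in the sense of Theorem~\ref{thm:properties_of_correlations} is $(\lambda_0,\lambda_1,\ldots,\lambda_N)$. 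Translating every application of Theorem~\ref{thm:properties_of_correlations} through this relabeling is purely clerical.

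First I would record that, by the construction~\eqref{eq: construction of conformal block vector} of the conformal block vector, $u_{\cbfullseq}$ is annihilated by $E$ and hence lies in the relevant space of highest weight vectors. The (COV) part of Theorem~\ref{thm:properties_of_correlations} then shows that $F$ is translation invariant on $\chamber_{N+1}$; in particular the restriction $\widetilde{F}(x_1,\ldots,x_N)=F(0,x_1,\ldots,x_N)$ is well-defined on $\chamber_{N}^{+}$, since $(0,x_1,\ldots,x_N)\in\chamber_{N+1}$ exactly when $0<x_1<\cdots<x_N$. Likewise, the (PDE) part of Theorem~\ref{thm:properties_of_correlations} gives $\BSAoper{j}F=0$ identically on $\chamber_{N+1}$ for every $j\in\set{0,1,\ldots,N}$; I will only need these equations for $j\in\set{1,\ldots,N}$.

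For fixed $j\in\set{1,\ldots,N}$, I would then expand $\BSAoper{j}$ according to~\eqref{eq: BSA differential operator} as a linear combination of compositions $\WittL{-p_1}{j}\cdots\WittL{-p_k}{j}$ with $p_1+\cdots+p_k=\lambda_j+1$, and apply Lemma~\ref{lem: action of diff op on transl inv fun} to each such composition, which is legitimate precisely because $F$ is translation invariant. Since $\varBSAoper{j}$ is, by its definition~\eqref{eq: BSA differential operator transl inv}, obtained from $\BSAoper{j}$ by replacing every factor $\WittL{n}{j}$ with $\varWittL{n}{j}$ (up to the overall numerical prefactor differing between the two definitions), summing the terms gives $(\BSAoper{j}F)^{\thicksim}=c_j\,\varBSAoper{j}\widetilde{F}$ for some nonzero constant $c_j$. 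As $\BSAoper{j}F$ vanishes identically on $\chamber_{N+1}$, its specialization at $x_0=0$ vanishes on $\chamber_{N}^{+}$, whence $\varBSAoper{j}\widetilde{F}=0$ there. I do not expect a substantive obstacle: the corollary is a short bookkeeping consequence of the two quoted results, so the only care needed is in the index shift and in matching the coefficient conventions of $\BSAoper{j}$ and $\varBSAoper{j}$.
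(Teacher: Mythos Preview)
Your proposal is correct and follows exactly the approach the paper intends: the corollary is stated without proof immediately after Lemma~\ref{lem: action of diff op on transl inv fun}, and the intended argument is precisely to combine (COV) and (PDE) from Theorem~\ref{thm:properties_of_correlations} with that lemma, as you do. Your observation about the coefficient mismatch between~\eqref{eq: BSA differential operator} and~\eqref{eq: BSA differential operator transl inv} (the factor $\lambda_j!^2$ versus $\lambda_j!$) is accurate and harmless for the conclusion; this appears to be a typographical inconsistency in the paper rather than a substantive issue.
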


\subsubsection*{Series expansions of the functions}
\label{ssec: quantum group functions as power series}
Fix again a sequence 
$\lambdafullseq =
(\lambda_{0},\lambda_{1},\dots, \lambda_{N},\lambda_{\infty})\in \Znn^{N+2}$
and a $\lambdafullseq$-admissible sequence $\cbfullseq=(\cbseq_{0},\dots,\cbseq_{N})$,
and let $\widetilde{F} \colon \chamber_{N}^{+} \to \bC$ be the function given by
\begin{align*}
\widetilde{F}(x_1, \ldots, x_N) \; = \; \sF[u_{\cbfullseq}](0,x_1,\ldots,x_N)
\qquad \text{ for } (x_1, \ldots, x_N) \in \chamber_{N}^{+}
\end{align*}
where $u_{\cbfullseq} \in
\big( \bigotimes_{j=1}^N \QGrep{\lambda_j} \big) \tens \QGrep{\lambda_{0}}$
is the corresponding conformal block vector
of~\eqref{eq: construction of conformal block vector}.
We will expand this function as a series recursively, one variable at a time,
starting from~$x_1$.
The series expansion in~$x_1$ is the Frobenius series at~$0$
given by Lemma~\ref{lem: Frobenius series of QG functions},
\begin{align*}
\widetilde{F}(x_1, \ldots, x_N) 
\; = \; x_1^{\Delta_1} \, \sum_{k=0}^\infty c_k(x_2, \ldots, x_N) \, x_1^k ,
\end{align*}
and the other variables
$(x_2, \ldots, x_N) \in \chamber_{N-1}^+$
are treated as parameters. When the other variables stay in the open subset
\begin{align}
\chamber_{N-1}^{+;R} :=
\set{(x_2, \ldots, x_N) \in \chamber_{N-1}^+ \; \Big| \; x_2 > R} ,
\end{align}
the Frobenius series in locally uniformly $R$-controlled.

The differential operators~$\varWittL{n}{j}$
and~$\varBSAoper{j}$ 
of~\eqref{eq: Witt differential operator transl inv}
and~\eqref{eq: BSA differential operator transl inv}
are composed of
\begin{itemize}
\item differentiations~$\pder{x_1}$ with respect to 
the power series variable~$x_1$;
\item differentiations~$\pder{x_i}$
with respect to the parameters~$x_i$, for $i=2,\ldots,N$;
\item multiplication operators by~$(x_i-x_j)^n$
for $i \neq j$ and by $x_i^n$.
\end{itemize}
If $i,j>1$, then the multiplication by~$(x_i-x_j)^n$ acts on
the coefficients $c_k \in \ContF^\infty(\chamber_{N-1}^{+;R})$
by a multiplication by a smooth (and in particular locally
bounded) function. By contrast ${(x_i-x_1)^n}$ for~$i = 2, \ldots, N$
is expanded as a power series
$(x_i-x_1)^n = \sum_{j=0}^\infty (-1)^j \, \binom{n}{j} \, x_i^{n-j} \, x_1^j$,
which is itself locally uniformly $R$-controlled power series in~$x_1$ on 
$\chamber_{N-1}^{+;R}$, and the 
multiplication operator
involves a convolution of the coefficients of the power series.

By Lemma~\ref{lem: operations on R controlled series}, each one
of the above constituent operators preserves the space of
locally uniformly $R$-controlled series and acts naturally
coefficientwise when~$(x_2, \ldots, x_N) \in \chamber_{N-1}^{+;R}$.
Therefore the constituent
operators can be composed, and the operators~$\varWittL{n}{j}$
and~$\varBSAoper{j}$ also act on the space of
locally uniformly $R$-controlled series, and they also act naturally
coefficientwise, when~$(x_2, \ldots, x_N) \in \chamber_{N-1}^{+;R}$.
For different values of $R>0$, the actions of~$\varWittL{n}{j}$
and~$\varBSAoper{j}$ on
power series parametrized by~$\chamber_{N-1}^{+;R}$
are consistent (coefficient functions are obtained by restrictions
to the smaller subset), so they give rise to
a natural action on power series
parametrized simply by~$\chamber_{N-1}^{+} = \bigcup_{R>0} \chamber_{N-1}^{+;R}$.

It will turn out in the analysis below that the coefficient
functions~$c_k \in \ContF^\infty(\chamber_{N-1}^{+})$
are themselves (essentially) given by functions from the quantum group
method, and they therefore admit Frobenius series expansions of their
own. Recursively in the number of variables, this allows us to
uniquely associate to~$\widetilde{F}$ a power series in
the space~\eqref{eq: the appropriate space of formal series},
\begin{align*}
\mathbb{C}[\apvar_N]\,[[\apvar_{N-1}/\apvar_{N}]] \cdots [[\apvar_{1}/\apvar_{2}]]\,[\apvar_1^{-1}] \;
    \apvar_{N}^{\Delta_{N}} \cdots \apvar_{1}^{\Delta_{1}}.
\end{align*}

The following is our main result about the equality of the
quantum group functions and (the highest weight matrix elements of) the
compositions of intertwining
operators of the first row subcategory of the generic Virasoro VOA.
\begin{thm}
\label{thm: quantum group functions and compositions of intertwining ops}
For any
$\lambdafullseq =
(\lambda_{0},\lambda_{1},\dots, \lambda_{N},\lambda_{\infty})\in \Znn^{N+2}$
and any $\lambdafullseq$-admissible sequence $\cbfullseq=(\cbseq_{0},\dots,\cbseq_{N})$,
the function~$\sF[u_{\cbfullseq}]$ admits a series expansion
\begin{align*}
\sF[u_{\cbfullseq}](0,x_1,\ldots,x_N)
= \; & \sum_{k_1, \ldots, k_N \in \Z} c_{k_1, \ldots, k_N} \,
    x_1^{\Delta_1+k_1} \cdots x_N^{\Delta_N+k_N} ,
\end{align*}
which is convergent for any~$(x_1, \ldots, x_N) \in \chamber_N^+$.
As a formal series, this coincides with
the matrix element~\eqref{eq: highest weight matrix element of a composition}
of the composition of intertwining operators,
\begin{align*}
\sF[u_{\cbfullseq}](0,\apvar_1,\ldots,\apvar_N)
= \; & C^{\lambdafullseq}_{\cbfullseq}(\apvar_{1},\dots, \apvar_{N})
\; \in \; \mathbb{C}[[\apvar_{N-1}/\apvar_{N}]] \cdots [[\apvar_{1}/\apvar_{2}]] \,
     \apvar_{N}^{\Delta_{N}} \cdots \apvar_{1}^{\Delta_{1}} .
\end{align*}
\end{thm}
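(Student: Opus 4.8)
The plan is to compare the two objects by showing that both satisfy the same system of differential equations and lie in the same space of formal series, in which (by Theorem~\ref{thm:uniqueness_differential_equations_formal_series}) the solution space is one-dimensional, and then to pin down the normalization via the leading coefficient. More precisely, I would proceed as follows.

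\textbf{Step 1: Both sides solve the same PDE system as formal series.} By Lemma~\ref{lem:matrix_elements_formal_series}, the matrix element $C^{\lambdafullseq}_{\cbfullseq}(\apvar_1,\dots,\apvar_N)$ lies in the space~\eqref{eq: the appropriate space of formal series}, and by Lemma~\ref{lem:PDE_matrix_elements} it is annihilated by every $\varBSAoper{j}$. On the other side, the Corollary after Lemma~\ref{lem: action of diff op on transl inv fun} shows that $\widetilde{F} = \sF[u_{\cbfullseq}](0,\,\cdot\,)$ is translation invariant and that $\varBSAoper{j}\widetilde{F} = 0$ for all $j$; meanwhile the recursive Frobenius-series construction (Lemmas~\ref{lem: analyticity of QG functions} and~\ref{lem: Frobenius series of QG functions}, together with Lemma~\ref{lem: operations on R controlled series}) associates to $\widetilde{F}$ an element of the same formal-series space~\eqref{eq: the appropriate space of formal series}, and the $\varBSAoper{j}$ act coefficientwise on that space so the annihilation passes to the formal series. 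The key point here, which I would spell out carefully, is that the coefficientwise action of $\varBSAoper{j}$ used in Section~\ref{sub: differential equations for matrix elements} genuinely matches the analytic action used for $\widetilde F$ --- this is exactly the content of the expansion rules $(x_i - x_j)^n \mapsto (\apvar_i - \apvar_j)^n$ for $i>j$ and $(-\apvar_j + \apvar_i)^n$ for $i<j$, which I verified are the correct Frobenius expansions in the smaller variable.

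\textbf{Step 2: Apply the uniqueness theorem and match normalizations.} By Theorem~\ref{thm:uniqueness_differential_equations_formal_series}, the intersection $\bigcap_j \Kern\varBSAoper{j}$ inside the formal-series space is one-dimensional, so the two formal series agree up to a scalar. To fix the scalar to be $1$, I compare the coefficient of the monomial $\apvar_N^{\Delta_N}\cdots\apvar_1^{\Delta_1}$ on both sides. On the intertwiner side this coefficient is $\prod_{j=1}^N \betacoef{\lambda_j}{\cbseq_{j-1}}{\;\cbseq_j}$, as traced out in the discussion following Lemma~\ref{lem:PDE_matrix_elements}. On the quantum-group side, the leading coefficient of the innermost Frobenius series is $\betacoef{\lambda_1}{\cbseq_0}{\;\cbseq_1}$ times $\sF[\hat u]$ for the vector $\hat u = \QGlongpremb{1}{2}{\cbseq_1}(u_{\cbfullseq})$ (Remark~\ref{rmk: leading coefficient of Frobenius series}); by the projection conditions in the Proposition of Section~\ref{sec: explicit conformal block vectors}, $\hat u$ is (a multiple of) the conformal block vector for the shortened admissible sequence, and iterating extracts the full product $\prod_{j=1}^N \betacoef{\lambda_j}{\cbseq_{j-1}}{\;\cbseq_j}$, with the normalization $\sF[\QGvec{\lambda}{0}](x) \equiv 1$ closing the recursion. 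Hence the two formal series coincide on the nose.

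\textbf{Step 3: Convergence.} Once the formal identity is established, the analytic statement follows because $\sF[u_{\cbfullseq}]$ is by construction an actual smooth function whose recursive Frobenius expansions converge on the relevant polydisc-type regions: Lemma~\ref{lem: Frobenius series of QG functions} gives convergence of the outer series for $0 < x_1 < R$ locally uniformly in the remaining variables over $\chamber_{N-1}^{+;R}$, and recursing in $x_2, \dots, x_N$ (each coefficient function being again a quantum-group function in fewer variables, by the same leading-coefficient identification) yields a multivariate series converging throughout $\chamber_N^+$. The main obstacle I anticipate is Step~1's bookkeeping: one must check that the analytic operators $\varWittL{n}{j}$ acting on genuine functions, when re-expanded as operators on the iterated formal-series space, reproduce exactly the formal operators of Section~\ref{sub: differential equations for matrix elements} --- including that no "hidden" terms arise from the non-commuting compositions $\varWittL{-p_1}{j}\cdots\varWittL{-p_k}{j}$ when one passes between the analytic and the graded formal-series pictures. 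This is routine but delicate, and it is where the carefully chosen binomial-expansion conventions do all the work.
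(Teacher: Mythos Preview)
Your strategy---PDE plus uniqueness plus normalization matching---is the paper's, and Steps~2--3 are essentially right. The gap is in Step~1, and it resurfaces in your parenthetical in Step~3: you assert that the recursive Frobenius construction places $\widetilde F$ in the space~\eqref{eq: the appropriate space of formal series}, and that ``each coefficient function [is] again a quantum-group function in fewer variables.'' Only the \emph{leading} Frobenius coefficient $\widetilde F_0$ is a quantum-group function (Remark~\ref{rmk: leading coefficient of Frobenius series}); the higher coefficients $\widetilde F_n$, $n\geq 1$, are not values of $\sF$ on any vector, and Lemmas~\ref{lem: analyticity of QG functions}--\ref{lem: Frobenius series of QG functions} alone give no reason they should admit iterated Frobenius expansions of the required shape with the correct indicial exponents. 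Until that is established you cannot invoke Theorem~\ref{thm:uniqueness_differential_equations_formal_series}, because you have not shown the series of $\widetilde F$ lies in its domain.

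The paper closes this gap by making the whole argument an induction on $N$. At the inductive step one applies $\varBSAoper{1}$ to the Frobenius expansion in $x_1$ (using Lemma~\ref{lem: operations on R controlled series} to justify acting coefficientwise) and reads off, degree by degree, exactly the recursion
\[
\widetilde F_n \;=\; -\,\frac{\apvar_1^{-\Delta_1-n+\lambda_1+1}}{P_{\lambda_1}(\hwFR{\lambda_0},\hwFR{\cbseq_1}+n)}\sum_{m=0}^{n-1}\extraterm^{(1)}_{n-m}\bigl(\apvar_1^{\Delta_1+m}\widetilde F_m\bigr)
\]
that appeared in the proof of Theorem~\ref{thm:uniqueness_differential_equations_formal_series} for the $C_n$. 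Since $\widetilde F_0 = \betacoef{\lambda_1}{\lambda_0}{\;\cbseq_1}\,\widetilde{\sF}[u_{\cbfullseq'}]$ equals $C_0$ by the induction hypothesis, every $\widetilde F_n$ is forced to equal $C_n$, and in particular each lands in the correct series space. Once you replace the unjustified ``all coefficients are QG functions'' by this PDE-driven recursion inside an induction on $N$, your argument becomes the paper's proof.
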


\begin{proof}
We employ an induction over the number of variables $N$.
The case of $N=1$ is obvious; the function is
\begin{equation*}
	\widetilde{\sF}[u_{\cbfullseq}](x)=\betacoef{\lambda_{1}}{\lambda_{0}}{\lambda_{\infty}}x^{\Delta_{1}},\quad x>0.
\end{equation*}
This function gives the same one term power series
in~$\C \apvar^{\Delta_{1}}$ as the matrix
element~$C^{\lambdafullseq}_{\cbfullseq}(\apvar)$.

Assume that the assertion is true
when the number of variables is strictly less than~$N$.
Then use Lemma~\ref{lem: Frobenius series of QG functions}
to obtain a Frobenius expansion 
of~$\widetilde{\sF}[u_{\cbfullseq}]$ at $x_{1}=0$,
\begin{equation}
\label{eq:Frobenius_expansion_reduced_qg_function}
	\widetilde{\sF}[u_{\cbfullseq}](x_{1},\dots,x_{N})=\sum_{n=0}^{\infty}\widetilde{F}_{n}(x_{2},\dots, x_{N})x_{1}^{\Delta_{1}+n}
\end{equation}
Note that by Remark~\ref{rmk: leading coefficient of Frobenius series},
the initial coefficient~$\widetilde{F}_{0}$ coincides with
$\betacoef{\lambda_{1}}{\lambda_{0}}{\cbseq_{1}} \widetilde{\sF}[u_{\cbfullseq'}]$,
where ${\cbfullseq'=(\cbseq_{1},\dots,\cbseq_{N})}$
is a $(\cbseq_{1},\lambda_{2},\dots,\lambda_{N},\lambda_{\infty})$-admissible
sequence.

We apply $\varBSAoper{1}$ to both sides of the expansion (\ref{eq:Frobenius_expansion_reduced_qg_function}).
Because the operators act coefficientwise, we may
employ similar arguments as in the proof of Theorem \ref{thm:uniqueness_differential_equations_formal_series}.
We find that $\widetilde{F}_{n}$, $n\geq 1$,
are determined recursively as
\begin{equation*}
\widetilde{F}_{n}(x_{2},\dots,x_{N})
= - \frac{\apvar_{1}^{-\Delta_{1}-n+\lambda_{1}+1}}{P_{\lambda_{1}}(\hwFR{\lambda_{0}},\hwFR{\varsigma_{1}}+n)}
  \sum_{m=0}^{n-1} \extraterm^{(1)}_{n-m}
  \left(\apvar_{1}^{\Delta_{1}+m}\widetilde{F}_{m}(x_{2},\dots, x_{N})\right) ,
\end{equation*}
where $\extraterm^{(1)}_{j}$, $j \in \Znn$, are differential
operators given by exactly the same formulas as in the proof of
Theorem~\ref{thm:uniqueness_differential_equations_formal_series},
but now acting on the
space~$\ContF^\infty(\chamber_{N-1}^{+})[[\apvar_1^{\pm 1}]] \, \apvar_1^{\Delta_1}$.

In particular, $\widetilde{F}_{n}$, $n\geq 1$, are
determined by $\widetilde{F}_{0}$ via the same relations as $C_{n}$,
$n\geq 1$, are determined by $C_{0}$.
It also follows that all higher coefficients $F_{n}$, $n\geq 1$, are
analytic on~$\chamber_{N-1}^{+}$.
By the induction hypothesis, $\widetilde{F}_{0}$ is expanded in
\begin{equation*}
	\C [[\apvar_{N-1}/\apvar_{N}]]\cdots [[\apvar_{2}/\apvar_{3}]]
	\, \apvar_{N}^{\Delta_{N}}\cdots \apvar_{2}^{\Delta_{2}}
\end{equation*}
and coincides with $C_{0}$.
Therefore, each $\widetilde{F}_{n}$, $n\geq 0$ is expanded in
\begin{equation*}
\C [[\apvar_{N-1}/\apvar_{N}]]\cdots [[\apvar_{2}/\apvar_{3}]]
  \, \apvar_{N}^{\Delta_{N}}\cdots \apvar_{2}^{\Delta_{2}-n}
\end{equation*}
and coincides with $C_{n}$.
Finally, from the expansion (\ref{eq:Frobenius_expansion_reduced_qg_function}), we conclude that $\widetilde{\sF}[u_{\cbfullseq}]$ is expanded in
\begin{equation*}
\C [[\apvar_{N-1}/\apvar_{N}]]\cdots [[\apvar_{1}/\apvar_{2}]]
	\, \apvar_{N}^{\Delta_{N}}\cdots \apvar_{1}^{\Delta_{1}}
\end{equation*}
and coincides with $C^{\lambdafullseq}_{\cbfullseq}$.
The convergence of this series at any
$(x_1 , \ldots, x_N) \in \chamber_{N}^{+}$
is clear by an inductive application of
Lemma~\ref{lem: Frobenius series of QG functions}.
\end{proof}

\subsection{Some applications}
\label{sub: applications of the composition result}
Let us quickly comment on applications of the above result to
the analysis of the PDE system and to the quantum group method itself.

\subsubsection*{On the solution spaces to BPZ differential equations}

By Theorem~\ref{thm: quantum group functions and compositions of intertwining ops},
for every $u \in
\big( \bigotimes_{j=1}^N \QGrep{\lambda_j} \big) \tens \QGrep{\lambda_{0}}$
such that $E u = 0$ we associate a formal series
representing the function~$\widetilde{\sF}[u] \colon \chamber_{N}^+ \to \bC$,
\begin{align*}
\widetilde{\sF}[u] \; \in \; \bC \{ \apvar_1 , \ldots, \apvar_N \} .
\end{align*}
By convergence of the series
(pointwise) in~$\chamber_{N}^+$, the series uniquely determines
the function~$\widetilde{\sF}[u]$, and by translation invariance it therefore also
determines~$\sF[u] \colon \chamber_{N+1} \to \bC$.

For fixed 
$\lambdafullseq =
(\lambda_{0},\lambda_{1},\dots, \lambda_{N},\lambda_{\infty})\in \Znn^{N+2}$,
any $\lambdafullseq$-admissible sequence $\cbfullseq=(\cbseq_{0},\dots,\cbseq_{N})$
gives a solution to the (same) system of BPZ partial differential equations
of BSA form,
\begin{align}\label{eq: BPZ PDE system of BSA type}
\BSAoper{j} 
F = 0 \qquad \text{ for } j = 0,\ldots,N,
\end{align}
with homogeneity
\begin{align}\label{eq: homogeneity for BPZ PDE system of BSA type}
F(s x_0 , s x_1 , \ldots , s x_N)
= s^{\hwFR{\lambda_\infty} - \sum_{j=0}^N \hwFR{\lambda_j}} \, 
    F(x_0 , x_1 , \ldots , x_N) 
\end{align}
for any~$s>0$ and~$(x_0, x_1, \ldots, x_N) \in \chamber_{N+1}$.

For different~$\cbfullseq$,
the corresponding formal series in
$\bC \{ \apvar_1 , \ldots, \apvar_N \}$
are clearly linearly independent, since the sequences
$(\Delta_1, \ldots,\Delta_N)$ of exponents
$\Delta_{i}=\hwFR{\cbseq_{i}}-\hwFR{\lambda_{i}}-\hwFR{\cbseq_{i-1}}$,
$i=1,\ldots,N$, are distinct in the generic
case. In particular the conformal block type
solutions~$\sF[u_{\cbfullseq}]$, for $\cbfullseq$ admissible,
are linearly independent solutions to the system of differential
equations.

As an application, we therefore get that the dimension of the solution
space is at least the number of conformal blocks,
which is a combinatorial and well understood quantity.

\begin{cor}
Let
$\lambdafullseq =
(\lambda_{0},\lambda_{1},\dots, \lambda_{N},\lambda_{\infty})\in \Znn^{N+2}$.
Then we have
\begin{align*}
\dmn \set{ F \in \ContF^\infty (\chamber_{N+1}) \; \bigg| \;
   F \text{ satisfies \eqref{eq: BPZ PDE system of BSA type}
    and \eqref{eq: homogeneity for BPZ PDE system of BSA type}}
    } 
\; \geq \;\; & \dmn \; \Hom_{\Uqsltwo} \bigg(
    \QGrep{\lambda_{\infty}} , \;
    \bigotimes_{j=0}^N \QGrep{\lambda_j} \bigg) .
\end{align*}
\end{cor}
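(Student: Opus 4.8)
The plan is to exhibit, for every $\lambdafullseq$-admissible sequence $\cbfullseq$, the conformal block function $\sF[u_{\cbfullseq}]$ as an element of the solution space on the left, and then to show that these functions (as $\cbfullseq$ ranges over the admissible sequences) are linearly independent and as numerous as $\dmn\,\Hom_{\Uqsltwo}\big(\QGrep{\lambda_{\infty}},\bigotimes_{j=0}^N\QGrep{\lambda_j}\big)$. For the count: the composition of embeddings in~\eqref{eq: sequence of embeddings}--\eqref{eq: construction of conformal block vector} is a $\Uqsltwo$-module map $\QGrep{\lambda_{\infty}}\hookrightarrow\bigotimes_{j=0}^N\QGrep{\lambda_j}$ carrying $\QGhwv{\lambda_{\infty}}$ to $u_{\cbfullseq}$, and an induction on $N$ starting from Lemma~\ref{lem: embedding of QG reps in tensor product of two} and iterating the multiplicity-free branching rule~\eqref{eq:tensor_prod_Uq_mods} (the same argument as behind Lemma~\ref{lem: bases for 6j symbols}, now for an $(N+1)$-fold tensor product) shows that these module maps, indexed by the $\lambdafullseq$-admissible sequences, form a basis of $\Hom_{\Uqsltwo}\big(\QGrep{\lambda_{\infty}},\bigotimes_{j=0}^N\QGrep{\lambda_j}\big)$; hence the number of admissible sequences equals the right-hand side of the inequality.

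For a fixed admissible $\cbfullseq$, the vector $u_{\cbfullseq}\in\big(\bigotimes_{j=1}^N\QGrep{\lambda_j}\big)\tens\QGrep{\lambda_0}$ is a highest weight vector and a Cartan eigenvector, $E.u_{\cbfullseq}=0$ and $K.u_{\cbfullseq}=q^{\lambda_{\infty}}u_{\cbfullseq}$, by the lemma immediately following~\eqref{eq: construction of conformal block vector}. Applying Theorem~\ref{thm:properties_of_correlations} with the $N+1$ variables $x_0,x_1,\dots,x_N$ and labels $\lambda_0,\lambda_1,\dots,\lambda_N$, part~(PDE) gives $\BSAoper{j}\sF[u_{\cbfullseq}]=0$ for $j=0,1,\dots,N$, while part~(COV) applied to the Cartan eigenvector $u_{\cbfullseq}$ with eigenvalue $q^{\lambda_{\infty}}$ gives translation invariance together with the homogeneity~\eqref{eq: homogeneity for BPZ PDE system of BSA type} of exponent $\hwFR{\lambda_{\infty}}-\sum_{j=0}^N\hwFR{\lambda_j}$. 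Thus each $\sF[u_{\cbfullseq}]\in\ContF^\infty(\chamber_{N+1})$ lies in the solution space of the corollary.

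For linear independence, recall from Theorem~\ref{thm: quantum group functions and compositions of intertwining ops} that $\sF[u_{\cbfullseq}](0,x_1,\dots,x_N)$ agrees on $\chamber_N^+$ with the convergent series $C^{\lambdafullseq}_{\cbfullseq}$ lying in $\mathbb{C}[[\apvar_{N-1}/\apvar_N]]\cdots[[\apvar_1/\apvar_2]]\,\apvar_N^{\Delta_N}\cdots\apvar_1^{\Delta_1}$, with $\Delta_i=\hwFR{\cbseq_i}-\hwFR{\lambda_i}-\hwFR{\cbseq_{i-1}}$, whose term of $\apvar$-exponent $(\Delta_1,\dots,\Delta_N)$ carries the nonzero coefficient $\prod_{j=1}^N\betacoef{\lambda_j}{\cbseq_{j-1}}{\;\cbseq_j}$. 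Since $\kappa\notin\bQ$ the map $\lambda\mapsto\hwFR{\lambda}$ is injective on $\Znn$ (as noted in the proof of Proposition~\ref{prop:factorization_selection_rule_polynomial}), so with $\cbseq_0=\lambda_0$ fixed the exponent tuple $(\Delta_1,\dots,\Delta_N)$ recovers $\cbseq_1,\dots,\cbseq_N$ recursively; distinct admissible sequences therefore give distinct leading exponents. A vanishing combination $\sum_{\cbfullseq}a_{\cbfullseq}\sF[u_{\cbfullseq}]\equiv 0$ on $\chamber_{N+1}$, restricted to $x_0=0$ and compared in the (convergent, hence formal) series by the standard graded leading-term argument---recursively in the number of variables, using the injectivity just mentioned to isolate one monomial at a time---forces every $a_{\cbfullseq}=0$; this is the observation already indicated in the discussion preceding the statement, and it is the pointwise convergence from Theorem~\ref{thm: quantum group functions and compositions of intertwining ops} that makes the passage between functions and formal series legitimate. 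Together with the count, we obtain $\dmn\,\Hom_{\Uqsltwo}\big(\QGrep{\lambda_{\infty}},\bigotimes_{j=0}^N\QGrep{\lambda_j}\big)$ linearly independent solutions, hence the asserted lower bound.

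The step I expect to require the most care is the combinatorial identification in the first paragraph, matching the set of $\lambdafullseq$-admissible sequences with a basis of the iterated $\Hom$ space; this is nonetheless a routine inductive extension of Lemma~\ref{lem: bases for 6j symbols}. All of the genuinely substantive input---analyticity, convergence of the series, and the identification of the quantum group functions with the formal compositions of intertwining operators---is already packaged in Theorem~\ref{thm: quantum group functions and compositions of intertwining ops}, so I do not anticipate any further obstacle.
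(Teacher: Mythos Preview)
Your proposal is correct and follows essentially the same approach as the paper: the corollary is stated as an immediate consequence of the preceding discussion, which notes that the $\sF[u_{\cbfullseq}]$ solve \eqref{eq: BPZ PDE system of BSA type}--\eqref{eq: homogeneity for BPZ PDE system of BSA type} by Theorem~\ref{thm:properties_of_correlations}, that the associated formal series (via Theorem~\ref{thm: quantum group functions and compositions of intertwining ops}) have distinct exponent tuples $(\Delta_1,\dots,\Delta_N)$ in the generic case and are hence linearly independent, and that the number of admissible sequences is the combinatorial count of conformal blocks. Your write-up merely makes explicit the identification of admissible sequences with a basis of the iterated $\Hom$ space, which the paper leaves implicit.
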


\subsubsection*{Differential equations at infinity}

From the perspective of CFT, 
the (PDE) part of Theorem~\ref{thm:properties_of_correlations}
has the interpretation that 
for~$u\in \HWspace{\lambdafullseq}$, the function~$\sF [u]$
satisfies partial differential equations stemming
from degeneracies of primary fields at the
points~$x_0, x_1, \ldots, x_N$.
When we take $u$ to moreover satisfy $K.u = q^{\lambda_\infty} \, u$,
it is natural to expect
the function~$\sF [u]$ to satisfy a further 
differential equation of order~$\lambda_\infty + 1$,
associated with the field at infinity.
It is possible to give a direct proof of this property from
the quantum group method, but such a proof is not entirely trivial.
By contrast, a very simple proof can be obtained as an application
of the results of this section.

For the statement, we introduce the following
differential operators at infinity
\begin{align*}
	\WittL{n}{\infty}=\sum_{i=0}^{N}\left(x_{i}^{n+1}\frac{\dee}{\dee x_{i}}+(n+1)\hwFR{\lambda_{i}}x_{i}^{n}\right), \quad n\in \Z.
\end{align*}

\begin{cor}
\label{cor:diff_eq_at_infty}
Let $u\in \HWspace{\lambdafullseq}\cap \Kern (K-q^{\lambda_{\infty}})$, $\lambda_{\infty}\in \Znn$. The function $\sF [u]$ satisfies the differential equation
$\BSAoper{\infty} \, \sF [u] = 0$, where
\begin{equation*}
	\BSAoper{\infty}=\sum_{k=1}^{\lambda_{\infty}+1}\sum_{\substack{p_{1},\dots, p_{k}\geq 1 \\ p_{1}+\cdots +p_{k}=\lambda_{\infty}+1}}\frac{(-4/\kappa)^{\lambda_{\infty}+1-k}(\lambda_{\infty}!)^{2}}{\prod_{u=1}^{k-1}(\sum_{i=1}^{u}p_{i})(\sum_{i=u+1}^{k}p_{i})}\WittL{p_{1}}{\infty} \cdots \WittL{p_{k}}{\infty}.
\end{equation*}
\end{cor}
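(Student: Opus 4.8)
The plan is to reduce the claim to a statement about the first row modules and intertwining operators via Theorem~\ref{thm: quantum group functions and compositions of intertwining ops}, and then to exploit the singular vector structure of the contragredient module $\voaFR{\lambda_{\infty}}' \isom \voaFR{\lambda_{\infty}}$. First I would observe that the differential operator $\BSAoper{\infty}$ is manifestly built from the generators $\WittL{n}{\infty}$, $n>0$, which satisfy the Witt algebra commutation relations $[\WittL{m}{\infty},\WittL{n}{\infty}]=(m-n)\WittL{m+n}{\infty}$. The combinatorial coefficients appearing in $\BSAoper{\infty}$ are exactly those of the Benoit--Saint-Aubin formula~\eqref{eq:singular_vector}, so $\BSAoper{\infty}$ is the image of the BSA singular vector operator $\singvecop_{\lambda_{\infty}}$ under the assignment $L_{-p_i}\mapsto \WittL{p_i}{\infty}$; the key point is that the $\WittL{n}{\infty}$ with positive index play the role of the negative Virasoro modes acting on the module at infinity. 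Thus it suffices to show that $\sF[u]$, viewed through the composition-of-intertwining-operators identification, is annihilated by $\singvecop_{\lambda_{\infty}}$ acting on the outgoing slot, which vanishes because $\singvecop_{\lambda_{\infty}}\hwvFR{\lambda_{\infty}} = 0$ in the irreducible quotient $\voaFR{\lambda_{\infty}}$.

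More concretely, I would argue as follows. By the Clebsch--Gordan decomposition~\eqref{eq:tensor_prod_Uq_mods}, any $u\in\HWspace{\lambdafullseq}\cap\Kern(K-q^{\lambda_{\infty}})$ is a linear combination of conformal block vectors $u_{\cbfullseq}$ for $\lambdafullseq$-admissible sequences $\cbfullseq$ with $\cbseq_N = \lambda_{\infty}$ and $\cbseq_0 = \lambda_0$ (one extends the basis argument of Section~\ref{sec: explicit conformal block vectors}, together with the Cartan-eigenvalue constraint, to span $\HWspace{\lambdafullseq}\cap\Kern(K-q^{\lambda_{\infty}})$). By linearity it suffices to treat $u = u_{\cbfullseq}$. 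By Theorem~\ref{thm: quantum group functions and compositions of intertwining ops}, the translation-invariant function $\sF[u_{\cbfullseq}]$ is determined, as a convergent series, by the highest weight matrix element $C^{\lambdafullseq}_{\cbfullseq}(\apvar_1,\dots,\apvar_N)$ of the composition of normalized intertwining operators $\intertwnorm{\cbseq_{N-1}}{\lambda_N}{\cbseq_N}\cdots\intertwnorm{\cbseq_0}{\lambda_1}{\cbseq_1}$, with the outgoing highest weight vector being $\hwvFR{\lambda_{\infty}}' \in \voaFR{\lambda_{\infty}}'$. Using Corollary~\ref{cor:formula_intertw} and Lemma~\ref{lem: Virasoro action on the contragredient}, the action of $\WittL{n}{\infty}$, $n>0$, on this matrix element corresponds precisely to moving $L_{-n}^{\voaFR{\lambda_{\infty}}}$ through the composition and letting it act on $\hwvFR{\lambda_{\infty}}' $ (equivalently, $L_n^{\voaFR{\lambda_{\infty}}}$ acting contragrediently); this is the same computation as in the proof of Lemma~\ref{lem:PDE_matrix_elements}, but with the role of the module at the origin replaced by the one at infinity. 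Hence $\BSAoper{\infty} \sF[u_{\cbfullseq}]$ corresponds to inserting $\singvecop_{\lambda_{\infty}}\hwv{c,h(\lambda_{\infty})}$ — which is zero in the irreducible module $\voaFR{\lambda_{\infty}}$ and hence zero as a functional on $\voaFR{\lambda_{\infty}}'$ — so the matrix element, and therefore the function, is annihilated.

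An alternative, perhaps cleaner route avoids intertwining operators entirely: work directly with the function $\sF[u]$ and mimic the proof of the (PDE) part of Theorem~\ref{thm:properties_of_correlations}. One introduces the ``inverted'' coordinates $y_i = 1/x_i$ (or applies a M\"obius transformation sending $\infty$ to a finite point and $x_0$ to $\infty$); under such a transformation, by the full M\"obius covariance on trivial subrepresentations — or by combining translation invariance and homogeneity with an inversion — the BSA differential equation associated with the new finite puncture becomes exactly $\BSAoper{\infty}\sF[u] = 0$ in the original coordinates. The representation-theoretic input is that for $u\in\Kern(K-q^{\lambda_{\infty}})$ the vector $u$ lies in the subrepresentation generated by $\QGrep{\lambda_{\infty}}$, which is precisely the condition needed for the existence of a degenerate primary field of label $\lambda_{\infty}$ at infinity; this is encoded in the embedding $\QGembed{\lambda_{\infty}}{\lambda_N}{\cbseq_{N-1}}$ at the top of the construction~\eqref{eq: construction of conformal block vector}.

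The main obstacle I anticipate is the careful bookkeeping in identifying $\BSAoper{\infty}$ with the ``BSA operator at infinity'' — that is, verifying that the assignment $L_{-p}\mapsto\WittL{p}{\infty}$ really is an (anti)homomorphism from the relevant part of $\UEA(\vir_{<0})$ intertwining the two pictures, so that the vanishing of $\singvecop_{\lambda_{\infty}}\hwvFR{\lambda_{\infty}}$ transports correctly. In the intertwining-operator approach this requires a commutation computation showing $[\WittL{n}{\infty}, \intertwnorm{\cbseq_{i-1}}{\lambda_i}{\cbseq_i}(\hwvFR{\lambda_i},\apvar_i)]$ has the expected form dual to~\eqref{eq: primary field Virasoro commutation}, plus controlling the boundary terms at $x_0$; in the inversion approach it requires tracking how the Jacobian factors and the binomial expansion conventions~\eqref{eq: binomial expansion} transform. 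Either way, once this identification is in place, the vanishing is immediate from Lemma~\ref{lem: first row generic case} and the irreducibility of $\voaFR{\lambda_{\infty}}$, and the convergence carries over from Theorem~\ref{thm: quantum group functions and compositions of intertwining ops}.
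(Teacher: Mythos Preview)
Your overall strategy matches the paper's: reduce by linearity to conformal block vectors~$u_{\cbfullseq}$, use Theorem~\ref{thm: quantum group functions and compositions of intertwining ops} to identify $\sF[u_{\cbfullseq}]$ with a highest-weight matrix element of composed intertwining operators, and then exploit $\singvecop_{\lambda_{\infty}}\hwvFR{\lambda_{\infty}}'=0$ by commuting the Virasoro generators through to the right via~\eqref{eq: primary field Virasoro commutation}. However, the obstacle you flag at the end---``controlling the boundary terms at~$x_0$''---is real and is exactly where your argument stops short of a proof, and the paper's resolution is a trick you do not mention.

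The issue is this: Theorem~\ref{thm: quantum group functions and compositions of intertwining ops} identifies $\sF[u_{\cbfullseq}](0,x_1,\dots,x_N)$ with $C^{\lambdafullseq}_{\cbfullseq}(x_1,\dots,x_N)$, a matrix element with only $N$ intertwining-operator variables. When you commute $L_{p}$ from the outgoing slot through the composition, the commutators~\eqref{eq: primary field Virasoro commutation} yield $\sum_{i=1}^{N}\big(x_i^{p+1}\partial_{x_i}+(p{+}1)\hwFR{\lambda_i}x_i^{p}\big)$, which is $\WittL{p}{\infty}$ \emph{minus} its $i=0$ term. At $x_0=0$ the missing term vanishes (for $p\geq 1$), so your argument proves $(\BSAoper{\infty}\sF[u_{\cbfullseq}])\big|_{x_0=0}=0$, but only there. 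Since $\BSAoper{\infty}F$ is not translation invariant even when $F$ is (one checks already for $\lambda_{\infty}=1$ that $\WittL{-1}{\infty}\BSAoper{\infty}F$ is a nonzero multiple of $\WittL{1}{\infty}F$), vanishing on the slice $\{x_0=0\}$ does not propagate.

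The paper's fix is to tensor with the trivial representation: set $u'_{\cbfullseq}=\big((\bigotimes_j\idof{\QGrep{\lambda_j}})\otimes\QGembed{\lambda_0}{\lambda_0}{0}\big)(u_{\cbfullseq})$ in $(\bigotimes_j\QGrep{\lambda_j})\otimes\QGrep{\lambda_0}\otimes\QGrep{0}$. The function $\sF[u'_{\cbfullseq}](z,x_0,\dots,x_N)$ is constant in~$z$ by its order-one BSA PDE, and at $z=0$ Theorem~\ref{thm: quantum group functions and compositions of intertwining ops} now gives a matrix element with $N{+}1$ intertwiners at the variables $x_0,x_1,\dots,x_N$ acting on the vacuum~$\hwvFR{0}$. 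This makes $x_0$ a genuine intertwining-operator variable, so commuting $L_{p}$ through picks up the full $\WittL{p}{\infty}$ including $i=0$, and the residual $L_{p}\hwvFR{0}=0$ finishes the argument at every point of~$\chamber_{N+1}$. Your inversion alternative is precisely the direct route the paper calls ``not entirely trivial'' and bypasses with this device.
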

\begin{proof}
Note that 
any $u\in \HWspace{\lambdafullseq}\cap \Kern (K-q^{\lambda_{\infty}})$
is a linear combination of $u_{\cbfullseq}$,
where $\cbfullseq$ ranges over the $\lambdafullseq$-admissible sequences.
By linearity, it suffices to prove the statement for $u = u_{\cbfullseq}$.

To simplify, we make use of the fact that the representation~$\QGrep{0}$ is
the unit for tensor products of representations of $\Uqsltwo$,
as seen from~\eqref{eq:tensor_prod_Uq_mods}.
Explicitly, we use the isomorphism
\begin{align*}
\big( \bigotimes_{j=1}^N \idof{\QGrep{\lambda_j}} \big) \tens \QGembed{\lambda_0}{\lambda_0}{0} \; \colon \;
\big( \bigotimes_{j=1}^N \QGrep{\lambda_j} \big) \tens \QGrep{\lambda_{0}}
\; \overset{\isom}{\longrightarrow} \;
\big( \bigotimes_{j=1}^N \QGrep{\lambda_j} \big) \tens \QGrep{\lambda_{0}} \tens \QGrep{0} .
\end{align*}
We set
$u'_{\cbfullseq} = \big( \big( \bigotimes_{j=1}^N \idof{\QGrep{\lambda_j}} \big) \tens \QGembed{\lambda_0}{\lambda_0}{0} \big) (u_{\cbfullseq})$, and note that the function
\begin{align*}
\sF [u'_{\cbfullseq}] (z , x_0 , x_1 , \ldots , x_N)
\end{align*}
satisfies the first order differential equation
$0 = \pder{z} \, \sF [u'_{\cbfullseq}]$ by the (PDE) property
of Theorem~\ref{thm:properties_of_correlations}.
Therefore it is constant as a function of~$z$.
When we specialize to~$z=0$, 
Theorem~\ref{thm: quantum group functions and compositions of intertwining ops}
states that this function is given by
\begin{align*}
\sF [u'_{\cbfullseq}] (0 , x_0 , x_1 , \ldots , x_N)
= \; & 
    \Braket{ \hwvFR{\cbseq_{N}}^{\prime} , \,
    \intertwnorm{\cbseq_{N-1}}{\lambda_{N}}{\cbseq_{N}}(\hwvFR{\lambda_{N}} , x_{N})
\cdots
\intertwnorm{\cbseq_{0}}{\lambda_{1}}{\cbseq_{1}}(\hwvFR{\lambda_{1}} , x_{1}) \,
\intertwnorm{0}{\lambda_{0}}{\cbseq_{0}}(\hwvFR{\lambda_{0}} , x_{0}) \,
    \hwvFR{0}} .
\end{align*}
On the other hand, by the (ASY) property 
of Theorem~\ref{thm:properties_of_correlations}
and the observation that ${\betacoef{\lambda_{0}}{0}{\,\lambda_{0}} = 1}$,
we can determine the value of the constant, and we find
\begin{align*}
\sF [u'_{\cbfullseq}] (0 , x_0 , x_1 , \ldots , x_N)
\; = \; \sF [u_{\cbfullseq}] (x_0 , x_1 , \ldots , x_N) .
\end{align*}

It now suffices to prove the differential equation for the above expression
involving a composition of intertwining operators.
Recall that $\voaFR{\lambda_\infty}' \isom \voaFR{\lambda_\infty}$,
so $\singvecop_{\lambda_\infty} \hwvFR{\lambda_\infty}' = 0$, and therefore
\begin{align*}
0 \; = \;\; &
\Braket{ \singvecop_{\lambda_\infty} \hwvFR{\cbseq_{N}}^{\prime} , \,
    \intertwnorm{\cbseq_{N-1}}{\lambda_{N}}{\cbseq_{N}}(\hwvFR{\lambda_{N}} , \apvar_{N})
\cdots
\intertwnorm{0}{\lambda_{0}}{\cbseq_{0}}(\hwvFR{\lambda_{0}} , \apvar_{0}) \,
    \hwvFR{0}} \\
= \;\; &
    \sum_{k=1}^{\lambda_{\infty}+1}\sum_{\substack{p_{1},\dots, p_{k}\geq 1 \\ p_{1}+\cdots +p_{k}=\lambda_{\infty}+1}}\frac{(-4/\kappa)^{\lambda_{\infty}+1-k}(\lambda_{\infty}!)^{2}}{\prod_{u=1}^{k-1}(\sum_{i=1}^{u}p_{i})(\sum_{i=u+1}^{k}p_{i})} \\
& \qquad\qquad \Braket{ \hwvFR{\cbseq_{N}}^{\prime} , \; L_{p_k} \cdots L_{p_1} \,
    \intertwnorm{\cbseq_{N-1}}{\lambda_{N}}{\cbseq_{N}}(\hwvFR{\lambda_{N}} , \apvar_{N})
\cdots
\intertwnorm{0}{\lambda_{0}}{\cbseq_{0}}(\hwvFR{\lambda_{0}} , \apvar_{0}) \,
    \hwvFR{0}} .
\end{align*}
We can commute the Virasoro generators~$L_{p_m}$ to the right
by Corollary~\ref{cor:formula_intertw} in the
form~\eqref{eq: primary field Virasoro commutation} again.
For $p_m>0$ we have $L_{p_m} \hwvFR{\lambda_0} = 0$, so after commutation we find
the desired differential equation.
\end{proof}

By contrast, 
to prove Corollary~\ref{cor:diff_eq_at_infty} directly using the quantum
group method, one would have to introduce an auxiliary variable
$x_{N+1}$ and an auxiliary vector
\begin{align*}
u'' \in \QGrep{\lambda_{\infty}} \tens \big( \bigotimes_{j=1}^N \QGrep{\lambda_j} \big) \tens \QGrep{\lambda_{0}}
\end{align*}
as in~\cite[Section 5.2]{KP-conformally_covariant_bdry_correlations},
conjugate by an appropriate exponentiatial of~$\WittL{1}{N+1}$,
and take a complicated limit $x_{N+1} \to +\infty$.
The proof above is a significant simplification.

\subsubsection*{Full series expansions of the functions from the quantum group method}

The property (ASY) in Theorem~\ref{thm:properties_of_correlations}
gives the explicit leading coefficient of the Frobenius series expansion 
of a function obtained by the quantum group method, in the
limit~$x_j-x_{j-1} \to 0$.
The main result of this section
allows to express also all of the
higher order coefficients of this Frobenius series
as matrix elements of
compositions of intertwining operators\footnote{Specifically,
the method as presented in this section, applies to the
coefficients of the Frobenius series corresponding
to~$x_1-x_0 \to 0$.
The general case of expansions as
$x_j-x_{j-1} \to 0$ for~$j>1$ will be 
obtained by associativity, which is addressed in the next section.},
making their calculation tractable by
algebraic and combinatorial techniques.
This is perhaps the most important direct
application of the results of this section to the quantum group method.

\section{Associativity of intertwining operators}%
\label{sec: associativity}
In this section we will prove one of the main results of the paper,
the associativity of the intertwining operators among the modules
of the first row subcategory.
This result is essentially the associativity in an appropriate
tensor category.

From the point of view of correlation functions of conformal field theories,
associativity amounts to the fact that the same correlation function
admits series expansions in different regimes, and the resulting
series represent the same function on the overlap of the domains where they
converge.
A geometric interpretation of this property is that different
pair-of-pants decompositions of the same Riemann surface can be used
interchangeably.

In VOA theory, associativity is a statement about the case~$N=2$ only,
but involving the full 
intertwining operators, not merely a matrix element that gives
a particular correlation function.
Starting from the $N=2$ case, it is then possible to inductively obtain
coincidence of various expansions of multipoint correlation functions
(or interchangeability of pair-of-pants decompositions of multiply
punctured spheres).

From the point of view of VOA intertwining operators, 
we will be comparing a priori entirely different formal series,
and it is far from obvious that they should represent the same function.
Indeed, to obtain coincidence, we cannot separate a given
conformal block from the rest: the expansion of a single conformal
block in another regime will involve all possible conformal blocks,
so a carefully devised linear combination is needed.
The appropriate coefficients of the linear combination
will be the $6j$-symbols of the underlying quantum group.

A key subtlety is that the analytic functions represented by the
series are multivalued (when extended to their natural complex domains),
and the correct coincidence statements necessarily
involve branch choices. In our setup, the quantum group method and the
ordering of the variables on the real line leads to convenient
branch choices which facilitate the statement.
The multivaluedness, in turn, is closely related to braiding
properties, which feature crucially in the analysis of CFT correlation
functions, and form a key structure of the underlying tensor category.

In the previous sections, the quantum group has
been used mainly through the 
fusion rules of the VOA modules, which matched the selection rules
for $\Uqsltwo$ representations. The selection rules themselves, however,
are not sensitive to the deformation parameter~$q$.
By contrast, the results of this section will
involve the specific $6j$ symbols, and they will lead to the specific
braiding properties. The associativity result therefore gives more profound
evidence for the equivalence of the first row subcategory of the
generic Virasoro VOA and the category of (type-one) finite-dimensional
representations of~$\Uqsltwo$.

This section is organized as follows.
In Section~\ref{sub: statement of associativity} we introduce the setup
and give the precise formulation
of the associativity statement, Theorem~\ref{thm:associativity}.
The proof of that statement is divided in two parts.
Section~\ref{sub: associativity for the highest weight matrix element}
contains the proof of a particular case involving a highest weight
matrix element.
In Section~\ref{sub: associativity general case}, an inductive
construction starting from that particular case is used to
finish the proof of the general case.

\subsection{The setup and statement of associativity}
\label{sub: statement of associativity}

Fix $\lambda_{0},\lambda_{1},\lambda_{2},\lambda_{\infty}\in \Znn$.
For arbitrary
$\Mvec_{0} \in \voaFR{\lambda_{0}}$,
$\Mvec_{1} \in \voaFR{\lambda_{1}}$,
$\Mvec_{2} \in \voaFR{\lambda_{2}}$,
$\Mvec_{\infty}' \in \voaFR{\lambda_{\infty}}^{\prime}$,
and $\cbseq\in \selRule{\lambda_{0}}{\lambda_{1}}$,
we will consider the formal series
\begin{align}\label{eq: associativity series A}
\Braket{\Mvec_{\infty}', \; \intertwnorm{\cbseq}{\lambda_{2}}{\lambda_{\infty}}(\Mvec_{2}, \apvar_{2})
    \,  \intertwnorm{\lambda_{0}}{\lambda_{1}}{\cbseq}(\Mvec_{1}, \apvar_{1}) \, \Mvec_{0}} ,
\end{align}
which is a special case $N=2$ of the series in Section~\ref{sec: composition},
and we will compare it with the formal series
\begin{align}\label{eq: associativity series B}
\Braket{\Mvec_{\infty}', \; \intertwnorm{\lambda_{0}}{\cbseqB}{\lambda_{\infty}}
    \big(\intertwnorm{\lambda_{1}}{\lambda_{2}}{\cbseqB}(\Mvec_{2}, \apvarB) \, \Mvec_{1}, \, \apvar \big)
    \, \Mvec_{0}} 
\end{align}
for $\cbseqB \in\selRule{\lambda_{1}}{\lambda_{2}}\cap\selRule{\lambda_{0}}{\lambda_{\infty}}$.
The idea will be to substitute actual values $x_1 , x_2 \in \bR$ so that
\begin{align*}
&\begin{cases}
\apvar_1 = x_1 \\
\apvar_2 = x_2 
\end{cases} \qquad \text{ and} \qquad\quad
\begin{cases}
\apvar = x_1  \\
\apvarB = x_2-x_1 .
\end{cases}
\end{align*}
The first series~\eqref{eq: associativity series A} will correspond to an
expansion in the regime~$0 < x_1 \ll x_2$,
and the second~\eqref{eq: associativity series B}
to and expansion in the regime~$0 < x_2-x_1 \ll x_1$.
As discussed in Section~\ref{sec: composition}, the
series~\eqref{eq: associativity series A} is in the space
\begin{align*}
\mathbb{C}[\apvar_2]\,[[\apvar_{1}/\apvar_{2}]]\,[\apvar_1^{-1}]
    \; \apvar_{1}^{\deltasym_1} \, \apvar_{2}^{\deltasym_2}
\qquad \text{ with } \qquad
\begin{cases}
\deltasym_1 = \hwFR{\cbseq} - \hwFR{\lambda_1} - \hwFR{\lambda_0} \\
\deltasym_2 = \hwFR{\lambda_\infty} - \hwFR{\lambda_2} - \hwFR{\cbseq} .
\end{cases}
\end{align*}
Similarly we will see that the
series~\eqref{eq: associativity series B} is in the space
\begin{align*}
\mathbb{C}[\apvar]\,[[\apvarB/\apvar]] \, [\apvarB^{-1}] \; \apvar^{\deltasymB'(\cbseqB)} 
    \, \apvarB^{\deltasymB(\cbseqB)}
\qquad \text{with } \qquad
\begin{cases}
\deltasymB(\cbseqB) = \hwFR{\cbseqB} - \hwFR{\lambda_2} - \hwFR{\lambda_1} \\
\deltasymB'(\cbseqB) = \hwFR{\lambda_\infty} - \hwFR{\cbseqB} - \hwFR{\lambda_0} .
\end{cases}
\end{align*}

The goal of this section is to prove the following
associativity of intertwining operators.
\begin{thm}
\label{thm:associativity}
Fix $\lambda_{0},\lambda_{1},\lambda_{2},\lambda_{\infty}\in \Znn$.
For arbitrary
$\Mvec_{0} \in \voaFR{\lambda_{0}}$,
$\Mvec_{1} \in \voaFR{\lambda_{1}}$,
$\Mvec_{2} \in \voaFR{\lambda_{2}}$,
$\Mvec_{\infty}' \in \voaFR{\lambda_{\infty}}^{\prime}$,
and $\cbseq\in \selRule{\lambda_{0}}{\lambda_{1}}$,
the series
\begin{align*}
\Braket{\Mvec_{\infty}', \; \intertwnorm{\cbseq}{\lambda_{2}}{\lambda_{\infty}}(\Mvec_{2}, x_{2})
    \,  \intertwnorm{\lambda_{0}}{\lambda_{1}}{\cbseq}(\Mvec_{1}, x_{1}) \, \Mvec_{0}} ,
\end{align*}
converges when $0 < x_1 < x_2$, and the series
\begin{align*}
\sum_{\cbseqB \, \in \, \selRule{\lambda_{1}}{\lambda_{2}}
        \, \cap \, \selRule{\lambda_{0}}{\lambda_{\infty}}}
	\sixj{\lambda_{\infty}}{\lambda_2}{\lambda_1}{\lambda_0}{\cbseq}{\cbseqB}
\Braket{\Mvec_{\infty}', \; \intertwnorm{\lambda_{0}}{\cbseqB}{\lambda_{\infty}}
    \big(\intertwnorm{\lambda_{1}}{\lambda_{2}}{\cbseqB}(\Mvec_{2}, x_2-x_1) \, \Mvec_{1}, \, x_1 \big)
    \, \Mvec_{0}} 
\end{align*}
converges when $0 < x_2-x_1 < x_1$, and in the (nontrivial) overlap
of these domains, the analytic functions represented by these two series coincide.
\end{thm}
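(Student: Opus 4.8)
The plan is to prove Theorem~\ref{thm:associativity} in two stages, following the organization announced in Section~\ref{sub: statement of associativity}: first establish the identity on the level of the highest weight matrix element (taking $\Mvec_0 = \hwvFR{\lambda_0}$, $\Mvec_1 = \hwvFR{\lambda_1}$, $\Mvec_2 = \hwvFR{\lambda_2}$, $\Mvec_\infty' = \hwvFR{\lambda_\infty}'$), and then bootstrap to the general case by the usual PBW-filtration induction.

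\textbf{Stage 1: the highest weight matrix element.} For the series~\eqref{eq: associativity series A}, Theorem~\ref{thm: quantum group functions and compositions of intertwining ops} identifies its highest weight matrix element $C^{\lambdafullseq}_{\cbfullseq}(\apvar_1,\apvar_2)$ (with $\cbfullseq = (\lambda_0, \cbseq, \lambda_\infty)$, so $\cbseq_1 = \cbseq$) with the convergent series expansion of the quantum group function $\sF[u_{\cbfullseq}](0, x_1, x_2)$ in the regime $0 < x_1 < x_2$; in particular it converges there. For the second series~\eqref{eq: associativity series B}, I would first check that its highest weight matrix element lies in the space $\mathbb{C}[\apvar][[\apvarB/\apvar]][\apvarB^{-1}]\,\apvar^{\deltasymB'(\cbseqB)}\apvarB^{\deltasymB(\cbseqB)}$ — this is an application of Lemma~\ref{lem:matrix_elements_formal_series} after the substitution $(\apvar,\apvarB) \to (x_1, x_2-x_1)$, reading $\intertwnorm{\lambda_1}{\lambda_2}{\cbseqB}(\hwvFR{\lambda_2},\apvarB)\hwvFR{\lambda_1}$ as an element of $\voaFR{\cbseqB}\{\apvarB\}$ and then composing with $\intertwnorm{\lambda_0}{\cbseqB}{\lambda_\infty}(\cdot,\apvar)$. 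Next, I would observe that each summand of~\eqref{eq: associativity series B}, viewed as a function of the genuine variables $(x_0, x_1, x_2) = (0, x_1, x_2)$, is also annihilated by the BSA operators $\BSAoper{1}, \BSAoper{2}$ (equivalently, after passing to the translation-invariant picture, by $\varBSAoper{1}, \varBSAoper{2}$): the operators $\BSAoper{1}, \BSAoper{2}$ kill matrix elements of compositions of intertwining operators between first-row modules because of the vanishing of the BSA singular vectors in $\voaFR{\lambda_1}$ and $\voaFR{\lambda_2}$, exactly as in Lemma~\ref{lem:PDE_matrix_elements}, and this holds regardless of how the composition is organized. So both~\eqref{eq: associativity series A} at $(0,x_1,x_2)$ and the left hand side of the claimed identity extend to solutions of the same system $\varBSAoper{1} F = \varBSAoper{2} F = 0$, and by homogeneity (COV-type scaling with total weight $\hwFR{\lambda_\infty} - \sum_j \hwFR{\lambda_j}$) they lie in the finite-dimensional solution space of Theorem~\ref{thm:uniqueness_differential_equations_formal_series} — but with a twist: the natural expansion variables on the right are $(\apvar,\apvarB) = (x_1, x_2-x_1)$, i.e.\ this is the "$x_2-x_1 \to 0$" expansion regime rather than the "$x_1 \to 0$" regime. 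The resolution is the quantum group side: by Theorem~\ref{thm: quantum group functions and compositions of intertwining ops} applied to the $6j$-rotated family of conformal block vectors, together with the $6j$ expansion~\eqref{eq: 6j decomposition} of the embeddings, the function $\sF[u_{\cbfullseq}]$ itself — a single analytic function on $\chamber_3$ — has a second Frobenius expansion in $x_2-x_1$ whose pieces are precisely the $6j$-weighted sum in~\eqref{eq: associativity series B}. Concretely, the conformal block vector $u_{(\lambda_0,\cbseq,\lambda_\infty)}$, built from $\QGembed{\lambda_\infty}{\lambda_2}{\cbseq} \circ (\id \tens \QGembed{\cbseq}{\lambda_1}{\lambda_0})$, is re-expressed via Lemma~\ref{lem: bases for 6j symbols} and~\eqref{eq: 6j decomposition} (applied in the $\lambda_2, \lambda_1, \lambda_0$ tensorands) as $\sum_{\cbseqB} \sixj{\lambda_\infty}{\lambda_2}{\lambda_1}{\lambda_0}{\cbseq}{\cbseqB} \, u_{(\lambda_0,\cbseqB,\lambda_\infty)}'$ where $u'$ denotes the reassociated block vector; applying $\sF$ and Lemma~\ref{lem: Frobenius series of QG functions} in the variable $z = x_2-x_1$ then yields the $6j$-weighted series, and the leading-coefficient normalization is exactly what makes the $\betasym$ prefactors absorb correctly (Remark~\ref{rmk: leading coefficient of Frobenius series}). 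Since $\sF[u_{\cbfullseq}]$ is a genuine analytic function on $\chamber_3$, both series converge to it on their respective domains, proving Stage~1.

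\textbf{Stage 2: general vectors.} With the highest weight matrix element identity in hand, I would run the now-familiar induction over the total PBW word length of $\Mvec_0 \in \PBWfil{p_0}\voaFR{\lambda_0}$, $\Mvec_1 \in \PBWfil{p_1}\voaFR{\lambda_1}$, $\Mvec_2 \in \PBWfil{p_2}\voaFR{\lambda_2}$, $\Mvec_\infty' \in \PBWfil{p_\infty}\voaFR{\lambda_\infty}'$, as in the proof of Proposition~\ref{prop: uniqueness of intertwiners up to constant}. The point is that Corollary~\ref{cor:formula_intertw} lets one express the effect of replacing, say, $\Mvec_1$ by $L_{-n}\Mvec_1$ (and similarly for the other slots) in terms of differential operators in $\apvar_1, \apvar_2$ (resp.\ $\apvar, \apvarB$) acting on matrix elements with strictly shorter words, plus shifts moving $L$'s onto neighboring factors. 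Both series~\eqref{eq: associativity series A} and~\eqref{eq: associativity series B} transform under these moves by the \emph{same} differential-operator recursions — once one checks that the chain rule for the change of variables $(\apvar,\apvarB) = (x_1, x_2-x_1)$ is respected, which is a routine computation — and the $6j$ coefficients are constants that commute through. Since the base case of the induction is Stage~1, and since the recursions are identical, the two series agree coefficientwise as formal series in the appropriate completed spaces; convergence on the stated overlapping domains is inherited from Stage~1 because each such differential-operator move preserves local-uniform $R$-control (Lemma~\ref{lem: analyticity of QG functions}, Lemma~\ref{lem: operations on R controlled series}).

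\textbf{Anticipated main obstacle.} The genuinely delicate point is Stage~1, specifically matching the $6j$-rotated conformal block vectors to the reassociated compositions of intertwining operators with the \emph{correct} scalar prefactors and \emph{correct} branch of the multivalued powers. The reassociation on the VOA side introduces, through the iterated asymptotics, products of $\betasym$-coefficients $\betacoef{\lambda_2}{\lambda_1}{\cbseqB}$ and $\betacoef{\cbseqB}{\lambda_0}{\lambda_\infty}$, while the quantum group side produces $6j$-symbols times $\betacoef{\lambda_2}{\cbseq}{\lambda_\infty}\betacoef{\lambda_1}{\lambda_0}{\cbseq}$; these are consistent precisely because the normalization in Definition~\ref{def: normalized intertwining operators} was engineered (via Theorem~\ref{thm:properties_of_correlations}(ASY) and Remark~\ref{rmk: leading coefficient of Frobenius series}) to make the initial terms of intertwining operators match $\sF$ of the block vectors, but verifying the full chain of identifications — and that all powers $z^{\deltasym}$ are interpreted with the same branch determined by the real ordering $0 < x_1 < x_2$ — requires care. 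The multivaluedness is exactly where a naive "both solve the same ODE, hence equal" argument would fail, and the quantum-group-method bookkeeping of branches (powers placed on $z = x_2 - x_1 > 0$ and on $x_1 > 0$) is what rescues it.
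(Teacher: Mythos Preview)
Your two-stage strategy is exactly the paper's, and Stage~2 (the PBW induction via Corollary~\ref{cor:formula_intertw}, with the chain-rule check for the change of variables $(\apvar_1,\apvar_2)\to(\apvar,\apvarB)$) matches the paper's Section~\ref{sub: associativity general case} closely.

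There is, however, a genuine gap in your Stage~1. You correctly use Theorem~\ref{thm: quantum group functions and compositions of intertwining ops} to identify the \emph{first} series with $\sF[u_{\cbfullseq}](0,x_1,x_2)$, and you correctly decompose $u_{\cbfullseq}$ via~\eqref{eq: 6j decomposition} into the rotated vectors $\widehat{u}_{\cbfullseqB}$. But the step where you write ``by Theorem~\ref{thm: quantum group functions and compositions of intertwining ops} applied to the $6j$-rotated family of conformal block vectors'' does not go through: that theorem is proved only for compositions of the iterated form $\intertwnorm{\cbseq_{N-1}}{\lambda_N}{\cbseq_N}(\cdot,\apvar_N)\cdots\intertwnorm{\cbseq_0}{\lambda_1}{\cbseq_1}(\cdot,\apvar_1)$ expanded at $\apvar_1\to 0$, and the corresponding conformal block vectors~\eqref{eq: construction of conformal block vector} are built by successive embeddings into the \emph{rightmost} tensorands. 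The rotated vectors $\widehat{u}_{\cbfullseqB}$ are built by an embedding into the \emph{leftmost} pair, and the series you must match them to is the \emph{nested} composition $\intertwnorm{\lambda_0}{\cbseqB}{\lambda_\infty}\big(\intertwnorm{\lambda_1}{\lambda_2}{\cbseqB}(\cdot,\apvarB)\,\cdot,\apvar\big)$ expanded at $\apvarB\to 0$. Neither the statement nor the proof of Theorem~\ref{thm: quantum group functions and compositions of intertwining ops} covers this case.

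The paper closes this gap by proving a separate one-dimensional uniqueness statement (Proposition~\ref{prop: uniqueness of solutions for associativity}) for series solutions of the single equation $\newvarBSAoper{2}\,\newhwme=0$ in the space $\C[[\apvarB/\apvar]]\,\apvar^{\deltasymB'(\cbseqB)}\apvarB^{\deltasymB(\cbseqB)}$; homogeneity pins the exponent on~$\apvar$, so one PDE suffices. One then checks that both $\sF[\widehat{u}_{\cbfullseqB}](0,x,x+y)$ (via Lemma~\ref{lem: Frobenius series of QG functions} and the change of variables) and the nested VOA matrix element $\newhwme^{\lambdafullseq}_{\cbseqB}$ (via Corollary~\ref{cor:formula_intertw} applied twice) are such solutions, and matches their leading coefficients $\betacoef{\lambda_2}{\lambda_1}{\;\cbseqB}\,\betacoef{\cbseqB}{\lambda_0}{\,\lambda_\infty}\,x^{\deltasymB'(\cbseqB)}$. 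Your anticipated obstacle about normalizations and branches is real but secondary; the missing piece is this uniqueness argument in the new variables, which you gestured at (``but with a twist'') without supplying.
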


The proof of Theorem~\ref{thm:associativity} consists of two main steps:
the special case of the matrix element of highest weight vectors,
and the recursion on the PBW filtration to establish the general
case. We address the two in separate subsections below.

In the rest of the section, sums 
over~$\cbseqB$ are always taken over the same set of values satisfying the two
selection rules above.
For brevity, we omit this from the notation and write just 
\begin{align*}
\sum_{\cbseqB} \Big( \cdots \Big) \; := \;
\sum_{\cbseqB \, \in \, \selRule{\lambda_{1}}{\lambda_{2}}
        \, \cap \, \selRule{\lambda_{0}}{\lambda_{\infty}}} \Big( \cdots \Big) .
\end{align*}

\subsection{The special case with highest weight vectors}
\label{sub: associativity for the highest weight matrix element}

The first step in the proof of Theorem~\ref{thm:associativity} is
to consider the special case where instead of general vectors
$\Mvec_{0} \in \voaFR{\lambda_{0}}$,
$\Mvec_{1} \in \voaFR{\lambda_{1}}$,
$\Mvec_{2} \in \voaFR{\lambda_{2}}$,
$\Mvec_{\infty}' \in \voaFR{\lambda_{\infty}}^{\prime}$,
we use the highest weight vectors
$\hwvsym_{\lambda_{0}} \in \voaFR{\lambda_{0}}$,
$\hwvsym_{\lambda_{1}} \in \voaFR{\lambda_{1}}$,
$\hwvsym_{\lambda_{2}} \in \voaFR{\lambda_{2}}$,
$\hwvsym_{\lambda_{\infty}}' \in \voaFR{\lambda_{\infty}}^{\prime}$.
This is analogous to the initial term of an intertwining
operator, and the highest weight matrix element considered
in the context of compositions of intertwining operators.
The precise statement we want to prove is the following.
\begin{prop}
\label{prop:associativity_initial_term}
Let $\lambda_{0},\lambda_{1},\lambda_{2},\lambda_{\infty}\in \Znn$.
For arbitrary $\cbseq \in \selRule{\lambda_{0}}{\lambda_{1}}$, the two series
\begin{align*}
\Braket{\hwvsym_{\lambda_{\infty}}', \; \intertwnorm{\cbseq}{\lambda_{2}}{\lambda_{\infty}}(\hwvsym_{\lambda_{2}}, x_{2})
    \,  \intertwnorm{\lambda_{0}}{\lambda_{1}}{\cbseq}(\hwvsym_{\lambda_{1}}, x_{1}) \, \hwvsym_{\lambda_{0}}} ,
\end{align*}
converges when $0 < x_1 < x_2$, and the series
\begin{align*}
\sum_{\cbseqB}
	\sixj{\lambda_{\infty}}{\lambda_2}{\lambda_1}{\lambda_0}{\cbseq}{\cbseqB}
\Braket{\hwvsym_{\lambda_{\infty}}', \; \intertwnorm{\lambda_{0}}{\cbseqB}{\lambda_{\infty}}
    \big(\intertwnorm{\lambda_{1}}{\lambda_{2}}{\cbseqB}(\hwvsym_{\lambda_{2}}, x_2-x_1) \, \hwvsym_{\lambda_{1}}, \, x_1 \big)
    \, \hwvsym_{\lambda_{0}}} 
\end{align*}
converges when $0 < x_2-x_1 < x_1$, and in the (nontrivial) overlap
of these domains, the analytic functions represented by these two series coincide.
\end{prop}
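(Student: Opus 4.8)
The plan is to deduce Proposition~\ref{prop:associativity_initial_term} from Theorem~\ref{thm: quantum group functions and compositions of intertwining ops}, the $6j$-symbol relation~\eqref{eq: 6j decomposition}, and the Frobenius expansion of Lemma~\ref{lem: Frobenius series of QG functions}. We may assume $\cbseq \in \selRule{\lambda_2}{\lambda_\infty}$, since otherwise the intertwining operator $\intertwnorm{\cbseq}{\lambda_2}{\lambda_\infty}$ is zero and so is every coefficient $\sixj{\lambda_\infty}{\lambda_2}{\lambda_1}{\lambda_0}{\cbseq}{\cbseqB}$, whence both series vanish identically. Under this assumption the sequence $\cbfullseq := (\lambda_0, \cbseq, \lambda_\infty)$ is $\lambdafullseq$-admissible for $\lambdafullseq = (\lambda_0, \lambda_1, \lambda_2, \lambda_\infty)$, and by Theorem~\ref{thm: quantum group functions and compositions of intertwining ops} the first series of the proposition converges on $\chamber_2^+ = \set{0 < x_1 < x_2}$ to the analytic function $\widetilde{\sF}[u_{\cbfullseq}](x_1,x_2) = \sF[u_{\cbfullseq}](0, x_1, x_2)$, where $u_{\cbfullseq} \in \QGrep{\lambda_2} \tens \QGrep{\lambda_1} \tens \QGrep{\lambda_0}$ is the conformal block vector~\eqref{eq: construction of conformal block vector}. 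Since $u_{\cbfullseq}$ is a $K$-eigenvector with eigenvalue $q^{\lambda_\infty}$, the function $\widetilde{\sF}[u_{\cbfullseq}]$ is homogeneous of degree $\hwFR{\lambda_\infty} - \hwFR{\lambda_0} - \hwFR{\lambda_1} - \hwFR{\lambda_2}$ under simultaneous scaling of $x_1$ and $x_2$.

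Comparing with~\eqref{eq: construction of conformal block vector}, one sees that $u_{\cbfullseq}$ is the image of $\QGhwv{\lambda_\infty}$ under the second basis element of Lemma~\ref{lem: bases for 6j symbols} (with the labels $\lambda_3, \lambda_2, \lambda_1$ there replaced by our $\lambda_2, \lambda_1, \lambda_0$, and with $\sigma = \lambda_\infty$, $\kappa = \cbseq$). The $6j$-expansion~\eqref{eq: 6j decomposition} therefore gives
\begin{align*}
u_{\cbfullseq} \; = \; \sum_{\cbseqB} \sixj{\lambda_\infty}{\lambda_2}{\lambda_1}{\lambda_0}{\cbseq}{\cbseqB} \, \tilde{u}_{\cbseqB} ,
\qquad \text{where} \qquad
\tilde{u}_{\cbseqB} := \big( \QGembed{\cbseqB}{\lambda_2}{\lambda_1} \tens \id_{\QGrep{\lambda_0}} \big) \circ \QGembed{\lambda_\infty}{\cbseqB}{\lambda_0} \big( \QGhwv{\lambda_\infty} \big) ,
\end{align*}
the sum ranging over $\cbseqB \in \selRule{\lambda_1}{\lambda_2} \cap \selRule{\lambda_0}{\lambda_\infty}$. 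By linearity of $\sF$ this yields the identity of analytic functions $\widetilde{\sF}[u_{\cbfullseq}] = \sum_{\cbseqB} \sixj{\lambda_\infty}{\lambda_2}{\lambda_1}{\lambda_0}{\cbseq}{\cbseqB} \, \widetilde{\sF}[\tilde{u}_{\cbseqB}]$ on $\chamber_2^+$. Each $\tilde{u}_{\cbseqB}$ factors through the subrepresentation $\QGrep{\cbseqB}$ in the two tensorands carrying $x_1, x_2$, so Lemma~\ref{lem: Frobenius series of QG functions} gives, on $0 < x_2 - x_1 < x_1$, a convergent Frobenius expansion $\widetilde{\sF}[\tilde{u}_{\cbseqB}] = (x_2-x_1)^{\deltasymB(\cbseqB)} \sum_{k\geq 0} c^{(\cbseqB)}_k(x_1) \, (x_2-x_1)^k$; homogeneity forces each coefficient to be a monomial $c^{(\cbseqB)}_k(x_1) = \gamma^{(\cbseqB)}_k \, x_1^{\deltasymB'(\cbseqB) - k}$, and Remark~\ref{rmk: leading coefficient of Frobenius series} together with the normalizations of Definition~\ref{def: normalized intertwining operators} gives $\gamma^{(\cbseqB)}_0 = \betacoef{\lambda_2}{\lambda_1}{\cbseqB} \betacoef{\cbseqB}{\lambda_0}{\lambda_\infty}$.

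It remains to identify, for each fixed $\cbseqB$, the formal series $\widetilde{\sF}[\tilde{u}_{\cbseqB}]$ (expanded in $z = x_2 - x_1$ and $x_1$ as above) with the iterated matrix element
\begin{align*}
D_{\cbseqB}(x_1, x_2) := \Braket{ \hwvsym_{\lambda_\infty}', \; \intertwnorm{\lambda_0}{\cbseqB}{\lambda_\infty} \big( \intertwnorm{\lambda_1}{\lambda_2}{\cbseqB}(\hwvsym_{\lambda_2}, x_2-x_1) \, \hwvsym_{\lambda_1}, \, x_1 \big) \hwvsym_{\lambda_0} } .
\end{align*}
This is carried out by mirroring, in the crossed channel, the scheme used to prove Theorem~\ref{thm: quantum group functions and compositions of intertwining ops}. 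First, writing the inner intertwiner as $\intertwnorm{\lambda_1}{\lambda_2}{\cbseqB}(\hwvsym_{\lambda_2}, z) \hwvsym_{\lambda_1} = \sum_{n\geq 0} w_n \, z^{\deltasymB(\cbseqB) + n}$ with $w_n \in \voaFR{\cbseqB}(n)$ and $w_0 = \betacoef{\lambda_2}{\lambda_1}{\cbseqB} \hwvsym_{\cbseqB}$, the homogeneity of intertwining operators (Lemma~\ref{lem: homogeneity of the modes in modules}) shows $D_{\cbseqB}$ lies in $\mathbb{C}[x_1][[z/x_1]][x_1^{-1}] \, z^{\deltasymB(\cbseqB)} x_1^{\deltasymB'(\cbseqB)}$ with the same leading coefficient $\gamma^{(\cbseqB)}_0$. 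Second, the null-vector relations $\singvecop_{\lambda_1} \hwvsym_{\lambda_1} = 0$ and $\singvecop_{\lambda_2} \hwvsym_{\lambda_2} = 0$, processed through the commutation identities of Corollary~\ref{cor:formula_intertw} exactly as in the proof of Lemma~\ref{lem:PDE_matrix_elements} — now for a nested rather than a horizontal composition of intertwiners — show that $D_{\cbseqB}$ solves the Benoit--Saint-Aubin system $\varBSAoper{1} D_{\cbseqB} = \varBSAoper{2} D_{\cbseqB} = 0$, the operators being re-expressed in the coordinates $(x_1, z)$ and acting coefficientwise on the above space of series. Third, an adaptation of Theorem~\ref{thm:uniqueness_differential_equations_formal_series} to this crossed-channel space shows its solution space is one-dimensional: one extracts the leading power of $z$, whose exponent is pinned to $\deltasymB(\cbseqB)$ by the indicial equation $P_{\lambda_2}(\hwFR{\lambda_1}, \hwFR{\cbseqB} + k) = 0 \iff k = 0$ (valid since $\cbseqB \in \selRule{\lambda_1}{\lambda_2}$ and $\kappa \notin \bQ$, by Proposition~\ref{prop:factorization_selection_rule_polynomial}), and then applies the $N=1$ case of Theorem~\ref{thm:uniqueness_differential_equations_formal_series} to the leading coefficient (forcing it to be proportional to $x_1^{\deltasymB'(\cbseqB)}$), the recursion of that proof determining all higher coefficients. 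Both $\widetilde{\sF}[\tilde{u}_{\cbseqB}]$ and $D_{\cbseqB}$ being solutions with the same leading coefficient, they coincide as formal series.

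Combining these facts: each $D_{\cbseqB}$ equals $\widetilde{\sF}[\tilde{u}_{\cbseqB}]$, whose expansion converges to that function for $0 < x_2 - x_1 < x_1$; hence the second series of the proposition, a finite $\bC$-linear combination of these, converges there to $\sum_{\cbseqB} \sixj{\lambda_\infty}{\lambda_2}{\lambda_1}{\lambda_0}{\cbseq}{\cbseqB} \widetilde{\sF}[\tilde{u}_{\cbseqB}] = \widetilde{\sF}[u_{\cbfullseq}]$, which on the nonempty overlap $\set{0 < x_1 < x_2 < 2x_1}$ agrees with the first series. This proves the proposition. The main obstacle is the pair of steps described above as ``second'' and ``third'': re-deriving the differential equations of Lemma~\ref{lem:PDE_matrix_elements} for a nested composition, and re-running the uniqueness argument of Theorem~\ref{thm:uniqueness_differential_equations_formal_series} in the crossed-channel space. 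These are routine in spirit, but require careful bookkeeping of the commutators produced by Corollary~\ref{cor:formula_intertw}, in particular verifying that the differential operators re-expressed in $(x_1, z)$ preserve expansions in non-negative powers of $z/x_1$ and that the indicial analysis goes through as stated.
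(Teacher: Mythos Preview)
Your proposal is correct and follows essentially the same route as the paper: identify the first series with $\widetilde{\sF}[u_{\cbfullseq}]$ via Theorem~\ref{thm: quantum group functions and compositions of intertwining ops}, decompose $u_{\cbfullseq}$ by the $6j$-relation~\eqref{eq: 6j decomposition}, Frobenius-expand each piece $\widetilde{\sF}[\tilde u_{\cbseqB}]$ in $z=x_2-x_1$ via Lemma~\ref{lem: Frobenius series of QG functions}, and then match each piece to the nested matrix element by a PDE-plus-uniqueness argument.

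The one substantive difference is in the uniqueness step. The paper observes (from the mode grading you yourself invoke) that $D_{\cbseqB}$ actually lies in the \emph{homogeneous} subspace $\bC[[z/x_1]]\,x_1^{\deltasymB'(\cbseqB)}z^{\deltasymB(\cbseqB)}$, not just in $\bC[x_1][[z/x_1]][x_1^{-1}]\,x_1^{\deltasymB'(\cbseqB)}z^{\deltasymB(\cbseqB)}$, and then proves one-dimensionality of the solution space there using only the single equation $\newvarBSAoper{2}$ coming from the $\lambda_2$ null vector (this is their Proposition~\ref{prop: uniqueness of solutions for associativity}). You instead work in the larger space and invoke both $\varBSAoper{1}$ and $\varBSAoper{2}$. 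The paper explicitly remarks that this alternative is possible, so your plan is sound; but note that deriving $\varBSAoper{1}D_{\cbseqB}=0$ for the \emph{nested} composition is not literally ``exactly as in Lemma~\ref{lem:PDE_matrix_elements}'', since $\hwvsym_{\lambda_1}$ now sits to the right of the inner intertwiner rather than inside its own slot, and the commutator bookkeeping is different. The paper's homogeneity shortcut avoids this extra computation entirely and is the cleaner option.
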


The first of the two series is exactly of the form considered in
Section~\ref{sec: composition}, and by
Theorem~\ref{thm: quantum group functions and compositions of intertwining ops}
we have
\begin{align*}
\sF [u_{\cbfullseq}] (0,x_1, x_2)
\; = \;  \Braket{\hwvsym_{\lambda_{\infty}}^{\prime}, \;
    \intertwnorm{\cbseq}{\lambda_{2}}{\lambda_{\infty}}(\hwvsym_{\lambda_{2}},x_{2}) \,
    \intertwnorm{\lambda_{0}}{\lambda_{1}}{\cbseq}(\hwvsym_{\lambda_{1}},x_{1}) \,
    \hwvsym_{\lambda_{0}}}
\qquad \text{for $0<x_1<x_2$,}
\end{align*}
where $u_{\cbfullseq} \in \QGrep{\lambda_2} \tens \QGrep{\lambda_1} \tens \QGrep{\lambda_0}$
is the conformal block vector~\eqref{eq: construction of conformal block vector}
\begin{align*}
u_{\cbfullseq}
\; = \; \Big( \big( \id_{\QGrep{\lambda_2}} \tens \QGembed{\cbseq}{\lambda_1}{\lambda_0} \big)
        \circ \QGembed{\lambda_\infty}{\lambda_2}{\cbseq} \Big) \, (\QGhwv{\lambda_{\infty}}) .
\end{align*}
For the purposes of comparing with the other series in the associativity
statement, we decompose this vector using the defining 
property~\eqref{eq: 6j decomposition}
\begin{align*}
\Big( \big( \id_{\QGrep{\lambda_2}} \tens \QGembed{\cbseq}{\lambda_1}{\lambda_0} \big)
        \circ \QGembed{\lambda_\infty}{\lambda_2}{\cbseq} \Big)
= \; & \sum_{\cbseqB} \sixj{\lambda_\infty}{\lambda_2}{\lambda_1}{\lambda_0}{\cbseq}{\cbseqB}
\big( \QGembed{\cbseqB}{\lambda_2}{\lambda_1} \tens \id_{\QGrep{\lambda_0}} \big)
\circ \QGembed{\lambda_\infty}{\cbseqB}{\lambda_0}
\end{align*}
of $6j$-symbols, to get
\begin{align*}
u_{\cbfullseq} \; = \;
\sum_{\cbseqB} \sixj{\lambda_\infty}{\lambda_2}{\lambda_1}{\lambda_0}{\cbseq}{\cbseqB}
    \; \widehat{u}_{\cbfullseqB} ,
\qquad \text{ where} \qquad
\widehat{u}_{\cbfullseqB}
\; = \; \Big( \big( \QGembed{\cbseqB}{\lambda_2}{\lambda_1} \tens \id_{\QGrep{\lambda_0}} \big)
\circ \QGembed{\lambda_\infty}{\cbseqB}{\lambda_0} \Big) \, (\QGhwv{\lambda_{\infty}}) .
\end{align*}
For the corresponding function, we get the decomposition
\begin{align*}
\sF[u_{\cbfullseq}](0,x_1,x_2) \; = \;
\sum_{\cbseqB} \sixj{\lambda_\infty}{\lambda_2}{\lambda_1}{\lambda_0}{\cbseq}{\cbseqB}
    \; \sF[\widehat{u}_{\cbfullseqB}](0,x_1,x_2) ,
\end{align*}
where each term on the right hand side has a good Frobenius series
expansion in the variable~$x_2-x_1$, by virtue of the submodule
projection property~$\widehat{u}_{\cbfullseqB} 
= \QGlongprcan{1}{2}{\cbseqB} (\widehat{u}_{\cbfullseqB})$
and Lemma~\ref{lem: Frobenius series of QG functions}.
Specifically, for any~$\cbseqB$, in the region~$0 < x_2-x_1 < x_1$, we have
\begin{align}\label{eq: another Frobenius series}
\sF[\widehat{u}_{\cbfullseqB}](0,x_1,x_2)
\; = \; (x_2-x_1)^{\deltasymB(\cbseqB)} \,
    \sum_{k=0}^\infty \widehat{C}_k (x_1) \, (x_2-x_1)^k ,
\end{align}
where $\deltasymB(\cbseqB) = \hwFR{\cbseqB} - \hwFR{\lambda_2} - \hwFR{\lambda_1}$
and the leading coefficient is
\begin{align*}
\widehat{C}_0 (x_1) 
\; = \;\; & \betacoef{\lambda_{2}}{\lambda_{1}}{\;\;\cbseqB} \;
    \sF \big[ \QGembed{\lambda_\infty}{\cbseqB}{\lambda_0} (\QGhwv{\lambda_{\infty}}) \big] (0, x_1)
\; = \; \betacoef{\lambda_{2}}{\lambda_{1}}{\;\;\cbseqB} \,
    \betacoef{\cbseqB}{\lambda_{0}}{\,\lambda_\infty} \; x_1^{\deltasymB'(\cbseqB)} 
\end{align*}
with
$\deltasymB'(\cbseqB) = \hwFR{\lambda_\infty} - \hwFR{\cbseqB} - \hwFR{\lambda_0}$.
To prove Proposition~\ref{prop:associativity_initial_term}
therefore amounts to showing that the 
series~\eqref{eq: another Frobenius series}
coincides with the other expression with VOA intertwining operators,
\begin{align*}
\Braket{\hwvsym_{\lambda_{\infty}}', \; \intertwnorm{\lambda_{0}}{\cbseqB}{\lambda_{\infty}}
    \big(\intertwnorm{\lambda_{1}}{\lambda_{2}}{\cbseqB}(\hwvsym_{\lambda_{2}}, \apvarB) \, \hwvsym_{\lambda_{1}}, \, \apvar \big)
    \, \hwvsym_{\lambda_{0}}} ,
\end{align*}
where one substitutes~$\apvar=x_1$ and~$\apvarB = x_2-x_1$.
For this, the strategy is again to show that both expressions satisfy
the same differential equation, and that the series solutions to it are unique
up to multiplicative constants.
The most straightforward approach would be to use more than one differential
equation (e.g., inductively, as in Section~\ref{sec: composition}),
but in the current case we can in fact bypass the need for all but one
differential equation by using a priori homogeneity information.

The appropriate differential operator now is
\begin{align}\label{eq: BSA differential operator new var}
\newvarBSAoper{2}
	=\sum_{k=1}^{\lambda_{2}+1}\sum_{\substack{p_{1},\dots, p_{k}\geq 1 \\ p_{1}+\cdots+p_{k}=\lambda_{2}+1}}\frac{(-4/\kappa)^{1+\lambda_{2}-k}\lambda_{2}!}{\prod_{u=1}^{k-1}(\sum_{i=1}^{u}p_{i})(\sum_{i=u+1}^{k}p_{i})}\newvarWittL{-p_{1}}{2} \cdots \newvarWittL{-p_{k}}{2}
\end{align}
where for $n \in \bZ$ we set
\begin{align}\label{eq: Witt differential operator new var}
\newvarWittL{n}{2}
:= \;&  - (-x-y)^{n}\left((-x-y) \pder{x}
	        +(1+n)\hwFR{\lambda_{0}}\right) \\
\nonumber
	& - (-y)^{n} \left((-y) \Big( \pder{x} - \pder{y} \Big)+(1+n)\hwFR{\lambda_{1}}\right) .
\end{align}
Analogously to Section~\ref{sec: composition},
these differential operators act on the one hand on
spaces of smooth functions of~$x$ and~$y$, and on the other hand on
spaces
$\mathbb{C}[\apvar]\,[[\apvarB/\apvar]] \, [\apvarB^{-1}] \; \apvar^{\deltasymB'(\cbseqB)} 
    \, \apvarB^{\deltasymB(\cbseqB)}$
of formal series (for any~$\cbseqB$) via
\begin{align*}
x \mapsto \apvar , \qquad y \mapsto \apvarB,
\qquad 
\frac{\dee}{\dee x}\mapsto \frac{\dee}{\dee \apvar}, \qquad
\frac{\dee}{\dee y}\mapsto \frac{\dee}{\dee \apvarB}
\end{align*}
and the factors $(-x-y)^n$
are expanded as the following formal power series
\begin{align*}
(-x-y)^n \; \mapsto \; (-\apvar-\apvarB)^n \, = \,
    (-\apvar)^n \, \sum_{k=0}^\infty \binom{n}{k} \, (\apvarB/\apvar)^k .
\end{align*}
These differential operators~$\newvarBSAoper{2}$ and~$\newvarWittL{n}{2}$
are obtained from~$\varBSAoper{2}$ and~$\varWittL{n}{2}$ by changing
variables from $x_1, x_2$ to $x=x_1, y=x_2-x_1$. In particular the
following is obvious from the
property~$\varBSAoper{2} \, \sF[\widehat{u}_{\cbfullseqB}](0,x_1,x_2) \, = \, 0$,
which itself is obtained from the (PDE) part of
Theorem~\ref{thm:properties_of_correlations} using
Lemma~\ref{lem:translation_invariance_under_Witt_action}.
\begin{lem}\label{lem: associativity PDE for the quantum group function}
For any~$\cbseqB$ we have
\begin{align*}
\newvarBSAoper{2} \, \sF[\widehat{u}_{\cbfullseqB}](0,x,x+y) \; = \; 0 .
\end{align*}
\end{lem}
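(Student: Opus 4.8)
The statement to prove is Lemma~\ref{lem: associativity PDE for the quantum group function}: the function $\sF[\widehat{u}_{\cbfullseqB}](0,x,x+y)$ is annihilated by the differential operator $\newvarBSAoper{2}$ obtained by the change of variables $x=x_1$, $y=x_2-x_1$ from $\varBSAoper{2}$.

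\medskip

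The plan is to reduce everything to already-established facts. First, recall that $u_{\cbfullseqB}$ (and hence $\widehat{u}_{\cbfullseqB}$) is a highest weight vector in $\QGrep{\lambda_2}\tens\QGrep{\lambda_1}\tens\QGrep{\lambda_0}$, so it lies in $\HWspace{\lambdafullseq}$ with $\lambdafullseq=(\lambda_0,\lambda_1,\lambda_2,\lambda_\infty)$. Therefore the (PDE) part of Theorem~\ref{thm:properties_of_correlations} applies and gives $\BSAoper{2}\,\sF[\widehat{u}_{\cbfullseqB}](x_0,x_1,x_2)=0$ on $\chamber_3$, where $\BSAoper{2}$ is built out of the operators $\WittL{n}{2}$ of~\eqref{eq: Witt differential operator}. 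Second, the (COV) part gives translation invariance of $\sF[\widehat{u}_{\cbfullseqB}]$. By Lemma~\ref{lem: action of diff op on transl inv fun}, the restriction $\widetilde{F}(x_1,x_2)=\sF[\widehat{u}_{\cbfullseqB}](0,x_1,x_2)$ then satisfies $\varBSAoper{2}\,\widetilde{F}=0$, with $\varBSAoper{2}$ as in~\eqref{eq: BSA differential operator transl inv}. This is precisely the observation already noted in the Corollary immediately following Lemma~\ref{lem: action of diff op on transl inv fun}.

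\medskip

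What remains is purely bookkeeping about the change of variables. First I would record that if $F(x_1,x_2)$ is any smooth function and $G(x,y):=F(x,x+y)$, then, writing $\partial_{x_1},\partial_{x_2}$ for the derivatives of $F$ and $\partial_x,\partial_y$ for those of $G$, one has $\partial_{x_1}F = (\partial_x - \partial_y)G$ and $\partial_{x_2}F = \partial_y G$ after the substitution $x_2\mapsto x+y$; consequently $\partial_{x_1}F+\partial_{x_2}F = \partial_x G$, which is the image of the total-derivative operator. Also, under this substitution, $x_2-x_1\mapsto y$ while $-x_1\mapsto -x$ in the ``point at $0$'' slot. Comparing the explicit formula~\eqref{eq: Witt differential operator transl inv} for $\varWittL{n}{2}$ with $N=2$, $\lambda_3$ absent, and the definition~\eqref{eq: Witt differential operator new var} of $\newvarWittL{n}{2}$, one sees term by term that $\newvarWittL{n}{2}$ is exactly the operator obtained from $\varWittL{n}{2}$ by the above change of variables: the $(-x_j)^n(\cdots)$ block with $j=2$ (so $x_2\mapsto x+y$, i.e.\ $-x_2\mapsto -x-y$) produces the first line of~\eqref{eq: Witt differential operator new var}, and the sole $i\neq j$ term with $i=1$ (so $x_1-x_2\mapsto -y$) produces the second line, using $\partial_{x_1}\mapsto \partial_x-\partial_y$. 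Since $\newvarBSAoper{2}$ and $\varBSAoper{2}$ are built from these by the same polynomial combination~\eqref{eq: BSA differential operator transl inv}/\eqref{eq: BSA differential operator new var} of the $\WittL{}{}$'s, the operator identity $\newvarBSAoper{2} = (\text{change of variables})\circ \varBSAoper{2}$ follows. Applying both sides to $\widetilde{F}$ and using $\varBSAoper{2}\widetilde{F}=0$ gives $\newvarBSAoper{2}\,\sF[\widehat{u}_{\cbfullseqB}](0,x,x+y)=0$, as claimed. One should be mildly careful that the formal power-series expansions prescribed for $(-x-y)^n$ and for $(x_1-x_2)^n$ are consistent under the substitution, but this is immediate from the binomial expansion convention: $(x_1-x_2)^n\mapsto(-\apvarB+0)^n$-type expansions match $(-x-y)^n\mapsto(-\apvar-\apvarB)^n$ once one notes $x_1\mapsto x$ with $x$ playing the role of the ``large'' variable.

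\medskip

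I expect no real obstacle here; the only thing requiring attention is matching signs and the roles of the variables in the two presentations~\eqref{eq: Witt differential operator transl inv} and~\eqref{eq: Witt differential operator new var}, and making sure the ``expand in non-negative powers of $\apvarB/\apvar$'' convention survives the substitution. Since the lemma is explicitly stated to be obvious from $\varBSAoper{2}\,\sF[\widehat{u}_{\cbfullseqB}]=0$ together with Lemma~\ref{lem:translation_invariance_under_Witt_action}, the proof is essentially a one-line invocation of the Corollary after Lemma~\ref{lem: action of diff op on transl inv fun} plus the elementary change-of-variables identity above; I would present it in exactly that compressed form.
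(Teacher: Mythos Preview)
Your proposal is correct and follows essentially the same approach as the paper: invoke the (PDE) part of Theorem~\ref{thm:properties_of_correlations} for $\widehat{u}_{\cbfullseqB}\in\HWspace{\lambdafullseq}$, pass to the translation-invariant restriction via Lemma~\ref{lem: action of diff op on transl inv fun} (equivalently the Corollary after it) to get $\varBSAoper{2}\widetilde{F}=0$, and then observe that $\newvarBSAoper{2}$ is by definition $\varBSAoper{2}$ transported along the change of variables $x=x_1$, $y=x_2-x_1$. The paper compresses all of this into one sentence; your explicit verification of how $\partial_{x_1},\partial_{x_2}$ transform and how the two blocks of $\varWittL{n}{2}$ become the two lines of $\newvarWittL{n}{2}$ is more detailed than necessary but entirely in the same spirit.
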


Next we show the analoguous property of the
formal series.

\begin{prop}
For each $\cbseqB \in \selRule{\lambda_{1}}{\lambda_{2}}\cap\selRule{\lambda_{0}}{\lambda_{\infty}}$,
the formal series 
\begin{align*}
\newhwme^{\lambdafullseq}_{\cbseqB}(\apvar,\apvarB)
:=\Braket{\hwvsym_{\lambda_{\infty}}^{\prime},
  \intertwnorm{\lambda_{0}}{\cbseqB}{\lambda_{\infty}} \big(
    \intertwnorm{\lambda_{1}}{\lambda_{2}}{\cbseqB}(\hwvsym_{\lambda_{2}}, \apvarB) \, \hwvsym_{\lambda_{1}},\apvar \big) \, \hwvsym_{\lambda_{0}}}
\end{align*}
lies in $\C[[\apvarB/\apvar]] \, \apvarB^{\deltasymB(\cbseqB)} \, \apvar^{\deltasymB'(\cbseqB)}$,
and it satisfies the differential equation
\begin{align*}
\newvarBSAoper{2} \, \newhwme^{\lambdafullseq}_{\cbseqB}(\apvar,\apvarB) \; = \; 0 .
\end{align*}
\end{prop}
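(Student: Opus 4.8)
The plan is to handle the two assertions in turn, closely mirroring the structure already used for $C^{\lambdafullseq}_{\cbfullseq}$ in Section~\ref{sec: composition}. The first claim, that $\newhwme^{\lambdafullseq}_{\cbseqB}(\apvar,\apvarB)$ lies in $\C[[\apvarB/\apvar]] \, \apvarB^{\deltasymB(\cbseqB)} \, \apvar^{\deltasymB'(\cbseqB)}$, is an $N=2$ instance of the nesting-of-intertwiners analogue of Lemma~\ref{lem:matrix_elements_formal_series}. First I would use Corollary~\ref{cor:formula_intertw}(2) applied to $L_0$ (as in Lemma~\ref{lem: homogeneity of the modes in modules}) to see that the inner operator $\intertwnorm{\lambda_{1}}{\lambda_{2}}{\cbseqB}(\hwvsym_{\lambda_{2}},\apvarB)$ maps the highest weight vector $\hwvsym_{\lambda_1}$ to a sum $\sum_{m \geq 0} v_m \, \apvarB^{\deltasymB(\cbseqB)+m}$ with $v_m \in \voaFR{\cbseqB}(m)$, since applying the inner operator to a highest weight vector can only produce non-negative grading shifts; concretely, the modes of negative index raise the conformal weight. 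Then the outer operator $\intertwnorm{\lambda_{0}}{\cbseqB}{\lambda_{\infty}}(v_m,\apvar)$ produces, after pairing with $\hwvsym_{\lambda_\infty}'$, a single power $\apvar^{\deltasymB'(\cbseqB)-m}$ by homogeneity (Lemma~\ref{lem: the initial term is a power} generalized via Corollary~\ref{cor:formula_intertw}, exactly as in the last lemma of Section~\ref{sub: space of formal series}). Collecting, the pairing is $\sum_{m\geq 0} \langle \hwvsym_{\lambda_\infty}', \intertwnorm{\lambda_0}{\cbseqB}{\lambda_\infty}(v_m,\apvar)\hwvsym_{\lambda_0}\rangle \, \apvarB^{\deltasymB(\cbseqB)+m}$, which has the stated form (a power series in $\apvarB/\apvar$ times the monomial).

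For the differential equation, the approach is the direct analogue of Lemma~\ref{lem:PDE_matrix_elements}. Fix the role of the label~$\lambda_2$. For $\Mvec \in \voaFR{\lambda_2}$ set
\begin{align*}
Y(\Mvec;\apvar,\apvarB) := \Braket{\hwvsym_{\lambda_\infty}', \intertwnorm{\lambda_0}{\cbseqB}{\lambda_\infty}\big(\intertwnorm{\lambda_1}{\lambda_2}{\cbseqB}(\Mvec,\apvarB)\hwvsym_{\lambda_1},\apvar\big)\hwvsym_{\lambda_0}}.
\end{align*}
Using Corollary~\ref{cor:formula_intertw}(1) to rewrite $Y(L_{-p}\Mvec;\apvar,\apvarB)$ for $p>0$, one commutes the resulting Virasoro modes outward. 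The modes with positive index annihilate $\hwvsym_{\lambda_\infty}'$ on the far left (using Lemma~\ref{lem: Virasoro action on the contragredient}) and $\hwvsym_{\lambda_0}$ on the far right, while commuting past the intertwining operators produces, via the commutation formula~\eqref{eq: primary field Virasoro commutation}, precisely the differential operators in the variables attached to the remaining insertions — here the point at~$0$ (weight $\hwFR{\lambda_0}$, associated with the combination $-x-y$ after the change of variables $x=x_1$, $y=x_2-x_1$) and the point $\hwvsym_{\lambda_1}$ (weight $\hwFR{\lambda_1}$, associated with $-y$). Care with the binomial expansion convention is what dictates that $(x_i-x_j)^n$ gets expanded in non-negative powers of the later variable, hence the stated $(-x-y)^n \mapsto (-\apvar)^n\sum_k \binom{n}{k}(\apvarB/\apvar)^k$ rule; this is exactly where the branch choices live, and I would walk through the $k=0,1$ boundary terms (where $L_{-1}\hwvsym_{\lambda_0}$ and $L_0\hwvsym_{\lambda_0}$ survive) just as in Lemma~\ref{lem:PDE_matrix_elements}. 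The upshot is the recursion $Y(L_{-p}\Mvec;\apvar,\apvarB) = \newvarWittL{-p}{2}\, Y(\Mvec;\apvar,\apvarB)$, and then the vanishing of the BSA singular vector's projection in $\voaFR{\lambda_2}$, i.e. $\singvecop_{\lambda_2}\hwvFR{\lambda_2}=0$ together with formula~\eqref{eq:singular_vector}, gives $\newvarBSAoper{2}\, \newhwme^{\lambdafullseq}_{\cbseqB}(\apvar,\apvarB)=0$ upon specializing $\Mvec=\hwvsym_{\lambda_2}$.

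I expect the main obstacle to be purely bookkeeping: tracking the boundary terms in the commutation of Virasoro modes past \emph{both} the inner and outer intertwining operators, and ensuring the two layers of binomial expansions are organized consistently so that everything lands in $\C[[\apvarB/\apvar]]$ rather than accidentally producing negative powers of $\apvarB/\apvar$. Conceptually this is identical to Lemma~\ref{lem:PDE_matrix_elements}; the one genuinely new feature is that the two ``remaining'' insertions (at $0$ and at $\hwvsym_{\lambda_1}$) are now both to one side after nesting, which changes which differences $(x_i-x_j)$ appear — but this is precisely what the definition~\eqref{eq: Witt differential operator new var} of $\newvarWittL{n}{2}$ encodes, obtained from $\varWittL{n}{2}$ by the substitution $x_1=x$, $x_2=x+y$. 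So I would state the change-of-variables identity relating $\newvarWittL{n}{2},\newvarBSAoper{2}$ to $\varWittL{n}{2},\varBSAoper{2}$ explicitly, reducing the bulk of the verification to the already-established Lemma~\ref{lem:PDE_matrix_elements} by the same device Lemma~\ref{lem: action of diff op on transl inv fun} uses for translation-invariant functions, and only spell out the genuinely new boundary-term computation in detail.
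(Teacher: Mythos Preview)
Your approach is essentially the paper's, and the first part (the series-space claim) matches it exactly. Two points in the PDE argument need correcting.

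First, a mechanism issue: after Corollary~\ref{cor:formula_intertw}(1) on the inner operator places $L_{-p-k}$ inside the first slot of $\intertwnorm{\lambda_0}{\cbseqB}{\lambda_\infty}$, you cannot invoke the commutation formula~\eqref{eq: primary field Virasoro commutation}---that is Corollary~\ref{cor:formula_intertw}(2) specialized to a highest-weight argument, and the outer intertwiner's first argument is not a highest-weight vector. You must apply Corollary~\ref{cor:formula_intertw}(1) \emph{again}, now to the outer operator, to extract $L_{-p-k}$ from the first slot; the paper says exactly this (``part~(a)~\ldots\ has to be used twice''). Your obstacle paragraph (``two layers of binomial expansions'') suggests you see this, but the main description conflates Cor.~(1) with Cor.~(2).

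Second, your proposed shortcut---reducing to Lemma~\ref{lem:PDE_matrix_elements} via the change of variables---does not work and should be dropped. That lemma concerns the \emph{composed} matrix element $C^{\lambdafullseq}_{\cbfullseq}$, which is an entirely different formal series from the \emph{nested} one $\newhwme^{\lambdafullseq}_{\cbseqB}$; relating the two is precisely the associativity statement being proved, so using it here is circular. The change-of-variables identity between $\newvarWittL{n}{2}$ and $\varWittL{n}{2}$ is only a fact about the differential operators, not about which series they annihilate. Stick to the direct computation you outlined first.
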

\begin{proof}
We expand the intertwining operator $\intertwnorm{\lambda_{1}}{\lambda_{2}}{\cbseqB}(\hwvsym_{\lambda_{2}},\apvarB)$ so that
\begin{align*}
	\intertwnorm{\lambda_{1}}{\lambda_{2}}{\cbseqB}(\hwvsym_{\lambda_{2}}, \apvarB)
= \sum_{n\in\Z} \apvarB^{\deltasymB(\cbseqB)-n-1}(\hwvsym_{\lambda_{2}})_{(n)},
\end{align*}
where each $(\hwvsym_{\lambda_{2}})_{(n)}\in \Hom (\voaFR{\lambda_{1}},\voaFR{\cbseqB})$ is of degree $-n-1$, $n\in\Z$.
In particular, we have $(\hwvsym_{\lambda_{2}})_{(n)}\hwvsym_{\lambda_{1}}\in \voaFR{\cbseqB}(-n-1)$, $n\in\Z$, implying that $(\hwvsym_{\lambda_{2}})_{(n)}\hwvsym_{\lambda_{1}}=0$ unless $n\leq -1$.
Furthermore, we expand each part $\intertwnorm{\lambda_{0}}{\cbseqB}{\lambda_{\infty}}((\hwvsym_{\lambda_{2}})_{(n)}\hwvsym_{\lambda_{1}}, \apvar)$, $n\leq -1$ so that
\begin{align*}
	\intertwnorm{\lambda_{0}}{\cbseqB}{\lambda_{\infty}}((\hwvsym_{\lambda_{2}})_{(n)}\hwvsym_{\lambda_{1}}, \apvar)
	=\sum_{m\in\Z} \apvar^{\deltasymB'(\cbseqB)-m-1}\left((\hwvsym_{\lambda_{2}})_{(n)}\hwvsym_{\lambda_{1}}\right)_{(m)},
\end{align*}
where $\left((\hwvsym_{\lambda_{2}})_{(n)}\hwvsym_{\lambda_{1}}\right)_{(m)}\in \Hom (\voaFR{\lambda_{0}},\voaFR{\lambda_{\infty}})$, $m\in\Z$ is of degree $-n-m-2$.
Therefore, the matrix element
\begin{align*}
	\Braket{\hwvsym_{\lambda_{\infty}}^{\prime},\left((\hwvsym_{\lambda_{2}})_{(n)}\hwvsym_{\lambda_{1}}\right)_{(m)}\hwvsym_{\lambda_{0}}}
\end{align*}
vanishes unless $-n-m-2=0$.
Consequently, we see that the series $\newhwme^{\lambdafullseq}_{\cbseqB}(\apvar,\apvarB)$
is expanded as
\begin{align*}
\newhwme^{\lambdafullseq}_{\cbseqB}(\apvar,\apvarB)
\; = \;\; & \apvarB^{\deltasymB(\cbseqB)} \,\apvar^{\deltasymB'(\cbseqB)} \; \sum_{n=0}^{\infty}
	\Braket{\hwvsym_{\lambda_{\infty}}^{\prime},\left((\hwvsym_{\lambda_{2}})_{(-n-1)}\hwvsym_{\lambda_{1}}\right)_{(n-1)}\hwvsym_{\lambda_{0}}} (\apvarB/\apvar)^{n} \\
\in \;\; & \C [[\apvarB/\apvar]] \, \apvarB^{\deltasymB(\cbseqB)} \, \apvar^{\deltasymB'(\cbseqB)}.
\end{align*}

The differential equation follows from the quotienting
out of the singular vector
$\singvecop_{\lambda_2} \hwv{c,h(\lambda_2)}$ of the Verma module $\VermaCH{c}{\hwFR{\lambda_2}}$
given by~\eqref{eq:singular_vector};
in~$\voaFR{\lambda_2}$
we have
$\singvecop_{\lambda_2} \hwvsym_{\lambda_{2}} = 0$ and therefore
\begin{align*}
\Braket{\hwvsym_{\lambda_{\infty}}^{\prime},
	\intertwnorm{\lambda_{0}}{\cbseqB}{\lambda_{\infty}} \big(
	\intertwnorm{\lambda_{1}}{\lambda_{2}}{\cbseqB}
	    (\singvecop_{\lambda_2} \hwvsym_{\lambda_{2}}, \apvarB) \,
	\hwvsym_{\lambda_{1}}, \, \apvar \big) \, \hwvsym_{\lambda_{0}}} \; = \; 0.
\end{align*}
To see that this gives the differential equation of the asserted form,
the key is to observe that
for any $\Mvec_2\in \voaFR{\lambda_{2}}$ and $p>0$, we have
\begin{align*}
\Braket{\hwvsym_{\lambda_{\infty}}^{\prime},\intertwnorm{\lambda_{0}}{\cbseqB}{\lambda_{\infty}}
    \big( \intertwnorm{\lambda_{1}}{\lambda_{2}}{\cbseqB}(L_{-p} \Mvec_2 , \apvarB) \,
        \hwvsym_{\lambda_{1}}, \apvar \big) \, \hwvsym_{\lambda_{0}}}
\; = \; \newvarWittL{-p}{2} \Braket{\hwvsym_{\lambda_{\infty}}^{\prime},
    \intertwnorm{\lambda_{0}}{\cbseqB}{\lambda_{\infty}} \big(
    \intertwnorm{\lambda_{1}}{\lambda_{2}}{\cbseqB}(\Mvec_2 , \apvarB) 
        \,\hwvsym_{\lambda_{1}}, \, \apvar \big)\hwvsym_{\lambda_{0}}} ,
\end{align*}
which in turn follows by a calculation based on the formulas of
Corollary~\ref{cor:formula_intertw}. The calculation is otherwise
very similar to
that in the proof of Lemma~\ref{lem:PDE_matrix_elements},
except that part (a) of Corollary~\ref{cor:formula_intertw} has to be used
twice here.
\end{proof}
By the chosen normalization of the intertwining operators,
the coefficient of $\apvarB^{\deltasymB(\cbseqB)}\,\apvar^{\deltasymB'(\cbseqB)}$ in
$\newhwme^{\lambdafullseq}_{\cbseqB}(\apvar,\apvarB)$ is
$\betacoef{\cbseqB}{\lambda_{0}}{\,\lambda_\infty} \,
\betacoef{\lambda_{1}}{\lambda_{2}}{\;\cbseqB}$.
In particular, $\newhwme^{\lambdafullseq}_{\cbseqB}(\apvar,\apvarB)$
is non-zero.

The remaining core ingredient is a suitable uniqueness
statement for series form solutions of the PDE.
Two aspects of the statement here are worth noting.
First, we will have a uniqueness statement separately for
every~$\cbseqB$, in an appropriate space of formal series.
Different~$\cbseqB$ would give other linearly independent solutions,
but due to different characteristic exponents, the forms of the series
are different.
Second, although we require just
one differential equation, we obtain uniqueness, because
we additionally fix the total homogeneity degree. Alternatively
it would be possible to start without the homogeneity requirement,
using instead further differential equations that can also
be established in the present case.
\begin{prop}\label{prop: uniqueness of solutions for associativity}
For each $\cbseqB \in
\selRule{\lambda_{1}}{\lambda_{2}} \cap \selRule{\lambda_{0}}{\lambda_{\infty}}$,
subspace
\begin{align*}
\Kern \, \newvarBSAoper{2} \; \subset \; 
\mathbb{C}[[\apvarB/\apvar]] \; \apvar^{\deltasymB'(\cbseqB)} 
    \, \apvarB^{\deltasymB(\cbseqB)}
\end{align*}
consisting of the solutions to the above differential
equation is one-dimensional,
\begin{align*}
\dmn \Big( \Kern \, \newvarBSAoper{2} \Big)
\; = \; 1  .
\end{align*}
\end{prop}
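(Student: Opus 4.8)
The plan is to mimic the argument of Theorem~\ref{thm:uniqueness_differential_equations_formal_series}, now with a single differential operator but with the homogeneity degree already fixed by the ambient space. Since the preceding proposition exhibits a non-zero element $\newhwme^{\lambdafullseq}_{\cbseqB}$ of $\Kern\,\newvarBSAoper{2}$ inside $\mathbb{C}[[\apvarB/\apvar]]\,\apvar^{\deltasymB'(\cbseqB)}\,\apvarB^{\deltasymB(\cbseqB)}$, it suffices to prove $\dmn\big(\Kern\,\newvarBSAoper{2}\big)\leq 1$, i.e. that a solution is determined up to a scalar by a single coefficient.

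First I would put a $\bZ$-grading on the ambient space: the total homogeneity $\deltasymB(\cbseqB)+\deltasymB'(\cbseqB)$ being fixed, every monomial occurring is $\apvar^{\deltasymB'(\cbseqB)-n}\apvarB^{\deltasymB(\cbseqB)+n}$ with $n\in\Znn$, which I declare to have degree $n$. I would then decompose each $\newvarWittL{-p}{2}$, $p\geq 1$, accordingly. The factor $(-x-y)^{-p}$ attached to the field at the origin expands in non-negative powers of $\apvarB/\apvar$ and so only raises the degree; inside $-(-y)^{-p}\big((-y)(\dee_x-\dee_y)+(1-p)\hwFR{\lambda_1}\big)$ the $\dee_x$-part raises the degree by one relative to the $\dee_y$- and mass-parts. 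Hence the minimal-degree component of $\newvarWittL{-p}{2}$ is an operator $\newldvarWittL{-p}{2}$ of degree $-p$ that does not involve $\apvar$ and acts on $\apvarB^a\apvar^b$ by multiplication by a degree-one polynomial in $a$. Assembling these with the Benoit--Saint-Aubin weights produces $\newldvarBSAoper{2}$, the degree-$(-\lambda_2-1)$ part of $\newvarBSAoper{2}$, with $\newvarBSAoper{2}=\newldvarBSAoper{2}+\sum_{k\geq 1}\newextraterm^{(2)}_{k}$, $\deg\newextraterm^{(2)}_{k}=-\lambda_2-1+k$. A computation with the same combinatorics as the one yielding $P_\lambda$ in~\eqref{eq: selection polynomial two var} (and as the action of $\ldvarBSAoper{1}$ in the proof of Theorem~\ref{thm:uniqueness_differential_equations_formal_series}) then gives
\begin{align*}
\newldvarBSAoper{2}\cdot\apvar^{\deltasymB'(\cbseqB)-n}\apvarB^{\deltasymB(\cbseqB)+n}
\; = \; P_{\lambda_2}\big(\hwFR{\lambda_1},\,\hwFR{\cbseqB}+n\big)\;
     \apvar^{\deltasymB'(\cbseqB)-n}\apvarB^{\deltasymB(\cbseqB)+n-\lambda_2-1}.
\end{align*}

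The crucial arithmetic point is that this indicial factor vanishes precisely for $n=0$. Since $\cbseqB\in\selRule{\lambda_1}{\lambda_2}$, the factorization of Proposition~\ref{prop:factorization_selection_rule_polynomial} gives $P_{\lambda_2}(\hwFR{\lambda_1},\hwFR{\cbseqB})=0$; on the other hand $P_{\lambda_2}(\hwFR{\lambda_1},\hwFR{\cbseqB}+n)=0$ would require $\hwFR{\cbseqB}+n=\hwFR{\lambda_1+\lambda_2-2\ell}$ for some $\ell\in\{0,\dots,\lambda_2\}$, that is $n=\hwFR{\lambda_1+\lambda_2-2\ell}-\hwFR{\cbseqB}$, and as $\kappa\notin\bQ$ the difference $\hwFR{m}-\hwFR{m'}$ is a non-zero irrational number whenever $m\neq m'$, so this forces $\lambda_1+\lambda_2-2\ell=\cbseqB$ and $n=0$. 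Expanding a solution as $C=\sum_{n\geq 0}c_n\,\apvar^{\deltasymB'(\cbseqB)-n}\apvarB^{\deltasymB(\cbseqB)+n}$, the degree-$(-\lambda_2-1)$ component of $\newvarBSAoper{2}C=0$ is $c_0\,P_{\lambda_2}(\hwFR{\lambda_1},\hwFR{\cbseqB})=0$, which holds automatically, while for $k\geq 1$ the degree-$(-\lambda_2-1+k)$ component reads
\begin{align*}
c_k\,P_{\lambda_2}\big(\hwFR{\lambda_1},\,\hwFR{\cbseqB}+k\big)\;\apvar^{\deltasymB'(\cbseqB)-k}\apvarB^{\deltasymB(\cbseqB)+k-\lambda_2-1}
\; = \; -\sum_{n=0}^{k-1}\newextraterm^{(2)}_{k-n}\big(c_n\,\apvar^{\deltasymB'(\cbseqB)-n}\apvarB^{\deltasymB(\cbseqB)+n}\big) .
\end{align*}
Since the factor on the left is non-zero this determines $c_k$ uniquely from $c_0,\dots,c_{k-1}$, hence recursively every $c_k$ is determined by $c_0$, giving $\dmn(\Kern\,\newvarBSAoper{2})\leq 1$; combined with the non-zero solution of the previous proposition this yields $\dmn(\Kern\,\newvarBSAoper{2})=1$.

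The step I expect to be the main obstacle is the explicit verification of the displayed action of $\newldvarBSAoper{2}$: that assembling the nested products $\newldvarWittL{-p_1}{2}\cdots\newldvarWittL{-p_k}{2}$ against the Benoit--Saint-Aubin weights reproduces exactly $P_{\lambda_2}(\hwFR{\lambda_1},\hwFR{\cbseqB}+n)$. This is the same calculation as the one behind Lemma~\ref{lem: necessary selection rule}, but the change of variables $x=x_1$, $y=x_2-x_1$ introduces sign and orientation bookkeeping that must be handled carefully, so that it is the weight $\hwFR{\lambda_1}$ of the field being approached (rather than $\hwFR{\lambda_0}$) that enters the indicial polynomial, and so that the shift produces the argument $\hwFR{\cbseqB}+n$. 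Everything else runs in close parallel with the proof of Theorem~\ref{thm:uniqueness_differential_equations_formal_series}.
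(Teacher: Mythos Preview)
Your proposal is correct and follows essentially the same approach as the paper: grade by the $\apvarB$-exponent, isolate the lowest-degree piece $\newldvarBSAoper{2}$ of $\newvarBSAoper{2}$, identify its action on monomials as multiplication by $P_{\lambda_2}(\hwFR{\lambda_1},\hwFR{\cbseqB}+n)$, use genericity to see that this vanishes only at $n=0$, and run the Frobenius-type recursion. The paper grades the slightly larger space $\mathbb{C}[\apvar]\,[[\apvarB/\apvar]]\,[\apvarB^{-1}]\,\apvar^{\deltasymB'(\cbseqB)}\apvarB^{\deltasymB(\cbseqB)}$ rather than exploiting the fixed total homogeneity from the start as you do, and it simply asserts the indicial identity and the nonvanishing for $n\neq 0$ without the explicit justification via Proposition~\ref{prop:factorization_selection_rule_polynomial} that you supply; otherwise the arguments are the same, and your flagged ``main obstacle'' is likewise left as a routine computation in the paper.
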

\begin{proof}
Since $\newhwme^{\lambdafullseq}_{\cbseqB}$ is a non-zero solution
in this space of formal series,
it suffices to show that solutions are unique up to multiplicative constant.

For the present purpose, we introduce a $\bZ$-grading of the space
$\mathbb{C}[\apvar]\,[[\apvarB/\apvar]] \, [\apvarB^{-1}] \; 
\apvar^{\deltasymB'(\cbseqB)} \, \apvarB^{\deltasymB(\cbseqB)}$:
for $n,m\in\Z$, a monomial
$\apvarB^{\deltasymB(\cbseqB) + n} \, \apvar^{\deltasymB'(\cbseqB) + m}$ is
declared
to be of degree~$n$.
The degrees of operators are determined accordingly.

For each $p>0$,
the operator
\begin{align*}
\newldvarWittL{-p}{2}
\; = \; - (-\apvarB)^{-p} \, \left(\apvarB \pder{\apvarB} + (1-p) \, \hwFR{{\lambda_{1}}} \right)
\end{align*}
is of degree~$-p$, and the difference
\begin{align*}
&\newvarWittL{-p}{2} - \newldvarWittL{-p}{2} \\
= \;\; & -(-\apvarB)^{-p+1} \pder{\apvar}
	+\sum_{k=0}^{\infty} \binom{-p+1}{k} \, (-1)^{p+1} \, \apvarB^{k} \, \apvar^{-p-k} \,
	    \Big( \apvar \pder{\apvar} + (1-p-k) \, \hwFR{\lambda_{0}} \Big) 
\end{align*}
is a sum of terms of degrees strictly greater than~$-p$.
Therefore, we can decompose the action of~$\newvarBSAoper{2}$ 
on $\mathbb{C}[\apvar]\,[[\apvarB/\apvar]] \, [\apvarB^{-1}] \; 
\apvar^{\deltasymB'(\cbseqB)} \, \apvarB^{\deltasymB(\cbseqB)}$ so that
\begin{align*}
\newvarBSAoper{2}
\; = \; \newldvarBSAoper{2} +
    \sum_{k=1}^{\infty} \newextraterm^{(2)}_{k},
\end{align*}
where $\deg \newextraterm^{(2)}_{k} = -\lambda_{2}-1+k$ for $k \geq 1$, and
\begin{align*}
\newldvarBSAoper{2}
= \sum_{k=1}^{\lambda_{2}+1}
    \sum_{\substack{p_{1},\dots, p_{k}\geq 1 \\ p_{1}+\cdots+p_{k}=\lambda_{2}+1}}
    \frac{(-4/\kappa)^{1+\lambda_{2}-k}\lambda_{2}!}{\prod_{u=1}^{k-1}(\sum_{i=1}^{u}p_{i})
    (\sum_{i=u+1}^{k}p_{i})} \,
    \newldvarWittL{-p_1}{2} \cdots \newldvarWittL{-p_k}{2}
\end{align*}
is a differential operator of degree $-\lambda_{2}-1$.

We also have
\begin{align*}
\newldvarBSAoper{2} \ \apvarB^{\deltasymB(\cbseqB)+n}
\; = \; P_{\lambda_{2}} \big(\hwFR{\lambda_{1}} , \hwFR{\mu}+n \big)
    \; \apvarB^{\deltasymB(\cbseqB)+n-\lambda_{2}-1}
\end{align*}
for any~$n \in \bZ$, and we recall that
$P_{\lambda_{2}}(\hwFR{\lambda_{1}},\hwFR{\mu}+n)=0$ if and only if~$n=0$.

Let us expand a series $\newhwme \in 
\mathbb{C}[[\apvarB/\apvar]] \; 
\apvar^{\deltasymB'(\cbseqB)} \, \apvarB^{\deltasymB(\cbseqB)}$ so that
\begin{align*}
\newhwme \; = \;
    \sum_{n=0}^{\infty} \newhwme_{n} \; \apvar^{\deltasymB'(\cbseqB) - n} \apvarB^{\deltasymB(\cbseqB) + n} .
\end{align*}
Requiring the differential equation $\newvarBSAoper{2} \, \newhwme = 0$
and considering the terms of different degrees separately,
we obtain the recursion relations that determines all higher
coefficients $\newhwme_{k}$, $k>0$, by
\begin{align*}
\newhwme_{k}
\; = \; & -
  \frac{\apvarB^{-\deltasymB(\cbseqB)-k+\lambda_{2}+1} \apvar^{-\deltasymB'(\cbseqB)+k}}%
  {P_{\lambda_{2}} \big( \hwFR{\lambda_{1}} , \hwFR{\mu}+k\big)}
  \; \sum_{n=0}^{k-1} \newextraterm^{(2)}_{k-n}
      \left(\apvarB^{\deltasymB(\cbseqB)+n} \, \apvar^{\deltasymB'(\cbseqB)-n} \, \newhwme_{n} \right) . 
\end{align*}
Therefore, a solution is uniquely determined by its leading
coefficient~$\newhwme_0$.
\end{proof}

Putting the above ingredients together, we can
prove Proposition~\ref{prop:associativity_initial_term}.
\begin{proof}[Proof of Proposition~\ref{prop:associativity_initial_term}]
By linearity on~$\sF$ and decomposition of~$u_{\cbfullseq}$, we found
\begin{align*}
\sF[u_{\cbfullseq}](0,x_1,x_2) \; = \;
\sum_{\cbseqB} \sixj{\lambda_\infty}{\lambda_2}{\lambda_1}{\lambda_0}{\cbseq}{\cbseqB}
    \; \sF[\widehat{u}_{\cbfullseqB}](0,x_1,x_2) .
\end{align*}
According to Theorem~\ref{thm: quantum group functions and compositions of intertwining ops},
the left hand side is given by the series 
\begin{align*}
\Braket{\hwvsym_{\lambda_{\infty}}', \; \intertwnorm{\cbseq}{\lambda_{2}}{\lambda_{\infty}}(\hwvsym_{\lambda_{2}}, x_{2})
    \,  \intertwnorm{\lambda_{0}}{\lambda_{1}}{\cbseq}(\hwvsym_{\lambda_{1}}, x_{1}) \, \hwvsym_{\lambda_{0}}} .
\end{align*}
On the right hand side, change variables to $x=x_1, \, y=x_2-x_1$,
and recall from Lemma~\ref{lem: associativity PDE for the quantum group function}
that each term
$\sF[\widehat{u}_{\cbfullseqB}](0,x,x+y)$
satisfies the differential equation
$\newvarBSAoper{2} \, \sF[\widehat{u}_{\cbfullseqB}](0,x,x+y) = 0$
and by Lemma~\ref{lem: Frobenius series of QG functions}
has a Frobenius series of the form~\eqref{eq: another Frobenius series}.
By homogeneity, property~(COV) in Theorem~\ref{thm:properties_of_correlations},
it is easy to see that the coefficients are of the
form~$\widehat{C}_n (x) = c_n \, x^{\deltasymB'(\cbseqB)-n}$.
The power series part of the Frobenius series in variable~$y$
is locally uniformly
$R$-controlled in the domain defined by~$x>R$, so
by arguments similar to the proof of
Theorem~\ref{thm: quantum group functions and compositions of intertwining ops}
we get that the differential
operator~$\newvarBSAoper{2}$ acts naturally coefficientwise on the series.
The uniqueness up to multiplicative constant of series solutions stated in
Proposition~\ref{prop: uniqueness of solutions for associativity}
then shows that the series
expansion is
\begin{align*}
\sF[\widehat{u}_{\cbfullseqB}](0,x,x+y)
\; = \; \Braket{\hwvsym_{\lambda_{\infty}}^{\prime},
  \intertwnorm{\lambda_{0}}{\cbseqB}{\lambda_{\infty}} \big(
    \intertwnorm{\lambda_{1}}{\lambda_{2}}{\cbseqB}(\hwvsym_{\lambda_{2}}, y) \,
    \hwvsym_{\lambda_{1}}, x \big) \, \hwvsym_{\lambda_{0}}} ,
\end{align*}
since the leading coefficients on both sides are
$\widehat{C}_0 (x) = \betacoef{\lambda_{2}}{\lambda_{1}}{\;\;\cbseqB} \,
    \betacoef{\cbseqB}{\lambda_{0}}{\,\lambda_\infty} \; x^{\deltasymB'(\cbseqB)} $.
\end{proof}

\subsection{Reduction to the initial terms}
\label{sub: associativity general case}

Here, we present a proof of Theorem \ref{thm:associativity}.
The proof is by induction on the total PBW length,
broadly similarly to Proposition~\ref{prop: uniqueness of intertwiners up to constant}.
In fact, since the proof splits to many similar cases, we only provide the details
about one. Proposition~\ref{prop:associativity_initial_term} from above
will serve as the base case of the induction.

\begin{proof}[Proof of Theorem \ref{thm:associativity}]
Recall that all the modules $\voaFR{\lambda_{i}}$, $i=0,1,2$ and $\voaFR{\lambda_{\infty}}^{\prime}$ admit the PBW filtration;
\begin{align*}
\PBWfil{0} \voaFR{\lambda_{i}} \; \subset \; \cdots 
  & \; \subset \; \PBWfil{p_{i}}\voaFR{\lambda_{i}}
	\; \subset \; \PBWfil{p_{i}+1}\voaFR{\lambda_{i}}
	\; \subset \; \cdots, \qquad \text{ for } i=0,1,2, \\
\PBWfil{0} \voaFR{\lambda_{\infty}}^{\prime} \; \subset \; \cdots
  & \; \subset \; \PBWfil{p_{\infty}}\voaFR{\lambda_{\infty}}^{\prime}
    \; \subset \; \PBWfil{p_{\infty}+1}\voaFR{\lambda_{\infty}}^{\prime}
    \; \subset \; \cdots.
\end{align*}

Fix $p\in \N$ and assume that the claim in Theorem~\ref{thm:associativity}
is true for any $\Mvec_{i}\in \PBWfil{p_{i}}\voaFR{\lambda_{i}}$, $i=0,1,2$
and $\Mvec_{\infty}^{\prime}\in \PBWfil{p_{\infty}}\voaFR{\lambda_{\infty}}^{\prime}$
with $p_{0}+p_{1}+p_{2}+p_{\infty}\leq p$.
Note also that the base
case $p_0 = p_1 = p_2 = p_\infty = 0$ is covered
by Proposition~\ref{prop:associativity_initial_term}.

Let us first consider increasing~$p_0$ by one, by
applying $L_{-n}$, $n>0$, on $\Mvec_0 \in \PBWfil{p_{0}}\voaFR{\lambda_{0}}$.
The Jacobi identity, as formulated in 
Corollary~\ref{cor:formula_intertw}, yields the following
\begin{align*}
\phantom{\bigg|} & \Braket{\Mvec_{\infty}^{\prime},
    \; \intertwnorm{\cbseq}{\lambda_{2}}{\lambda_{\infty}} (\Mvec_{2},\apvar_{2}) \,
    \intertwnorm{\lambda_{0}}{\lambda_{1}}{\cbseq}(\Mvec_{1},\apvar_{1}) \, L_{-n}\Mvec_{0}} \\
= \; & \Braket{L_{n} \Mvec_{\infty}^{\prime}, \;
	\intertwnorm{\cbseq}{\lambda_{2}}{\lambda_{\infty}}(\Mvec_{2},\apvar_{2}) \,
	\intertwnorm{\lambda_{0}}{\lambda_{1}}{\cbseq}(\Mvec_{1},\apvar_{1}) \, \Mvec_{0}} \\
& - \apvar_{1}^{-n+1} \pder{\apvar_{1}} \Braket{\Mvec_{\infty}^{\prime}, \;
    \intertwnorm{\cbseq}{\lambda_{2}}{\lambda_{\infty}}(\Mvec_{2},\apvar_{2}) \,
    \intertwnorm{\lambda_{0}}{\lambda_{1}}{\cbseq}(\Mvec_{1},\apvar_{1}) \; \Mvec_{0} } \\
& - \sum_{k=1}^{\infty}\binom{-n+1}{k}\apvar_{1}^{-n-k+1}
    \Braket{\Mvec_{\infty}^{\prime}, \; 
    \intertwnorm{\cbseq}{\lambda_{2}}{\lambda_{\infty}}(\Mvec_{2},\apvar_{2}) \, 
    \intertwnorm{\lambda_{0}}{\lambda_{1}}{\cbseq}(L_{k-1}\Mvec_{1},\apvar_{1}) \, \Mvec_{0}} \\
& - \apvar_{2}^{-n+1} \pder{\apvar_{2}} \Braket{\Mvec_{\infty}^{\prime}, \;
    \intertwnorm{\cbseq}{\lambda_{2}}{\lambda_{\infty}}(\Mvec_{2},\apvar_{2}) \,
    \intertwnorm{\lambda_{0}}{\lambda_{1}}{\cbseq}(\Mvec_{1},\apvar_{1}) \, \Mvec_{0} } \\
& - \sum_{k=1}^{\infty}\binom{-n+1}{k}\apvar_{2}^{-n-k+1}
    \Braket{\Mvec_{\infty}^{\prime} , \;
    \intertwnorm{\cbseq}{\lambda_{2}}{\lambda_{\infty}}(L_{k-1}\Mvec_{2},\apvar_{2}) \,
    \intertwnorm{\lambda_{0}}{\lambda_{1}}{\cbseq}(\Mvec_{1},\apvar_{1}) \, \Mvec_{0} }.
\end{align*}
Each of the terms of the right hand side have a 
total PBW word length~$\le p$,
so the induction hypothesis can be applied to each of them.
By the induction hypothesis, the right hand side power series represents the same
analytic function as the following series
\begin{align}\nonumber
\sum_{\cbseqB}
\sixj{\lambda_{\infty}}{\lambda_2}{\lambda_1}{\lambda_0}{\cbseq}{\cbseqB} \bigg( \,
& \Braket{ L_{n} \Mvec_{\infty}^{\prime}, \;
    \intertwnorm{\lambda_{0}}{\cbseqB}{\lambda_{\infty}} \big(
        \intertwnorm{\lambda_{1}}{\lambda_{2}}{\cbseqB}(\Mvec_{2}, \apvarB)
        \Mvec_{1}, \apvar \big) \, \Mvec_{0}} \\ \nonumber
& -\apvar^{1-n}\left( \pder{\apvar} - \pder{\apvarB} \right)
    \Braket{\Mvec_{\infty}^{\prime}, \;
    \intertwnorm{\lambda_{0}}{\cbseqB}{\lambda_{\infty}} \big(
        \intertwnorm{\lambda_{1}}{\lambda_{2}}{\cbseqB}(\Mvec_{2},\apvarB)
        \Mvec_{1}, \apvar) \, \Mvec_{0}} \\  \nonumber
& -\sum_{k=1}^{\infty}\binom{1-n}{k} \apvar^{1-n-k}
    \Braket{\Mvec_{\infty}^{\prime}, \;
    \intertwnorm{\lambda_{0}}{\cbseqB}{\lambda_{\infty}} \big(
        \intertwnorm{\lambda_{1}}{\lambda_{2}}{\cbseqB}(\Mvec_{2}, \apvarB) L_{k-1} 
        \Mvec_{1}, \apvar \big) \, \Mvec_{0}} \\  \nonumber
& -(\apvar+\apvarB)^{1-n} \, \pder{\apvarB}
    \Braket{\Mvec_{\infty}^{\prime}, \;
    \intertwnorm{\lambda_{0}}{\cbseqB}{\lambda_{\infty}} \big(
        \intertwnorm{\lambda_{1}}{\lambda_{2}}{\cbseqB}(\Mvec_{2}, \apvarB)
        \Mvec_{1}, \apvar \big) \, \Mvec_{0}} \\ 
\label{eq: one complicated PBW length reduction}
& -\sum_{k=1}^{\infty}\binom{1-n}{k} (\apvar+\apvarB)^{1-n-k}
    \Braket{\Mvec_{\infty}^{\prime}, \;
    \intertwnorm{\lambda_{0}}{\cbseqB}{\lambda_{\infty}} \big(
        \intertwnorm{\lambda_{1}}{\lambda_{2}}{\cbseqB}(L_{k-1}\Mvec_{2}, \apvarB)
        \Mvec_{1}, \apvar \big) \, \Mvec_{0}}
\bigg)
\end{align}
when the two series are evaluated at $\apvar_1 = x_1$, $\apvar_2 = x_2$, and
$\apvar = x_1$, $\apvarB = x_2-x_1$, respectively.
It remains to check that the expression inside the parentheses 
in~\eqref{eq: one complicated PBW length reduction} coincides with
\begin{align*}
\Braket{\Mvec_{\infty}^{\prime}, \;
    \intertwnorm{\lambda_{0}}{\cbseqB}{\lambda_{\infty}} \big(
        \intertwnorm{\lambda_{1}}{\lambda_{2}}{\cbseqB}(\Mvec_{2}, \apvarB)
        \Mvec_{1}, \apvar \big) \, L_{-n} \Mvec_{0}} .
\end{align*}
For this, we first commute $L_{-n}$ to the left with the
formula of Corollary~\ref{cor:formula_intertw},
\begin{align*}
\phantom{\bigg|} & \Braket{\Mvec_{\infty}^{\prime}, \;
    \intertwnorm{\lambda_{0}}{\cbseqB}{\lambda_{\infty}} \big(
        \intertwnorm{\lambda_{1}}{\lambda_{2}}{\cbseqB}(\Mvec_{2}, \apvarB)
        \Mvec_{1}, \apvar \big) \, L_{-n} \Mvec_{0}} \\
= \;\; & \Braket{ L_n \, \Mvec_{\infty}^{\prime}, \;
    \intertwnorm{\lambda_{0}}{\cbseqB}{\lambda_{\infty}} \big(
        \intertwnorm{\lambda_{1}}{\lambda_{2}}{\cbseqB}(\Mvec_{2}, \apvarB)
        \Mvec_{1}, \apvar \big) \, \Mvec_{0}} \\
& - \sum_{\ell=0}^\infty \binom{1-n}{\ell} \, \apvar^{1-n-\ell} \;
    \Braket{ \Mvec_{\infty}^{\prime}, \;
    \intertwnorm{\lambda_{0}}{\cbseqB}{\lambda_{\infty}} \big(
        L_{\ell-1} \, \intertwnorm{\lambda_{1}}{\lambda_{2}}{\cbseqB}(\Mvec_{2}, \apvarB)
        \Mvec_{1}, \apvar \big) \, \Mvec_{0}}
\end{align*}
The first term here is indeed present
in~\eqref{eq: one complicated PBW length reduction},
so we focus on the second term. Also the $\ell=0$
contribution in the second term is
\begin{align*}
& -\apvar^{1-n} \,
    \Braket{ \Mvec_{\infty}^{\prime}, \;
    \intertwnorm{\lambda_{0}}{\cbseqB}{\lambda_{\infty}} \big(
        L_{-1} \, \intertwnorm{\lambda_{1}}{\lambda_{2}}{\cbseqB}(\Mvec_{2}, \apvarB)
        \Mvec_{1}, \apvar \big) \, \Mvec_{0}} \\
\; = \;\; & -\apvar^{1-n} \, \pder{\apvar}\,
    \Braket{ \Mvec_{\infty}^{\prime}, \;
    \intertwnorm{\lambda_{0}}{\cbseqB}{\lambda_{\infty}} \big(
        \intertwnorm{\lambda_{1}}{\lambda_{2}}{\cbseqB}(\Mvec_{2}, \apvarB)
        \Mvec_{1}, \apvar \big) \, \Mvec_{0}} ,
\end{align*}
which also appears in~\eqref{eq: one complicated PBW length reduction}.
We rewrite the remaining terms by commuting $L_{\ell-1}$ to the right inside 
the intertwiner, leading to a contribution
\begin{align*}
& - \sum_{\ell=1}^\infty \binom{1-n}{\ell} \, \apvar^{1-n-\ell} \;
    \Braket{ \Mvec_{\infty}^{\prime}, \;
    \intertwnorm{\lambda_{0}}{\cbseqB}{\lambda_{\infty}} \big(
        L_{\ell-1} \, \intertwnorm{\lambda_{1}}{\lambda_{2}}{\cbseqB}(\Mvec_{2}, \apvarB)
        \Mvec_{1}, \apvar \big) \, \Mvec_{0}} \\
\; = \;\; & - \sum_{\ell=1}^\infty \binom{1-n}{\ell} \, \apvar^{1-n-\ell}
    \Bigg( \Braket{ \Mvec_{\infty}^{\prime}, \;
        \intertwnorm{\lambda_{0}}{\cbseqB}{\lambda_{\infty}} \big(
        \intertwnorm{\lambda_{1}}{\lambda_{2}}{\cbseqB}(\Mvec_{2}, \apvarB)
        L_{\ell-1} \, \Mvec_{1}, \apvar \big) \, \Mvec_{0}} \\
& \qquad\qquad\qquad + \sum_{k=0}^\infty \binom{\ell}{k} \, \apvarB^{\ell-k} \;
    \Braket{ \Mvec_{\infty}^{\prime}, \;
    \intertwnorm{\lambda_{0}}{\cbseqB}{\lambda_{\infty}} \big(
        \intertwnorm{\lambda_{1}}{\lambda_{2}}{\cbseqB}( L_{k-1} \Mvec_{2}, \apvarB)
        \Mvec_{1}, \apvar \big) \, \Mvec_{0}} \Bigg)
\end{align*}
Again the first term appears
in~\eqref{eq: one complicated PBW length reduction}, so it remains
to treat the last term. The $k=0$ contribution of it is just
\begin{align*}
& - \sum_{\ell=1}^\infty \binom{1-n}{\ell} \, \apvar^{1-n-\ell}
    \, \apvarB^{\ell} \;
    \Braket{ \Mvec_{\infty}^{\prime}, \;
    \intertwnorm{\lambda_{0}}{\cbseqB}{\lambda_{\infty}} \big(
        \intertwnorm{\lambda_{1}}{\lambda_{2}}{\cbseqB}( L_{-1} \Mvec_{2}, \apvarB)
        \Mvec_{1}, \apvar \big) \, \Mvec_{0}} \\
\; = \;\; & - (\apvar + \apvarB)^{1-n} \, \pder{\apvarB} \,
    \Braket{ \Mvec_{\infty}^{\prime}, \;
    \intertwnorm{\lambda_{0}}{\cbseqB}{\lambda_{\infty}} \big(
        \intertwnorm{\lambda_{1}}{\lambda_{2}}{\cbseqB}( \Mvec_{2}, \apvarB)
        \Mvec_{1}, \apvar \big) \, \Mvec_{0}} \\
& + \apvar^{1-n} \, \pder{\apvarB} \,
    \Braket{ \Mvec_{\infty}^{\prime}, \;
    \intertwnorm{\lambda_{0}}{\cbseqB}{\lambda_{\infty}} \big(
        \intertwnorm{\lambda_{1}}{\lambda_{2}}{\cbseqB}( \Mvec_{2}, \apvarB)
        \Mvec_{1}, \apvar \big) \, \Mvec_{0}} .
\end{align*}
Both of these again appear
in~\eqref{eq: one complicated PBW length reduction}.
In the remaining terms, we interchange the order of summations
and change to a new summation variable $m=\ell-k$ to express them as
\begin{align*}
& - \sum_{\ell=1}^\infty \binom{1-n}{\ell} \, \apvar^{1-n-\ell}
    \sum_{k=1}^\infty \binom{\ell}{k} \, \apvarB^{\ell-k} \;
    \Braket{ \Mvec_{\infty}^{\prime}, \;
    \intertwnorm{\lambda_{0}}{\cbseqB}{\lambda_{\infty}} \big(
        \intertwnorm{\lambda_{1}}{\lambda_{2}}{\cbseqB}( L_{k-1} \Mvec_{2}, \apvarB)
        \Mvec_{1}, \apvar \big) \, \Mvec_{0}} \\
= \;\; & - \sum_{k=1}^\infty
    \sum_{\ell=k}^\infty 
    \binom{\ell}{k} \, \binom{1-n}{\ell} \, \apvar^{1-n-\ell} \, \apvarB^{\ell-k} 
    \Braket{ \Mvec_{\infty}^{\prime}, \;
    \intertwnorm{\lambda_{0}}{\cbseqB}{\lambda_{\infty}} \big(
        \intertwnorm{\lambda_{1}}{\lambda_{2}}{\cbseqB}( L_{k-1} \Mvec_{2}, \apvarB)
        \Mvec_{1}, \apvar \big) \, \Mvec_{0}} \\
= \;\; & - \sum_{k=1}^\infty
    \sum_{m=0}^\infty 
    \frac{1}{k! \, m!} \prod_{i=0}^{k+m-1} (1-n-i) \;
    \, \apvar^{1-n-k-m} \, \apvarB^{m} 
    \Braket{ \Mvec_{\infty}^{\prime}, \;
    \intertwnorm{\lambda_{0}}{\cbseqB}{\lambda_{\infty}} \big(
        \intertwnorm{\lambda_{1}}{\lambda_{2}}{\cbseqB}( L_{k-1} \Mvec_{2}, \apvarB)
        \Mvec_{1}, \apvar \big) \, \Mvec_{0}} .
\end{align*}
This equals the remaining term 
in~\eqref{eq: one complicated PBW length reduction}.

There are in principle three more cases to consider:
the application of~$L_{-n}$ on $\Mvec_1 \in \PBWfil{p_{1}}\voaFR{\lambda_{1}}$,
on $\Mvec_2 \in \PBWfil{p_{2}}\voaFR{\lambda_{2}}$, and
on $\Mvec_\infty' \in \PBWfil{p_{\infty}}\voaFR{\lambda_{\infty}}'$.
The calculations are similar to the case above, so we omit the details here.
\end{proof}

\appendix

\section{Proofs of analyticity and Frobenius series}%
\label{app: series expansions}
This appendix is devoted to indicating how the
results of \cite{KP-conformally_covariant_bdry_correlations}
can be strengthened so as to yield the analyticity
and series expansion properties needed in the present 
article, and for some elementary power series estimates
employed in the proofs of the main results in
Sections~\ref{sec: composition} and~\ref{sec: associativity}.

\subsection{Elementary power series estimates}

In Sections~\ref{ssec: quantum group functions as power series}
and~\ref{sub: associativity for the highest weight matrix element},
we needed that three types of operations preserved a suitable
space of parametrized power series, and that these
operations could be performed essentially ``coefficientwise''.
Specifically, we needed
\begin{itemize}
\item differentiation of the power series itself;
\item multiplication of the power series by another power series;
\item differentiation of the power series with respect to parameters.
\end{itemize}

The setup is that we consider power series in a variable~$z$, of the form
\begin{align}\label{eq: parametrized power series}
\sum_{k=0}^\infty c_k(y) \, z^k ,
\end{align}
where the coefficients $c_k$ depend smoothly on parameters~$y \in \Omega$,
where $\Omega \subset \bR^m$ is an open set. The main assumption is that
for some~$R>0$, the parametrized power series is locally uniformly $R$-controlled
in the sense of Definition~\ref{def: controlled power series}.
For concretely working with the power series, it is useful to note 
that the property of being  locally uniformly $R$-controlled
is equivalent to the following: for 
every compact $K \subset \Omega$, every multi-index~$\alpha$,
and every~$0 < R_0 < R$, there exists a $M<\infty$ such that 
\begin{align}\label{eq: elementary estimate for controlled coefficients}
\big| \partial^\alpha c_k(y) \big| \; \leq \; M \, R_0^{-k}
\qquad \text{for all $y \in K$ and $k \in \Znn$.}
\end{align}
This in particular ensures that for every $y \in \Omega$, the radius
of convergence of~\eqref{eq: parametrized power series} is at least~$R$,
and for any $R_0<R$ and any compact subset~$K \subset \Omega$,
the convergence is uniform over $|z| \le R_0$ and $y \in K$.

Crucial for us is that the locally uniformly $R$-controlled
power series are stable under 
differentiation of the power series, multiplication by another
locally uniformly $R$-controlled power series,
and differentiation with respect to the parameters~$y \in \Omega$.
The first two are in essence just familiar operations
on ordinary power series~--- only the last one involves
dependence on the parameters~$y$. The proofs are all elementary.

\begin{lem}
\label{lem: operations on R controlled series}
Let $\Omega \subset \bR^m$ be an open set, and let~$R>0$.
\begin{itemize}
\item[(a)]
If $(c_k)_{k \in \Znn}$ are locally uniformly $R$-controlled,
then also ${\big( (k+1) c_{k+1} \big)_{k \in \Znn}}$ are
locally uniformly $R$-controlled and for any $z \in \bC$
with $|z|<R$ and any $y \in \Omega$ we have
\begin{align*}
\der{z} \sum_{k=0}^\infty c_k(y) \, z^k
= \sum_{k=0}^\infty (k+1) \, c_{k+1}(y) \, z^{k} .
\end{align*}
\item[(b)]
If $(c_k)_{k \in \Znn}$ and $(d_k)_{k \in \Znn}$ are locally
uniformly $R$-controlled,
then also the convoluted coefficients
${\big( \sum_{j=0}^k c_{k-j} d_j \big)_{k \in \Znn}}$ are
locally uniformly $R$-controlled and for any $z \in \bC$
with~$|z|<R$ and any $y \in \Omega$ we have
\begin{align*}
\Big( \sum_{k=0}^\infty d_k(y) \, z^k \Big)
\Big( \sum_{k=0}^\infty c_k(y) \, z^k \Big)
= \sum_{k=0}^\infty
   \bigg( \sum_{j=0}^k c_{k-j}(y) \, d_j(y) \bigg) \, z^{k} .
\end{align*}
\item[(c)]
If $(c_k)_{k \in \Znn}$ are locally uniformly $R$-controlled
and $j \in \set{1,\ldots,m}$, 
then also the coefficients
${\big( \partial_j c_{k} \big)_{k \in \Znn}}$ are
locally uniformly $R$-controlled and for any $z \in \bC$
with~$|z|<R$ and any $y \in \Omega$ we have
\begin{align*}
\pder{y_j} \sum_{k=0}^\infty c_k(y) \, z^k
= \sum_{k=0}^\infty  (\partial_j c_{k}) (y) \, z^{k} .
\end{align*}
\end{itemize}
\end{lem}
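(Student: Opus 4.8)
The plan is to work throughout with the equivalent ``elementary estimate'' reformulation~\eqref{eq: elementary estimate for controlled coefficients} of Definition~\ref{def: controlled power series}: a family $(c_k)_{k\in\Znn}$ is locally uniformly $R$-controlled precisely when for every compact $K\subset\Omega$, every multi-index $\alpha\in\bN^m$, and every $R_0<R$ there is $M<\infty$ with $|\partial^\alpha c_k(y)|\le M R_0^{-k}$ for all $y\in K$ and $k\in\Znn$. In this form each of (a), (b), (c) splits into two independent tasks: first, checking that the new coefficient family satisfies an estimate of the same shape, which in every case comes down to absorbing a polynomial-in-$k$ prefactor into the geometric factor by slightly shrinking the rate, using that $k^{d}(R_0/R_1)^{k}$ is bounded whenever $R_0<R_1$; and second, justifying the claimed identity of functions, which in every case is a standard fact about power series converging locally uniformly (in $z$, or jointly in $(y,z)$) on $|z|<R$, with the extra variables acting as spectators.

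\textbf{Parts (a) and (b).} For (a), given $K$ and $\alpha$, pick $R_0<R_1<R$ and $M$ with $|\partial^\alpha c_k(y)|\le M R_1^{-k}$ on $K$; then $|\partial^\alpha\big((k+1)c_{k+1}\big)(y)|=(k+1)|\partial^\alpha c_{k+1}(y)|\le \tfrac{M}{R_1}(k+1)(R_0/R_1)^{k}R_0^{-k}$, and $(k+1)(R_0/R_1)^{k}$ is bounded, so $\big((k+1)c_{k+1}\big)$ is locally uniformly $R$-controlled; the identity $\der{z}\sum_k c_k(y)z^k=\sum_k(k+1)c_{k+1}(y)z^k$ on $|z|<R$ is the classical term-by-term differentiation theorem for the ordinary power series in $z$ with $y$ frozen. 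For (b), the Leibniz rule gives $\partial^\alpha\big(\sum_{j=0}^k c_{k-j}d_j\big)=\sum_{j=0}^k\sum_{\beta\le\alpha}\binom{\alpha}{\beta}\,\partial^\beta c_{k-j}\,\partial^{\alpha-\beta}d_j$, and bounding each factor on $K$ by $M_1R_1^{-(k-j)}$ and $M_2R_1^{-j}$ (one rate $R_1<R$ and constants chosen to serve all $\beta\le\alpha$) yields a bound $\le C_\alpha M_1M_2\,(k+1)R_1^{-k}$, whence the conclusion by the same absorption of $(k+1)$; the identity is the Cauchy product formula, valid at every $z$ with $|z|<R$ since both series converge absolutely there.

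\textbf{Part (c).} The estimate is immediate: $\partial^\alpha\partial_j c_k=\partial^{\alpha+e_j}c_k$, and the hypothesis already supplies the bound required for the multi-index $\alpha+e_j$, so $(\partial_j c_k)$ is locally uniformly $R$-controlled. For the identity, fix a compact $K\subset\Omega$ and a point $z$ with $|z|<R$, choose $R_0$ with $|z|<R_0<R$, and note that the estimate makes $\sum_k(\partial_j c_k)(y)z^k$ converge uniformly for $y\in K$; applying the standard theorem on differentiating a locally uniform limit of $C^1$ functions to the partial sums $S_N(y)=\sum_{k=0}^N c_k(y)z^k$ gives $\pder{y_j}\sum_k c_k(y)z^k=\sum_k(\partial_j c_k)(y)z^k$.

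\textbf{Expected difficulty.} No step presents a genuine obstacle; the lemma is elementary, which is why a plan rather than a detailed computation suffices. The only points that require a little care are the bookkeeping of polynomial prefactors in $k$ when passing to a slightly smaller geometric rate, and, in part (b), invoking the Leibniz rule so that the control of \emph{all} parameter-derivatives (not merely the coefficients themselves) survives the convolution. These are precisely the reasons it is worth isolating the statement, since it is exactly what legitimizes the ``coefficientwise'' action of $\varWittL{n}{j}$ and $\varBSAoper{j}$ used in Sections~\ref{ssec: quantum group functions as power series} and~\ref{sub: associativity for the highest weight matrix element}.
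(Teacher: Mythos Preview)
Your proposal is correct and follows essentially the same approach as the paper: both work via the equivalent elementary estimate~\eqref{eq: elementary estimate for controlled coefficients}, dispose of (a) and (b) as textbook power series facts once the coefficient bounds are checked, and observe that the estimate in (c) is immediate because $\partial^\alpha\partial_j c_k=\partial^{\alpha+e_j}c_k$. The only minor difference is in justifying the interchange of $\pder{y_j}$ with the sum in (c): you invoke the standard theorem on differentiating a locally uniformly convergent sequence of $C^1$ functions, whereas the paper writes out the difference quotient, expresses it via the integral mean value theorem, and applies dominated convergence; both are equally valid routine arguments.
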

\begin{proof}
The statements (a) and (b) at a fixed parameter~$y$
are textbook power series results based on the 
estimates~\eqref{eq: elementary estimate for controlled coefficients},
and the explicit calculations yield locally
uniformly $R$-controlledness by the same characterization.

For property (c), note first that arbitrary
partial derivatives of the coefficients satisfy the same 
estimate~\eqref{eq: elementary estimate for controlled coefficients},
so it is clear that any partial derivatives of the coefficients
remain locally uniformly $R$-controlled.
It suffices to check that the the partial derivative of the
series is the series with partial derivative coefficients.
Fix $y \in \Omega$ and choose a small $r>0$ so that the closed ball
$\overline{B}_r(y) \subset \Omega$.
First fix $R'<R_0<R$ and 
consider $|z| \leq R'$. Since $\overline{B}_r(y) \subset \Omega$ is compact,
we may choose~$M<\infty$
such that for all $k \in \Znn$ and $y \in \overline{B}_r(u)$ we have
both $|c_k(y)| \leq M \, R_0^{-k}$ and
$\big| \partial_j c_k(y) \big| \leq M \, R_0^{-k}$.
The partial derivative of the power series
with respect to the $j$:th parameter $y_j$ at $y=(y_1,\ldots,y_m)$
is 
\begin{align*}
\pder{y_j} \sum_{k=0}^\infty c_k(y) \, z^k
= \; & \lim_{\delta \to 0} \; \sum_{k=0}^\infty
        \frac{c_k(y+\delta \unitvec{j}) - c_k(y)}{\delta} \, z^k \\
= \; & \lim_{\delta \to 0} \; \sum_{k=0}^\infty
        \Big( \int_0^1 (\partial_j c_k)(y+ s \delta \unitvec{j}) \, \ud s \Big) \, z^k .
\end{align*}
For $\delta<r$ the integral above is bounded by the same $M \, R_0^{-k}$
(since the integrand is, and the integration is taken over the unit integral),
so for $|z| \leq R'<R_0$ the power series terms are dominated by
$M \, \big( \frac{R'}{R_0} \big)^k$, which are summable.
Moreover, as $\delta \to 0$, the integrands are tending pointwise w.r.t.~$s$ to
the constant~$(\partial_j c_k)(y)$, and they are bounded by the above.
With these observations, we can apply dominated convergence
to interchange the limit $\delta \to 0$ 
with both the series and the interal, and the assertion~(c) follows.
\end{proof}

\subsection{Power series for the functions from the quantum group method}

We now outline the proofs of
Lemmas~\ref{lem: analyticity of QG functions}
and~\ref{lem: Frobenius series of QG functions}.
For convenience, we also recall their statements here.

\begin{lem*}
[Lemma~\ref{lem: analyticity of QG functions}]
Let $F = \sF[u] \colon \chamber_N \to \bC$
be the function associated to 
any~$u\in\HWspace{\lambdafullseq}$,
and let $(x_1, \ldots, x_N) \in \chamber_N$,
and let $j \in \set{1,\ldots,N}$.
Then we have a power series expansion
\begin{align*}
F(x_1, \ldots, x_{j-1}, z_j, x_{j+1}, \ldots, x_N)
\; = \; \sum_{k=0}^\infty
    c_k (x_1, \ldots, x_{j-1}, x_{j+1}, \ldots, x_N) \; \big( z_j - x_j \big)^k
\end{align*}
in the~$j$:th variable.
For fixed $x_j \in \bR$ and $R>0$,
viewing the other variables $(x_i)_{i \neq j}$ as parameters,
on the subset $\Omega \subset \bR^{N-1}$
defined by the conditions $x_1 < \cdots < x_N$
and $\min_{i \neq j} |x_i - x_j| > R$, the power series
is locally uniformly $R$-controlled.
\end{lem*}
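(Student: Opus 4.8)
The plan is to read off the analyticity from the explicit integral representation of $\sF[u]$ established in \cite{KP-conformally_covariant_bdry_correlations}, and then to extract the quantitative control of the Taylor coefficients by Cauchy estimates. Recall that $\sF[u]$ is, by construction, given by a Coulomb-gas (Dotsenko--Fateev) type integral: for a suitable number $n$ of screening variables $w_1,\dots,w_n$ one integrates an integrand $\omega_u(x_1,\dots,x_N;w_1,\dots,w_n)$, built from powers of the differences $(x_i-x_{i'})$, $(x_i-w_\ell)$ and $(w_\ell-w_m)$ times a rational function determined by $u$, over an appropriate cycle $\Gamma$ supported near the real locus. By linearity of $\sF$ it suffices to treat one such integral.

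First I would fix $j$ and regard $x_j$ as a complex variable $z_j$ while keeping $(x_i)_{i\neq j}$ real. For $z_j$ in the open disk $D=\{|z_j-x_j|<R\}$ and $(x_i)_{i\neq j}\in\Omega$, the point $z_j$ stays bounded away from all the remaining $x_i$, and one can deform the integration cycle in the $w$-variables to a holomorphically $z_j$-dependent cycle $\Gamma(z_j)$ avoiding every diagonal $\{w_\ell=z_j\}$, $\{w_\ell=x_i\}$, $\{w_\ell=w_m\}$, coinciding with the original real cycle at $z_j=x_j$; this is legitimate because the integrand is a closed holomorphic top-form in the $w$-variables off the singular locus and $D$ is simply connected and disjoint from the remaining branch points — it is precisely the contour-deformation mechanism already used in \cite{KP-conformally_covariant_bdry_correlations} to derive the asymptotics (ASY). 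Differentiating under the integral sign — justified, on any compact subset of $D\times\Omega$, by the uniform separation of $\Gamma(z_j)$ from the singular locus together with the compactness of its support, which bounds $\omega_u$ and all its partial derivatives — shows that
\[
G\big(z_j;(x_i)_{i\neq j}\big):=\int_{\Gamma(z_j)}\omega_u
\]
is holomorphic in $z_j\in D$, smooth in the parameters, with $G$ and every $\partial^\alpha_{(x_i)_{i\neq j}}G$ locally bounded on $D\times\Omega$, and restricts to $F$ on the real slice.

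Given this, the power-series statement is routine. Fix a compact $K\subset\Omega$, a multi-index $\alpha$, and $R_0\in(0,R)$; by compactness $\min_{i\neq j}|x_i-x_j|\geq R+\varepsilon$ on $K$ for some $\varepsilon>0$, so $\{|z_j-x_j|\leq R_0\}\times K$ is a compact subset of $D\times\Omega$ on which $\partial^\alpha_y G$ is bounded, say by $M$. Cauchy's formula on the circle $|z_j-x_j|=R_0$ gives $c_k(y)=\frac{1}{2\pi\ii}\oint G(z_j;y)(z_j-x_j)^{-k-1}\,\ud z_j$ and, differentiating under this finite non-singular integral, $|\partial^\alpha c_k(y)|\leq M R_0^{-k}$ for all $y\in K$ and $k$. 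Since $R_0<R$ was arbitrary this is exactly the estimate \eqref{eq: elementary estimate for controlled coefficients} characterizing local uniform $R$-control, and Cauchy--Hadamard moreover shows the series converges (to $F$) on $|z_j-x_j|<\min_{i\neq j}|x_i-x_j|$.

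The main obstacle is the contour-deformation step: one must know the cycles of \cite{KP-conformally_covariant_bdry_correlations} precisely enough to produce the holomorphic family $\Gamma(z_j)$ with uniform distance from the singular locus, and to ensure that the (a priori improper/regularized) integrals are genuinely stable under such deformations. This is, however, the same analysis that underlies the proof of (ASY) there, so the work consists mostly in isolating and mildly generalizing it; everything downstream is elementary, and — as remarked in the text — this argument also serves as the warm-up for the more delicate iterated version needed for the Frobenius series of Lemma~\ref{lem: Frobenius series of QG functions}.
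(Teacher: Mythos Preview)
Your approach is essentially the paper's own --- integral representation from \cite{KP-conformally_covariant_bdry_correlations} followed by Cauchy estimates --- and the downstream power-series argument is fine. However, you overcomplicate the contour step. The integration surfaces~$\Gamma$ in \cite{KP-conformally_covariant_bdry_correlations} are compact collections of non-intersecting loops, and for each compact $K\subset\Omega$ one can deform them \emph{once}, within their homotopy class, so that none of the $w$-coordinates enters the closed ball $\overline{B}_R(x_j)$. With this single fixed contour $\Gamma_K$, the integrand $f(z_j;w_1,\ldots,w_\ell)$ is already holomorphic in $z_j$ on all of $\overline{B}_R(x_j)$ for every $(w_1,\ldots,w_\ell)\in\Gamma_K$, so you can apply the Cauchy estimate pointwise in $w$ and then integrate; no holomorphic family $\Gamma(z_j)$ is needed. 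This also resolves your worry about ``improper/regularized'' integrals: the surfaces are compact and the integrands continuous on them, so the integrals are ordinary and differentiation under the integral sign is immediate. The only deformation input you need is the freedom to push the fixed loop contours away from the disk, which is much milder than what underlies (ASY).
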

\begin{proof}
The construction 
in~\cite{KP-conformally_covariant_bdry_correlations}
expresses~$F = \sF[u]$ as a finite linear
combination of integrals
\begin{align*}
G(x_1, \ldots, x_N)
= \idotsint_\Gamma 
    f(x_1, \ldots, x_N ; w_1 , \ldots , w_\ell) 
    \; \ud w_1 \cdots \ud w_\ell ,
\end{align*}
where the integrands are branches of the multivalued function
\begin{align*}
& f(x_1, \ldots, x_N ; w_1 , \ldots , w_\ell) \\
=\; & \const \prod_{1 \leq i < j \leq N} 
        (x_j - x_i)^{\frac{2}{\kappa} \lambda_i \lambda_j}
    \prod_{\substack{1 \leq i \leq N \\ 1 \leq r \leq \ell}}
        (x_i - w_r)^{-\frac{4}{\kappa} \lambda_i}
    \prod_{1 \leq r < s \leq \ell}
        (w_s - w_r)^{\frac{8}{\kappa}} ,
\end{align*}
and where the integration surfaces~$\Gamma$ are
collections of non-intersecting loops 
(for details 
see~\cite{KP-conformally_covariant_bdry_correlations}).
Most importantly, the integration surfaces~$\Gamma$ are 
compact, and the integrands $(w_1, \ldots, w_\ell) \mapsto 
f(x_1, \ldots, x_N ; w_1 , \ldots , w_\ell)$ are continuous
on them.

Fix $R>0$ and a compact subset~$K \subset
\Omega_R = \set{ (x_i)_{i \neq j} \in \bR^{N-1} \; \big| \;
\min_{i \neq j} |x_i - x_j| > R}$.
When~$(x_i)_{i \neq j} \in K$,
without changing the homotopy class 
of~$\Gamma$, it is possible to arrange so that
none of the~$\ell$ coordinates of~$\Gamma = \Gamma_K$
intersects the closed ball~$\overline{B}_{R}(x_j)$
of radius~$R$ centered at~$x_j$.
For notational convenience, 
let us keep the coordinates $x_i$, $i \neq j$,
fixed and omit them from the notation, and
consider only the dependence
of the integrand~$f$ 
on the $j$:th variable 
(denoted $z_j$; we will perform power series
expansions around~$z_j=x_j$)
and the integration variables $w_1, \ldots, w_\ell$.
From the explicit formula for~$f$,
it is clear that for any~$(w_1, \ldots, w_\ell) \in \Gamma$,
the function~$z_j \mapsto f(z_j ; w_1 , \ldots , w_\ell)$
is analytic in an open set 
containing~$\overline{B}_{R}(x_j)$,
and has a convergent power series 
expansion
\begin{align*}
f(z_j ; w_1 , \ldots , w_\ell)
= \sum_{k = 0}^\infty 
    \phi_k(w_1, \ldots, w_\ell) \, (z_j - x_j)^k ,
\end{align*}
where the Taylor coefficients
obey the Cauchy integral based estimates
\begin{align*}
\big| \phi_k(w_1, \ldots, w_\ell) \big|
\leq 
    \frac{\max_{z_j \in \overline{B}_{R}(x_j)}
            \big| f(z_j ; w_1 , \ldots , w_\ell) \big|}{R^k} .
\end{align*}
In particular we get the uniform estimate 
\begin{align*}
\big| \phi_k(w_1, \ldots, w_\ell) \big|
\leq \frac{C_K}{R^k} ,
\end{align*}
for all~$(w_1, \ldots, w_\ell) \in \Gamma$,
with the finite constant~$C_K$ taken as the
maximum of $\big| f(z_j ; w_1 , \ldots , w_\ell) \big|$
over the compact 
set~$\overline{B}_{R}(x_j) \times K \times \Gamma_K$
(the middle factor is for the implicit parameters~$x_i$, $i \neq j$).
Then for
\begin{align*}
G(z_j) = \idotsint_{\Gamma_K} f(z_j; w_1 , \ldots, w_\ell)
    \; \ud w_1 \cdots \ud w_\ell ,
\end{align*}
we can use a power series expansion with coefficients
\begin{align*}
c_k := \idotsint_{\Gamma_K} \phi_k(w_1 , \ldots, w_\ell)
    \; \ud w_1 \cdots \ud w_\ell ,
\end{align*}
which
satisfy~$|c_k| \leq C_K \, |\Gamma_K| \, R^{-k}$.
The series converges to~$G(z_j)$
whenever ${|z_j-x_j|<R}$, by virtue of the error term estimate
\begin{align*}
& \Big| G(z_j) - \sum_{k=0}^{k_0} c_k \, (z_j-x_j)^k \Big| \\
\leq \; & \idotsint_{\Gamma_K} 
        \Big| f(z_j; w_1 , \ldots, w_\ell) 
            - \sum_{k=0}^{k_0} \phi_k(w_1 , \ldots, w_\ell) \,
                (z_j - x_j)^k \Big|
    \; \ud w_1 \cdots \ud w_\ell \\
\leq \; & C_K \, |\Gamma_K| \;\sum_{k=k_0+1}^\infty 
    \Big(\frac{|z_j-x_j|}{R} \Big)^k
\; \underset{k_0 \to \infty}{\longrightarrow} \; 0 .
\end{align*}
From the above explicit estimate for coefficients~$(c_k)_{k\in \bN}$, we see
that~$z_j \mapsto G(z_j)$ is given by a locally 
uniformly $R$-controlled power series parametrized by~$\Omega_R$.
The same holds for the
finite linear combination~$z_j \mapsto F(z_j)$
of such terms.
\end{proof}

The Frobenius series statement that we will use
is the following. Variants of this formulation
with obvious modifications to the statement and
proof could be done as well.
\begin{lem*}[Lemma~\ref{lem: Frobenius series of QG functions}]
Let $j \in \set{2,\ldots,N}$. 
Suppose that~$\tau \in \selRule{\lambda_{j-1}}{\lambda_j}$
and that $u\in\HWspace{\lambdafullseq}$
is such that~$u = \QGlongprcan{j-1}{j}{\tau} (u)$.
The function
$F = \sF[u] \colon \chamber_N \to \bC$
associated to~$u$ has a Frobenius series
expansion in the 
variable~$z=x_j-x_{j-1}$
\begin{align*}
F \big( x_1, \ldots 
    , x_{j-1} , (x_{j-1}+z) , x_{j+1} , \ldots, x_N \big)
\; = \; z^{\deltasym} \,
    \sum_{k=0}^\infty 
      c_k (x_1, \ldots, x_{j-1}, x_{j+1}, \ldots, x_N) \; z^{k}
\end{align*}
where
the indicial exponent is
$\deltasym = \hwFR{\tau} - \hwFR{\lambda_{j}} - \hwFR{\lambda_{j-1}}$. 
For fixed $R>0$,
viewing the other variables $(x_i)_{i \neq j}$ as parameters,
on the subset $\Omega \subset \bR^{N-1}$
defined by the conditions $x_1 < \cdots < x_N$
and $\min_{i \neq j, j-1} |x_i - x_{j-1}| > R$, the power series
part of this Frobenius series is locally uniformly $R$-controlled,
and for $0<z<R$ the Frobenius series converges to the function~$F$
on the left hand side.
\end{lem*}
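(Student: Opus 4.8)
The plan is to adapt the proof of Lemma~\ref{lem: analyticity of QG functions} just given, by elaborating the mechanism behind part~(ASY) of Theorem~\ref{thm:properties_of_correlations} from~\cite{KP-conformally_covariant_bdry_correlations} so as to retain the whole Frobenius expansion rather than only its leading term. As recalled in the appendix, $F=\sF[u]$ is a finite linear combination of Coulomb gas integrals $G(x_1,\ldots,x_N)=\idotsint_\Gamma f\,\ud w_1\cdots\ud w_\ell$ of branches of the explicit multivalued integrand $f$ over compact surfaces $\Gamma$. The crucial new input is the shape of these surfaces under the projection hypothesis $u=\QGlongprcan{j-1}{j}{\tau}(u)$: by the correspondence between vectors in tensor products of $\Uqsltwo$-representations and integration cycles established in~\cite{KP-conformally_covariant_bdry_correlations}, one may choose $\Gamma$ (within its homotopy class) so that exactly $m:=(\lambda_{j-1}+\lambda_j-\tau)/2$ of the $\ell$ integration variables run on loops localized near the pair $\{x_{j-1},x_j\}$, while the remaining $\ell-m$ variables and all marked points $x_i$ with $i\neq j-1,j$ stay at distance bounded below by (a fixed fraction of)~$R$ from $x_{j-1}$.

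First I would rescale the $m$ localized variables, writing $w_r=x_{j-1}+z\,\zeta_r$ with $z=x_j-x_{j-1}$, so that the $\zeta_r$ run over $z$-independent loops enclosing $\{0,1\}$, which can be taken inside the unit disk. Substituting into $f$ and extracting every power of $z$ — from $(x_j-x_{j-1})^{2\lambda_{j-1}\lambda_j/\kappa}$, from $(x_{j-1}-w_r)^{-4\lambda_{j-1}/\kappa}$ and $(x_j-w_r)^{-4\lambda_j/\kappa}=z^{-4\lambda_j/\kappa}(1-\zeta_r)^{-4\lambda_j/\kappa}$, from $(w_s-w_r)^{8/\kappa}$ for localized $r,s$, and from the $m$ Jacobian factors $\ud w_r=z\,\ud\zeta_r$ — produces an overall prefactor $z^{\deltasym}$ with $\deltasym=\hwFR{\tau}-\hwFR{\lambda_j}-\hwFR{\lambda_{j-1}}$; this is exactly the computation that yields the exponent in part~(ASY), so I would quote it rather than redo it. What is left, namely the integral of the residual integrand over the fixed $\zeta$-loops and over the $\ell-m$ non-localized $w$'s, depends on $z$ only through factors of the form $(x_i-x_{j-1}-z\zeta_r)^{\bullet}$, $(x_{j-1}+z-x_i)^{\bullet}$, $(x_{j-1}+z-w_s)^{\bullet}$ and $(w_s-x_{j-1}-z\zeta_r)^{\bullet}$, each of which is analytic in $z$ on the disk $|z|<R$, uniformly over the parameters $(x_i)_{i\neq j-1,j}$ in compact subsets of $\Omega$.

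From this point the argument is identical to that for Lemma~\ref{lem: analyticity of QG functions}: the residual integrand is continuous on a compact product of the (fixed) integration surfaces, a closed disk $|z|\le R_0$ for each $R_0<R$, and a compact parameter set, so Cauchy's integral formula bounds its Taylor coefficients in $z$ by $C\,R^{-k}$ locally uniformly, and integrating over the compact surfaces preserves these bounds; the error-term estimate then gives convergence of the resulting power series to $G$ for $0<z<R$, and the locally uniform $R$-control follows by the characterization in Lemma~\ref{lem: operations on R controlled series} and the remark after Definition~\ref{def: controlled power series}. Summing the finitely many contributions, all carrying the identical prefactor $z^{\deltasym}$ (since $\deltasym$ depends only on~$\tau$), yields the asserted Frobenius series for $F$ with locally uniformly $R$-controlled power series part, convergent for $0<z<R$; the identification of the leading coefficient $c_0$ with $\betasym\,\sF[\hat u]$ of Remark~\ref{rmk: leading coefficient of Frobenius series} drops out of the $z\to 0$ limit and matches~(ASY).

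The main obstacle is the bookkeeping in the first two steps: verifying that the integration surfaces can be deformed, without changing their homotopy class, into the localized/non-localized configuration adapted to $\QGlongprcan{j-1}{j}{\tau}$, and confirming that the extracted power of $z$ is exactly $\deltasym$. Both are already implicit in the treatment of part~(ASY) of Theorem~\ref{thm:properties_of_correlations} in~\cite{KP-conformally_covariant_bdry_correlations}, so the genuinely new observation needed is merely that, after rescaling, the residual integrand is honestly analytic in $z$ on the entire disk of radius $R$ (not just near $z=0$); once that is in place, the Cauchy-estimate machinery set up for Lemma~\ref{lem: analyticity of QG functions} applies verbatim.
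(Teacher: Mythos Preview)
Your proposal is correct and follows essentially the same approach as the paper: split the integration surface into $m=(\lambda_{j-1}+\lambda_j-\tau)/2$ localized loops near $\{x_{j-1},x_j\}$ and $\ell-m$ non-localized ones, rescale the localized variables by $z=x_j-x_{j-1}$, extract the overall factor $z^{\deltasym}$, and then run the Cauchy-estimate argument from Lemma~\ref{lem: analyticity of QG functions} on the residual integrand. The paper first translates to $x_{j-1}=0$ to lighten notation, but otherwise the logic is identical.

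One small imprecision: the rescaled loops must enclose both $0$ and $1$, so they cannot lie inside the open unit disk as you state; the paper takes them inside $|t|\le 1+\eps/2$. Correspondingly, a factor such as $(x_i-x_{j-1}-z\zeta_r)^{\bullet}$ is analytic only for $|z|<|x_i-x_{j-1}|/(1+\eps/2)$, not on the full disk $|z|<R$. This does not break your argument, since on a compact subset of $\Omega$ one has $\min_{i\neq j-1,j}|x_i-x_{j-1}|\ge(1+\eps)R$ for some $\eps>0$, and the non-localized contours can be deformed to satisfy the same bound; choosing the $\eps$'s compatibly then recovers $R$-controlledness. This is exactly the endgame the paper spells out, so just adjust that one sentence.
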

\begin{proof}
The idea of the proof is otherwise similar to
that of Lemma~\ref{lem: analyticity of QG functions},
except that a part of the integration surface needs to
be separated from the rest and that part of the surface
also undergoes scaling proportional to 
variable~$x_j-x_{j-1}$ of the Frobenius series.

Again for notational simplicity, we 
suppress the fixed variables~$x_i$, $i \neq j$,
from the notation.
By translation invariance we can moreover
assume~$x_{j-1}=0$, so that the 
Frobenius series variable is simply~$x_j$
and notation with scalings centered 
at~$x_{j-1}$ simplifies.

With the methods of~\cite[Sections 3.4, 4.2, 
and 4.3]{KP-conformally_covariant_bdry_correlations},
the assumption
$u = \QGlongprcan{j-1}{j}{\tau} (u)$ implies that
the function~$x_j \mapsto F(x_j) = \sF[u](\ldots,x_j,\ldots)$
can be written as a finite
linear combination of integrals
\begin{align*}
H(x_j) = \idotsint_{\Gamma'} \bigg(
    \idotsint_{x_j \Gamma_0} f(x_j;w'_1,\ldots,w'_m,
        w_1, \ldots, w_\ell) \; \ud w'_1 \cdots \ud w'_{m}
    \bigg) \ud w_1 \cdots \ud w_{\ell} ,
\end{align*}
where the surface $\Gamma'$ is compact and can be kept
fixed as~$x_j \downarrow 0$,
and where and $x_j \Gamma_0$ stands for a scaling
by a factor~$x_j>0$ of a fixed surface~$\Gamma_0$
(same for all terms in the linear combination
and for all sufficiently small~$x_j>0$) of
dimension $m = \frac{1}{2} 
    \big(\lambda_{j-1} + \lambda_{j} - \tau \big)$,
and the integrand
$f(x_j; w'_1,\ldots,w'_m, w_1, \ldots, w_\ell)$
is as in 
Lemma~\ref{lem: analyticity of QG functions}.
(now all of
$w'_1,\ldots,w'_m, w_1, \ldots, w_\ell$
are in the same role as $w_1, \ldots, w_\ell$ originally).
With a change of variables to unit scale,
$t_r = w'_r / x_j$ for $r=1,\ldots,m$,
both integration surfaces become constant, and the
dependence on~$x_j$ is entirely in the integrand:
we find
\begin{align*}
H(x_j) = \idotsint_{\Gamma'} \bigg( 
    \idotsint_{\Gamma_0} 
        x_j^{m} \, f(x_j ; x_j t_1,\ldots , x_j t_m,
        w_1, \ldots, w_\ell) \; \ud t_1 \cdots \ud t_{m}
    \bigg) \ud w_1 \cdots \ud w_{\ell} .
\end{align*}
We will compare
$f(x_j ; x_j t_1,\ldots , x_j t_m, w_1, \ldots, w_\ell)$ 
with (we omit all of the fixed arguments
$x_i$, $i \neq j$, for brevity)
\begin{align*}
& \hat{f}( \ ; 
    w_1 , \ldots , w_\ell) \\
=\; & \const \prod_{\substack{1 \leq i < i' \leq N :\\ i, i' \neq j}}
        (x_{i'} - x_i)^{\frac{2}{\kappa} \hat{\lambda}_i \hat{\lambda}_{i'}}
    \prod_{\substack{1 \leq i \leq N : \; i \neq j \\
            1 \leq r \leq \ell}}
        (x_i - w_r)^{-\frac{4}{\kappa} \hat{\lambda}_i}
    \prod_{1 \leq r < s \leq \ell}
        (w_s - w_r)^{\frac{8}{\kappa}} ,
\end{align*}
where 
\begin{align*}
\hat{\lambda}_i = \begin{cases}
                \tau & \text{ if $i = j-1$.} \\
                \lambda_i & \text{ if $i \neq j-1, j$,} 
                \end{cases}
\end{align*}
because 
$\hat{f}( \ ; w_1 , \ldots , w_\ell)$ is the integrand
whose integral on~$\Gamma'$ yields the asserted
leading asymptotic coefficient.

The ratio
\[ \frac{f(x_j ; x_j t_1,\ldots , x_j t_m, w_1, \ldots, w_\ell)}%
    {\hat{f}( \ ; w_1 , \ldots , w_\ell)} \]
contains factors (we set~$x_{j-1}=0$)
\begin{align*}
(x_j - 0)^{\frac{2}{\kappa}\lambda_{j-1}\lambda_{j}}
\; = \;\; & x_j^{\frac{2}{\kappa}\lambda_{j-1}\lambda_{j}} \\
(x_j t_a - 0)^{-\frac{4}{\kappa}\lambda_{j-1}}
\; = \;\; & x_j^{-\frac{4}{\kappa}\lambda_{j-1}} \;
    (t_a - 0)^{-\frac{4}{\kappa}\lambda_{j-1}}
    & & \text{for } 1 \leq a \leq m \\
(x_j t_a - x_j)^{-\frac{4}{\kappa}\lambda_{j}}
\; = \;\; & x_j^{-\frac{4}{\kappa}\lambda_{j}} \;
    (t_a - 1)^{-\frac{4}{\kappa}\lambda_{j}}
    & & \text{for } 1 \leq a \leq m \\
(x_j t_b - x_j t_a)^{\frac{8}{\kappa}}
\; = \;\; & x_j^{\frac{8}{\kappa}} \;
    (t_b - t_a)^{\frac{8}{\kappa}}
    & & \text{for } 1 \leq a < b \leq m ,
\end{align*}
out of which we extract in particular the powers 
of the scaling factor~$x_j$, which combined with the
factor~$x_j^m$ from the change of variables produce
the correct overall scaling by
\begin{align*}
x_j^{m + \frac{2}{\kappa}\lambda_{j-1}\lambda_{j}
    - \frac{4}{\kappa} (\lambda_{j-1} + \lambda_{j}) m 
    + \frac{8}{\kappa} m(m-1)/2}
= x_j^{h(\tau) - h(\lambda_{j-1}) - h(\lambda_{j})} 
= x_j^{\Delta} .
\end{align*}
The~$t_a$-dependent factors are yet
to be integrated over~$\Gamma_0$.
The ratio can naturally also be rearranged
(by appropriately splitting the factor
$(w_r-0)^{-\frac{4}{\kappa} \hat{\lambda}_{j-1}}$
in the denominator) to contain factors
\begin{align*}
\left( \frac{x_i - x_j}{x_i-0} \right)^{\frac{2}{\kappa} \lambda_i 
\lambda_j}
\; = \;\; & \left( 1- \frac{x_j}{x_i} \right)^{\frac{2}{\kappa} \lambda_i 
\lambda_j}
= \; 1 + \sum_{k=1}^\infty c_k(x_i) \; x_j^k
    & & \text{for } i \notin \set{j-1,j} \\
\left( \frac{x_i - x_j t_a}{x_i-0} \right)^{-\frac{4}{\kappa} \lambda_i }
\; = \;\; & \left( 1- \frac{x_j t_a}{x_i} \right)^{-\frac{4}{\kappa} \lambda_i }
= \; 1 + \sum_{k=1}^\infty c_k(x_i) \; t_a^k \; x_j^k
    & & \text{for } i \notin \set{j-1,j} \text{, }
    1 \leq a \leq m \\
\left( \frac{w_r - x_j}{w_r-0} \right)^{-\frac{4}{\kappa} \lambda_j}
\; = \;\; & \left( 1- \frac{x_j}{w_r} \right)^{-\frac{4}{\kappa} \lambda_j}
= \; 1 + \sum_{k=1}^\infty c_k(w_r) \; x_j^k
    & & \text{for } 1 \leq r \leq \ell \\
\left( \frac{w_r - x_j t_a}{w_r-0} \right)^{\frac{8}{\kappa}}
\; = \;\; & \left( 1- \frac{x_j t_a}{w_r} \right)^{\frac{8}{\kappa}}
= \; 1 + \sum_{k=1}^\infty c_k(w_r) \; t_a^k \; x_j^k
    & & \text{for } 1 \leq r \leq \ell \text{, }
    1 \leq a \leq m \\
\end{align*}
which can be expanded
as power series in~$x_j$ with unit constant coefficient,
all having a radius of convergence at least a fixed positive
multiple of 
the distance~$R'$ of $x_{j-1}$ to the contours in~$\Gamma'$,
when $(t_1 , \ldots, t_m)$ lies on 
the compact set~$\Gamma_0$.
All remaining factors in the ratio cancel directly.
    
We again conclude that after extracting the overall
scaling factor~$x_j^{\Delta}$, the integrand
in~$H(x_j)$ admits a power series expansion in~$x_j$,
with the $k$:th coefficient bounded 
by~$p(k) (C R')^{-k}$, where $p(k)$ is a polynomial 
and $C>0$ is a fixed constant. By arguments similar
to the previous case, a term by term integration
of this  yields a power series
for~$x_j^{-\Delta} \, H(x_j)$.
The proof of the existence of a convergent Frobenius
series expansion with some positive radius of convergence
is complete once one notices
that the integral of the constant coefficient
of the power series factorizes
to integrals over~$\Gamma_0$ and $\Gamma'$,
the former yielding the beta-function coefficient~$B$
and the latter yielding the function with one fewer
variable and 
labels~$(\hat{\lambda}_i)_{1 \leq i \leq N : \, i \neq j}$.

The remaining part of the assertion is 
the uniform~$R$-controlledness on compact subsets
of the domain determined by $\min_{i \neq j, j-1} |x_i - x_{j-1}| > R$.
For any such compact subset, from the start we can choose
$\Gamma'$ so that for some~$\eps>0$, on $\Gamma'$
we have $|w_r| \geq (1 + \eps) R$ for all~$r$, i.e., $R' \ge (1+\eps)R$.
Moreover, $\Gamma_0$ can be chosen so that on~$\Gamma_0$
we have $|t_a| \le 1 + \eps/2$ for all~$a$. With such choices,
the constant above is $C \ge \frac{1}{1+\eps/2}$, and the estimates
indeed yield uniform $R$-controlledness.
\end{proof}

\section{Construction of the intertwining operators for Verma modules}%
\label{app: construction for Verma modules}
Our goal is to show the following theorem (Theorem~\ref{thm:intertw_Vermas}):
\begin{thm}
For $\hmid,\hin ,\hout \in\C$,
\begin{equation*}
	\dim \spintertw {\voaFusion{\VermaCH{c}{\hmid}}{\VermaCH{c}{\hin}}{\VermaCH{c}{\hout}^{\prime}}}=1.
\end{equation*}
\end{thm}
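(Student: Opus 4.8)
The plan is to prove the theorem in two halves: an upper bound $\dim \spintertw \voaFusion{\VermaCH{c}{\hmid}}{\VermaCH{c}{\hin}}{\VermaCH{c}{\hout}^{\prime}} \le 1$ and existence of a nonzero intertwining operator. The upper bound is immediate: $\VermaCH{c}{\hmid}$, $\VermaCH{c}{\hin}$, and $\big( \VermaCH{c}{\hout}^{\prime}\big)' \cong \VermaCH{c}{\hout}$ are all highest weight modules (the contragredient of a Verma module has the Verma module as a quotient, hence is itself a highest weight module of the same highest weight), so Proposition~\ref{prop: uniqueness of intertwiners up to constant} applies verbatim and gives $\dim \le 1$ together with the fact that an intertwining operator is determined by its initial term, which by Lemma~\ref{lem: the initial term is a power} is $A\,\apvar^{\hout-\hmid-\hin}$ for some $A \in \bC$.

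The substance is therefore the construction of a nonzero intertwining operator, which is what Appendix~\ref{app: construction for Verma modules} is devoted to. The approach I would take is a direct recursive construction dictated by the formulas of Corollary~\ref{cor:formula_intertw}. Normalize the initial term to $\braket{\hwvout' , \intertw(\hwvmid , \apvar)\hwvin} = \apvar^{\Delta}$ with $\Delta = \hout - \hmid - \hin$. Using the PBW bases~\eqref{eq: basis for Verma module} of the three Verma modules, one wants to \emph{define} all matrix elements $\braket{\Mvecout' , \intertw(\Mvecmid , \apvar)\Mvecin}$ by induction on the total PBW word length, using the three reduction formulas in the proof of Proposition~\ref{prop: uniqueness of intertwiners up to constant} (for $L_{-n}$ acting on $\Mvecout'$, on $\Mvecin$, and on $\Mvecmid$) as \emph{defining} recursions rather than as consistency checks. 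Since a Verma module is a \emph{free} $\UEA(\vir_{<0})$-module on the highest weight vector, there is no relation among the PBW generators to obstruct this, so the matrix elements are well defined; the key point is to verify that the resulting linear map $\intertw(\cdot,\apvar)\colon \VermaCH{c}{\hmid} \to \Hom(\VermaCH{c}{\hin},\VermaCH{c}{\hout}^{\prime})\{\apvar\}$ genuinely satisfies the Jacobi identity~\eqref{eq:intertw_assoc} and the translation property~\eqref{eq:intertw_translation}. The translation property is built into the recursion via Corollary~\ref{cor:formula_intertw}(3). For the Jacobi identity, the clean way is to note that the content of~\eqref{eq:intertw_assoc} is equivalent (Lemma~\ref{lem: intertwining property with rational functions}) to the family of residue identities, and that these residue identities are themselves generated — by a second induction on PBW length — from the three reduction formulas that were used to define $\intertw$; one checks the base case (the action on highest weight vectors, where the $L_n$-commutation relations and the Witt-type differential operators of the proof of Lemma~\ref{lem: the initial term is a power} make everything explicit) and then propagates, using the Virasoro relations $[L_m,L_n]=(m-n)L_{m+n}$ to show that the two a priori different reductions of a word $L_{-m}L_{-n}(\cdots)$ agree.

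Concretely I would organize it as: (i) fix notation and the normalization of the initial term; (ii) write the three recursions from Corollary~\ref{cor:formula_intertw}(1)--(3), rephrased so that the left-hand side has strictly larger PBW length than every term on the right; (iii) define $\intertw$ on PBW basis vectors by simultaneous induction on $(p_1,p_2,p_3)$, where $p_1,p_2,p_3$ are the word lengths in $\VermaCH{c}{\hout}^{\prime}, \VermaCH{c}{\hmid}, \VermaCH{c}{\hin}$ — well-definedness being free because Verma modules have no PBW relations; (iv) prove by induction that the residue identity of Lemma~\ref{lem: intertwining property with rational functions} holds for all $\Vvec$ ranging over a PBW basis of $\voaV_c$ (equivalently over Virasoro modes $L_n$, since those modes generate $\voaV_c$) and all basis vectors $\Mvec$, which is equivalent to the Jacobi identity; (v) note the translation property is the $n=-1$ instance already used in the definition. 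The main obstacle is step (iv): one must check that the recursively defined matrix elements are consistent with the Jacobi identity, i.e., that rewriting a vector such as $L_{-m}L_{-n}\hwvmid$ via two different orders of the reduction formulas produces the same formal series — this amounts to verifying that the Virasoro commutator $[L_{-m},L_{-n}]=(m-n)L_{-m-n}$ is respected by the recursion, which is a finite but somewhat intricate bookkeeping of the binomial-coefficient sums in Corollary~\ref{cor:formula_intertw}. This is exactly the kind of computation that the paper signals ``is a particular case of a more general one in~\cite{Li99}'' and which, for the Verma-module case, can be carried out by hand.
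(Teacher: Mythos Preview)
Your proposal is correct and takes a different route from Appendix~\ref{app: construction for Verma modules}. (One minor quibble: the parenthetical about contragredients is misstated---the contragredient of a Verma module is generally \emph{not} a highest weight module---but it is not needed, since what Proposition~\ref{prop: uniqueness of intertwiners up to constant} actually requires is that $(\VermaCH{c}{\hout}')' \cong \VermaCH{c}{\hout}$ be highest weight, which is immediate.) For existence the paper follows Li's method: it introduces the affinization $\modMmidaff = \bC[t^{\pm 1}] \otimes \modMmid$ with a tailored central-charge-zero Virasoro action (Proposition~\ref{prop:Virasoro_action_modMmidaff}) that packages all modes $(\Mvecmid)_{(m)}$ at once, builds a single bilinear functional $\cbaff$ in stages through the three modules, proves $\theta$-invariance (Proposition~\ref{prop:theta-invariance_cbaff}) as the key structural property, and then derives the Jacobi identity from separate commutativity and associativity lemmas rather than checking it directly. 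Your route instead defines matrix elements on PBW basis vectors via the reduction formulas of Corollary~\ref{cor:formula_intertw} and verifies the axioms by a second induction. Both work; the payoff of the paper's machinery sits exactly at your step~(iv)---the Virasoro-compatibility check you correctly flag as the main obstacle---which the affinization together with the commutativity/associativity split absorbs more structurally than a direct case analysis of binomial sums would.
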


An intertwining operator in this case is unique up to multiplicative constants if exists (Proposition~\ref{prop: uniqueness of intertwiners up to constant}).
Hence, we only have to prove the existence.
We apply the construction in~\cite{Li99} to the generic Virasoro VOA to obtain an intertwining operator among Verma modules; we include this so as
to be self-contained, and also because we believe that the concrete
case of the Virasoro VOA is instructive.
The procedure is divided into two parts: in the former part, we will construct a linear map
\begin{equation}
\label{eq:intertw_op_meanttobe}
	\intertw\colon \VermaCH{c}{\hmid}\to \Hom (\VermaCH{c}{\hin},\VermaCH{c}{\hout}^{\prime})\{\apvar\},
\end{equation}
and in the latter part, we will show that this linear map $\intertw$ is indeed an intertwining operator of the desired type.

\subsection{Construction of a linear map}
For convenience, we write $\modMin:=\VermaCH{c}{\hin}$, $\modMmid:=\VermaCH{c}{\hmid}$ and $\modMout:=\VermaCH{c}{\hout}$.
For each $k\in \Znn$, we also write $\modMin(k):=(\modMin)_{(\hin+k)}$, $\modMmid(k):=(\modMmid)_{(\hmid+k)}$ and $\modMout(k):=(\modMout)_{(\hout+k)}$
for the eigenspaces of~$L_{0}$.

Before constructing the linear map~\eqref{eq:intertw_op_meanttobe},
it is instructive to observe anticipated properties.
First of all, defining a linear map~\eqref{eq:intertw_op_meanttobe}
is equivalent to defining its matrix elements
\begin{equation*}
	\modMout \times (\modMmid\otimes \modMin) \to \bC\{\apvar\};\quad (\Mvecout, \Mvecmid\otimes\Mvecin)\mapsto \braket{\Mvecout,\intertw (\Mvecmid,\apvar)\Mvecin}.
\end{equation*}

Second, if $\intertw$ gives an intertwining operators 
of the desired type, it is expanded as
\begin{equation*}
	\intertw (\Mvecmid,\apvar)=\sum_{m\in\Z}(\Mvecmid)_{(m)}\apvar^{\Delta-m-1},\quad \Delta=\hout-\hmid-\hin .
\end{equation*}
Hence, for our purpose of constructing an intertwining operator,
it is natural to specify the linear map~(\ref{eq:intertw_op_meanttobe})
in terms of the family of infinitely many bilinear functionals
\begin{equation*}
	\modMout \times (\modMmid\otimes\modMin) \to \C;\quad (\Mvecout,\Mvecmid\otimes\Mvecin)\mapsto \braket{\Mvecout,(\Mvecmid)_{(m)}\Mvecin}
\end{equation*}
labeled by $m\in \Z$.
We will actually incorporate these bilinear functionals, by introducing the affinization $\modMmidaff:=\C [t^{\pm1 }]\otimes \modMmid$,
into a single bilinear functional
\begin{equation*}
	\modMout\times (\modMmidaff\otimes \modMin)\to \C ;\quad (\Mvecout, (t^{m}\otimes\Mvecmid)\otimes \Mvecin)\mapsto \braket{\Mvecout,(\Mvecmid)_{(m)}\Mvecin}.
\end{equation*}

Finally, as we have seen in Proposition~\ref{prop: uniqueness of intertwiners up to constant},
an intertwining operator among highest weight modules is determined uniquely by the initial term.
Therefore, the desired bilinear functional should be determined recursively by the value at $(\hwv{c,\hout}, (t^{-1}\otimes \hwv{c,\hmid})\otimes\hwv{c,\hin})$.

\subsubsection*{Step 1}
The first step is to fix an initial term.
Taking a constant $B\in \C\backslash\{0\}$,
we define a bilinear functional\footnote{%
At various stages of the construction, the superscripts to~$\cb$
are meant to indicate in which of the modules
$\modMin$, $\modMmid$, $\modMout$
we restrict attention to only the one-dimensional subspaces
$\modMin(0)$, $\modMmid(0)$, $\modMout(0)$
of highest weight vectors.}
\begin{align*}
& \cb^{(\outsym, \midsym, \insym)}\colon \modMout (0)\times (\modMmid(0)\otimes \modMin (0))\to\C 
\end{align*}
by $\cb^{(\outsym, \midsym, \insym)}
    \left(\hwv{c,\hout}, \hwv{c,\hmid}\otimes\hwv{c,\hin}\right) := B$.

\subsubsection*{Step 2}
We extend the bilinear functional $\cb^{(\outsym, \midsym, \insym)}$
so that an arbitrary vector from $\modMmid$ can be inserted.

\begin{prop}
There exists a unique bilinear functional
\begin{align*}
\cb^{(\outsym, \insym)}\colon \modMout (0)\times (\modMmid\otimes \modMin (0))\to\C 
\end{align*}
which coincides with~$\cb^{(\outsym, \midsym, \insym)}$
on the subspace
\begin{align*}
\modMout (0)\times (\modMmid(0)\otimes \modMin (0)) 
\; \subset \; \modMout (0)\times (\modMmid\otimes \modMin (0)) 
\end{align*}
and which has the property that
\begin{equation*}
\cb^{(\outsym, \insym)}(\hwv{c,\hout}, L_{-n}\Mvecmid \otimes \hwv{c,\hin})
= (-1)^{-n+1}(-h-n\hin +\hout) \cb^{(\outsym, \insym)}(\hwv{c,\hout}, \Mvecmid \otimes \hwv{c,\hin}),
\end{equation*}
for any $n>0$ and $\Mvecmid \in (\modMmid)_{(h)}$. 

\end{prop}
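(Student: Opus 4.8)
The plan is to establish uniqueness first (which is immediate) and then existence by a descent argument from the free algebra. For uniqueness: by the PBW theorem $\modMmid = \VermaCH{c}{\hmid}$ is spanned by the vectors $L_{-n_{k}} \cdots L_{-n_{1}} \hwv{c,\hmid}$ with $0 < n_{1} \leq \cdots \leq n_{k}$, so iterating the asserted recursion relation $k$ times reduces $\cb^{(\outsym,\insym)}\big( \hwv{c,\hout}, L_{-n_{k}} \cdots L_{-n_{1}} \hwv{c,\hmid} \otimes \hwv{c,\hin} \big)$ to $\cb^{(\outsym,\midsym,\insym)}(\hwv{c,\hout}, \hwv{c,\hmid} \otimes \hwv{c,\hin}) = B$; concretely it forces
\begin{align*}
\cb^{(\outsym,\insym)}\big( \hwv{c,\hout}, L_{-n_{k}} \cdots L_{-n_{1}} \hwv{c,\hmid} \otimes \hwv{c,\hin} \big)
= B \prod_{j=1}^{k} (-1)^{-n_{j}+1} \Big( \hout - \hmid - \textstyle\sum_{i<j} n_{i} - n_{j}\hin \Big) .
\end{align*}
Since $\modMout(0) = \C\hwv{c,\hout}$ and $\modMin(0) = \C\hwv{c,\hin}$ are one-dimensional, this determines $\cb^{(\outsym,\insym)}$ completely, and the real content is to show that this prescription is consistent, i.e.\ that a functional with these values actually exists.

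For existence I would set $\phi(n,h) := (-1)^{-n+1}(\hout - h - n\hin)$ and realize the candidate functional on the tensor algebra $T(\vir_{<0})$ of the vector space $\vir_{<0}$, writing $\ell_{-n}$ for the generator corresponding to $L_{-n}$ and grading monomials by their total weight $\deg(\ell_{-n_{k}} \otimes \cdots \otimes \ell_{-n_{1}}) = n_{1}+\cdots+n_{k}$. Define $\Phi \colon T(\vir_{<0}) \to \C$ on monomials by
\begin{align*}
\Phi\big( \ell_{-n_{k}} \otimes \cdots \otimes \ell_{-n_{1}} \big)
:= B \prod_{j=1}^{k} \phi\Big( n_{j}, \; \hmid + \textstyle\sum_{i<j} n_{i} \Big)
\end{align*}
(so $\Phi$ of the empty monomial equals $B$), extended linearly. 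The key structural property, read off directly from the definition, is the left-multiplicativity
\begin{align*}
\Phi\big( \ell_{-n} \otimes w \big) = \phi\big( n, \hmid + \deg w \big) \, \Phi(w)
\end{align*}
for all $w$. Recalling that $\UEA(\vir_{<0}) = T(\vir_{<0})/I$ with $I$ the two-sided ideal generated by $\ell_{-m}\otimes\ell_{-n} - \ell_{-n}\otimes\ell_{-m} - (n-m)\ell_{-m-n}$ (no central term, since $m+n>0$), and that the three terms of such a generator multiplied on the right by any monomial $b$ all have the same weight, the left-multiplicativity reduces the vanishing $\Phi(a \otimes r \otimes b) = 0$ to the single scalar identity
\begin{align*}
\phi(m, h+n)\, \phi(n,h) - \phi(n, h+m)\, \phi(m,h) = (n-m)\, \phi(m+n, h) , \qquad h = \hmid + \deg b .
\end{align*}

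I expect this polynomial identity to be the only real step; it is a routine but slightly delicate computation with the quadratic polynomial $\phi(\cdot, h)$, cleanest after setting $A = \hout - h$ and comparing the coefficients of $A$ and $\hin$ on the two sides. Granting it, $\Phi$ descends to a linear map $\bar\Phi \colon \UEA(\vir_{<0}) \to \C$; transporting $\bar\Phi$ along the PBW vector space isomorphism $\UEA(\vir_{<0}) \cong \modMmid$, $u \mapsto u\hwv{c,\hmid}$, and using the one-dimensionality of $\modMout(0)$ and $\modMin(0)$ to fill in the outer slots, defines the bilinear functional $\cb^{(\outsym,\insym)}$. It restricts to $\cb^{(\outsym,\midsym,\insym)}$ because $\bar\Phi(\hwv{c,\hmid})$ is the empty-monomial value $B$, and the required recursion relation for a general $\Mvecmid \in (\modMmid)_{(h)}$ is exactly the left-multiplicativity of $\Phi$, since $\phi(n,h) = (-1)^{-n+1}(-h-n\hin+\hout)$. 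This settles both existence and uniqueness of $\cb^{(\outsym,\insym)}$.
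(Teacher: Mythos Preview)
Your proof is correct and follows essentially the same approach as the paper's. Both arguments reduce existence to the single scalar identity
\[
\phi(m,h+n)\,\phi(n,h) - \phi(n,h+m)\,\phi(m,h) = (n-m)\,\phi(m+n,h),
\]
which is exactly the paper's direct check that the recursion respects the Virasoro bracket; the only difference is packaging---the paper phrases it as an induction on the PBW filtration of~$\modMmid$, whereas you define $\Phi$ on the full tensor algebra $T(\vir_{<0})$ and verify it annihilates the defining ideal.
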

\begin{proof}
The proof is by induction with respect the PBW
filtration~$(\PBWfil{p}\modMmid)_{p \in \bN}$ of~$\modMmid$.
We recursively define
\begin{align*}
\cb^{(\outsym, \insym)}_{p}\colon
    \modMout (0)\times (\PBWfil{p}\modMmid \otimes \modMin (0))\to\C
\end{align*}
for~$p \in \bN$, and we show consistency and the desired
property.

The zeroth level of the filtration is just the highest
weight subspace, $\PBWfil{0}\modMmid = \modMmid(0)$, so the
required coincidence with $\cb^{(\outsym, \midsym, \insym)}$
fully determines~$\cb^{(\outsym, \insym)}_{0}$, and provides the
base case for the recursion.

Suppose then 
that $\cb^{(\outsym, \insym)}_{p}\colon \modMout (0)\times (\PBWfil{p}\modMmid \otimes \modMin (0))\to\C$
are well-defined and consistent for $p \le r-1$.
We want to
define $\cb^{(\outsym, \insym)}_{r}\colon \modMout (0)\times (\PBWfil{r}\modMmid \otimes \modMin (0))\to\C$ according to the required property, by setting
\begin{equation*}
	\cb^{(\outsym, \insym)}_{r}(\hwv{c,\hout}, L_{-n}\Mvecmid \otimes \hwv{c,\hin}):=(-1)^{-n+1}(-h-n\hin +\hout)\cb^{(\outsym, \insym)}_{r-1}(\hwv{c,\hout}, \Mvecmid \otimes \hwv{c,\hin}) 
\end{equation*}
for $n>0$ and $\Mvecmid \in \PBWfil{r-1}\modMmid \cap (\modMmid)_{(h)}$.
For well-definedness,
it suffices to show that
\begin{equation*}
	\cb^{(\outsym, \insym)}_{r}\left(\hwv{c,\hout}, (L_{-m}L_{-n}-L_{-n}L_{-m}-[L_{-m},L_{-n}])\Mvecmid \otimes \hwv{c,\hin}\right)=0,
\end{equation*}
for any $m,n>0$ and $\Mvecmid \in \PBWfil{r-2}\modMmid$.
We may assume that $\Mvecmid\in (\modMmid)_{(h)}$.
Note that
\begin{align*}
	&\cb^{(\outsym, \insym)}_{r}\left(\hwv{c,\hout}, L_{-m}L_{-n}\Mvecmid \otimes \hwv{c,\hin}\right)\\
	&=(-1)^{-m-n}\left(-h-n-m\hin+\hout\right)\left(-h-n\hin+\hout\right)\cb^{(\outsym, \insym)}_{r-2}(\hwv{c,\hout},\Mvecmid\otimes \hwv{c,\hin}).
\end{align*}
Hence we have
\begin{align*}
	&\cb^{(\outsym, \insym)}_{r}\left(\hwv{c,\hout}, (L_{-m}L_{-n}-L_{-n}L_{-m})\Mvecmid \otimes \hwv{c,\hin}\right) \\
	&=(-1)^{-m-n}\left((m-n)(-h+\hout)-(m^{2}-n^{2})\hin \right)\cb^{(\outsym, \insym)}_{r-2}(\hwv{c,\hout}, \Mvecmid \otimes \hwv{c,\hin}) \\
	&=(-m+n)(-1)^{-m-n+1}\left(-h-(m+n)\hin+\hout\right)\cb^{(\outsym, \insym)}_{r-2}(\hwv{c,\hout}, \Mvecmid \otimes \hwv{c,\hin})  \\
	&=(-m+n)\cb^{(\outsym, \insym)}_{r-1}(\hwv{c,\hout}, L_{-m-n}\Mvecmid \otimes \hwv{c,\hin}).
\end{align*}
Therefore $\cb^{(\outsym, \insym)}_{r}$ is well-defined.
Consistency is clear from the construction: for $s \le r$
the map~$\cb^{(\outsym, \insym)}_{r}$
coincides with~$\cb^{(\outsym, \insym)}_{s}$ on the
subspace $\modMout (0)\times (\PBWfil{s}\modMmid \otimes \modMin (0))$.
\end{proof}
\subsubsection*{Step 3}
As we anticipated, we now consider
the affinization of $\modMmid$, $\modMmidaff:=\C[t^{\pm 1}]\otimes \modMmid$.
Let us also introduce a $\Z$-grading of it as
\begin{equation*}
	\deg (t^{n}\otimes \Mvecmid):=k-n-1\quad \mbox{when }\Mvecmid\in \modMmid (k),\quad n\in \Z.
\end{equation*}

We extend the bilinear functional $\cb^{(\outsym, \insym)}$ to
\begin{align*}
\cbaff^{(\outsym, \insym)}\colon \modMout (0) \times (\modMmidaff \otimes \modMin (0))\to\C
\end{align*}
by
\begin{align}\label{eq: def cb out in affinized}
\cbaff^{(\outsym, \insym)}\left(\hwv{c,\hout}\otimes (t^{n}\otimes \Mvecmid)\otimes \hwv{c,\hin}\right)
:=\delta_{k-n-1,0}\cb^{(\outsym, \insym)}(\hwv{c,\hout}\otimes \Mvecmid \otimes \hwv{c,\hin})
\end{align}
for $n\in\Z$ and $\Mvecmid \in \modMmid (k)$.
In particular, $\cbaff^{(\outsym, \insym)}$ vanishes unless $t^{n}\otimes \Mvecmid\in \modMmidaff$ is of degree $0$.

\subsubsection*{Step 4}
The following fact will be used to
extend the bilinear functional $\cbaff^{(\outsym, \insym)}$
so that arbitrary vectors from $\modMin$ can be inserted.
\begin{prop}
\label{prop:Virasoro_action_modMmidaff}
For $m,n\in \Z$, and $\Mvecmid \in \modMmid$, we set
\begin{equation*}
	L_{m}(t^{n}\otimes \Mvecmid):=\sum_{k=0}^{\infty}\binom{m+1}{k}t^{m+n+1-k}\otimes L_{k-1}\Mvecmid, \quad C(t^{n}\otimes \Mvecmid):=0.
\end{equation*}
Then, this gives a representation of the Virasoro algebra
of central charge~$0$ on~$\modMmidaff$.
Furthermore, $L_{m}$ 
has degree~$-m$, i.e., when $\Mvecmid\in\modMmid$ is homogeneous, we have
\begin{equation*}
	\deg (L_{m}(t^{n}\otimes \Mvecmid))=\deg (t^{n}\otimes \Mvecmid)-m.
\end{equation*}
\end{prop}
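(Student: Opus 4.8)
The plan is to verify that the assignment sending each Virasoro generator $L_m$ to the displayed operator on $\modMmidaff$ and sending $C$ to $0$ respects the defining brackets of $\vir$. Since $C$ goes to $0$, the relations involving $C$ are automatic, so the only substantive thing to check is $[L_m,L_{m'}] = (m-m')\,L_{m+m'}$ on $\modMmidaff$, with the central term having to cancel on its own. Before that I would record well-definedness and the degree count together: for $\Mvecmid\in\modMmid(j)$ one has $L_{k-1}\Mvecmid\in\modMmid(j-k+1)$, which is zero once $k>j+1$ (Verma modules are $\Znn$-graded), so each $L_m(t^n\otimes\Mvecmid)$ is a finite sum and $L_m$ is a genuine linear operator; moreover the summand $t^{m+n+1-k}\otimes L_{k-1}\Mvecmid$ has degree $(j-k+1)-(m+n+1-k)-1 = (j-n-1)-m = \deg(t^n\otimes\Mvecmid)-m$ for every $k$, which is exactly the asserted homogeneity of $L_m$.

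For the bracket relation I would apply the definition twice to get
\[
[L_m,L_{m'}](t^n\otimes\Mvecmid)
= \sum_{k,k'\ge 0}\binom{m+1}{k}\binom{m'+1}{k'}\,
   t^{\,m+m'+n+2-k-k'}\otimes\big(L_{k-1}L_{k'-1}-L_{k'-1}L_{k-1}\big)\Mvecmid ,
\]
and then substitute the Virasoro relation of $\modMmid$, namely $[L_{k-1},L_{k'-1}] = (k-k')L_{k+k'-2} + \frac{(k-1)^3-(k-1)}{12}\,\delta_{k+k'-2,0}\,c$. The crucial observation is that the central piece is supported on $k+k'=2$, i.e.\ on $(k,k')\in\{(0,2),(1,1),(2,0)\}$, where $k-1\in\{-1,0,1\}$ and hence $(k-1)^3-(k-1)=0$ in every case; so the central term drops out entirely, which is precisely why the induced action has central charge $0$ rather than $c$. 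What remains is then a purely combinatorial matching: comparing the coefficient of each power $t^a$ (equivalently, grouping by $j$ with $k+k'=j+1$, which also makes $L_{k+k'-2}=L_{j-1}$ and the $t$-exponents agree with those in $(m-m')L_{m+m'}(t^n\otimes\Mvecmid)$) reduces the claim to the identity
\[
\sum_{k=0}^{j+1}\binom{m+1}{k}\binom{m'+1}{j+1-k}\,(2k-j-1) = (m-m')\binom{m+m'+1}{j}
\qquad (j\ge 0).
\]
I would prove this by writing $2k-j-1 = 2k-(j+1)$, using $k\binom{m+1}{k}=(m+1)\binom{m}{k-1}$ together with Chu--Vandermonde $\sum_{\ell}\binom{a}{\ell}\binom{b}{N-\ell}=\binom{a+b}{N}$ (a polynomial identity in $a,b$, hence valid for the possibly negative arguments occurring here), and the relation $(j+1)\binom{m+m'+2}{j+1}=(m+m'+2)\binom{m+m'+1}{j}$; the terms recombine to exactly $(m-m')\binom{m+m'+1}{j}$.

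Once the bracket relation holds on generators and $C$ maps to $0$, the assignment extends linearly to a Lie algebra homomorphism $\vir\to\End(\modMmidaff)$, i.e.\ a representation of central charge $0$; the degree statement has already been obtained above. I expect the main obstacle to be purely bookkeeping rather than conceptual: one must keep track consistently of the index shifts in the double sum, remember that $\binom{m+1}{k}$ is the polynomial binomial coefficient (so it need not vanish for $k>m+1$ when $m+1<0$, and finiteness of the sums comes instead from $L_{k-1}\Mvecmid=0$ for large $k$), and check that the central contribution vanishes for the reason above; none of these is deep, but it is where sign and index errors would most easily slip in.
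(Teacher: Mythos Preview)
Your proof is correct and follows essentially the same route as the paper: compute the commutator as a double sum, substitute the Virasoro relation in $\modMmid$, and reduce to the same Vandermonde-type binomial identity, which the paper proves via the identical manipulation $l\binom{m+1}{l}=(m+1)\binom{m}{l-1}$ followed by Chu--Vandermonde. You are in fact more explicit than the paper in two places---the well-definedness (finiteness of the sum on homogeneous vectors) and the vanishing of the central contribution supported on $k+k'=2$---both of which the paper leaves implicit.
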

\begin{proof}
Note that, for any $m,n,p\in\Z$ and $\Mvecmid \in \modMmid$,
\begin{align*}
	L_{m}L_{n}(t^{p}\otimes \Mvecmid)=\sum_{k,l=0}^{\infty}\binom{n+1}{k}\binom{m+1}{l}t^{m+n+p+2-k-l}\otimes L_{l-1}L_{k-1}\Mvecmid.
\end{align*}
Therefore,
\begin{align*}
	&(L_{m}L_{n}-L_{n}L_{m})(t^{p}\otimes \Mvecmid) \\
	&=\sum_{k,l=0}^{\infty}\binom{n+1}{k}\binom{m+1}{l}t^{m+n+p+2-k-l}\otimes [L_{l-1},L_{k-1}]\Mvecmid \\
	&=\sum_{k,l=0}^{\infty}\binom{n+1}{k}\binom{m+1}{l}(l-k)t^{m+n+p+2-k-l}\otimes L_{l+k-2}\Mvecmid  \\
	&=\sum_{k=0}^{\infty}\sum_{l=0}^{k}\binom{n+1}{k-l}\binom{m+1}{l}(l-(k-l))t^{m+n+p+2-k}\otimes L_{k-2}\Mvecmid
\end{align*}
Here, we use the following identity:
\begin{align*}
	&\sum_{l=0}^{k}\binom{n+1}{k-l}\binom{m+1}{l}(l-(k-l)) \\
	&=(m+1)\sum_{l=1}^{k}\binom{n+1}{k-l}\binom{m}{l-1}-(n+1)\sum_{l=0}^{k-1}\binom{n}{k-l-1}\binom{m+1}{l} \\
	&=(m-n)\binom{m+n+1}{k-1}.
\end{align*}
Hence, we have
\begin{align*}
	&(L_{m}L_{n}-L_{n}L_{m})(t^{p}\otimes \Mvecmid) \\
	&=(m-n)\sum_{k=0}^{\infty}\binom{m+n+1}{k}t^{m+n+p+1-k}\otimes L_{k-1}\Mvecmid \\
	&=[L_{m},L_{n}](t^{p}\otimes \Mvecmid),
\end{align*}
which implies that the action defines a representation of the Virasoro algebra of central charge $0$.
The latter property regarding the degree is obvious from the definition.
\end{proof}

Now we define the bilinear functional $\cbaff^{(\outsym)}\colon \modMout (0)\times (\modMmidaff \otimes \modMin) \to\C$ by
\begin{align}\label{eq: def cb out affinized}
\cbaff^{(\outsym)}\left(\hwv{c,\hout}, (t^{n}\otimes \Mvecmid)\otimes y\hwv{c,\hin}\right)
:=\cbaff^{(\outsym, \insym)}(\hwv{c,\hout}, \sigma (y)(t^{n}\otimes \Mvecmid )\otimes \hwv{c,\hin})
\end{align}
for $n\in\Z$, $\Mvecmid \in \modMmid$ and $y\in \UEA(\vir_{<0})$, where $\sigma$ is the anti-involution of $\UEA(\vir)$ defined by $\sigma (X):=-X$, $X\in \vir$.

\subsubsection*{Step 5}
Finally we extend the bilinear functional to the whole space $\modMout \times (\modMmidaff\otimes\modMin)$.
For that purpose, we define an action of the Virasoro algebra on $\modMmidaff\otimes \modMin$ by the so-called coproduct action:
\begin{equation*}
	X((t^{m}\otimes\Mvecmid)\otimes\Mvecin):=X(t^{m}\otimes\Mvecmid)\otimes \Mvecin+(t^{m}\otimes\Mvecmid)\otimes X\Mvecin,\quad X\in \vir.
\end{equation*}
Then $\modMmidaff\otimes \modMin$
becomes a representation of the Virasoro algebra of central charge~$c$.

Note that the tensor product $\modMmidaff\otimes \modMin$ is naturally $\Z$-graded so that
\begin{align*}
	\modMmidaff\otimes \modMin&=\bigoplus_{n\in\Z}\left(\modMmidaff\otimes \modMin\right)(n), \\
	\left(\modMmidaff\otimes \modMin\right)(n)&=\bigoplus_{k,r\in\N}\left(\C t^{k+r-n-1}\otimes \modMmid (k)\right)\otimes \modMin (r).
\end{align*}

We extend $\cbaff^{(\outsym)}$ to a bilinear functional $\cbaff\colon \modMout \times  (\modMmidaff \otimes \modMin) \to\C$ by
\begin{equation*}
	\cbaff (y\hwv{c,\hout}, \Mvec ):=\cbaff^{(\outsym)}(\hwv{c,\hout}, \theta (y)\Mvec), \quad y\in \UEA(\vir_{<0}), \quad \Mvec \in \modMmidaff\otimes\modMin ,
\end{equation*}
where $\theta$ is the anti-involution of $\UEA (\vir)$ defined by $\theta (L_{n})=L_{-n}$, $n\in\Z$ and $\theta (C)=C$.

Now we define the linear map (\ref{eq:intertw_op_meanttobe}) that is meant to be an intertwining operator of the desired type.
\begin{defn}
\label{defn:intertw_op_meanttobe}
Define a linear map
\begin{equation*}
	\intertw \colon \modMmid\to \Hom (\modMin,\modMout^{\prime})\{\apvar\}
\end{equation*}
by the matrix elements
\begin{equation*}
	\braket{\Mvecout,\intertw (\Mvecmid,\apvar)\Mvecin}:=\sum_{m\in \Z}\cbaff (\Mvecout, (t^{m}\otimes \Mvecmid)\otimes\Mvecin)\apvar^{\Delta-m-1},
\end{equation*}
where $\Mvecin\in \modMin$, $\Mvecmid\in \modMmid$ and $\Mvecout\in \modMout$ are arbitrary, and $\Delta=\hout-\hmid-\hin$.
\end{defn}

\subsection{Some properties of the linear map}
We are going to show that the linear map $\intertw$ constructed in Definition~\ref{defn:intertw_op_meanttobe} gives an intertwining operator.
We begin with collecting some immediate properties.

\begin{lem}
\label{lem:property_cbaff}
Let~$\Mvec \in \modMmidaff \otimes \modMin$. Then we have:
\begin{enumerate}
\item 	If $\Mvec \in (\modMmidaff \otimes \modMin )(k)$ and $\Mvecout\in \modMout (l)$, we have $\cbaff \left(\Mvecout , \Mvec\right)=0$ unless $k=l$.
\item 	For any $y\in\vir_{<0}$, we have $\cbaff \left(\hwv{c,\hout}, y\Mvec \right)=0$.
\item 	For any $y\in\vir_{0}$, we have $\cbaff \left(\hwv{c,\hout}, y\Mvec \right)=\cbaff \left(y\hwv{c,\hout},\Mvec\right)$.
\end{enumerate}
\end{lem}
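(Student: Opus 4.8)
Since the three assertions concern only the bilinear functional $\cbaff$ and not yet the intertwining property, the plan is to read them off directly from the five-step construction of $\cbaff$, by tracking the $\bZ$-grading of $\modMmidaff\otimes\modMin$ and by exploiting the recursions built into the intermediate functionals $\cb^{(\outsym,\midsym,\insym)}$, $\cb^{(\outsym,\insym)}$, $\cbaff^{(\outsym,\insym)}$, $\cbaff^{(\outsym)}$. For part~(1) I would propagate the degree support through these steps. By~\eqref{eq: def cb out in affinized} the functional $\cbaff^{(\outsym,\insym)}(\hwv{c,\hout},\,\cdot\,)$ vanishes on $(t^n\otimes\Mvecmid)\otimes\hwv{c,\hin}$ unless $t^n\otimes\Mvecmid$ has degree $0$ in $\modMmidaff$, and since $\hwv{c,\hin}\in\modMin(0)$ has degree $0$ this functional is supported on $(\modMmidaff\otimes\modMin)(0)$. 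The extension~\eqref{eq: def cb out affinized} to $\cbaff^{(\outsym)}$ preserves this, because $\sigma$ maps $\UEA(\vir_{<0})$ to itself, the $\modMmidaff$-action of each $L_{-m}$ has degree $m$ by Proposition~\ref{prop:Virasoro_action_modMmidaff}, and the $\modMmidaff$-degree shift of $\sigma(z)$ equals the $L_0$-grade of $z\hwv{c,\hin}$ in $\modMin$. Finally, in the passage to $\cbaff$ the element $\theta(y)$ associated with $\Mvecout\in\modMout(l)$ acts on $\modMmidaff\otimes\modMin$ with degree $-l$, so $\cbaff(\Mvecout,\Mvec)=\cbaff^{(\outsym)}(\hwv{c,\hout},\theta(y)\Mvec)$ can be non-zero only when $\deg(\theta(y)\Mvec)=0$, i.e.\ only when $\deg\Mvec=l$, which is assertion~(1).

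For part~(2), the key observation is that~\eqref{eq: def cb out affinized} is arranged so that transporting an element $X\in\vir_{<0}$ from the $\modMin$-factor to the $\modMmidaff$-factor flips a sign: since $\vir_{<0}$ is a Lie subalgebra, $X$ acts on the Verma module $\modMin$ by left multiplication, so for $\Mvecin=z\hwv{c,\hin}$ with $z\in\UEA(\vir_{<0})$ one has $X\Mvecin=(Xz)\hwv{c,\hin}$ with $Xz\in\UEA(\vir_{<0})$, and using $\sigma(X)=-X$ the calculation gives
\begin{align*}
\cbaff^{(\outsym)}\big(\hwv{c,\hout},(t^n\otimes\Mvecmid)\otimes X\Mvecin\big)
\; & = \; \cbaff^{(\outsym,\insym)}\big(\hwv{c,\hout},\sigma(z)\,\sigma(X)(t^n\otimes\Mvecmid)\otimes\hwv{c,\hin}\big) \\
& = \; -\,\cbaff^{(\outsym)}\big(\hwv{c,\hout},\big(X(t^n\otimes\Mvecmid)\big)\otimes\Mvecin\big) .
\end{align*}
Hence the coproduct action of $X$ annihilates $\cbaff^{(\outsym)}(\hwv{c,\hout},\,\cdot\,)$, and since $\cbaff(\hwv{c,\hout},\,\cdot\,)=\cbaff^{(\outsym)}(\hwv{c,\hout},\,\cdot\,)$ (the $y=1$ instance of the definition of $\cbaff$), this yields $\cbaff(\hwv{c,\hout},y\Mvec)=0$ for all $y\in\vir_{<0}$.

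For part~(3), write $y=\alpha L_0+\beta C$. The $C$-part is immediate, since $C$ acts as the scalar $c$ both on $\modMmidaff\otimes\modMin$ (central charge $0+c$, as noted in Step~5 using Proposition~\ref{prop:Virasoro_action_modMmidaff}) and on $\hwv{c,\hout}$. For the $L_0$-part the core identity is
\begin{align*}
\cbaff^{(\outsym,\insym)}\big(\hwv{c,\hout},L_0\Mvec\big)=\hout\,\cbaff^{(\outsym,\insym)}\big(\hwv{c,\hout},\Mvec\big)
\qquad\text{for }\Mvec\in\modMmidaff\otimes\modMin(0) ,
\end{align*}
which I would verify by a short computation: the non-diagonal action $L_0(t^n\otimes\Mvecmid)=t^{n+1}\otimes L_{-1}\Mvecmid+t^n\otimes L_0\Mvecmid$ from Proposition~\ref{prop:Virasoro_action_modMmidaff}, together with $L_0\hwv{c,\hin}=\hin\hwv{c,\hin}$, produces after~\eqref{eq: def cb out in affinized} the combination $\cb^{(\outsym,\insym)}(\hwv{c,\hout},L_{-1}\Mvecmid\otimes\hwv{c,\hin})+(\hmid+k+\hin)\,\cb^{(\outsym,\insym)}(\hwv{c,\hout},\Mvecmid\otimes\hwv{c,\hin})$ for $\Mvecmid\in\modMmid(k)$, and the Step~2 recursion rewrites the first summand as $(\hout-\hmid-k-\hin)\,\cb^{(\outsym,\insym)}(\hwv{c,\hout},\Mvecmid\otimes\hwv{c,\hin})$, so the two terms collapse to $\hout\,\cb^{(\outsym,\insym)}(\hwv{c,\hout},\Mvecmid\otimes\hwv{c,\hin})$. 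I would then extend this to $\cbaff^{(\outsym)}(\hwv{c,\hout},\,\cdot\,)$ on all of $\modMmidaff\otimes\modMin$ by commuting $L_0$ through $z\in\UEA(\vir_{<0})$ at the cost of its grading shift and re-applying~\eqref{eq: def cb out affinized}; since~(3) involves only $\hwv{c,\hout}$, and $\cbaff(\hwv{c,\hout},\,\cdot\,)=\cbaff^{(\outsym)}(\hwv{c,\hout},\,\cdot\,)$, while $\cbaff(y\hwv{c,\hout},\Mvec)=(\alpha\hout+\beta c)\,\cbaff(\hwv{c,\hout},\Mvec)$ by linearity, nothing more is needed on the $\theta$-side.

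The main obstacle I expect is the bootstrapping used in parts~(1) and~(3): one must reconcile the non-diagonal $L_0$-action on the affinization $\modMmidaff$ with the Step~2 recursion, and carefully match the $\modMmidaff$-degree shifts of $\sigma(z)$ with the $L_0$-grades in $\modMin$, and the degree of $\theta(y)$ with the $L_0$-grades in $\modMout$, across the $\sigma$- and $\theta$-twisted extensions of Steps~4 and~5. Once these three properties are in place they are precisely the input needed for the subsequent verification that the linear map $\intertw$ of Definition~\ref{defn:intertw_op_meanttobe} satisfies the translation property and, through its $\voaconfvec$-components together with the fact that $\voaV_c$ is generated by $\voaconfvec$, the Jacobi identity~\eqref{eq:intertw_assoc}.
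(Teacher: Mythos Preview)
Your proposal is correct and follows essentially the same approach as the paper: unwind the five-step construction of~$\cbaff$, propagate the degree support through the $\sigma$- and $\theta$-twisted extensions for part~(1), use $\sigma(Xz)=\sigma(z)\sigma(X)=-\sigma(z)X$ to cancel the two coproduct contributions for part~(2), and for part~(3) combine the explicit $L_0$-action on $\modMmidaff$ with the Step~2 recursion for $L_{-1}$ to get the eigenvalue~$\hout$, then extend to general $\Mvecin$ by commuting $L_0$ through $z\in\UEA(\vir_{<0})$. The paper phrases the extension in part~(3) via the identity $\sigma([L_0,y'])=-[\sigma(y'),L_0]$ rather than via the grading shift you mention, but the two formulations are equivalent and the computations match.
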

\begin{proof}
{(1):}~Writing $\Mvecout=y\hwv{c,\hout}$ with $y\in \UEA (\vir_{<0})$, 
we have $\theta (y)\Mvec \in (\modMmidaff\otimes\modMin)(k-l)$.
Hence it suffices to show that
\begin{equation*}
\cbaff (\hwv{c,\hout},\Mvec)=\cbaff^{(\outsym)} (\hwv{c,\hout},\Mvec)=0,\quad \Mvec\in (\modMmidaff\otimes\modMin)(k)
\end{equation*}
unless $k\neq 0$.
We may further assume that
$\Mvec = (t^{n}\otimes \Mvecmid)\otimes y'\hwv{c,\hin}$ with 
$y'\in \UEA (\vir_{<0})$ to find
\begin{equation*}
\cbaff (\hwv{c,\hout},\Mvec)
= \cbaff ^{(\outsym, \insym)}(\hwv{c,\hout},\sigma (y')(t^{n}\otimes \Mvecmid)\otimes \hwv{c,\hin}).
\end{equation*}
Since by assumption
$\deg (\sigma (y')(t^{n}\otimes \Mvecmid))=k$, this value vanishes unless~$k=0$
from the definition~\eqref{eq: def cb out in affinized}
of $\cbaff ^{(\outsym, \insym)}$.

{(2):}~It is sufficient to consider the case
$y=L_{-n}$, $n>0$, and $\Mvec=(t^{m}\otimes \Mvecmid)\otimes y_{2}\hwv{c,\hin}$, $m\in\Z$, $\Mvecmid\in \modMmid$, $y_{2}\in \UEA(\vir_{<0})$. Then, since $L_{-n}y_{2}\in \UEA(\vir_{<0})$, we observe that
\begin{align*}
&\cbaff \left(\hwv{c,\hout}, L_{-n}\left((t^{m}\otimes \Mvecmid)\otimes y_{2}\hwv{c,\hin}\right)\right) \\
= \; &\cbaff^{(\outsym)}\left(\hwv{c,\hout}, L_{-n}(t^{m}\otimes \Mvecmid)\otimes y_{2}\hwv{c,\hin}\right)+\cbaff^{(\outsym)}\left(\hwv{c,\hout}, (t^{m}\otimes \Mvecmid)\otimes L_{-n}y_{2}\hwv{c,\hin}\right) \\
= \; &\cbaff^{(\outsym)}\left(\hwv{c,\hout}, \sigma (y_{2})L_{-n}(t^{m}\otimes \Mvecmid)\otimes \hwv{c,\hin}\right)+\cbaff^{(\outsym)}\left(\hwv{c,\hout}, \sigma (L_{-n}y_{2}) (t^{m}\otimes \Mvecmid)\otimes \hwv{c,\hin}\right) \\
= \; & \cbaff^{(\outsym)}\left(\hwv{c,\hout}, \sigma (y_{2})L_{-n}(t^{m}\otimes \Mvecmid)\otimes \hwv{c,\hin}\right)-\cbaff^{(\outsym)}\left(\hwv{c,\hout}, \sigma (y_{2})L_{-n}(t^{m}\otimes \Mvecmid )\otimes \hwv{c,\hin}\right)  \\
= \; & 0.
\end{align*}

{(3):}~The assertion is obvious for $y=C$,
so it suffices to consider the case of~$y=L_{0}$.
First, we assume that $\Mvec =(t^{n}\otimes \Mvecmid)\otimes\hwv{c,\hin}\in \modMmidaff\otimes \modMin (0)$, where $\Mvecmid \in \modMmid (k)$ is homogeneous. In this case,
\begin{align*}
&\cbaff^{(\outsym)}\left(\hwv{c,\hout}, L_{0}\left((t^{n}\otimes \Mvecmid )\otimes \hwv{c,\hin}\right)\right) \\
= \; &\delta_{k-n-1,0}\Bigl(\cb^{(\outsym, \insym)}\left(\hwv{c,\hout}, L_{-1}\Mvecmid \otimes \hwv{c,\hin}\right)
	+\cb^{(\outsym, \insym)}\left(\hwv{c,\hout}, L_{0}\Mvecmid \otimes \hwv{c,\hin}\right) \\
&\qquad+\cb^{(\outsym, \insym)}\left(\hwv{c,\hout}, \Mvecmid \otimes L_{0}\hwv{c,\hin}\right) \Bigr)\\
= \; &\delta_{k-n-1,0}\left((-\hmid-k-\hin+\hout)+\hmid+k+\hin\right)\cb^{(\outsym, \insym)}\left(\hwv{c,\hout}, \Mvecmid\otimes \hwv{c,\hin}\right) \\
= \; &\cbaff^{(\outsym, \insym)}\left(L_{0}\hwv{c,\hout}, (t^{n}\otimes \Mvecmid)\otimes \hwv{c,\hin}\right).
\end{align*}
Then consider a general vector~$\Mvecin \in \modMin$, and write it as
$\Mvecin = y' \hwv{c,\hin}$ for~$y' \in \UEA(\vir_{<0})$.
For $n\in\Z$ and $\Mvecmid \in \modMmid$,
we have
\begin{align*}
&\cbaff \left(\hwv{c,\hout}, L_{0}\left((t^{n}\otimes \Mvecmid)\otimes y' \hwv{c,\hin}\right)\right)\\
= \; &\cbaff^{(\outsym)}\left(\hwv{c,\hout}, L_{0}\left(t^{n}\otimes \Mvecmid \right)\otimes y' \hwv{c,\hin}\right)
  + \cbaff^{(\outsym)}\left(\hwv{c,\hout}, (t^{n}\otimes \Mvecmid)\otimes L_{0} y' \hwv{c,\hin}\right).
\end{align*}
Note that $[L_{0},y']\in U(\vir_{<0})$ and $\sigma ([L_{0},y'])=-[\sigma (y'),L_{0}]$. Hence, we have
\begin{align*}
&\cbaff \left(\hwv{c,\hout}, L_{0}\left((t^{n}\otimes \Mvecmid)\otimes y'\hwv{c,\hin}\right)\right)\\
= \; &\cbaff^{(\outsym, \insym)}\left(\hwv{c,\hout}, \sigma (y')L_{0}\left(t^{n}\otimes \Mvecmid\right)\otimes \hwv{c,\hin}\right)
+\cbaff^{(\outsym, \insym)}\left(\hwv{c,\hout}, \sigma (y')\left(t^{n}\otimes \Mvecmid\right)\otimes L_{0}\hwv{c,\hin}\right) \\
&\quad -\cbaff^{(\outsym, \insym)}\left(\hwv{c,\hout}, [\sigma(y'),L_{0}](t^{n}\otimes \Mvecmid)\otimes \hwv{c,\hin}\right) \\
= \; &\cbaff^{(\outsym, \insym)}\left(\hwv{c,\hout}, L_{0}\left(\sigma (y')(t^{n}\otimes \Mvecmid)\otimes \hwv{c,\hin}\right)\right) \\
= \; &\cbaff^{(\outsym, \insym)}\left(L_{0}\hwv{c,\hout}, \sigma (y')(t^{n}\otimes \Mvecmid)\otimes \hwv{c,\hin}\right) \\
= \; &\cbaff^{(\outsym)}\left(L_{0}\hwv{c,\hout}, (t^{n}\otimes \Mvecmid)\otimes y' \hwv{c,\hin}\right).
\end{align*}
This finishes the proof.
\end{proof}

\begin{prop}
\label{prop:theta-invariance_cbaff}
The bilinear functional $\cbaff$ is $\theta$-invariant in the sense that
\begin{equation*}
	\cbaff(y\Mvecout,  \Mvec)=\cbaff (\Mvecout, \theta (y)\Mvec), \quad y\in \UEA(\vir), \quad \Mvecout \in \modMout, \quad \Mvec \in \modMmidaff \otimes \modMin.
\end{equation*}
\end{prop}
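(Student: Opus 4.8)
The plan is to prove the identity $\cbaff(y\Mvecout, \Mvec)=\cbaff(\Mvecout,\theta(y)\Mvec)$ first for $y$ a generator of $\UEA(\vir)$ — that is $y=L_{n}$ for some $n\in\Z$, or $y=C$ — and then to bootstrap to arbitrary $y$. The bootstrap is immediate from the fact that $\theta$ is an anti-algebra homomorphism carrying generators to generators: if the identity holds for two elements $y_{1},y_{2}$ (for all $\Mvecout$ and all $\Mvec$), then $\cbaff(y_{1}y_{2}\Mvecout,\Mvec)=\cbaff(y_{2}\Mvecout,\theta(y_{1})\Mvec)=\cbaff(\Mvecout,\theta(y_{2})\theta(y_{1})\Mvec)=\cbaff(\Mvecout,\theta(y_{1}y_{2})\Mvec)$, so by linearity and induction on the length of a monomial in the generators everything reduces to the generator case. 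The case $y=C$ is trivial, since $C$ acts as the scalar $c$ on $\modMout$ and — via the coproduct action, which combines central charge $0$ on $\modMmidaff$ with central charge $c$ on $\modMin$ — also as $c$ on $\modMmidaff\otimes\modMin$.

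It remains to treat $y=L_{n}$, and I would split into three cases in the order $n<0$, then $n=0$, then $n>0$, so that each case may invoke the earlier ones. For $n<0$ the identity $\cbaff(L_{n}\Mvecout,\Mvec)=\cbaff(\Mvecout,L_{-n}\Mvec)$ is essentially the definition of $\cbaff$ on $\modMout=\UEA(\vir_{<0})\hwv{c,\hout}$: writing $\Mvecout=x\hwv{c,\hout}$ with $x\in\UEA(\vir_{<0})$ (unique by the PBW theorem for the Verma module), one has $L_{n}x\in\UEA(\vir_{<0})$ and $\theta(L_{n})=L_{-n}$, so both sides unwind to $\cbaff^{(\outsym)}(\hwv{c,\hout},\theta(x)L_{-n}\Mvec)$. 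For $n=0$ and for $n>0$ I would run an induction on the level $l$ of $\Mvecout\in\modMout(l)=(\modMout)_{(\hout+l)}$. The base case $l=0$, where $\Mvecout$ is proportional to $\hwv{c,\hout}$, is handled by Lemma~\ref{lem:property_cbaff}: part~(2) gives the $n>0$ case, since $L_{n}\hwv{c,\hout}=0$ while $\cbaff(\hwv{c,\hout},L_{-n}\Mvec)=0$ as $L_{-n}\in\vir_{<0}$; and part~(3) gives the $n=0$ case. For the inductive step, $\modMout(l)$ with $l\ge 1$ is spanned by vectors $L_{-m}\Mvecout'$ with $m\ge 1$ and $\Mvecout'\in\modMout(l-m)$, so by linearity one may assume $\Mvecout=L_{-m}\Mvecout'$; then one commutes $L_{n}$ past $L_{-m}$ using the Virasoro relations, uses the already-established $n<0$ case to move $L_{-m}$ into the second argument (as $L_{m}$), applies the inductive hypothesis to the lower-level vector $\Mvecout'$ (and, in the $n>0$ case, applies the already-finished $n=0$ and $n<0$ cases to the term coming from $[L_{n},L_{-m}]=(n+m)L_{n-m}+\tfrac{n^{3}-n}{12}\delta_{n,m}C$ whenever $n-m\le 0$), and finally reassembles everything using the Virasoro relations and the coproduct action on $\modMmidaff\otimes\modMin$; the central correction matches because $[L_{n},L_{-m}]$ only produces it when $m=n$, in which case $\tfrac{n^{3}-n}{12}=\tfrac{m^{3}-m}{12}$.

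I expect the $n>0$ inductive step to be the only genuine obstacle; the other cases are definitional or a direct appeal to Lemma~\ref{lem:property_cbaff}. The difficulty there is purely organizational: one must check that after commuting $L_{n}$ past $L_{-m}$ and recursing, every surviving term sits at strictly lower level or has already been disposed of, and that the collections of commutator and central corrections produced on the two sides of the pairing agree term by term. Ordering the cases $n<0$, $n=0$, $n>0$ as above makes this bookkeeping close cleanly, and no input beyond the Virasoro commutation relations, the coproduct action on $\modMmidaff\otimes\modMin$, the PBW description of $\modMout$, and Lemma~\ref{lem:property_cbaff} is needed.
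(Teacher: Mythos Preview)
Your proof is correct but takes a genuinely different route from the paper's. The paper avoids any case split or level induction by using the triangular PBW decomposition of the full enveloping algebra: writing $\Mvecout=y_{+}\hwv{c,\hout}$ with $y_{+}\in\UEA(\vir_{<0})$, it decomposes $yy_{+}=y_{1}+y_{2}$ with $y_{1}\in\UEA(\vir_{<0})\,\UEA(\vir_{0})$ and $y_{2}\in\UEA(\vir)\vir_{>0}$, so that $y\Mvecout=y_{1}\hwv{c,\hout}$; then the definition of $\cbaff$ together with Lemma~\ref{lem:property_cbaff}(3) gives $\cbaff(y\Mvecout,\Mvec)=\cbaff(\hwv{c,\hout},\theta(y_{1})\Mvec)$, while Lemma~\ref{lem:property_cbaff}(2) gives $\cbaff(\hwv{c,\hout},\theta(y_{2})\Mvec)=0$ since $\theta(y_{2})\in\vir_{<0}\UEA(\vir)$, and the two together yield $\cbaff(\hwv{c,\hout},\theta(yy_{+})\Mvec)=\cbaff(\Mvecout,\theta(y)\Mvec)$ in one stroke.

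Your approach trades this structural decomposition for an explicit induction on the $L_{0}$-grade of $\Mvecout$, verifying the Virasoro commutators by hand. This buys self-containedness --- you never invoke the PBW decomposition of $\UEA(\vir)$ itself, only the vector-space isomorphism $\modMout\cong\UEA(\vir_{<0})$ --- at the price of more bookkeeping in the $n>0$ step. The paper's argument is shorter and hides the commutator checks inside the PBW theorem; yours is more pedestrian but makes the reliance on Lemma~\ref{lem:property_cbaff} more transparent, since each of its parts is invoked exactly once as a base case.
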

\begin{proof}
Let us write $\Mvecout =y_{+}\hwv{c,\hout}$, where $y_{+}\in \UEA(\vir_{<0})$.
Since it follows from the PBW theorem that $\UEA (\vir)=(\UEA (\vir_{<0}) \, \UEA (\vir_{0}))\oplus \UEA (\vir)\vir_{>0}$,
we can uniquely write $yy_{+}=y_{1}+y_{2}$, $y_{1}\in \UEA (\vir_{<0}) \, \UEA (\vir_{0})$, $y_{2}\in \UEA (\vir)\vir_{>0}$.
Since $y_{2}$ annihilates the highest weight vector,
\begin{equation*}
	\cbaff (y\Mvecout , \Mvec )=\cbaff (yy_{+}\hwv{c,\hout} , \Mvec)=\cbaff (y_{1}\hwv{c,\hout},\Mvec).
\end{equation*}
From the definition of $\cbaff$ and (3) of Lemma \ref{lem:property_cbaff} (note that $\theta|_{\UEA (\vir_{0})}=\id_{\UEA (\vir_{0})}$), we have
\begin{equation*}
	\cbaff (y\Mvecout,  \Mvec)=\cbaff (\hwv{c,\hout}, \theta (y_{1})\Mvec).
\end{equation*}
Notice also that $\theta (y_{2})\in \vir_{<0}\UEA (\vir)$. Then, (2) of Lemma \ref{lem:property_cbaff} gives $\cbaff (\hwv{c,\hout}, \theta (y_{2})\Mvec)=0$.
Therefore, we have
\begin{align*}
\cbaff (y\Mvecout , \Mvec)
& \; = \cbaff (\hwv{c,\hout}, \theta (y_{1}+y_{2})\Mvec)=\cbaff (\hwv{c,\hout},\theta (yy_{+})\Mvec)=\cbaff (\hwv{c,\hout},\theta (y_{+})\theta (y)\Mvec) \\
& \; = \cbaff (y_{+}\hwv{c,\hout},\theta (y)\Mvec)=\cbaff (\Mvecout,\theta (y)\Mvec)
\end{align*}
as desired.
\end{proof}

\subsection{Translation property}
We show that the linear map $\intertw$ satisfies the Jacobi identity (\ref{eq:intertw_assoc}) and the translation property (\ref{eq:intertw_translation}).
Let us begin with the easier one; the translation property.

\begin{prop}
\label{prop:L(-1)-derivative_cbaff}
For $\Mvecmid\in\modMmid$, we have
\begin{equation*}
	\intertw (L_{-1}\Mvecmid,\apvar) = \frac{d}{d\apvar}\intertw (\Mvecmid,\apvar).
\end{equation*}
\end{prop}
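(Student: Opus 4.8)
The plan is to reduce the asserted equality of formal series to a single coefficient relation for the bilinear functional $\cbaff$, and then to verify that relation by a short computation that plays the affinized $L_{0}$-action on $\modMmidaff$ against the $\theta$-invariance of $\cbaff$. Concretely, by Definition~\ref{defn:intertw_op_meanttobe} both $\langle\Mvecout,\intertw(L_{-1}\Mvecmid,\apvar)\Mvecin\rangle$ and $\der{\apvar}\langle\Mvecout,\intertw(\Mvecmid,\apvar)\Mvecin\rangle$ are explicit series in $\apvar$; after reindexing $m\mapsto m-1$ in the latter, comparing coefficients of $\apvar^{\Delta-m-1}$ shows that the proposition is equivalent to
\[
\cbaff\big(\Mvecout,(t^{m}\otimes L_{-1}\Mvecmid)\otimes\Mvecin\big)=(\Delta-m)\,\cbaff\big(\Mvecout,(t^{m-1}\otimes\Mvecmid)\otimes\Mvecin\big)
\]
for every $m\in\Z$ and all $\Mvecout\in\modMout$, $\Mvecmid\in\modMmid$, $\Mvecin\in\modMin$, which by linearity we may assume homogeneous with $L_{0}$-eigenvalues $\eta_{\outsym},\eta_{\midsym},\eta_{\insym}$.

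Next I would rewrite $t^{m}\otimes L_{-1}\Mvecmid$ using the Virasoro action on $\modMmidaff$ from Proposition~\ref{prop:Virasoro_action_modMmidaff} specialized to the generator $L_{0}$: one has $L_{0}(t^{m-1}\otimes\Mvecmid)=t^{m}\otimes L_{-1}\Mvecmid+\eta_{\midsym}(t^{m-1}\otimes\Mvecmid)$. Tensoring with $\Mvecin$ and expanding the coproduct (diagonal) action of $L_{0}$ on $\modMmidaff\otimes\modMin$ expresses $(t^{m}\otimes L_{-1}\Mvecmid)\otimes\Mvecin$ as $L_{0}\big((t^{m-1}\otimes\Mvecmid)\otimes\Mvecin\big)$ minus $\eta_{\midsym}(t^{m-1}\otimes\Mvecmid)\otimes\Mvecin$ minus $(t^{m-1}\otimes\Mvecmid)\otimes(L_{0}\Mvecin)$. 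Pairing against $\Mvecout$ and invoking the $\theta$-invariance of $\cbaff$ (Proposition~\ref{prop:theta-invariance_cbaff}, with $\theta(L_{0})=L_{0}$, hence $\cbaff(\Mvecout,L_{0}\,\cdot)=\cbaff(L_{0}\Mvecout,\cdot)$) together with the three eigenvalue equations, the right-hand side collapses to $(\eta_{\outsym}-\eta_{\midsym}-\eta_{\insym})\,\cbaff\big(\Mvecout,(t^{m-1}\otimes\Mvecmid)\otimes\Mvecin\big)$.

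It then only remains to match the scalar: by part~(1) of Lemma~\ref{lem:property_cbaff} the quantity $\cbaff\big(\Mvecout,(t^{m-1}\otimes\Mvecmid)\otimes\Mvecin\big)$ vanishes unless the $\Z$-degree of $(t^{m-1}\otimes\Mvecmid)\otimes\Mvecin$ equals the grading index of $\Mvecout$ in $\modMout$, and translating those grading indices back into $L_{0}$-eigenvalues via $\modMout(l)=(\modMout)_{(\hout+l)}$ (and analogously for $\modMmid,\modMin$) shows exactly that $\Delta-m=\eta_{\outsym}-\eta_{\midsym}-\eta_{\insym}$ whenever the functional is nonzero, while in the opposite case both sides are $0$; this yields the displayed coefficient identity and hence the proposition. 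I do not expect a genuine obstacle here: the only delicate point is the bookkeeping converting the degree constraint of Lemma~\ref{lem:property_cbaff}(1) into the numerical relation for $\Delta-m$, and the substantive work of this appendix lies rather in the Jacobi identity treated afterwards.
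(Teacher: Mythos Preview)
Your proposal is correct and follows essentially the same argument as the paper: both proofs use the $L_{0}$-action on $\modMmidaff$ from Proposition~\ref{prop:Virasoro_action_modMmidaff} to rewrite $t^{m}\otimes L_{-1}\Mvecmid$, expand the coproduct action of $L_{0}$ on $\modMmidaff\otimes\modMin$, apply the $\theta$-invariance of $\cbaff$ (Proposition~\ref{prop:theta-invariance_cbaff}) to move $L_{0}$ onto $\Mvecout$, and invoke Lemma~\ref{lem:property_cbaff}(1) to match the resulting scalar $\eta_{\outsym}-\eta_{\midsym}-\eta_{\insym}$ with $\Delta-m$. The only cosmetic difference is that the paper first uses Lemma~\ref{lem:property_cbaff}(1) to reduce each matrix element to a single surviving term and then carries out the $L_{0}$ computation on that term, whereas you perform the $L_{0}$ computation for general $m$ and invoke the lemma at the end; the content is the same.
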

\begin{proof}
We take arbitrary vectors $\Mvecin\in \modMin (k)$, $\Mvecmid\in \modMmid (l)$ and $\Mvecout\in \modMout (m)$
and consider the matrix element
\begin{equation*}
\braket{\Mvecout,\intertw (L_{-1}\Mvecmid,\apvar)\Mvecin}
= \sum_{n\in \Z}\cbaff (\Mvecout, (t^{n}\otimes L_{-1}\Mvecmid)\otimes\Mvecin)\apvar^{\Delta-n-1}.
\end{equation*}
Due to (1) of Lemma~\ref{lem:property_cbaff}, the coefficient $\cbaff (\Mvecout, (t^{n}\otimes L_{-1}\Mvecmid)\otimes\Mvecin)$
vanishes unless $n=k+l-m$.
Hence, we have
\begin{equation*}
	\braket{\Mvecout,\intertw (L_{-1}\Mvecmid,\apvar)\Mvecin}
=\cbaff (\Mvecout, (t^{k+l-m}\otimes L_{-1}\Mvecmid)\otimes\Mvecin)\apvar^{\Delta-k-l+m-1}.
\end{equation*}
Recall that the action of $L_{0}$ on $(t^{n-1}\otimes \Mvecmid)\otimes \Mvecin$, $n\in\Z$ reads
\begin{align*}
	L_{0}\left((t^{n-1}\otimes \Mvecmid)\otimes \Mvecin\right)
= (t^{n}\otimes L_{-1}\Mvecmid )\otimes \Mvecin +(\hmid+k+\hin+l)(t^{n-1}\otimes \Mvecmid )\otimes \Mvecin.
\end{align*}
Rearranging the terms, we get
\begin{align*}
	&\braket{\Mvecout,\intertw (L_{-1}\Mvecmid,\apvar)\Mvecin}\\
= \; &\apvar^{\Delta-k-l+m-1}\cbaff \left(\Mvecout,L_{0}\left((t^{k+l-m-1}\otimes \Mvecmid)\otimes \Mvecin\right)\right) \\
	&\quad -\apvar^{\Delta-k-l+m-1}(\hmid+k+\hin+l)\cbaff \left(\Mvecout,(t^{k+l-m-1}\otimes \Mvecmid)\otimes \Mvecin\right).
\end{align*}
Employing the $\theta$-invariance (Proposition~\ref{prop:theta-invariance_cbaff}) of the bilinear functional $\cbaff$,
we obtain
\begin{align*}
	&\braket{\Mvecout,\intertw (L_{-1}\Mvecmid,\apvar)\Mvecin}\\
= \; &(\Delta-k-l+m)\apvar^{\Delta-k-l+m-1}\cbaff \left(\Mvecout,(t^{k+l-m-1}\otimes \Mvecmid)\otimes \Mvecin\right) \\
= \; &\frac{d}{d\apvar}\left(x^{\Delta-k-l+m}\cbaff \left(\Mvecout,(t^{k+l-m-1}\otimes \Mvecmid)\otimes \Mvecin\right)\right).
\end{align*}
Here, we again observe the identity
\begin{align*}
	\braket{\Mvecout,\intertw (\Mvecmid,\apvar)\Mvecin}
= x^{\Delta-k-l+m}\cbaff \left(\Mvecout,(t^{k+l-m-1}\otimes \Mvecmid)\otimes \Mvecin\right)
\end{align*}
to conclude the desired result.
\end{proof}

\subsection{Jacobi identity}
We next show the Jacobi identity (\ref{eq:intertw_assoc}).
Since the Virasoro VOA is generated by the conformal vector $\voaconfvec$, it suffices to show the Jacobi identity for $\Vvec=\voaconfvec$.
For convenience, we introduce the generating series $L(\ipvar)=\sum_{n\in\Z}L_{n}\ipvar^{-n-2}$ whose coefficients are understood as the action of the Virasoro algebra on any representation.
The following commutation relations are sometimes useful:
\begin{align}
\label{eq:commutation_Virasoro_field}
	[L_{n}, L(\ipvar)]
		&=\ipvar^{n+1}\frac{d}{d\ipvar}L(\ipvar)+2(n+1)\ipvar^{n}L(\ipvar)+\frac{c}{2}\binom{n+1}{3}\ipvar^{n-2},\quad n\in \Z,
\end{align}
where $c$ is the central charge of representations under consideration.

In our setting, it is convenient to formulate the Jacobi 
identity in terms of matrix elements. We will therefore prove the following.
\begin{prop}
\label{prop:Jacobi_cbaff}
For $\Mvecin\in \modMin$, $\Mvecmid\in\modMmid$ and $\Mvecout\in \modMout$, we have
\begin{align*}
	&\ipvar_{0}^{-1}\delta\left(\frac{\ipvar_{1}-\apvar}{\ipvar_{0}}\right)\braket{\Mvecout , L(\ipvar_{1})\intertw (\Mvecmid,\apvar ) \Mvecin}
	-\ipvar_{0}^{-1}\delta\left(\frac{\apvar-\ipvar_{1}}{-\ipvar_{0}}\right)\braket{\Mvecout, \intertw (\Mvecmid,\apvar) L(\ipvar_{1})\Mvecin }  \\
= \; &\apvar^{-1}\delta\left(\frac{\ipvar_{1}-\ipvar_{0}}{\apvar}\right) \braket{\Mvecout,\intertw \left(L(\ipvar_{0})\Mvecmid,\apvar\right)\Mvecin  }\notag
\end{align*}
\end{prop}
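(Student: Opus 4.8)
The statement is the Jacobi identity for the constructed linear map $\intertw$, restricted to $\Vvec=\voaconfvec$, and written on matrix elements. My plan is to reduce everything to the $\theta$-invariance of $\cbaff$ (Proposition~\ref{prop:theta-invariance_cbaff}) together with the Virasoro commutation relation~\eqref{eq:commutation_Virasoro_field}. The three terms of the Jacobi identity differ in where the ``delta-function substitution'' places the variable $\ipvar_1$, so the standard approach is to extract the coefficient of a generic power of one of the formal variables and check the resulting identity of formal series; I would follow the route that is dual to Lemma~\ref{lem: intertwining property with rational functions}, taking residues appropriately. Concretely, I would first rewrite each matrix element $\braket{\Mvecout, L(\ipvar_1)\,\intertw(\Mvecmid,\apvar)\,\Mvecin}$ etc.\ in terms of $\cbaff$ on $\modMmidaff\otimes\modMin$, using Definition~\ref{defn:intertw_op_meanttobe}.

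\textbf{Key steps.} (1)~Homogeneity bookkeeping: by part~(1) of Lemma~\ref{lem:property_cbaff}, $\cbaff\big(\Mvecout,(t^m\otimes\Mvecmid)\otimes\Mvecin\big)$ is nonzero only for the single value of $m$ dictated by the gradings of $\Mvecin,\Mvecmid,\Mvecout$, so each intertwining-operator matrix element is a single monomial in $\apvar$; this converts the formal $\delta$-function identities into honest identities between rational functions once $\apvar$ is a single power. (2)~Move the Virasoro action onto $\Mvecout$: by $\theta$-invariance, $\braket{\Mvecout, L_n\,\intertw(\Mvecmid,\apvar)\,\Mvecin}$ and the analogous expression with $L_n$ acting from the right can both be re-expressed as $\cbaff$ evaluated with a Virasoro generator $L_{\pm n}$ acting \emph{inside} $\modMmidaff\otimes\modMin$ via the coproduct action defined in Step~5 of the construction. (3)~Use the coproduct: $L_n$ acting on $\modMmidaff\otimes\modMin$ is $L_n\tens 1 + 1\tens L_n$, so the difference of the ``left'' and ``right'' terms in the Jacobi identity becomes precisely the action of $L_n$ on the $\modMmidaff$-tensorand, i.e.\ the action from Proposition~\ref{prop:Virasoro_action_modMmidaff}, $L_m(t^n\otimes\Mvecmid)=\sum_{k\ge 0}\binom{m+1}{k}\,t^{m+n+1-k}\otimes L_{k-1}\Mvecmid$. (4)~Recognize this as the right-hand side: the explicit formula $L(\ipvar_0)\Mvecmid$ inside $\intertw(\,\cdot\,,\apvar)$ unpacks, via the same definition of $\cbaff$ and the affinization grading, into exactly the combinatorial sum produced in step~(3), with the $\apvar^{-1}\delta\big((\ipvar_1-\ipvar_0)/\apvar\big)$ prefactor accounting for the shift $t^n\mapsto t^{m+n+1-k}$. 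The central-charge term in~\eqref{eq:commutation_Virasoro_field} is handled automatically because $C$ acts as $0$ on $\modMmidaff$ (Proposition~\ref{prop:Virasoro_action_modMmidaff}) and as $c$ on $\modMout$ and $\modMin$, and the two contributions combine with the correct coefficient; this is where I would be slightly careful.

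\textbf{Main obstacle.} The genuinely delicate part is the bookkeeping of the delta-function expansions and the matching of the binomial coefficients: the three terms in the Jacobi identity live in different spaces of formal series (powers of $\ipvar_0$, of $\ipvar_1$, of $\apvar$), and one must verify that the formal $\delta$-function identity $\ipvar_0^{-1}\delta\big((\ipvar_1-\apvar)/\ipvar_0\big)-\ipvar_0^{-1}\delta\big((\apvar-\ipvar_1)/(-\ipvar_0)\big)=\apvar^{-1}\delta\big((\ipvar_1-\ipvar_0)/\apvar\big)$ is compatible with the single-monomial structure of the $\apvar$-dependence forced by the grading. In practice I would do this by extracting the coefficient of $\ipvar_1^{-p-2}$ for each $p\in\bZ$ (i.e.\ isolating $L_p$ acting on $\Mvecout$), reducing to a finite identity among binomial sums of the type already used in the proof of Proposition~\ref{prop:Virasoro_action_modMmidaff} (the Vandermonde-type identity there), and then reassembling. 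Apart from this combinatorial matching, every step is a formal consequence of properties already established, so I expect the write-up to be routine once the $\ipvar_0,\ipvar_1$ expansions are organized carefully.
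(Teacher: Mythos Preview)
Your plan has a genuine gap: what steps (2)--(4) establish is only the \emph{commutator formula}
\[
\braket{\Mvecout, L(\ipvar_1)\intertw(\Mvecmid,\apvar)\Mvecin}-\braket{\Mvecout, \intertw(\Mvecmid,\apvar)L(\ipvar_1)\Mvecin}
=\Res_{\ipvar_0}\,\apvar^{-1}\delta\Big(\frac{\ipvar_1-\ipvar_0}{\apvar}\Big)\braket{\Mvecout,\intertw(L(\ipvar_0)\Mvecmid,\apvar)\Mvecin},
\]
which is exactly Lemma~\ref{lem:commutativity_intertw_aff} in the paper. This is what you get from $\theta$-invariance, the coproduct action, and Proposition~\ref{prop:Virasoro_action_modMmidaff}. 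But the commutator formula is strictly weaker than the Jacobi identity. Extracting the coefficient of $\ipvar_0^{-q-1}$ from the Jacobi identity gives
\[
(\ipvar_1-\apvar)^q A - (-\apvar+\ipvar_1)^q B = (\text{RHS}_q),
\]
and for $q<0$ the two binomial expansions $(\ipvar_1-\apvar)^q$ and $(-\apvar+\ipvar_1)^q$ disagree, so the left-hand side is \emph{not} a multiple of $A-B$ and cannot be obtained from the commutator alone. These negative-$q$ components encode precisely the weak associativity
\[
(\ipvar_0+\apvar)^{k}\braket{\Mvecout, L(\ipvar_0+\apvar)\intertw(\Mvecmid,\apvar)\Mvecin}
=(\ipvar_0+\apvar)^{k}\braket{\Mvecout, \intertw(L(\ipvar_0)\Mvecmid,\apvar)\Mvecin}
\]
for some $k$ depending on $\Mvecin$, which is Proposition~\ref{prop:associativity_cbaff} in the paper. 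Your ``single monomial in $\apvar$'' observation does not rescue this: the $\ipvar_1$-dependence of $A$ and $B$ is an honest infinite Laurent series, and the three-term delta identity $D_1-D_2=D_3$ does \emph{not} imply $D_1 A - D_2 B = D_3 C$ from $A-B=\Res_{\ipvar_0}D_3 C$ alone.

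The paper's route is to prove commutativity (Lemma~\ref{lem:commutativity_intertw_aff}) and associativity (Proposition~\ref{prop:associativity_cbaff}) separately, then invoke the standard equivalence of the pair with the Jacobi identity. The associativity proof is the substantive part: it is an induction on the grading of $\modMin$ and the PBW filtration of $\modMout$, and it genuinely uses the recursive construction of $\cbaff^{(\outsym,\insym)}$ in Step~2 (the defining formula for $\cb^{(\outsym,\insym)}(\hwv{c,\hout}, L_{-n}\Mvecmid\otimes\hwv{c,\hin})$), not just $\theta$-invariance and the coproduct. That ingredient is entirely absent from your sketch, and without it the argument cannot close.
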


In the following proof, it will be convenient to use the notation
\begin{equation*}
	\intertw_{t}(\Mvecmid,\apvar):=\sum_{n\in\Z}(t^{n}\otimes \Mvecmid)\apvar^{\Delta-n-1} \in \modMmidaff \{\apvar\}
\end{equation*}
for $\Mvecmid\in \modMmid$.
Then applying the bilinear functional~$\cbaff$ coefficientwise to
formal series in~$\apvar$, we may understand
\begin{equation*}
	\Braket{\Mvecout,\intertw (\Mvecmid,\apvar)\Mvecin}=\cbaff (\Mvecout,\intertw_{t}(\Mvecmid,\apvar)\otimes \Mvecin)
\end{equation*}
for $\Mvecin\in \modMin$, $\Mvecmid\in\modMmid$ and $\Mvecout\in \modMout$.
The following formula derived from Proposition~\ref{prop:Virasoro_action_modMmidaff} will be also useful:
\begin{equation}
\label{eq:Virasoro_action_modMmidaff_series}
	L_{n}\intertw_{t}(\Mvecmid,\apvar)=\sum_{k=0}^{\infty}\binom{n+1}{k}\apvar^{n+1-k}\intertw_{t} (L_{k-1}\Mvecmid,\apvar),\quad n\in\Z.
\end{equation}

The strategy of proving Proposition~\ref{prop:Jacobi_cbaff} is to reduce the Jacobi identity to the {\it commutativity} and the {\it associativity}.
Let us first observe the commutativity.
\begin{lem}
\label{lem:commutativity_intertw_aff}
For any $\Mvecin\in \modMin$, $\Mvecmid\in \modMmid$ and $\Mvecout \in \modMout$, we have
\begin{align*}
	&\braket{\Mvecout, L(\ipvar_{1}) \intertw (\Mvecmid,\apvar) \Mvecin}-\braket{\Mvecout, \intertw (\Mvecmid,\apvar) L(\ipvar_{1})\Mvecin} \\
= \; &\Res_{\ipvar_{0}}\apvar^{-1}\delta\left(\frac{\ipvar_{1}-\ipvar_{0}}{\apvar}\right)\braket{\Mvecout, \intertw \left(L(\ipvar_{0})\Mvecmid,\apvar\right)\Mvecin}.
\end{align*}
\end{lem}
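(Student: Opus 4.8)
Since both sides are linear in each of $\Mvecin$, $\Mvecmid$, $\Mvecout$ and compatible with the $L_{0}$-gradings, I may assume these vectors homogeneous; then every matrix element that appears is supported on a single power of $\apvar$ (Lemma~\ref{lem:property_cbaff}(1)), and the asserted identity is an identity of formal series in $\ipvar_{1}$ (and $\ipvar_{0}$). The plan is to extract the coefficient of $\ipvar_{1}^{-n-2}$ on the left-hand side, reduce it through the bilinear functional $\cbaff$ to formula~\eqref{eq:Virasoro_action_modMmidaff_series}, and then reassemble the result on the right-hand side using the formal delta function.

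First I would prove the coefficientwise statement
\begin{align*}
& \braket{\Mvecout, L_{n} \, \intertw(\Mvecmid,\apvar)\Mvecin} - \braket{\Mvecout, \intertw(\Mvecmid,\apvar) \, L_{n}\Mvecin} \\
= \; & \sum_{k=0}^{\infty} \binom{n+1}{k} \, \apvar^{n+1-k} \, \braket{\Mvecout, \intertw(L_{k-1}\Mvecmid,\apvar)\Mvecin}
\end{align*}
for every $n\in\Z$. In the first term $L_{n}$ acts on the contragredient module $\modMout^{\prime}$, so by Lemma~\ref{lem: Virasoro action on the contragredient} this matrix element equals $\cbaff\big(L_{-n}\Mvecout, \intertw_{t}(\Mvecmid,\apvar)\otimes\Mvecin\big)$. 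The $\theta$-invariance of $\cbaff$ (Proposition~\ref{prop:theta-invariance_cbaff}), together with $\theta(L_{-n})=L_{n}$, rewrites this as $\cbaff\big(\Mvecout, L_{n}(\intertw_{t}(\Mvecmid,\apvar)\otimes\Mvecin)\big)$, and the coproduct action of $L_{n}$ on $\modMmidaff\otimes\modMin$ splits it as $\cbaff\big(\Mvecout, (L_{n}\intertw_{t}(\Mvecmid,\apvar))\otimes\Mvecin\big) + \cbaff\big(\Mvecout, \intertw_{t}(\Mvecmid,\apvar)\otimes L_{n}\Mvecin\big)$. The second summand is precisely $\braket{\Mvecout, \intertw(\Mvecmid,\apvar) \, L_{n}\Mvecin}$, so the displayed identity reduces to $\cbaff\big(\Mvecout, (L_{n}\intertw_{t}(\Mvecmid,\apvar))\otimes\Mvecin\big) = \sum_{k\geq 0}\binom{n+1}{k}\apvar^{n+1-k}\braket{\Mvecout, \intertw(L_{k-1}\Mvecmid,\apvar)\Mvecin}$, which is immediate from~\eqref{eq:Virasoro_action_modMmidaff_series} applied to $L_{n}\intertw_{t}(\Mvecmid,\apvar)$.

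It then remains to sum the displayed identity against $\ipvar_{1}^{-n-2}$ over $n\in\Z$ and to identify the outcome with $\Res_{\ipvar_{0}}\apvar^{-1}\delta\big((\ipvar_{1}-\ipvar_{0})/\apvar\big)\braket{\Mvecout, \intertw(L(\ipvar_{0})\Mvecmid,\apvar)\Mvecin}$. For this I would expand the delta function with the binomial convention, $\apvar^{-1}\delta\big((\ipvar_{1}-\ipvar_{0})/\apvar\big) = \sum_{l\in\Z}\sum_{j\geq 0}\binom{l}{j}(-1)^{j}\apvar^{-1-l}\ipvar_{1}^{l-j}\ipvar_{0}^{j}$, write $\intertw(L(\ipvar_{0})\Mvecmid,\apvar) = \sum_{m}\ipvar_{0}^{-m-2}\intertw(L_{m}\Mvecmid,\apvar)$ (a sum bounded above in $m$, since $L_{m}\Mvecmid=0$ for $m\gg 0$), and take the residue in $\ipvar_{0}$, which forces $m=j-1$. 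Matching the power of $\ipvar_{1}$ by setting $l=k-n-2$ makes the power of $\apvar$ match identically, and the coefficient comparison then comes down to the elementary identity $(-1)^{k}\binom{k-n-2}{k} = \binom{n+1}{k}$, obtained by reversing the order of the $k$ factors in the falling product defining $\binom{k-n-2}{k}$. All the reorderings of sums are legitimate: for each fixed monomial in $\ipvar_{1}$ and $\apvar$ only finitely many terms contribute, again because $L_{m}\Mvecmid=0$ for $m\gg 0$ and each matrix element occupies a single power of $\apvar$.

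The computation is otherwise routine; the points requiring care are keeping track of which module each $L(\ipvar_{1})$ acts on --- on $\modMout^{\prime}$ in the first term, hence producing the transpose of Lemma~\ref{lem: Virasoro action on the contragredient}, and on $\modMin$ in the second --- and checking that the binomial coefficients produced by the residue of the formal delta function match those coming from~\eqref{eq:Virasoro_action_modMmidaff_series}. Beyond assembling Lemma~\ref{lem: Virasoro action on the contragredient}, Proposition~\ref{prop:theta-invariance_cbaff}, Proposition~\ref{prop:Virasoro_action_modMmidaff}, and~\eqref{eq:Virasoro_action_modMmidaff_series}, no new idea is needed.
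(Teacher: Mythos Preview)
Your proposal is correct and follows essentially the same route as the paper. The paper's proof uses the contragredient lemma and $\theta$-invariance to identify the left-hand side with $\cbaff(\Mvecout, L(\ipvar_{1})\intertw_{t}(\Mvecmid,\apvar)\otimes\Mvecin)$, then reduces everything to the generating-series identity $L(\ipvar_{1})\intertw_{t}(\Mvecmid,\apvar)=\Res_{\ipvar_{0}}\apvar^{-1}\delta\big((\ipvar_{1}-\ipvar_{0})/\apvar\big)\intertw_{t}(L(\ipvar_{0})\Mvecmid,\apvar)$ in $\modMmidaff\{\apvar\}$, declared ``straightforward from~\eqref{eq:Virasoro_action_modMmidaff_series}''; you carry out precisely this argument coefficientwise in $\ipvar_{1}$ and then verify the delta-function identity explicitly, which is just an unpacked version of the same computation.
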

\begin{proof}
From the property of the contragredient module (Lemma~\ref{lem: Virasoro action on the contragredient})
and the $\theta$-invariance of the bilinear functional $\cbaff$ (Proposition~\ref{prop:theta-invariance_cbaff}),
we can see that
\begin{equation*}
	\braket{\Mvecout, L(\ipvar_{1}) \intertw (\Mvecmid,\apvar) \Mvecin}
	=\cbaff (\Mvecout, L(\ipvar_{1})(\intertw_{t}(\Mvecmid,\apvar)\otimes \Mvecin)).
\end{equation*}
Hence, we have
\begin{align*}
	\braket{\Mvecout, L(\ipvar_{1}) \intertw (\Mvecmid,\apvar) \Mvecin}-\braket{\Mvecout, \intertw (\Mvecmid,\apvar) L(\ipvar_{1})\Mvecin}
	=\cbaff (\Mvecout, L(\ipvar_{1})\intertw_{t}(\Mvecmid,\apvar)\otimes \Mvecin).
\end{align*}
Therefore, the proof amounts to showing the identity
\begin{align*}
	L(\ipvar_{1})\intertw_{t}(\Mvecmid,\apvar)=\Res_{\ipvar_{0}}\apvar^{-1}\delta\left(\frac{\ipvar_{1}-\ipvar_{0}}{\apvar}\right)\intertw_{t} \left(L(\ipvar_{0})\Mvecmid,\apvar\right)
\end{align*}
in $\modMmidaff\{\apvar\}$, which is straightforward from (\ref{eq:Virasoro_action_modMmidaff_series}).
\end{proof}
We next state the associativity.
\begin{prop}
\label{prop:associativity_cbaff}
For any $\Mvecin\in \modMin$, there exists $k\in \N$ 
(depending on $\Mvecin$) such that
\begin{align*}
	(\ipvar_{0}+\apvar)^{k}\braket{\Mvecout, L(\ipvar_{0}+\apvar)\intertw (\Mvecmid,\apvar) \Mvecin}
=(\ipvar_{0}+\apvar )^{k}\braket{\Mvecout, \intertw (L(\ipvar_{0})\Mvecmid,\apvar) \Mvecin}
\end{align*}
holds for any $\Mvecmid \in \modMmid$ and $\Mvecout\in\modMout$.
\end{prop}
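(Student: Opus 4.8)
The statement is the associativity half of the Jacobi identity, complementing the commutation relation of Lemma~\ref{lem:commutativity_intertw_aff}. The plan is to push everything onto the affinization~$\modMmidaff$, identify the ``bulk'' of the left‑hand side with the right‑hand side by means of the explicit Virasoro action~\eqref{eq:Virasoro_action_modMmidaff_series}, and then invoke a rationality property of the matrix elements together with the standard Li‑type clearing of denominators by a power of~$(\ipvar_0+\apvar)$.

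First I would rewrite the left‑hand side on the affinization. By the $\theta$‑invariance of~$\cbaff$ (Proposition~\ref{prop:theta-invariance_cbaff}) and the contragredient Virasoro action (Lemma~\ref{lem: Virasoro action on the contragredient}), exactly as in the proof of Lemma~\ref{lem:commutativity_intertw_aff}, one has
\begin{equation*}
\braket{\Mvecout,\; L(\ipvar_1)\,\intertw(\Mvecmid,\apvar)\,\Mvecin}
= \cbaff\big(\Mvecout,\; L(\ipvar_1)\,(\intertw_t(\Mvecmid,\apvar)\otimes\Mvecin)\big),
\end{equation*}
with $L(\ipvar_1)$ acting on $\modMmidaff\otimes\modMin$ through the coproduct. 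Feeding in~\eqref{eq:Virasoro_action_modMmidaff_series} (equivalently, applying Corollary~\ref{cor:formula_intertw}(1) inside~$\intertw$ and summing against $\ipvar_0^{-n-2}$), the part of this action falling on the $\modMmidaff$‑tensorand resums, via the elementary identities $\sum_{n}\binom{n+1}{k}\apvar^{n+1-k}\ipvar_0^{-n-2}=(\ipvar_0-\apvar)^{-k-1}$ and its mirror, precisely to $\cbaff\big(\Mvecout,\intertw_t(L(\ipvar_0)\Mvecmid,\apvar)\otimes\Mvecin\big)=\braket{\Mvecout,\intertw(L(\ipvar_0)\Mvecmid,\apvar)\Mvecin}$; the part falling on the $\modMin$‑tensorand is, up to sign, the other product ordering $\braket{\Mvecout,\intertw(\Mvecmid,\apvar)L(\ipvar_1)\Mvecin}$. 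This exhibits the associativity statement as the assertion that, after the substitution $\ipvar_1=\ipvar_0+\apvar$ and multiplication by a suitable power of~$(\ipvar_0+\apvar)$, the ``product'' and the ``iterate'' land in a common space of series and become equal.

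To make that passage rigorous I would establish the rationality of the relevant matrix elements: for fixed homogeneous $\Mvecout,\Mvecmid,\Mvecin$, the generating series $\sum_{n}\braket{\Mvecout,L_n\,\intertw(\Mvecmid,\apvar)\,\Mvecin}\,\ipvar_1^{-n-2}$ equals $\apvar^{\Delta}$ times a rational function of~$\ipvar_1$ whose only poles are at $\ipvar_1=0$ and $\ipvar_1=\apvar$, of orders bounded in terms of the degrees of~$\Mvecin$ and~$\Mvecmid$; likewise $\sum_m\braket{\Mvecout,\intertw(L_m\Mvecmid,\apvar)\Mvecin}\,\ipvar_0^{-m-2}$ is $\apvar^{\Delta}$ times a rational function of~$\ipvar_0$ with a pole only at $\ipvar_0=0$. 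This is proven by the induction over total PBW word length used repeatedly above: the Virasoro commutation relation~\eqref{eq:commutation_Virasoro_field} lets one move $L(\ipvar_1)$ past an~$L_{-p}$ applied to any of $\Mvecout,\Mvecmid,\Mvecin$, reducing the word length and producing only the advertised new poles. Granting rationality, multiplying by $(\ipvar_0+\apvar)^k$ with $k$ at least the pole order at $\ipvar_1=0$ (this order depends only on $\Mvecin$ and the fixed~$\Mvecmid$) clears that factor after the substitution $\ipvar_1=\ipvar_0+\apvar$, so both sides become genuine Laurent polynomials in~$\ipvar_0$ with $\apvar^{\Delta}$‑coefficients; they then coincide because they are the Laurent expansions of one and the same rational function, the coincidence of the rational functions being exactly the resummation identity recorded in the previous paragraph.

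The main obstacle I expect is the rationality step together with the careful bookkeeping of the binomial‑expansion conventions: the product $L(\ipvar_1)\intertw(\Mvecmid,\apvar)$ is most naturally an expansion in~$\apvar/\ipvar_1$, whereas the iterate $\intertw(L(\ipvar_0)\Mvecmid,\apvar)$ is a Laurent expansion around $\ipvar_0=0$, and the factor $(\ipvar_0+\apvar)^k$ is precisely what is needed to reconcile the two regimes. Establishing the control on the pole orders — in particular that it is governed by a quantity depending only on~$\Mvecin$ — and keeping the substitution/expansion conventions consistent throughout is where the real work lies; the resummation identities themselves are routine.
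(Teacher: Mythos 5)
Your overall frame (pass to the affinization, establish a rationality/truncation property of the matrix elements, clear denominators with a power of $(\ipvar_{0}+\apvar)$) is a legitimate one, but the central step as you have written it does not close. The ``resummation'' of the action on the $\modMmidaff$-tensorand does not produce the iterate: the bilateral sum $\sum_{n\in\Z}\binom{n+1}{k}\apvar^{n+1-k}\ipvar_{1}^{-n-2}$ is a derivative of the formal delta function, not the one-sided expansion $(\ipvar_{1}-\apvar)^{-k-1}$, so what you recover is exactly the commutativity identity of Lemma~\ref{lem:commutativity_intertw_aff}. That identity only involves the modes $\intertw(L_{j}\Mvecmid,\apvar)$ with $j\ge -1$: in $\Res_{\ipvar_{0}}\,\apvar^{-1}\delta\big(\tfrac{\ipvar_{1}-\ipvar_{0}}{\apvar}\big)\,\intertw(L(\ipvar_{0})\Mvecmid,\apvar)$ the residue pairs $\ipvar_{0}^{n}$ with $n\ge 0$ against $\ipvar_{0}^{-j-2}$, forcing $j=n-1\ge -1$. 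Equivalently, commutativity pins down only the principal part of the would-be common rational function at $\ipvar_{1}=\apvar$; the regular part --- i.e.\ the matrix elements $\braket{\Mvecout,\intertw(L_{-n}\Mvecmid,\apvar)\Mvecin}$ for $n\ge 2$, which carry most of the content of the associativity statement --- is completely invisible to it. So ``the coincidence of the rational functions being exactly the resummation identity'' is false: commutativity plus rationality does not determine the iterate, and no amount of clearing denominators bridges this.

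What is needed, and what the paper's proof supplies, is an independent verification that the deep negative modes of the iterate match the Taylor coefficients of the product at $\ipvar_{1}=\apvar$, and this must come from the explicit construction of $\cbaff$: Lemma~\ref{lem:pre-associativity_cbaff_left_right_top} computes both sides directly from the Step-2 recursion defining $\cb^{(\outsym, \insym)}(\hwv{c,\hout},L_{-n}\Mvecmid\otimes\hwv{c,\hin})$, and the $m<0$ half of the induction in Lemma~\ref{lem:associativity_cbaff_left_right_top} propagates this using $\tfrac{d}{d\apvar}$, the translation property of Proposition~\ref{prop:L(-1)-derivative_cbaff}, and the relation~(\ref{eq:commutation_Virasoro_field}) at $n=-1$; the general case then follows by a double induction over the grading filtration $G^{d}\modMin$ and the PBW filtration of $\modMout$, with the finite-dimensionality of $G^{d}\modMin$ ensuring that $k$ depends only on $\Mvecin$. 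Note in this respect that your parenthetical ``this order depends only on $\Mvecin$ and the fixed $\Mvecmid$'' would not even yield the uniformity in $\Mvecmid$ that the proposition asserts. Finally, Corollary~\ref{cor:formula_intertw} is a consequence of the Jacobi identity and is not available at this stage of the construction; only the affinization formula~(\ref{eq:Virasoro_action_modMmidaff_series}), the properties of $\cbaff$ already proved, and Lemma~\ref{lem:commutativity_intertw_aff} may be used.
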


The proof of Proposition~\ref{prop:associativity_cbaff} requires some preliminaries.
\begin{lem}
\label{lem:pre-associativity_cbaff_left_right_top}
Let $\Mvecmid \in \modMmid$. Then, we have
\begin{align*}
	&\Res_{\ipvar_{0}}\ipvar_{0}^{-1}(\ipvar_{0}+\apvar)^{2}\braket{\hwv{c,\hout},L(\ipvar_{0}+\apvar)\intertw (\Mvecmid,\apvar) \hwv{c,\hin}} \\
= \; & \Res_{\ipvar_{0}}\ipvar_{0}^{-1}(\ipvar_{0}+\apvar)^{2}\braket{\hwv{c,\hout},\intertw \left(L(\ipvar_{0})\Mvecmid,\apvar\right)\hwv{c,\hin}}.
\end{align*}
\end{lem}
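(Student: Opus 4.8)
The plan is to show that both sides of the claimed identity equal $\hout\, g(\apvar)$, where I write $g(\apvar):=\braket{\hwv{c,\hout},\intertw(\Mvecmid,\apvar)\hwv{c,\hin}}$ for the ``basic matrix element'' and $\Mvecmid\in\VermaCH{c}{\hmid}(k)$. Two structural facts drive everything. First, unwinding the definition of $\cbaff$ down to $\cb^{(\outsym,\insym)}$ together with the degree-matching of Lemma~\ref{lem:property_cbaff}(1) shows that $g(\apvar)$ is a single monomial, $g(\apvar)=\cb^{(\outsym,\insym)}(\hwv{c,\hout},\Mvecmid\otimes\hwv{c,\hin})\,\apvar^{\,\hout-\hmid-\hin-k}$, so that $\apvar\,\der{\apvar}g=(\hout-\hmid-\hin-k)\,g$. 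Second, since $\hwv{c,\hin}$ is highest weight, $L(\ipvar)\hwv{c,\hin}$ has pole order at most two at $\ipvar=0$, which is exactly what makes the formal manipulations below legitimate.

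First I would evaluate the left-hand side. Expanding $L(\ipvar_0+\apvar)=\sum_{n}L_n(\ipvar_0+\apvar)^{-n-2}$ with the binomial convention, one computes $\Res_{\ipvar_0}\ipvar_0^{-1}(\ipvar_0+\apvar)^2 L(\ipvar_0+\apvar)=\sum_{m\ge 0}\apvar^m L_{-m}$ as operators on $\VermaCH{c}{\hout}'$. Pairing against $\hwv{c,\hout}$ and using the contragredient structure (Lemma~\ref{lem: Virasoro action on the contragredient}) to move each $L_{-m}$ across as $L_{m}$ acting on $\hwv{c,\hout}\in\VermaCH{c}{\hout}$, every term with $m\ge 1$ dies because $\hwv{c,\hout}$ is highest weight, and the matching of $L_0$-gradings leaves only the $m=0$ contribution. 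The left-hand side is thus $\hout\, g(\apvar)$. (The same degree constraint also shows the intermediate sum is a genuine formal series.)

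For the right-hand side the analogous residue gives $\Res_{\ipvar_0}\ipvar_0^{-1}(\ipvar_0+\apvar)^2 L(\ipvar_0)=L_0+2\apvar L_{-1}+\apvar^2 L_{-2}$ on $\VermaCH{c}{\hmid}$, so by linearity of $\intertw$ in its first argument the right-hand side equals $g_0+2\apvar g_1+\apvar^2 g_2$, with $g_i:=\braket{\hwv{c,\hout},\intertw(L_{-i}\Mvecmid,\apvar)\hwv{c,\hin}}$ for $i=0,1,2$ (reading $L_{-0}=L_0$). I would then close the identity using only facts already at hand --- crucially \emph{not} the full Jacobi identity, which this appendix is constructing --- namely: (i) the translation property (Proposition~\ref{prop:L(-1)-derivative_cbaff}) gives $g_1=\der{\apvar}g$; (ii) extracting the coefficient of $\ipvar_1^{-2}$ from the commutativity identity (Lemma~\ref{lem:commutativity_intertw_aff}) and pairing with highest weight vectors gives $g_0+\apvar g_1=(\hout-\hin)\,g$; and (iii) a direct computation of $g_2$ from the construction of $\cbaff$, in which the degree-matching of Lemma~\ref{lem:property_cbaff}(1) pins $g_2$ to the single term $\cb^{(\outsym,\insym)}(\hwv{c,\hout},L_{-2}\Mvecmid\otimes\hwv{c,\hin})\,\apvar^{\,\hout-\hmid-\hin-k-2}$, and the $L_{-2}$ case of the defining recursion of $\cb^{(\outsym,\insym)}$ yields $\apvar^2 g_2=(\hmid+k+2\hin-\hout)\,g$. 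Feeding (i)--(iii) together with $\apvar\,\der{\apvar}g=(\hout-\hmid-\hin-k)\,g$ into $g_0+2\apvar g_1+\apvar^2 g_2=(g_0+\apvar g_1)+\apvar g_1+\apvar^2 g_2$ collapses the right-hand side to $\hout\, g(\apvar)$ as well, completing the proof.

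I expect the fiddliest point to be step (iii): it is the one place where one must reach into the explicit layered definition of $\cbaff$ (Steps~1--5 of the construction) rather than invoke a structural property, and one must chase the degree constraints carefully to isolate the surviving monomial and track the scalar produced by the $\cb^{(\outsym,\insym)}$-recursion. The other potential pitfall is purely formal --- justifying the residue extractions and the substitution $\ipvar_1\mapsto\ipvar_0+\apvar$ --- but this is legitimate precisely because of the pole-order-two bound noted above, and once that is in place the manipulations are routine.
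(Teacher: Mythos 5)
Your proof is correct and is in substance the same as the paper's (one-line) argument: both reduce the claim to checking that each side equals $\hout\,\apvar^{\Delta-k}\,\cb^{(\outsym,\insym)}(\hwv{c,\hout},\Mvecmid\otimes\hwv{c,\hin})$ for homogeneous $\Mvecmid\in\modMmid(k)$, which is exactly what the paper asserts without supplying the computation. Your steps (i)--(iii) all check out and rely only on results already established at that point of the appendix (the translation property and the commutativity lemma), so there is no circularity; the only streamlining available is that $g_0$ and $\apvar g_1$ can also be read off directly from the defining recursion of $\cb^{(\outsym,\insym)}$ with $n=0,1$ (just as you do for $g_2$), which avoids the detour through Lemma~\ref{lem:commutativity_intertw_aff}.
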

\begin{proof}
Assuming that $\Mvecmid\in \modMmid (k)$ is homogeneous,
we can see that both sides coincide with
\begin{equation*}
	\hout \apvar^{\Delta-k}\cb^{(\outsym, \insym)}\left(\hwv{c,\hout},\Mvecmid \otimes \hwv{c,\hin}\right).
\end{equation*}
\end{proof}

\begin{lem}
\label{lem:associativity_cbaff_left_right_top}
For any $\Mvecmid \in \modMmid$ and $i\in\N$, we have
\begin{align*}
	&(\ipvar_{0}+\apvar)^{2+i}\braket{\hwv{c,\hout},L(\ipvar_{0}+\apvar)\intertw (\Mvecmid,\apvar)\hwv{c,\hin}} \\
= \; &(\ipvar_{0}+\apvar)^{2+i} \braket{\hwv{c,\hout},\intertw \left(L(\ipvar_{0})\Mvecmid ,\apvar\right)\hwv{c,\hin}}.
\end{align*}
\end{lem}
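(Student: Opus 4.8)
The plan is to reduce the statement for general $i$ to the single case $i=0$, show that each side multiplied by $(\ipvar_{0}+\apvar)^{2}$ is a genuine Laurent polynomial in $\ipvar_{0}$, and then match coefficients — using Lemma~\ref{lem:pre-associativity_cbaff_left_right_top} for the constant term and an induction on the PBW length of $\Mvecmid$ for the rest. Write $\Phi(\ipvar_{0}):=\braket{\hwv{c,\hout},L(\ipvar_{0}+\apvar)\intertw(\Mvecmid,\apvar)\hwv{c,\hin}}$ and $\Psi(\ipvar_{0}):=\braket{\hwv{c,\hout},\intertw(L(\ipvar_{0})\Mvecmid,\apvar)\hwv{c,\hin}}$; by linearity it is enough to treat $\Mvecmid$ homogeneous, say $\Mvecmid\in\modMmid(k)$. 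The reduction to $i=0$ is immediate: once $(\ipvar_{0}+\apvar)^{2}\Phi$ and $(\ipvar_{0}+\apvar)^{2}\Psi$ are known to be the same Laurent polynomial in $\ipvar_{0}$, multiplying by $(\ipvar_{0}+\apvar)^{i}$ gives the claim for every $i\in\N$.

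First I would check that $(\ipvar_{0}+\apvar)^{2}\Phi$ and $(\ipvar_{0}+\apvar)^{2}\Psi$ are Laurent polynomials in $\ipvar_{0}$ of bounded degree with coefficients monomial in $\apvar$. On the $\Psi$ side this uses $L_{n}\Mvecmid=0$ for $n>k$ and the Step~2 recursion for $\cb^{(\outsym, \insym)}$: the positive-power part of $\Psi$ has $\ipvar_{0}$-coefficients of the shape (polynomial in the index)$\times(-1)^{\text{index}}$, and such sequences are annihilated by ``multiply by $(\ipvar_{0}+\apvar)^{2}$'', so that part collapses to finitely many terms. On the $\Phi$ side one uses $L_{n}\hwv{c,\hin}=0$ for $n>0$ together with \eqref{eq:Virasoro_action_modMmidaff_series} (only finitely many $L_{k'-1}\Mvecmid\neq0$) to see that $\braket{\hwv{c,\hout},L_{n}\intertw(\Mvecmid,\apvar)\hwv{c,\hin}}$ is, for $n\geq1$, an explicit polynomial in $n$ times $\apvar^{\Delta-k+n}$; resumming the resulting geometric series and multiplying by $(\ipvar_{0}+\apvar)^{2}$ again collapses everything to a Laurent polynomial — this is precisely why the prefactor $(\ipvar_{0}+\apvar)^{2+i}$ is needed to make the left-hand side meaningful. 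It then suffices to compare coefficients, and the coefficient of $\ipvar_{0}^{0}$ is exactly Lemma~\ref{lem:pre-associativity_cbaff_left_right_top}: both equal $\hout\,\apvar^{\Delta-k}\,\cb^{(\outsym, \insym)}(\hwv{c,\hout},\Mvecmid\otimes\hwv{c,\hin})$.

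For the remaining, strictly negative, powers of $\ipvar_{0}$ I would induct on the PBW length of $\Mvecmid$. The base case $\Mvecmid=\hwv{c,\hmid}$ is a direct computation from the explicit formulas of Steps~1--4: each of $(\ipvar_{0}+\apvar)^{2}\Phi$ and $(\ipvar_{0}+\apvar)^{2}\Psi$ reduces to $B\apvar^{\Delta}\bigl(\hout+(\hout+\hmid-\hin)\,\apvar\ipvar_{0}^{-1}+\hmid\,\apvar^{2}\ipvar_{0}^{-2}\bigr)$. For the inductive step, pass from $\Mvecmid$ to $L_{-p}\Mvecmid$, $p>0$: on the $\Psi$ side combine \eqref{eq:commutation_Virasoro_field} inside $\modMmid$ with the translation property (Proposition~\ref{prop:L(-1)-derivative_cbaff}) and Corollary~\ref{cor:formula_intertw}; on the $\Phi$ side commute $L_{-p}$ past $L(\ipvar_{0}+\apvar)$ using \eqref{eq:commutation_Virasoro_field} and then apply Corollary~\ref{cor:formula_intertw}, or equivalently invoke the commutativity identity of Lemma~\ref{lem:commutativity_intertw_aff}. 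In either description the effect on $\Phi_{\Mvecmid}$ and on $\Psi_{\Mvecmid}$ is the action of one and the same differential operator in $\ipvar_{0},\apvar$ (plus, on the $\Psi$ side, a contribution from $L(\ipvar_{0})L_{-p}\Mvecmid$ that is again an earlier case), so the identity $(\ipvar_{0}+\apvar)^{2}\Phi_{\Mvecmid}=(\ipvar_{0}+\apvar)^{2}\Psi_{\Mvecmid}$ propagates from $\Mvecmid$ to $L_{-p}\Mvecmid$; the same, easier, recursion covers the general (non-homogeneous) $\Mvecmid$ in the stated lemma.

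The main obstacle is the formal-series bookkeeping in the second step: one must make precise that multiplication by $(\ipvar_{0}+\apvar)^{2}$ turns two a priori infinite (on the $\Phi$ side, not even well-defined) expressions into the \emph{same} finite Laurent polynomial — i.e.\ verify the telescoping of the relevant coefficient sequences — and one must track the central-charge term $\tfrac{c}{2}\binom{n+1}{3}\ipvar^{n-2}$ in \eqref{eq:commutation_Virasoro_field} through the induction (it is killed against $\hwv{c,\hin}$ for the relevant range of modes, but this needs to be said explicitly). Everything else is routine manipulation with the formulas of Steps~1--5 and Corollary~\ref{cor:formula_intertw}.
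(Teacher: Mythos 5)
Your architecture is genuinely different from the paper's (which fixes $\Mvecmid$ and inducts on the power $m$ of $\ipvar_{0}$, handling $m\geq 0$ by a change of variables plus the vanishing of $\Res_{\ipvar_{1}}$ of a series with no negative powers plus Lemma~\ref{lem:commutativity_intertw_aff}, then stepping down from $m=-1$ by differentiating in $\apvar$), and parts of it are sound: the telescoping observations are correct (the Step~2 recursion makes the positive-power tail of the $\Psi$-side affine in the mode index, so its second difference dies; the $\Phi$-side coefficients are polynomial in $n$ of degree controlled by the weight of $\Mvecmid$, so high finite differences die), and your explicit base-case value $B\apvar^{\Delta}\bigl(\hout+(\hout+\hmid-\hin)\apvar\ipvar_{0}^{-1}+\hmid\apvar^{2}\ipvar_{0}^{-2}\bigr)$ checks out on both sides.

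The gap is in the inductive step, at the sentence ``the effect on $\Phi_{\Mvecmid}$ and on $\Psi_{\Mvecmid}$ is the action of one and the same differential operator \dots so the identity propagates.'' Write $E_{\Mvecmid}:=\Phi_{\Mvecmid}-\Psi_{\Mvecmid}$. Your induction hypothesis controls only $(\ipvar_{0}+\apvar)^{2}E_{\Mvecmid}$, not $E_{\Mvecmid}$ itself; and $E_{\Mvecmid}$ is genuinely nonzero, a doubly infinite series of formal-delta type (the difference of the two opposite expansions of $(\ipvar_{0}+\apvar)^{-2}$ applied to the common Laurent polynomial), since $\Phi$ carries only powers $\ipvar_{0}^{\leq -2}$ while $\Psi$ carries powers $\ipvar_{0}^{\geq -k-2}$. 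The recursion $E_{L_{-p}\Mvecmid}=D_{p}E_{\Mvecmid}$ involves $\partial_{\ipvar_{0}}$ and $\partial_{\apvar}$, and a differential operator does \emph{not} in general preserve $\Kern\bigl((\ipvar_{0}+\apvar)^{2}\bigr)$: e.g.\ $\partial_{\ipvar_{0}}$ sends the analogue of $\delta'$ to $\delta''$, which is no longer killed by $(\ipvar_{0}+\apvar)^{2}$. So ``same operator on both sides'' is necessary but not sufficient, and as stated the step would fail for a generic first-order operator. What actually saves it here is a specific cancellation you never isolate or verify: the coefficients of $\partial_{\apvar}$ and $-\partial_{\ipvar_{0}}$ in $D_{p}$ are $(-\apvar)^{1-p}$ and $\ipvar_{0}^{1-p}$, whose difference is divisible by $(\ipvar_{0}+\apvar)$, so that $(\ipvar_{0}+\apvar)^{2}D_{p}F=2\bigl(\ipvar_{0}^{1-p}-(-\apvar)^{1-p}\bigr)(\ipvar_{0}+\apvar)F=0$ whenever $(\ipvar_{0}+\apvar)^{2}F=0$; the zeroth-order (multiplication and central-term) parts commute with $(\ipvar_{0}+\apvar)^{2}$ and the central contributions must separately be shown to resum, via the identity \eqref{eq:identity_binom}, to the same $\tfrac{c}{2}\binom{1-p}{3}\ipvar_{0}^{-p-2}f_{\Mvecmid}$ on both sides. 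Until you establish (i) that the two recursions really produce the identical operator $D_{p}$ after the change of variables $\ipvar_{1}=\ipvar_{0}+\apvar$, and (ii) that this $D_{p}$ preserves $\Kern\bigl((\ipvar_{0}+\apvar)^{2}\bigr)$, the propagation claim is unsupported, and this is the entire content of the inductive step.
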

\begin{proof}
Note that the assertion is equivalent to that
\begin{align}
\label{eq:associativity_cbaff_left_right_top_equivalent}
	&\Res_{\ipvar_{0}}\ipvar_{0}^{m}(\ipvar_{0}+\apvar)^{2+i}\cbaff \left(\hwv{c,\hout},L(\ipvar_{0}+\apvar)\left(\intertw_{t} (\Mvecmid,\apvar)\otimes \hwv{c,\hin}\right)\right) \\
= \; &\Res_{\ipvar_{0}}\ipvar_{0}^{m}(\ipvar_{0}+\apvar)^{2+i}\cbaff\left(\hwv{c,\hout},\intertw_{t} \left(L(\ipvar_{0})\Mvecmid ,\apvar\right)\otimes \hwv{c,\hin}\right) \notag
\end{align}
holds for any $\Mvecmid\in \modMmid$, $i\in \N$ and $m\in\Z$.
We show this in several different ranges of $m$.

When $m\in \N$, we have
\begin{align*}
	&\Res_{\ipvar_{0}}\ipvar_{0}^{m}(\ipvar_{0}+\apvar)^{2+i}L(\ipvar_{0}+\apvar)\left(\intertw_{t}(\Mvecmid,\apvar)\otimes \hwv{c,\hin}\right) \\
= \; &\Res_{\ipvar_{1}}(\ipvar_{1}-\apvar)^{m}\ipvar_{1}^{2+i}L(\ipvar_{1})\left(\intertw_{t}(\Mvecmid,\apvar)\otimes \hwv{c,\hin}\right)
\end{align*}
by changing the variables as $\ipvar_{1}=\ipvar_{0}+\apvar$.
Note that the formal series
\begin{align*}
	(\ipvar_{1}-\apvar)^{m}\ipvar_{1}^{2+i}\intertw_{t}(\Mvecmid,\apvar)\otimes L(\ipvar_{1})\hwv{c,\hin}
\end{align*}
does not involve any negative powers of $\ipvar_{1}$,
and hence its residue with respect to $\ipvar_{1}$ vanishes.
Therefore we can subtract the residue for free to obtain
\begin{align*}
	&\Res_{\ipvar_{0}}\ipvar_{0}^{m}(\ipvar_{0}+\apvar)^{2+i}L(\ipvar_{0}+\apvar)\left(\intertw_{t}(\Mvecmid,\apvar)\otimes \hwv{c,\hin}\right)  \\
= \; &\Res_{\ipvar_{1}}(\ipvar_{1}-\apvar)^{m}\ipvar_{1}^{2+i}L(\ipvar_{1})\left(\intertw_{t}(\Mvecmid,\apvar)\otimes \hwv{c,\hin}\right) 
	-\Res_{\ipvar_{1}}(\ipvar_{1}-\apvar)^{m}\ipvar_{1}^{2+i}\intertw_{t}(\Mvecmid,\apvar)\otimes L(\ipvar_{1})\hwv{c,\hin} \\
= \; &\Res_{\ipvar_{1}}\Res_{\ipvar_{0}}(\ipvar_{1}-\apvar)^{m}\ipvar_{1}^{2+i}\apvar^{-1}\delta\left(\frac{\ipvar_{1}-\ipvar_{0}}{\apvar}\right)\intertw_{t}\left(L(\ipvar_{0})\Mvecmid,\apvar\right)\otimes \hwv{c,\hin},
\end{align*}
where we used the commutativity (Lemma \ref{lem:commutativity_intertw_aff}) in the last equality.
We also use the identity
\begin{equation*}
	\apvar^{-1}\delta\left(\frac{\ipvar_{1}-\ipvar_{0}}{\apvar}\right)
=\ipvar_{1}^{-1}\delta\left(\frac{\apvar+\ipvar_{0}}{\ipvar_{1}}\right)
\end{equation*}
to obtain
\begin{align*}
	&\Res_{\ipvar_{0}}\ipvar_{0}^{m}(\ipvar_{0}+\apvar)^{2+i}L(\ipvar_{0}+\apvar)\left(\intertw_{t}(\Mvecmid,\apvar)\otimes \hwv{c,\hin}\right)\\
= \; &\Res_{\ipvar_{1}}\Res_{\ipvar_{0}}(\ipvar_{1}-\apvar)^{m}\ipvar_{1}^{2+i}\ipvar_{1}^{-1}\delta\left(\frac{\apvar+\ipvar_{0}}{\ipvar_{1}}\right)\intertw_{t}\left(L(\ipvar_{0})\Mvecmid,\apvar\right)\otimes \hwv{c,\hin} \notag \\
= \; &\Res_{\ipvar_{0}}\ipvar_{0}^{m}(\ipvar_{0}+\apvar)^{2+i}\intertw_{t} \left(L(\ipvar_{0})\Mvecmid ,\apvar\right)\otimes \hwv{c,\hin}. \notag
\end{align*}
This in particular implies (\ref{eq:associativity_cbaff_left_right_top_equivalent}) in the case when $m\in \N$.

We show (\ref{eq:associativity_cbaff_left_right_top_equivalent}) when $m<0$ by induction.
When $m=-1$, we see that
\begin{align*}
	&\Res_{\ipvar_{0}}\ipvar_{0}^{-1}(\ipvar_{0}+\apvar)^{2+i}\cbaff \left(\hwv{c,\hout},L(\ipvar_{0}+\apvar)\left(\intertw_{t}(\Mvecmid,\apvar)\otimes \hwv{c,\hin}\right)\right) \\
= \; &\sum_{r=0}^{i}\binom{i}{r}\Res_{\ipvar_{0}}\ipvar_{0}^{r-1}\apvar^{i-r}(\ipvar_{0}+\apvar)^{2}\cbaff \left(\hwv{c,\hout},L(\ipvar_{0}+\apvar)\left(\intertw_{t}(\Mvecmid,\apvar)\otimes \hwv{c,\hin}\right)\right).
\end{align*}
We divide the sum into the part of $r=0$ and those of $r=1,\dots, i$. For the former case, we apply Lemma \ref{lem:pre-associativity_cbaff_left_right_top},
and for the latter cases, we can already apply (\ref{eq:associativity_cbaff_left_right_top_equivalent}) to obtain
\begin{align*}
	&\Res_{\ipvar_{0}}\ipvar_{0}^{-1}(\ipvar_{0}+\apvar)^{2+i}\cbaff \left(\hwv{c,\hout},L(\ipvar_{0}+\apvar)\left(\intertw_{t}(\Mvecmid,\apvar)\otimes \hwv{c,\hin}\right)\right) \\
= \; &\sum_{r=0}^{i}\binom{i}{r}\Res_{\ipvar_{0}}\ipvar_{0}^{r-1}\apvar^{i-r}(\ipvar_{0}+\apvar)^{2}\cbaff \left(\hwv{c,\hout},\intertw_{t}(L(\ipvar_{0})\Mvecmid,\apvar)\otimes \hwv{c,\hin}\right) \\
= \; &\Res_{\ipvar_{0}}\ipvar_{0}^{-1}(\apvar +\ipvar_{0})^{2+j}\cbaff \left(\hwv{c,h_{\infty}},\intertw_{t}(L(\ipvar_{0})u^{1},\apvar)\otimes \hwv{c,h_{0}}\right).
\end{align*}
Therefore (\ref{eq:associativity_cbaff_left_right_top_equivalent}) holds for $m\geq -1$.

Suppose that $k\in\Zpos$ is such that (\ref{eq:associativity_cbaff_left_right_top_equivalent}) holds for $m\geq -k$.
In particular, we have
\begin{align*}
	&\Res_{\ipvar_{0}}\ipvar_{0}^{-k}(\ipvar_{0}+\apvar)^{2+i}\cbaff \left(\hwv{c,\hout},L(\ipvar_{0}+\apvar)\left(\intertw_{t} (\Mvecmid,\apvar)\otimes \hwv{c,\hin}\right)\right) \\
= \; &\Res_{\ipvar_{0}}\ipvar_{0}^{-k}(\ipvar_{0}+\apvar)^{2+i}\cbaff\left(\hwv{c,\hout},\intertw_{t} \left(L(\ipvar_{0})\Mvecmid ,\apvar\right)\otimes \hwv{c,\hin}\right)
\end{align*}
for any $\Mvecmid \in \modMmid$, and $i\in\N$.
We differentiate both sides in terms of $\apvar$.
The derivative of the left hand side (LHS) becomes
\begin{align*}
	\frac{d}{d\apvar}(\mathrm{LHS})
& \; =\Res_{\ipvar_{0}}\ipvar_{0}^{-k}\left(\frac{\dee}{\dee \ipvar_{0}}(\ipvar_{0}+\apvar)^{2+i}\right)\cbaff \left(\hwv{c,\hout},L(\ipvar_{0}+\apvar)\left(\intertw_{t}(\Mvecmid,\apvar)\otimes \hwv{c,\hin}\right)\right) \\
    &\quad +\Res_{\ipvar_{0}}\ipvar_{0}^{-k}(\ipvar_{0}+\apvar)^{2+i}\frac{\dee}{\dee \ipvar_{0}}\cbaff \left(\hwv{c,\hout},L(\ipvar_{0}+\apvar)\left(\intertw_{t}(\Mvecmid,\apvar)\otimes \hwv{c,\hin}\right)\right) \\
    &\quad +\Res_{\ipvar_{0}}\ipvar_{0}^{-k}(\ipvar_{0}+\apvar)^{2+i}\cbaff \left(\hwv{c,\hout},L(\ipvar_{0}+\apvar)\left(\intertw_{t}(L_{-1}\Mvecmid,\apvar)\otimes \hwv{c,\hin}\right)\right) \\
& \; =k\Res_{\ipvar_{0}}\ipvar_{0}^{-k-1}(\ipvar_{0}+\apvar)^{2+i}\cbaff \left(\hwv{c,\hout},L(\ipvar_{0}+\apvar)\left(\intertw_{t}(\Mvecmid,\apvar)\otimes \hwv{c,\hin}\right)\right) \\
	&\quad +\Res_{\ipvar_{0}}\ipvar_{0}^{-k}(\ipvar_{0}+\apvar)^{2+i}\cbaff \left(\hwv{c,\hout},L(\ipvar_{0}+\apvar)\left(\intertw_{t}(L_{-1}\Mvecmid,\apvar)\otimes \hwv{c,\hin}\right)\right).
\end{align*}
On the other hand, the derivative of the right hand side (RHS) is
\begin{align*}
	\frac{d}{d\apvar}(\mathrm{RHS})
& \; =\Res_{\ipvar_{0}}\ipvar_{0}^{-k}\left(\frac{\dee}{\dee \ipvar_{0}}(\ipvar_{0}+\apvar)^{2+i}\right)\cbaff \left(\hwv{c,\hout},\intertw_{t}(L(\ipvar_{0})\Mvecmid,\apvar)\otimes \hwv{c,\hin}\right) \\
	&\quad +\Res_{\ipvar_{0}}\ipvar_{0}^{-k}(\ipvar_{0}+\apvar)^{2+i}\cbaff\left(\hwv{c,\hout},\intertw_{t}(L_{-1}L(\ipvar_{0})\Mvecmid,\apvar)\otimes \hwv{c,\hin}\right) \\
& \; =k\Res_{\ipvar_{0}}\ipvar_{0}^{-k-1}(\ipvar_{0}+\apvar)^{2+i}\cbaff \left(\hwv{c,\hout},\intertw_{t}(L(\ipvar_{0})\Mvecmid,\apvar)\otimes \hwv{c,\hin}\right) \\
	&\quad +\Res_{\ipvar_{0}}\ipvar_{0}^{-k}(\ipvar_{0}+\apvar)^{2+i}\cbaff \left(\hwv{c,\hout},\intertw_{t}(L(\ipvar_{0})L_{-1}\Mvecmid,\apvar)\otimes \hwv{c,\hin}\right).
\end{align*}
Here we used the commutation relation (\ref{eq:commutation_Virasoro_field}) at $n=-1$ in the last equality.
Comparing these, due to the induction hypothesis, we obtain
\begin{align*}
	&\Res_{\ipvar_{0}}\ipvar_{0}^{-k-1}(\ipvar_{0}+\apvar)^{2+i}\cbaff \left(\hwv{c,\hout},L(\ipvar_{0}+\apvar)\left(\intertw_{t}(\Mvecmid,\apvar)\otimes \hwv{c,\hin}\right)\right) \\
= \; &\Res_{\ipvar_{0}}\ipvar_{0}^{-k-1}(\ipvar_{0}+\apvar)^{2+i}\cbaff \left(\hwv{c,\hout},\intertw_{t}(L(\ipvar_{0})\Mvecmid,\apvar)\otimes \hwv{c,\hin}\right).
\end{align*}
Hence, (\ref{eq:associativity_cbaff_left_right_top_equivalent}) holds also at $m=-k-1$.

In conclusion, we have shown that (\ref{eq:associativity_cbaff_left_right_top_equivalent})
holds for arbitrary $\Mvecmid\in \modMmid$, $i\in \N$ and $m\in\Z$,
which is equivalent to the desired result.
\end{proof}

\begin{proof}[Proof of Proposition~\ref{prop:associativity_cbaff}]
We introduce another filtration on $\modMin$ as
\begin{equation*}
	G^{d}\modMin:=\bigoplus_{k=0}^{d}\modMin (k), \quad d\in \N.
\end{equation*}
In particular, each subspace $G^{d}\modMin$ is finite dimensional.

For $d,r\in \N$, we name the following statement:

\noindent $\mathbf{P}[d;r]$: There exists $k\in \N$ depending on $d$ such that
\begin{align*}
&(\ipvar_{0}+\apvar)^{k}\cbaff (\Mvecout,L(\ipvar_{0}+\apvar)(\intertw_{t} (\Mvecmid,\apvar)\otimes \Mvecin)) \\
= \; &(\ipvar_{0}+\apvar)^{k} \cbaff (\Mvecout,\intertw_{t} \left(L(\ipvar_{0})\Mvecmid ,\apvar\right)\otimes \Mvecin )
\end{align*}
holds for any $\Mvecin\in G^{d}\modMin$, $\Mvecmid\in \modMmid$ and $\Mvecout\in \PBWfil{r}\modMout$.

To prove Proposition~\ref{prop:associativity_cbaff}, we employ the induction in $p,r\in \N$.
The statement $\mathbf{P}[0;0]$ is true due to Lemma~\ref{lem:associativity_cbaff_left_right_top}.

Assume that $\mathbf{P}[d;r]$ is true.
For $\Mvecin\in G^{d}\modMin$, $\Mvecmid\in \modMmid$, $\Mvecout\in \PBWfil{r}\modMout$ and $n>0$ such that $L_{-n}\Mvecin\in G^{d+1}\modMin$, we have
\begin{align*}
	&\cbaff \left(\Mvecout,L(\ipvar_{0}+\apvar)\left(\intertw_{t}(\Mvecmid,\apvar)\otimes L_{-n}\Mvecin \right)\right) \\
= \; &\cbaff \left(\Mvecout,L(\ipvar_{0}+\apvar)L_{-n}\left(\intertw_{t}(\Mvecmid,\apvar)\otimes \Mvecin\right)\right) \\
	&\quad -\sum_{k=0}^{\infty}\binom{-n+1}{k}\apvar^{-n+1-k}\cbaff \left(\Mvecout,L(\ipvar_{0}+\apvar)\left(\intertw_{t}(L_{k-1}\Mvecmid,\apvar)\otimes \Mvecin\right)\right).
\end{align*}

Using the commutation relation (\ref{eq:commutation_Virasoro_field}), we get
\begin{align*}
	&\cbaff \left(\Mvecout,L(\ipvar_{0}+\apvar)\left(\intertw_{t}(\Mvecmid,\apvar)\otimes L_{-n}\Mvecin \right)\right) \\
= \; &\cbaff \left(L_{n}\Mvecout,L(\ipvar_{0}+\apvar)\left(\intertw_{t}(\Mvecmid,\apvar)\otimes \Mvecin\right)\right) \\
	&\quad -(\ipvar_{0}+\apvar)^{-n+1}\frac{\dee}{\dee \ipvar_{0}}\cbaff \left(\Mvecout,L(\ipvar_{0}+\apvar)\left(\intertw_{t}(\Mvecmid,\apvar)\otimes \Mvecin\right)\right)  \\
	&\quad -2(-n+1)(\ipvar_{0}+\apvar)^{-n}\cbaff \left(\Mvecout,L(\ipvar_{0}+\apvar)\left(\intertw_{t}(\Mvecmid,\apvar)\otimes \Mvecin\right)\right) \\
	&\quad -\frac{c}{2}\binom{-n+1}{3} (\ipvar_{0}+\apvar)^{-n-2}\cbaff \left(\Mvecout,\intertw_{t}(\Mvecmid,\apvar)\otimes \Mvecin\right) \\
	&\quad -\sum_{k=0}^{\infty}\binom{-n+1}{k}\apvar^{-n+1-k}\cbaff \left(\Mvecout,L(\ipvar_{0}+\apvar)\left(\intertw_{t}(L_{k-1}\Mvecmid,\apvar)\otimes \Mvecin\right)\right).
\end{align*}
On the other hand, we have
\begin{align*}
	&\cbaff \left( \Mvecout,\intertw_{t}\left(L(\ipvar_{0})\Mvecmid,\apvar \right)\otimes L_{-n}\Mvecin \right) \\
= \; & \cbaff \left( L_{n}\Mvecout,\intertw_{t}\left(L(\ipvar_{0})\Mvecmid,\apvar \right)\otimes \Mvecin \right)
	 -\cbaff \left(\Mvecout, \left(L_{-n}\intertw_{t}(L(\ipvar_{0})\Mvecmid,\apvar)\right)\otimes \Mvecin\right) \\
= \; & \cbaff \left( L_{n}\Mvecout,\intertw_{t}\left(L(\ipvar_{0})\Mvecmid,\apvar \right)\otimes \Mvecin \right) \\
	&\quad -\sum_{k=0}^{\infty}\binom{-n+1}{k}\apvar^{-n+1-k}\cbaff \left(\Mvecout,\intertw_{t}\left(L_{k-1}L(\ipvar_{0})\Mvecmid,\apvar\right)\otimes \Mvecin\right) \\
= \; &\cbaff \left( L_{n}\Mvecout,\intertw_{t}\left(L(\ipvar_{0})\Mvecmid,\apvar \right)\otimes \Mvecin \right) \\
	&\quad -\sum_{k=0}^{\infty}\binom{-n+1}{k}\apvar^{-n+1-k}\cbaff \left(\Mvecout,\intertw_{t}\left(L(\ipvar_{0})L_{k-1}\Mvecmid,\apvar\right)\otimes \Mvecin\right) \\
	&\quad -\sum_{k=0}^{\infty}\binom{-n+1}{k}\apvar^{-n+1-k}\Biggl( \ipvar_{0}^{k}\frac{\dee}{\dee \ipvar_{0}}\cbaff \left(\Mvecout,\intertw_{t}(L(\ipvar_{0})\Mvecmid,\apvar)\otimes \Mvecin\right) \\
	&\qquad+2k \ipvar_{0}^{k-1}\cbaff \left(\Mvecout,\intertw_{t}(L(\ipvar_{0})\Mvecmid,\apvar)\otimes \Mvecin \right) \\
	&\qquad +\frac{c}{2}\binom{k}{3}\ipvar_{0}^{k-3}\cbaff \left(\Mvecout, \intertw_{t}(\Mvecmid,\apvar)\otimes \Mvecin\right)\Biggr).
\end{align*}
Using the identities
\begin{align}
\label{eq:identity_binom}
	\sum_{k=0}^{\infty}\binom{-n+1}{k}\binom{k}{j}\apvar^{-n+1-k}\ipvar_{0}^{k-j}
& \; =\binom{-n+1}{j}(\apvar+\ipvar_{0})^{-n+1-j},\quad j\in \N,
\end{align}
we obtain
\begin{align*}
	&\cbaff \left( \Mvecout,\intertw_{t}\left(L(\ipvar_{0})\Mvecmid,\apvar \right)\otimes L_{-n}\Mvecin \right) \\
= \; &\cbaff \left( L_{n}\Mvecout,\intertw_{t}\left(L(\ipvar_{0})\Mvecmid,\apvar \right)\otimes \Mvecin \right) \\
	&\quad -\sum_{k=0}^{\infty}\binom{-n+1}{k}\apvar^{-n+1-k}\cbaff \left(\Mvecout,\intertw_{t}\left(L(\ipvar_{0})L_{k-1}\Mvecmid,\apvar\right)\otimes \Mvecin\right) \\
	&\quad -(\apvar+\ipvar_{0})^{-n+1}\frac{\dee}{\dee \ipvar_{0}}\cbaff \left(\Mvecout,\intertw_{t}(L(\ipvar_{0})\Mvecmid,\apvar)\otimes \Mvecin\right) \\
	&\quad -2(-n+1) (\apvar+\ipvar_{0})^{-n}\cbaff \left(\Mvecout,\intertw_{t}(L(\ipvar_{0})\Mvecmid,\apvar)\otimes \Mvecin \right) \\
	&\quad -\frac{c}{2}\binom{-n+1}{3}(\apvar+\ipvar_{0})^{-n-2}\cbaff \left(\Mvecout, \intertw_{t}(\Mvecmid,\apvar)\otimes \Mvecin\right).
\end{align*}

By induction hypothesis $\mathbf{P}[d;r]$, there exists $m\in\N$ depending on $d$ and $n$ such that
\begin{align*}
	&(\ipvar_{0}+\apvar)^{m}\cbaff\left(\Mvecout ,L(\ipvar_{0}+\apvar)\left(\intertw_{t}(\Mvecmid,\apvar)\otimes L_{-n}\Mvecin\right)\right) \\
= \; &(\ipvar_{0}+\apvar)^{m}\cbaff \left(\Mvecout,\intertw_{t}\left(L(\ipvar_{0})\Mvecmid, \apvar\right)\otimes L_{-n}\Mvecin\right).
\end{align*}
Since $G^{d+1}\modMin$ is finite dimensional, we can maximize such $m$ so that it depends only on $d+1$,
implying that $\mathbf{P}[d+1;r]$ is also true.

Next, we apply $L_{-n}$ with $n>0$ at $\Mvecout\in \PBWfil{r}\modMout$.
On one hand, we have
\begin{align*}
	&\cbaff \left(L_{-n}\Mvecout,L(\ipvar_{0}+\apvar)\left(\intertw_{t}(\Mvecmid,\apvar)\otimes \Mvecin \right)\right) \\
= \; &\sum_{k=0}^{\infty}\binom{n+1}{k}\apvar^{n+1-k}\cbaff \left(\Mvecout, L(\ipvar_{0}+\apvar)\left(\intertw_{t}(L_{k-1}\Mvecmid,\apvar)\otimes \Mvecin \right)\right) \\
	&\quad +\cbaff \left(\Mvecout, L(\ipvar_{0}+\apvar)\left(\intertw_{t}(\Mvecmid,\apvar)\otimes L_{n}\Mvecin\right)\right) \\
	&\quad +(\ipvar_{0}+\apvar)^{n+1}\frac{\dee}{\dee \ipvar_{0}}\cbaff \left(\Mvecout, L(\ipvar_{0}+\apvar)\left(\intertw_{t}(\Mvecmid,\apvar)\otimes \Mvecin\right)\right) \\
	&\quad +2(n+1)(\ipvar_{0}+\apvar)^{n}\cbaff \left(\Mvecout, L(\ipvar_{0}+\apvar)\left(\intertw_{t}(\Mvecmid,\apvar)\otimes \Mvecin\right)\right) \\
	&\quad +\frac{c}{2}\binom{n+1}{3}(\ipvar_{0}+\apvar)^{n-2}\cbaff \left(\Mvecout, \intertw_{t}(\Mvecmid,\apvar)\otimes \Mvecin \right).
\end{align*}
Note that the rational function $\binom{n+1}{3}(\ipvar_{0}+\apvar)^{n-2}$ 
appearing in the last line is a polynomial of $\ipvar_{0}$ and $\apvar$ for all $n>0$.
On the other hand, we have
\begin{align*}
	&\cbaff \left(L_{-n}\Mvecout,\intertw_{t}(L(\ipvar_{0})\Mvecmid,\apvar)\otimes \Mvecin \right) \\
\; =&\sum_{k=0}^{\infty}\binom{n+1}{k}\apvar^{n+1-k}\cbaff \left(\Mvecout, \intertw_{t}(L(\ipvar_{0})L_{k-1}\Mvecmid,\apvar)\otimes \Mvecin \right) \\
	&\quad +(\ipvar_{0}+\apvar)^{n+1}\frac{\dee}{\dee \ipvar_{0}}\cbaff \left(\Mvecout, \intertw_{t}(L(\ipvar_{0})\Mvecmid,\apvar)\otimes \Mvecin\right) \\
	&\quad +2(n+1)(\ipvar_{0}+\apvar)^{n}\cbaff \left(\Mvecout, \intertw_{t}(L(\ipvar_{0})\Mvecmid,\apvar)\otimes \Mvecin\right) \\
	&\quad +\frac{c}{2}\binom{n+1}{3}(\ipvar_{0}+\apvar)^{n-2}\cbaff \left(\Mvecout, \intertw_{t}(\Mvecmid,\apvar)\otimes \Mvecin\right) \\
	&\quad +\cbaff \left(\Mvecout, \intertw_{t}(L(\ipvar_{0})\Mvecmid,\apvar)\otimes L_{n}\Mvecin\right).
\end{align*}
Here we again used the identities from (\ref{eq:identity_binom}).
Since $L_{n}\Mvecin\in G^{d}\modMin$, we can apply the induction hypothesis $\mathbf{P}[d;r]$
to conclude that there exists $m\in \N$ depending on $d$ such that
\begin{align*}
	&(\ipvar_{0}+\apvar)^{m}\cbaff\left(L_{-n}\Mvecout ,L(\ipvar_{0}+\apvar)\left(\intertw_{t}(\Mvecmid,\apvar)\otimes \Mvecin\right)\right) \\
= \; &(\ipvar_{0}+\apvar)^{m}\cbaff \left(L_{-n}\Mvecout,\intertw_{t}\left(L(\ipvar_{0})\Mvecmid, \apvar\right)\otimes \Mvecin\right)
\end{align*}
holds, implying that $\mathbf{P}[d;r+1]$ is also true.
\end{proof}

\begin{proof}[Proof of Proposition~\ref{prop:Jacobi_cbaff}]
The Jacobi identity is equivalent to having 
both of the commutativity (Lemma~\ref{lem:commutativity_intertw_aff}) and the associativity (Proposition~\ref{prop:associativity_cbaff}).
In \cite{Lepowsky_Li-VOA}, this equivalence is shown only for module maps,
but the same arguments work for intertwining operators.
\end{proof}

\newpage

\bibliographystyle{annotate}
\bibliography{generic_Virasoro}

\newcommand{\etalchar}[1]{$^{#1}$}
\begin{thebibliography}{FGST06b}

\bibitem[AK11]{AomotoKita}
Kazuhiko Aomoto and Michitake Kita.
\newblock {\em Theory of hypergeometric functions}.
\newblock Springer, 2011.


\bibitem[BB03a]{BB-zig_zag}
M.~Bauer and D.~Bernard.
\newblock {SLE}, {CFT} and zig-zag probabilities.
\newblock In {\em Proceedings of the conference `Conformal Invariance and
  Random Spatial Processes', Edinburgh}, 2003.

\bibitem[BB03b]{BB-CFTs_of_SLEs}
Michel Bauer and Denis Bernard.
\newblock Conformal field theories of stochastic {L}oewner evolutions.
\newblock {\em Comm. Math. Phys.}, 239(3):493--521, 2003.


\bibitem[BB04]{BauerBernard-conformal_transformations}
Michel Bauer and Denis Bernard.
\newblock {Conformal transformations and the SLE partition function
  martingale}.
\newblock {\em Annales Henri Poincar\'e}, 5(2):289--326, 2004.


\bibitem[BKJ01]{BakalovKirillovJr2001}
B.~Bakalov and A.~Kirillov~Jr.
\newblock {\em Lectures on Tensor Categories and Modular Functors}, volume~21
  of {\em University Lecture Series}.
\newblock American Mathematical Society, 2001.


\bibitem[BPZ84a]{BPZ-infinite_conformal_symmetry_in_2d_QFT}
A.~A. Belavin, A.~M. Polyakov, and A.~B. Zamolodchikov.
\newblock Infinite conformal symmetry in two-dimensional quantum field theory.
\newblock {\em Nucl. Phys. B}, 241:333--380, 1984.


\bibitem[BPZ84b]{BPZ-infinite_conformal_symmetry_of_critical_fluctiations}
A.~A. Belavin, A.~M. Polyakov, and A.~B. Zamolodchikov.
\newblock Infinite conformal symmetry of critical fluctuations in two
  dimensions.
\newblock {\em J. Stat. Phys.}, 34:763--774, 1984.


\bibitem[BSA88]{BSA-degenerate_CFTs_and_explicit_expressions}
L.~Benoit and Y.~Saint-Aubin.
\newblock Degenerate conformal field theories and explicit expressions for some
  null vectors.
\newblock {\em Phys. Lett. B}, 215(3):517--522, 1988.


\bibitem[CJH{\etalchar{+}}21]{CJORY-tensor_categories_arising_from_Virasoro_algebra}
Thomas Creutzig, Cuipo Jiang, Florencia~Orosz Hunziker, David Ridout, and
  Jinwei Yang.
\newblock Tensor categories arising from the {Virasoro} algebra.
\newblock {\em Advances in Mathematics}, 380:107601, 2021.


\bibitem[CLR21]{CreutzigLentnerRupert}
Thomas Creutzig, Simon Lentner, and Matthew Rupert.
\newblock Characterizing braided tensor categories associated to logarithmic
  vertex operator algebras.
\newblock {\em https://arxiv.org/abs/2104.13262}, 2021.


\bibitem[DFMS97]{DMS-CFT}
P.~Di~Francesco, P.~Mathieu, and D.~S\'en\'echal.
\newblock {\em {C}onformal {F}ield {T}heory}.
\newblock Springer Verlag, New York, 1997.


\bibitem[Dri89]{Drinfeld1989}
V.~G. Drinfeld.
\newblock Quasi-{Hopf} algebras.
\newblock {\em Algebra i Analiz}, 1(6):114--148, 1989.


\bibitem[Dub07]{Dubedat-commutation}
J.~Dub{\'e}dat.
\newblock Commutation relations for $\mathrm{SLE}$.
\newblock {\em Comm. Pure Appl. Math.}, 60(12):1792--1847, 2007.


\bibitem[Dub15]{Dubedat-fusion}
J.~Dub{\'e}dat.
\newblock $\mathrm{SLE}$ and {Virasoro} representations: Fusion.
\newblock {\em Comm. Math. Phys.}, 336(2):761--809, 2015.


\bibitem[FBZ04]{FrenkelBen-Zvi2004}
E.~Frenkel and D.~Ben-Zvi.
\newblock {\em Vertex Algebras and Algebraic Curves}, volume~88 of {\em
  Mathematical Surveys and Monographs}.
\newblock American Mathematical Society, 2nd edition, 2004.


\bibitem[Fel89]{Felder-BRST_approach}
G.~Felder.
\newblock {BRST} approach to minimal models.
\newblock {\em Nucl. Phys. B}, 317(1):215--236, 1989.
\newblock Erratum ibid., 324(2):548, 1989.


\bibitem[FF90]{FF-representations}
B.~L. Feigin and D.~B. Fuchs.
\newblock Representations of the {V}irasoro algebra.
\newblock In {\em Representation of Lie groups and related topics}, volume~7 of
  {\em Adv. Stud. Contemp. Math.}, pages 465--554. Gordon and Breach, New York,
  1990.

\bibitem[FGST06a]{FeiginGainutdinovSemikhatovTipunin2006b}
B.~L. Feigin, A.~M. Gainutdinov, A.~M. Semikhatov, and I.~Y. Tipunin.
\newblock The {Kazhdan-Lusztig} correspondence for the representation category
  of the triplet {W}-algebra in logarithmic {CFT}.
\newblock {\em Theor. Math. Phys.}, 148:1210--1235, 2006.


\bibitem[FGST06b]{FeiginGainutdinovSemikhatovTipunin2006a}
B.~L. Feigin, A.~M. Gainutdinov, A.~M. Semikhatov, and I.~Y. Tipunin.
\newblock Modular group representations and fusion in logarithmic conformal
  field theories and in the quantum group center.
\newblock {\em Commun. Math. Phys.}, 265:47--93, 2006.


\bibitem[FHL93]{FrenkelHuangLepowsky1993}
I.~B. Frenkel, Y.-Z. Huang, and J.~Lepowsky.
\newblock {\em On Axiomatic Approaches to Vertex Operator Algebras and
  Modules}, volume 104 of {\em Memoirs of the American Mathematical Society}.
\newblock American Mathematical Society, 1993.


\bibitem[FK15]{FK-solution_space_for_a_system_of_null_state_PDEs_all}
S.~M. Flores and P.~Kleban.
\newblock A solution space for a system of null-state partial differential
  equations, {Parts I-IV}.
\newblock {\em Comm. Math. Phys.}, 2015.
\newblock 333(1):389--434 ,333(1):435--481, 333(2):597--667, 333(2):669--715.


\bibitem[FLM89]{FLM-VOAs_and_the_Monster}
Igor Frenkel, James Lepowsky, and Arne Meurman.
\newblock {\em Vertex operator algebras and the Monster}.
\newblock Academic press, 1989.


\bibitem[FP18a]{FP-generators_projectors_and_the_JW}
Steven~M Flores and Eveliina Peltola.
\newblock {Generators, projectors, and the Jones-Wenzl algebra}.
\newblock https://arxiv.org/abs/1811.12364, 2018.

\bibitem[FP18b]{FP-standard_modules_radicals_and_the_valenced_TL}
Steven~M Flores and Eveliina Peltola.
\newblock {Standard modules, radicals, and the valenced Temperley-Lieb
  algebra}.
\newblock https://arxiv.org/abs/1801.10003, 2018.

\bibitem[FP20]{FP-quantum_Schur_Weyl}
Steven~M Flores and Eveliina Peltola.
\newblock {Higher-spin quantum and classical Schur-Weyl duality for
  $\mathfrak{sl}_2$}.
\newblock https://arxiv.org/abs/2008.06038, 2020.

\bibitem[FP21]{FP-monodromy_invariant_correlations}
S.~M. Flores and E.~Peltola.
\newblock Monodromy invariant {CFT} correlation functions of first column kac
  operators, 2021.
\newblock In preparation.

\bibitem[Fre07]{Frenkel2007}
E.~Frenkel.
\newblock Lectures on the {Langlands} program and conformal field theory.
\newblock In {\em Frontiers in Number Theory, Physics, and Geometry II}, pages
  387--533. Springer, 2007.

\bibitem[FW91]{FW-topological_representation_of_Uqsl2}
G.~Felder and C.~Wieczerkowski.
\newblock Topological representation of the quantum group {$U_{q}(sl_{2})$}.
\newblock {\em Comm. Math. Phys.}, 138(3):583--605, 1991.


\bibitem[FZ92]{FrenkelZhu1992}
I.~B. Frenkel and Y.~Zhu.
\newblock Vertex operator algebras associated to representations of affine and
  {Virasoro} algebras.
\newblock {\em Duke Math. J.}, 66:123--168, 1992.


\bibitem[FZ12]{FrenkelZhu2012}
I.~Frenkel and M.~Zhu.
\newblock Vertex algebras associated to modified regular representations of the
  {Virasoro} algebra.
\newblock {\em Adv. Math.}, 229:3468--3507, 2012.


\bibitem[Gaw99]{Gawedzki-lectures_on_CFT}
Krzysztof Gaw{\c e}dzki.
\newblock Lectures on conformal field theory.
\newblock In {\em Quantum fields and strings: A course for mathematicians (IAS
  Princeton)}, volume 1--2, pages 727--805. Amer. Math. Soc., Providence, RI,
  1999.

\bibitem[GN21]{GannonNegron-quantum_sltwo_and_log_VOAs}
Terry Gannon and Cris Negron.
\newblock {Quantum SL(2) and logarithmic vertex operator algebras at
  (p,1)-central charge}.
\newblock {\em https://arxiv.org/abs/2104.12821}, 2021.


\bibitem[GRAS96]{GRS-quantum_groups_in_2d_physics}
C.~G. G{\'o}mez, M.~Ruiz-Altaba, and G.~Sierra.
\newblock {\em Quantum groups in two-dimensional physics}.
\newblock Cambridge University Press, 1996.


\bibitem[GSW87]{GreenSchwarzWitten_superstring}
M.~B. Green, J.~H. Schwarz, and E.~Witten.
\newblock {\em Superstring theory I, II}.
\newblock Cambridge University Press, 1987.


\bibitem[HKJL15]{Huang--Kirillov--Lepowsky}
Y.-Z. Huang, A.~Kirillov~Jr., and J.~Lepowsky.
\newblock Braided tensor categories and extensions of vertex operator algebras.
\newblock {\em Commun. Math. Phys.}, 337:1143--1159, 2015.


\bibitem[HL92]{HuangLepowsky1992}
Y.-Z. Huang and J.~Lepowsky.
\newblock Toward a theory of tensor products for representations of a vertex
  operator algebra.
\newblock In {\em Proceedings of 20th International Conference on Differential
  Geometric Methods in Theoretical Physics, New York, 1991}, volume~1, pages
  344--354. World Scientific, 1992.

\bibitem[HL94]{HuangLepowsky1994}
Y.-Z. Huang and J.~Lepowsky.
\newblock Tensor products of modules for a vertex operator algebra and vertex
  tensor categories.
\newblock In {\em Lie Theory and Geometry, in Honor of Bertram Kostant}, pages
  349--383. Birkh{\"a}user, Boston, 1994.

\bibitem[HL95a]{Huang--LepowskyI}
Y.-Z. Huang and J.~Lepowsky.
\newblock A theory of tensor products for module categories for a vertex
  operator algebra, {I}.
\newblock {\em Selecta Mathematica, New Series}, 1(4):699--756, 1995.


\bibitem[HL95b]{Huang--LepowskyII}
Y.-Z. Huang and J.~Lepowsky.
\newblock A theory of tensor products for module categories for a vertex
  operator algebra, {II}.
\newblock {\em Selecta Mathematica, New Series}, 1(4):757--786, 1995.


\bibitem[HL95c]{Huang--LepowskyIII}
Y.-Z. Huang and J.~Lepowsky.
\newblock A theory of tensor products for module categories for a vertex
  operator algebra, {III}.
\newblock {\em J. Pure Appl. Algebra}, 100:141--171, 1995.


\bibitem[HLZ10a]{HuangLepowskyZhangII}
Y.-Z. Huang, J.~Lepowsky, and L.~Zhang.
\newblock Logarithmic tensor category theory, {II}: Logarithmic formal calculus
  and properties of logarithmic intertwining operators, 2010.
\newblock arXiv:1012.4196.

\bibitem[HLZ10b]{HuangLepowskyZhangIII}
Y.-Z. Huang, J.~Lepowsky, and L.~Zhang.
\newblock Logarithmic tensor category theory, {III}: Intertwining maps and
  tensor product bifunctors, 2010.
\newblock arXiv:1012.4197.

\bibitem[HLZ10c]{HuangLepowskyZhangIV}
Y.-Z. Huang, J.~Lepowsky, and L.~Zhang.
\newblock Logarithmic tensor category theory, {IV}: Constructions of tensor
  product bifunctors and the compatibility conditions, 2010.
\newblock arXiv:1012.4198.

\bibitem[HLZ10d]{HuangLepowskyZhangV}
Y.-Z. Huang, J.~Lepowsky, and L.~Zhang.
\newblock Logarithmic tensor category theory, {V}: Convergence condition for
  intertwining maps and the corresponding compatibility condition, 2010.
\newblock arXiv:1012.4199.

\bibitem[HLZ10e]{HuangLepowskyZhangVI}
Y.-Z. Huang, J.~Lepowsky, and L.~Zhang.
\newblock Logarithmic tensor category theory, {VI}: Expansion condition,
  associativity of logarithmic intertwining operators, and the associativity
  isomorphisms, 2010.
\newblock arXiv:1012.4202.

\bibitem[HLZ11a]{HuangLepowskyZhangVII}
Y.-Z. Huang, J.~Lepowsky, and L.~Zhang.
\newblock Logarithmic tensor category theory, {VII}: Convergence and extension
  properties and applications to expansion for intertwining maps, 2011.
\newblock arXiv:1110.1929.

\bibitem[HLZ11b]{HuangLepowskyZhangVIII}
Y.-Z. Huang, J.~Lepowsky, and L.~Zhang.
\newblock Logarithmic tensor category theory, {VIII}: Braided tensor category
  structure on categories of generalized modules for a conformal vertex
  algebra, 2011.
\newblock arXiv:1110.1931.

\bibitem[HLZ14]{HuangLepowskyZhangI}
Y.-Z. Huang, J.~Lepowsky, and L.~Zhang.
\newblock Logarithmic tensor category theory for generalized modules for a
  conformal vertex algebra, {I}: Introduction and strongly graded algebras and
  their generalized modules.
\newblock In {\em Conformal Field Theories and Tensor Categories}, pages
  169--248. Springer, 2014.

\bibitem[Hua95]{Huang1995}
Y.-Z. Huang.
\newblock A theory of tensor products for module categories for a vertex
  operator algebra, {IV}.
\newblock {\em J. Pure Appl. Algebra}, 100:173--216, 1995.


\bibitem[Hua05]{Huang2005}
Y.-Z. Huang.
\newblock Differential equations and intertwining operators.
\newblock {\em Commun. Contemp. Math.}, 7:375--400, 2005.


\bibitem[Hua12]{Huang-CFT_and_VOA}
Yi-Zhi Huang.
\newblock {\em Two-dimensional conformal geometry and vertex operator
  algebras}, volume 148.
\newblock Springer Science \& Business Media, 2012.


\bibitem[IK11]{IK-representation_theory_of_the_Virasoro_algebra}
K.~Iohara and Y.~Koga.
\newblock {\em Representation theory of the Virasoro algebra}.
\newblock Springer Monographs in Mathematics. Springer, 2011.


\bibitem[JJK16]{JJK-SLE_boundary_visits}
N.~Jokela, M.~J{\"a}rvinen, and K.~Kyt{\"o}l{\"a}.
\newblock $\mathrm{SLE}$ boundary visits.
\newblock {\em Ann. Henri Poincar\'e}, 17(6):1263--1330, 2016.
\newblock arXiv:1311.2297, 2013.


\bibitem[Kac79]{Kac-contravariant_form}
Victor~G Kac.
\newblock Contravariant form for infinite-dimensional {Lie} algebras and
  superalgebras.
\newblock In {\em Group theoretical methods in physics}, pages 441--445.
  Springer, 1979.

\bibitem[Kac97]{Kac-vertex_algebras}
Victor Kac.
\newblock {\em Vertex algebras for beginners}, volume~10 of {\em University
  Lecture Series}.
\newblock American Mathematical Society, Providence, RI, 1997.


\bibitem[Kas95]{Kassel-quantum_groups}
Christian Kassel.
\newblock {\em Quantum groups}, volume 155 of {\em Graduate Texts in
  Mathematics}.
\newblock Springer Verlag, 1995.


\bibitem[Kaw15]{Kawahigashi2015}
Y.~Kawahigashi.
\newblock Conformal field theory, tensor categories and operator algebras.
\newblock {\em J. Phys. A: Math. Theor.}, 48:303001, 2015.


\bibitem[KKP19]{KKP-conformal_blocks}
A.~Karrila, K.~Kyt{\"o}l{\"a}, and E.~Peltola.
\newblock Conformal blocks, $q$-combinatorics, and quantum group symmetry.
\newblock {\em Ann. Inst. Henri Poincar\'e D}, 6(3):449--487, 2019.
\newblock \url{doi:10.4171/aihpd/88}, Preprint:
  \url{https://arxiv.org/abs/1709.00249}.


\bibitem[KL94]{KL-tensor_structures_affine_Lie}
David Kazhdan and George Lusztig.
\newblock {Tensor structures arising from affine Lie algebras I-IV}.
\newblock {\em Journal of the American Mathematical Society}, 1994.
\newblock 6(4):905--947, 6(4):949--1011, 7(2):335--381, 7(2):383--453.


\bibitem[KP16]{KP-pure_partition_functions_of_multiple_SLEs}
K.~Kyt{\"o}l{\"a} and E.~Peltola.
\newblock Pure partition functions of multiple $\mathrm{SLE}$s.
\newblock {\em Comm. Math. Phys.}, 346(1):237--292, 2016.
\newblock Preprint: \url{http://arxiv.org/abs/1506.02476}.


\bibitem[KP20]{KP-conformally_covariant_bdry_correlations}
K.~Kyt{\"o}l{\"a} and E.~Peltola.
\newblock Conformally covariant boundary correlation functions with a quantum
  group.
\newblock {\em J. Eur. Math. Soc.}, 22:55--118, 2020.
\newblock \url{doi:10.4171/JEMS/917}, Preprint:
  \url{https://arxiv.org/abs/1408.1384}.


\bibitem[KS11]{KondoSaito2011}
H.~Kondo and Y.~Saito.
\newblock Indecomposable decomposition of tensor products of modules over the
  restricted quantum universal enveloping algebra associated to $sl_{2}$.
\newblock {\em J. Algebra}, 330:103--129, 2011.


\bibitem[Law08]{Lawler-conformally_invariant_processes_in_the_plane}
Gregory~F Lawler.
\newblock {\em Conformally invariant processes in the plane}.
\newblock Number 114 in Mathematical Surveys and Monographs. American
  Mathematical Soc., 2008.


\bibitem[Len21]{Lentner2021}
Simon~D. Lentner.
\newblock Quantum groups and nichols algebras acting on conformal field
  theories.
\newblock {\em Advances in Mathematics}, 378:107517, 2021.


\bibitem[Li99]{Li99}
H.~Li.
\newblock Determining fusion rules by {$A(V)$}-modules and bimodules.
\newblock {\em J. Algebra}, 212:515--556, 1999.


\bibitem[LL04]{Lepowsky_Li-VOA}
J.~Lepowsky and H.~Li.
\newblock {\em Introduction to Vertex Operator Algebras and Their
  Representations}.
\newblock Birkh{\"a}user Boston, 2004.


\bibitem[Lus93]{Lusztig1993}
G.~Lusztig.
\newblock {\em Introduction to quantum groups}.
\newblock Birkh{\"a}user, 1993.


\bibitem[McR16]{McRae-nonneg_integer_level_affine_Lie_algebra_tensor_cat}
R.~McRae.
\newblock {Non-Negative Integral Level Affine Lie Algebra Tensor Categories and
  Their Associativity Isomorphisms}.
\newblock {\em Comm. Math. Phys.}, 346:349--395, 2016.


\bibitem[MR89]{MR-comment_on_quantum_group_symmetry_in_CFT}
G.~Moore and N.~Reshetikhin.
\newblock A comment on quantum group symmetry in conformal field theory.
\newblock {\em Nucl. Phys. B}, 328(3):557--574, 1989.


\bibitem[Mus10]{Mussardo-statistical_field_theory}
Giuseppe Mussardo.
\newblock {\em Statistical field theory: an introduction to exactly solved
  models in statistical physics}.
\newblock Oxford University Press, 2010.


\bibitem[Nah00]{Nahm-bridge_over_troubled_waters}
W.~Nahm.
\newblock Conformal field theory: a bridge over troubled waters, in: Quantum
  field theory - a twentieth century profile.
\newblock In {\em Hindustani Book Agency and Indian National Science Academy},
  pages 571--604, 2000.

\bibitem[NT11]{NagatomoTsuchiya2011}
K.~Nagatomo and A.~Tsuchiya.
\newblock The triplet vertex operator algebra {$W(p)$} and the restricted
  quantum group at root of unity.
\newblock {\em Adv. Stud. Pure Math.}, 61:1--49, 2011.


\bibitem[PS90]{PS-common_structures_between_finite_systems_and_CFTs}
V.~Pasquier and H.~Saleur.
\newblock Common structures between finite systems and conformal field theories
  through quantum groups.
\newblock {\em Nucl. Phys. B}, 330(2):523--556, 1990.


\bibitem[RRRA91]{RRR-contour_picture_of_quantum_groups_in_CFT}
C.~Ramirez, H.~Ruegg, and M.~Ruiz-Altaba.
\newblock The contour picture of quantum groups in conformal field theories.
\newblock {\em Nucl. Phys. B}, 364(1):195--233, 1991.


\bibitem[RS05]{RS-basic_properties}
Steffen Rohde and Oded Schramm.
\newblock Basic properties of {SLE}.
\newblock {\em Ann. of Math.}, 161(2):883--924, 2005.


\bibitem[SV91]{SV-quantum_groups_and_homology_of_local_systems}
Vadim~V. Schechtman and Alexander~N. Varchenko.
\newblock Quantum groups and homology of local systems.
\newblock In Akira Fujiki, Kazuya Kato, Yujiro Kawamata, Toshiyuki Katsura, and
  Yoichi Miyaoka, editors, {\em ICM-90 Satellite Conference Proceedings}, pages
  182--197, Tokyo, 1991. Springer Japan.

\bibitem[TV14]{TeschnerVartanov2014}
J.~Teschner and G.~Vartanov.
\newblock $6j$ symbols for the modular double, quantum hyperbolic geometry, and
  supersymmetric gauge theories.
\newblock {\em Lett. Math. Phys.}, 104:527--551, 2014.


\bibitem[TW13]{TsuchiyaWood2013}
A.~Tsuchiya and S.~Wood.
\newblock The tensor structure on the representation category if the
  $\mathcal{W}_{p}$ triplet algebra.
\newblock {\em J. Phys. A: Math. Theor.}, 46:445203, 2013.


\bibitem[TW14]{TsuchiyaWood_IMRN_2014}
Akihiro Tsuchiya and Simon Wood.
\newblock {On the Extended W-Algebra of Type $\mathfrak{sl}_{2}$ at Positive
  Rational Level}.
\newblock {\em International Mathematics Research Notices},
  2015(14):5357--5435, 06 2014.


\bibitem[Var95]{Varchenko-multidimensional_hypergeometric_functions_and_representation_theory_of_Lie_algebras_and_quantum_groups}
A.~Varchenko.
\newblock {\em Multidimensional hypergeometric functions and representation
  theory of Lie algebras and quantum groups}, volume~21 of {\em Advanced Series
  in Mathematical Physics}.
\newblock World Scientific, 1995.


\bibitem[Wan93]{Wang-rationality_of_Virasoro_VOA}
Weiqiang Wang.
\newblock Rationality of virasoro vertex operator algebras.
\newblock {\em International Mathematics Research Notices}, 1993(7):197--211,
  1993.


\bibitem[Wer04]{Werner-random_planar_curves_and_SLE}
Wendelin Werner.
\newblock Random planar curves and {Schramm-Loewner} evolutions.
\newblock In {\em Lectures on probability theory and statistics}, pages
  107--195. Springer, 2004.

\bibitem[Xu98]{Xu1998}
X.~Xu.
\newblock {\em Introduction to Vertex Operator Superalgebras and Their
  Modules}, volume 456 of {\em Mathematics and Its Applications}.
\newblock Springer, 1998.


\bibitem[Zhu96]{Zhu96}
Y.~Zhu.
\newblock Modular invariance of characters of vertex operator algebras.
\newblock {\em J. Amer. Math. Soc.}, 9:237--302, 1996.


\end{thebibliography}

\end{document}